\numberwithin{equation}{section}
\newtheorem{theorem}{Theorem}[section]
\newtheorem{lemma}[theorem]{Lemma}
\newtheorem{corollary}[theorem]{Corollary}
\newtheorem{proposition}[theorem]{Proposition}
\newtheorem{claim}{Claim}
\theoremstyle{definition}
\newtheorem{example}[theorem]{Example}
\newtheorem{definition}[theorem]{Definition}
\newtheorem{remark}[theorem]{Remark}
\newtheorem{question}[theorem]{Question}
\newcommand{\T}{{\mathbb T}}
\newcommand{\Z}{{\mathbb Z}}
\newcommand{\N}{{\mathbb N}}
\newcommand{\R}{{\mathbb R}}
\newcommand{\Q}{{\mathbb Q}}
\newcommand{\cont}{{\mathfrak c}}
\newcommand{\Cp}[2]{C_p(#1,#2)}
\newcommand{\W}{\mathscr{W}}
\newcommand{\Wi}{\mathscr{W}_\infty}
\newcommand{\lma}[1]{\mathrm{Max}\,{#1}}
\newcommand{\lmi}[1]{\mathrm{Min}\,{#1}}
\newcommand{\ext}[1]{\mathrm{Ext}\,{#1}}
\newcommand{\sequence}{sequence with distinct neighbors}
\newcommand{\Seq}{{\mathfrak S}}
\newcommand{\support}[1]{\sigma\!_{#1}}
\newcommand{\nat}[1]{\underline{#1}}
\newcommand{\cancel}[2]{{#1\diamond#2}}
\newcommand{\letter}[2]{{#1}\lfloor{#2}\rfloor}
\newcommand{\power}[2]{{#1}\lceil{#2}\rceil}
\title{Productivity of sequences with respect to a given weight function}
\author[D. Dikranjan]{Dikran Dikranjan}
\address[Dikran Dikranjan]{Dipartimento di Matematica e Informatica, Universit\`{a} di Udine, Via delle Scienze  206, 33100 Udine, Italy}
\email{dikran.dikranjan@uniud.it}
\author[D. Shakhmatov]{Dmitri Shakhmatov}
\address[Dmitri Shakhmatov]{Division of Mathematics, Physics and Earth Sciences\\
Graduate School of Science and Engineering\\
Ehime University, Matsuyama 790-8577, Japan}
\email{dmitri.shakhmatov@ehime-u.ac.jp}
\author[J. Sp\v{e}v\'ak]{Jan Sp\v{e}v\'ak}
\address[Jan Sp\v{e}v\'ak]{Department of Mathematics\\ Faculty of Science\\ J. E. Purkinje
University,
\v{C}esk\'{e} ml\'{a}de\v{z}e 8,
400 96 \'{U}st\'{i} nad Labem\\
Czech Republic}
\email{jan.spevak@ujep.cz}
\dedicatory{In memory of Nigel Kalton}
\keywords{subseries convergence, multiplier convergence, unconditional
convergence, summable sequence, productive sequence, Cauchy sequence, free group, seminorm, locally compact group}
\subjclass[2000]{Primary: 22A05; Secondary: 20E05, 40A05, 40A20, 54A20}
\thanks{The first named author was partially supported by SRA grants P1-0292-0101 and J1-9643-0101, as well as grant MTM2009-14409-C02-01.}
\thanks{The second named author was partially supported by the Grant-in-Aid for Scientific Research
(C) no.~22540089 by the Japan Society for the Promotion of Science (JSPS)}
\thanks{Preliminary version of this manuscript was written during
the third named author's Ph.~D. study at the Graduate School of
Science and Engineering of Ehime
University (Matsuyama, Japan) from October 1, 2006 till September 30, 2009.
His stay at Ehime University during this period has been generously supported
by the Ph.~D. Scholarship of the Government of Japan.}
\begin{document}

\begin{abstract}
Given a function $f:\N\to(\omega+1)\setminus\{0\}$,
we say that a faithfully indexed sequence $\{a_n:n\in\N\}$ of elements of a topological group $G$ is:
(i)~{\em $f$-Cauchy productive ($f$-productive)\/} provided that the sequence $\{\prod_{n=0}^m
a_n^{z(n)}:m\in\N\}$ is left Cauchy (converges to some element of $G$, respectively) for each
function $z:\N\to\Z$ such that $|z(n)|\le f(n)$ for every $n\in\N$;
(ii)~{\em unconditionally $f$-Cauchy productive (unconditionally $f$-productive)\/} provided that the sequence $\{a_{\varphi(n)}:n\in\N\}$ is
$(f\circ\varphi)$-Cauchy productive (respectively, $(f\circ\varphi)$-productive)
for every bijection $\varphi:\N\to\N$. (Bijections can be replaced by injections here.)
We consider the question of existence of (unconditionally) $f$-productive sequences for a given ``weight function'' $f$. We prove that: (1)~a
Hausdorff group having an $f$-productive sequence for some $f$ contains a homeomorphic copy of the Cantor set;
(2)~if a non-discrete group is either locally compact Hausdorff  or Weil complete metric,
then it contains an unconditionally $f$-productive sequence for every function $f:\N\to\N$; (3)~a metric group is NSS if and only if
it does not contain an $f_\omega$-Cauchy productive sequence, where $f_\omega$ is the function taking the constant value $\omega$.
We give an example of an $f_\omega$-productive sequence $\{a_n:n\in\N\}$
 in a (necessarily non-abelian) separable metric group $H$ with a linear topology and a bijection $\varphi:\N\to\N$
such that the sequence $\{\prod_{n=0}^m a_{\varphi(n)}:m\in\N\}$ diverges, thereby answering a question of Dominguez and Tarieladze.
Furthermore, we show that $H$ has no unconditionally $f_\omega$-productive sequences.
As an application of our results, we resolve negatively a question from $C_p(-,G)$-theory.
\end{abstract}

\maketitle

{\em All topological groups considered in this paper are assumed to be Hausdorff.\/} 
The symbols
 $\Z$, $\Q$ and $\R$ denote the topological
groups of integer numbers, rational numbers and real numbers, and $\T$ denotes the quotient group $\R/\Z$.
We use $e$ to denote the identity element of a (topological) group $G$. As usual, $\N$ denotes the set of natural numbers, $\omega$ denotes the first infinite ordinal,
 $\omega+1$ denotes the successor of the ordinal $\omega$
and $\cont$ denotes the cardinality of the continuum.  In this paper, 
$0\in\N$.
An ordinal
is identified with the set of all smaller ordinals. 
In particular, $\omega$ is identified with $\N$ and $\omega+1$ is identified with $\N\cup\{\omega\}$.

\section{Introduction}

Recall that a series $\sum_{n=0}^\infty a_n$ of real numbers:
\begin{itemize}
\item[(i)]
 {\em converges\/} if the sequence $\left\{\sum_{n=0}^m a_n:n\in\N\/\right\}$ of partial sums converges to some real number,
\item[(ii)]
{\em converges unconditionally\/} if the series $\sum_{n=0}^\infty a_{\varphi(n)}$ converges for every bijection $\varphi: \N \to \N$, and
\item[(iii)]
{\em converges absolutely\/} if the series $\sum_{n=0}^\infty |a_n|$ converges.
\end{itemize}
According to the celebrated Riemann-Dirichlet theorem from the classical analysis, the series $\sum_{n=0}^\infty a_n$ converges
unconditionally if and only if it converges absolutely, and in such a case all series $\sum_{n=0}^\infty a_{\varphi(n)}$ converge to
the limit of the net  $\{\sum_{n\in F}a_n:F\in[\N]^{< \omega}\}$, independently on  the bijection $\varphi$.
(Here $[\N]^{< \omega}$ denotes the set of all finite subsets of $\N$ ordered by set inclusion.)

The first two of these classical notions can be easily extended for arbitrary abelian topological groups  \cite{B}. Let us say that a sequence $\{a_n:n\in\N\}$ in a topological abelian group $(G,+)$ is:
\begin{itemize}
\item[(1)] {\em summable\/} if the sequence $\left\{\sum_{n=0}^m a_n:m\in\N\/\right\}$ converges in $G$;
\item[(2)]  {\em unconditionally summable\/} if the sequence $\{a_{\varphi(n)}:n\in\N\}$ is summable for every bijection $\varphi: \N \to \N$.
\end{itemize}

It should be noted that the terminology adopted by various authors differs. In particular, our terminology in (1) and (2) differs from that used in \cite{B}.

Clearly, items (1) and (2) extend the notion of a convergent series (i) and that of an unconditionally convergent series (ii) in the
framework of topological abelian groups. It turns out that a sequence $\{a_n:n\in\N\}$ is unconditionally summable if and only if
the net $\{\sum_{n\in F}a_n:F\in[\N]^{< \omega}\}$ converges in $G$
\cite[III,
  \S 5.7, Prop. 9]{B}.

Another source of inspiration for the present paper is the classical notion of subseries convergence \cite{Kalton}; see  also
\cite{Swartz}. Recall that a series $\sum_{n=0}^\infty a_n$ in a topological group $G$ is said to be:
\begin{itemize}
\item[(iv)]
 {\em subseries convergent\/} provided that the series $\sum_{n=0}^\infty z_n a_n$ converges to an element of $G$ for every sequence $\{z_n:n\in\N\}$ of zeros and ones.
\end{itemize}

This concept plays a crucial role in the so-called Orlicz-Pettis type theorems and is a model case of the general notion of {\em multiplier convergence\/} of a series in functional analysis; see \cite{Swartz}. In this theory one typically takes a fixed set $\mathscr{F}\subseteq \R^\N$ of ``multipliers'' and calls a
series $\sum_{n=0}^\infty a_n$ in a topological vector space {\em $\mathscr{F}$-convergent\/} provided that the series $\sum_{n=0}^\infty f(n)a_n$ converges
for every $f\in\mathscr{F}$. By varying the set $\mathscr{F}$ of multipliers one can obtain a fine description of the level of convergence
of a given series, leading to a rich theory; see \cite{Swartz}.

Given the tremendous success of the multiplier convergence theory for topological vector spaces, it is a bit surprising that no systematic theory of multiplier convergence in {\em topological groups\/} was yet developed. This paper is an attempt to establish a foundation for such a theory. Quite naturally, when speaking of multipliers in a topological group, we have to restrict ourselves to a subset $\mathscr{F}$ of the set $\Z^\N$. Since an element of $\Z^\N$ can be viewed is a sequence of natural numbers, this leads to the following natural definition:

\begin{itemize}
\item[(3)]  Given a sequence $\{z_n:n\in\N\}$ of integer numbers, we say that a  sequence $\{a_n:n\in\N\}$ in an abelian topological group
 is {\em summable with multiplier $\{z_n:n\in\N\}$\/} if the sequence  $\{z_n a_n:n\in\N\}$ is summable.
 \end{itemize}

This notion allows to capture the notion of an absolutely convergent series (iii) in the framework of topological abelian groups. Indeed,
in case of a sequence $\{a_n:n\in\N\}$ of reals, for every $n\in \N$, let $\varepsilon_n=1$ if $a_n\ge 0$ and $\varepsilon_n=-1$ if
$a_n<0$. Then the series $\sum_{n=0}^\infty a_n$ converges absolutely if and only if the sequence $\{a_n:n\in\N\}$ is summable with the multiplier $\{\varepsilon_n:n\in\N\}$.
In this paper we use a single ``weight'' function $f:\N\to (\omega+1)\setminus\{0\}$ to define two multiplier sets
$$
\mathscr{F}_f=\{z\in \Z^\N: |z(n)|\le f(n)
\mbox{ for all }
n\in\N\}
\ \
\mbox{and}
\ \
\mathscr{F}^\star_f=\{z\in \Z^\N: 0\le z(n)\le f(n)
\mbox{ for all }
n\in\N\},
$$
as well as the corresponding notions of multiplier convergence. We call a one-to-one sequence $\{a_n:n\in\N\}$ of elements of a topological group:
\begin{itemize}
\item[(4a)] {\em $f$-summable\/}
({\em $f^\star$-summable})
if $\{a_n:n\in\N\}$ is summable with multiplier $\{z(n):n\in\N\}$ for every $z\in \mathscr{F}_f$ (for every $z\in \mathscr{F}^\star_f$, respectively);

\item[(4b)]
{\em unconditionally $f$-summable\/} if the sequence $\{a_{\varphi(n)}:n\in\N\}$ is $(f\circ \varphi)$-summable for every bijection $\varphi:\N\to\N$.
\end{itemize}

Replacing bijections with injections in (4b) leads to the same notion; see Lemma \ref{AP:set:has:AP:subsets}. All three notions
coincide in abelian groups; see Proposition \ref{fstar:is:f:for:abel}
and Corollary
\ref{abelian:corollary}. We construct an example distinguishing these notions
in non-abelian topological groups. In the abelian case, unconditionally summable sequences coincide with (unconditionally) $f_1$-summable sequences,
where $f_1$ is the constant function $f_1$ taking value $1$. It is clear that a one-to-one sequence $\{a_n:n\in\N\}$ is
$f_1^\star$-summable if and only if the series $\sum_{n=0}^\infty a_n$ is subseries convergent, so we cover also the classical notion (iv).

An ``exceptionally strong absolute mixture'' of (2) and (3) was introduced in \cite{Sp}. We give here a reformulation which is substantially different from, yet (non-trivially)
equivalent to the definition in \cite{Sp}, in case of an abelian group. We refer the reader to Remark \ref{reformulation:of:f-productivity} for details.

\begin{itemize}
\item[(5)]
A one-to-one sequence $\{a_n:n\in\N\}$ of elements of a topological abelian group is called an {\em absolutely summable set\/} if the sequence $\{z_n a_n:n\in\N\}$
is unconditionally summable for {\em every\/} weight sequence $\{z_n:n\in\N\}$ of integer numbers.
\end{itemize}

Clearly, an absolutely summable set is precisely an unconditionally $f_\omega$-productive sequence in our terminology,
where $f_\omega$ is the constant function taking value $\omega$. Therefore, we cover also (5) as a special case.

A topological group without infinite absolutely summable sets (unconditionally $f_\omega$-productive sequences, in our terminology)
is called a {\em TAP group\/} \cite{Sp}.  Clearly, a group $G$ is TAP whenever $G$ contains no nontrivial convergent sequences.
Moreover, every NSS group is TAP (\cite{Sp}; see also \cite{DT} or Theorem \ref{NSS:is:NACP} for a short proof of this result).
The converse implication holds for locally compact (not necessarily abelian) groups;
in particular, a locally compact TAP group is NSS, hence a Lie group \cite{DSS}.
The notion of a TAP group plays a key role in characterizing pseudocompactness of a topological space $X$ in terms of properties
of the topological group $C_p(X,G)$ of all $G$-valued continuous functions on $X$ endowed with the pointwise topology, for a given NSS
group $G$; see \cite{Sp} and Section \ref{sec:appl} for details.

The paper is organized as follows. In Section \ref{sec:(Cauchy) productive sequences} we recall the (absolute) properties of a (Cauchy) productive sequence and a (Cauchy) summable sequence. In Section \ref{sec:f-(Cauchy) productive sequences, for a given function f:N to omega+1} we introduce (Cauchy)
productivity (summability) for a sequence with respect to a weight function $f:\N\to(\omega+1)\setminus\{0\}$.
The unconditional versions of these notions are introduced in Section \ref{f-(Cauchy) productive sets, for a given function f: N to omega+1}.

In Section \ref{f-(Cauchy) productive sets in metric groups}  we investigate the existence of (unconditionally) $f$-(Cauchy) productive sequences
in metric groups. We prove that every non-discrete metric group contains an unconditionally $f$-Cauchy productive sequence
for every ``finitely-valued'' function $f:\N\to\N\setminus\{0\}$; see Theorem \ref{non1-TAP}. Consequently, every non-discrete Weil complete metric group has an
unconditionally $f$-productive sequence  for every function $f:\N\to\N\setminus\{0\}$ (Corollary \ref{no:f-product:seq:in:nondiscrete:metr:group}). We
show that a metric group $G$ is NSS if and only if it does not contain an
(unconditionally) $f_\omega$-Cauchy productive sequence (Theorem \ref{metric:NACP:iff:NSS}).

 In Section \ref{f-(Cauchy) productive sets in linear groups} we consider unconditionally $f$-(Cauchy) productive sequences
 in groups with a linear topology. In particular, we prove that a linear sequentially  Weil complete group $G$ is TAP if and only if
it contains no nontrivial convergent sequences (Theorem \ref{linear:TAP}),
and if $G$ is also sequential, then $G$ is TAP if and only if it is discrete (Corollary \ref{sequentially:complete:liner:TAP:group:is:discrete}).

In Section \ref{f-productive sets in locally compact and omega-bounded groups} we show that every non-discrete
group $G$ that is either locally compact or $\omega$-bounded contains an unconditionally
$f$-productive sequence for every ``finite-valued'' weight function $f:\N\to\N\setminus\{0\}$;
moreover, if $G$ is  also totally disconnected, then $G$ contains even an unconditionally
$f_\omega$-productive sequence.

Section \ref{Descriptive properties of groups having f_1star-productive sequences} contains one of our main results: a topological group having an $f$-productive sequence for some $f$ contains a homeomorphic copy of the Cantor set. In particular, every topological group of size $< \cont$ is TAP (Corollary \ref{|G|<cont:is:TAP}).

In Section \ref{sec:appl} we give an application of our results to group-valued function spaces by resolving an open question from \cite{Sp}.

In Section \ref{four:spectra:section} we introduce four different notions of productivity spectra of a given topological group $G$ related to the existence of (unconditionally)
$f$-productive and (unconditionally) $f$-Cauchy productive sequences in $G$, and we show that there are precisely four classes
of abelian topological groups with respect to these spectra:
\begin{itemize}
\item
groups that have no $f$-productive sequences for any $f$,
\item
groups that have $f$-productive sequences only for bounded weight functions $f$,
\item
groups that have $f$-productive sequences only for ``finite-valued'' weight functions $f:\N\to\N$, and
\item
groups that have $f$-productive sequences for every weight function $f:\N\to(\omega+1)\setminus\{0\}$.
\end{itemize}
Moreover, we provide examples of separable metric abelian groups witnessing that these four classes are distinct.

In Section \ref{Eliminating f-productive sequences in products} we develop a general technique for eliminating $f$-productive sequences in infinite products
of discrete groups based on seminorms; see Lemma \ref{impossible:case:for:unbounded:seminorm}.

In Section \ref{background:on:free:groups} we recall a necessary background on free groups
and prove the ``mountain lemma''; see Lemma \ref{real:final:step}. This lemma is merely a technical tool used only in the last Section \ref{proof:of:item:v} dedicated to a tedious proof
of item (v) of Theorem \ref{TAP:group:with:ap:sequence}. Readers who are not interested in that proof may skip reading this ``mountain lemma''.

In Section \ref{seminorms:from:linear:order} we define a seminorm on free groups
that arise from a linear order on their alphabet set; see Definition \ref{eta:definition}
and Lemma \ref{a(g):equation:2}.
This seminorm appears to be well suitable for capturing the non-commutative nature of free groups. It is a main technical tool
used in Sections \ref{Sec:f-prod:seq:not:f-prod:set} and \ref{proof:of:item:v}.

In Section \ref{Sec:f-prod:seq:not:f-prod:set} we construct a  (necessarily non-abelian) separable metric linear group $H$
that contains an $f_\omega$-productive sequence $A=\{a_n:n\in\N\}$ with an additional property that the sequence $\{\prod_{n=0}^m a_{\varphi(n)}:m\in\N\}$ diverges
for a suitable bijection $\varphi:\N\to\N$. This provides a negative answer to a question of Dominguez and Tarieladze from \cite[Remark 2.5]{DT-private}.

Finally, in the last Section \ref{proof:of:item:v} we use the full power of the sophisticated machinery developed in Sections \ref{Eliminating f-productive sequences in products}--\ref{Sec:f-prod:seq:not:f-prod:set} in order to prove that the group $H$ does not contain any unconditionally $f_\omega$-productive sequence;  that is, $H$ is TAP.

\section{(Cauchy) productive sequences}
\label{sec:(Cauchy) productive sequences}

Recall that a sequence $\{a_n:n\in\N\}$ of elements of a topological group $G$ is called {\em left Cauchy\/}  ({\em right Cauchy\/}) provided that for every neighborhood $U$ of the
identity $e$ of $G$ there exists $k\in\N$ such that $a_l^{-1}a_m\in U$ (respectively, $a_la_m^{-1}\in U$) whenever $l,m$ are integers satisfying $l\geq k$ and $m\geq k$.
Recall that a topological group $G$ is called {\em sequentially Weil complete\/} provided that every left Cauchy sequence in $G$ converges to some element of $G$.

\begin{proposition}
For a topological group $G$, the following statements are equivalent:
\begin{itemize}
\item[(i)] $G$ is sequentially Weil complete;
\item[(ii)] every right Cauchy sequence in $G$ converges;
\item[(iii)] every left Cauchy sequence in $G$ is also a right Cauchy sequence  in $G$, and $G$ is sequentially closed
in its Raikov completion (that is, in the completion of $G$ with respect to the two-sided uniformity).
\end{itemize}
\end{proposition}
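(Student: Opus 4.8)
The plan is to prove the two equivalences (i)$\Leftrightarrow$(ii) and (i)$\Leftrightarrow$(iii) separately: the first is a one-line consequence of an inversion duality, while the second rests on the defining property of the Raikov completion. The duality I would record first is that inversion interchanges the two one-sided Cauchy conditions: a sequence $\{a_n:n\in\N\}$ is left Cauchy if and only if $\{a_n^{-1}:n\in\N\}$ is right Cauchy, because $(a_l^{-1})(a_m^{-1})^{-1}=a_l^{-1}a_m$; and, by continuity of inversion, $\{a_n\}$ converges to $g$ if and only if $\{a_n^{-1}\}$ converges to $g^{-1}$. Putting these together gives (i)$\Leftrightarrow$(ii) immediately: if (i) holds and $\{a_n\}$ is right Cauchy, then $\{a_n^{-1}\}$ is left Cauchy, hence convergent, so $\{a_n\}$ converges; the converse is symmetric.

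For (i)$\Rightarrow$(iii) I would use the routine fact that every convergent sequence in a topological group is simultaneously left and right Cauchy (verified by writing the terms as $a_n\in gV$ and as $a_n\in Vg$ for a suitable neighborhood $V$ of $e$). Granting (i), a left Cauchy sequence converges and is therefore right Cauchy, which is the first clause of (iii). For sequential closedness, let $\{a_n\}\subseteq G$ converge to some $g$ in the Raikov completion $\widetilde{G}$. Being convergent in $\widetilde{G}$, the sequence is left Cauchy there; but the left Cauchy condition constrains only the products $a_l^{-1}a_m$, which remain in $G$, and a neighborhood base of $e$ in $G$ consists of traces of neighborhoods of $e$ in $\widetilde{G}$, so $\{a_n\}$ is already left Cauchy in $G$. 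By (i) it converges in $G$ to some $h$, hence also to $h$ in $\widetilde{G}$; uniqueness of limits (both groups are Hausdorff) forces $g=h\in G$, so $G$ is sequentially closed in $\widetilde{G}$.

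For (iii)$\Rightarrow$(i) I would invoke that $\widetilde{G}$ is complete with respect to the two-sided uniformity, together with the fact that a sequence is two-sided Cauchy exactly when it is both left and right Cauchy. Given a left Cauchy sequence $\{a_n\}$ in $G$, the first clause of (iii) makes it right Cauchy as well, hence two-sided Cauchy in $G$ and therefore in $\widetilde{G}$, so it converges to some $g\in\widetilde{G}$; the second clause of (iii) places $g$ in $G$, and convergence in $\widetilde{G}$ to a point of $G$ is convergence in $G$. Thus every left Cauchy sequence converges, which is (i).

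The step requiring the most care is the interplay between the two-sided completeness of the Raikov completion and the one-sided (Weil) completeness in (i). The hypothesis ``left Cauchy $\Rightarrow$ right Cauchy'' in (iii) is present precisely to upgrade a left Cauchy sequence to a two-sided Cauchy one, so that completeness of $\widetilde{G}$ can be applied; conversely, the \emph{internal} nature of the left Cauchy condition (it mentions only products of terms of the sequence, which never leave $G$) is exactly what allows convergence in $\widetilde{G}$ to descend to a left Cauchy condition back in $G$. I expect no genuine obstacle beyond keeping these two features straight and using Hausdorffness for uniqueness of sequential limits.
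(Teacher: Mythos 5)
Your proposal is correct. Note, however, that the paper states this proposition without any proof at all (it is recorded as a routine, folklore fact right after the definitions), so there is no argument of the authors to compare against; your write-up simply supplies the missing proof. The route you take is the standard, expected one: the inversion duality $a_l^{-1}a_m=(a_l^{-1})(a_m^{-1})^{-1}$ handles (i)$\Leftrightarrow$(ii), and the facts that the two-sided uniformity of $G$ is the trace of that of the Raikov completion $\widetilde{G}$, that a sequence is two-sided Cauchy iff it is both left and right Cauchy, and that limits are unique in the Hausdorff group $\widetilde{G}$, correctly settle (i)$\Leftrightarrow$(iii); in particular you rightly isolate the point that the left Cauchy condition is internal to $G$, which is what lets convergence in $\widetilde{G}$ descend to a Cauchy condition in $G$.
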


\begin{lemma}\label{G:with:no:conv:seq:is:seq:Weil:complete}
Every topological group without nontrivial convergent sequences is sequentially Weil complete.
\end{lemma}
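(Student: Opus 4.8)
\emph{The plan} is to prove the stronger fact that every left Cauchy sequence in $G$ is eventually constant; such a sequence then converges to its eventual value, which lies in $G$, so $G$ is sequentially Weil complete by the very definition recalled above.

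First I would fix a left Cauchy sequence $\{a_n:n\in\N\}$ in $G$ and pass to its sequence of consecutive increments $d_n=a_n^{-1}a_{n+1}$, $n\in\N$. I claim that $d_n\to e$. Indeed, let $U$ be an arbitrary neighborhood of $e$. By the left Cauchy condition there is $k\in\N$ with $a_l^{-1}a_m\in U$ whenever $l,m\geq k$; specializing to $l=n$ and $m=n+1$ gives $d_n=a_n^{-1}a_{n+1}\in U$ for every $n\geq k$. As $U$ was arbitrary, the sequence $\{d_n:n\in\N\}$ converges to $e$.

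Next I would invoke the hypothesis. Since $G$ contains no nontrivial convergent sequence, the convergent sequence $\{d_n:n\in\N\}$ must be eventually constant; because its limit is $e$ and $G$ is Hausdorff, that eventual constant value can only be $e$. Hence there is $N\in\N$ with $d_n=e$, that is, $a_{n+1}=a_n$, for every $n\geq N$. Therefore $\{a_n:n\in\N\}$ is eventually equal to $a_N$, so it converges to $a_N\in G$, as required.

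The argument is essentially formal, and the one point deserving attention is the order in which the increment is formed: because we are using the \emph{left} Cauchy condition, involving the products $a_l^{-1}a_m$, the increment must be taken as $a_n^{-1}a_{n+1}$ rather than $a_{n+1}a_n^{-1}$, since only the former is guaranteed by the hypothesis to lie in $U$. With this matching choice there is no genuine obstacle, as the no-nontrivial-convergent-sequence assumption rules out precisely the behavior that a left Cauchy yet non-convergent sequence would be forced to display.
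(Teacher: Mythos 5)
Your proof is correct and follows essentially the same route as the paper: pass to the increments $a_n^{-1}a_{n+1}$, observe they converge to $e$ by the left Cauchy condition, conclude from the hypothesis (plus Hausdorffness) that they are eventually equal to $e$, and hence that the original sequence is eventually constant and convergent. The only cosmetic difference is that you spell out the Hausdorff step identifying the eventual constant value with the limit, which the paper leaves implicit.
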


\begin{proof}
Let $G$ be a topological group without nontrivial convergent sequences, and assume that $\{a_n:n\in\N\}$ is a left Cauchy
sequence in $G$. Then for every neighborhood $U$ of the identity $e$ of $G$ there exists $k\in\N$ such that $a_n^{-1}a_{n+1}\in U$
for every integer $n$ such that $n\ge k$. Thus, the sequence $\{a_n^{-1}a_{n+1}:n\in\N\}$ converges to $e$. By our assumption,
there exists $i\in\N$ such that $a_n^{-1}a_{n+1}=e$ for every $n\ge i$. Therefore, $a_n=a_i$ for every $n\ge i$, and so the sequence $\{a_n:n\in\N\}$ converges to $a_i$.
\end{proof}

\begin{definition}
\label{productive}
A sequence $\{b_n:n\in\N\}$ of elements of a topological group $G$ will be called:
\begin{itemize}
\item[(i)]
{\em Cauchy productive\/} provided that the sequence $\left\{\prod_{n=0}^m b_n:m\in\N\/\right\}$ is left Cauchy;
\item[(ii)]
{\em productive\/} provided that the
sequence $\left\{\prod_{n=0}^m b_n:m\in\N\/\right\}$ converges to some element of $G$.
\end{itemize}
\end{definition}

When the group $G$ is abelian, we will use additive notations:
\begin{definition}
\label{summable}
A sequence $\{b_n:n\in\N\}$ of elements of an abelian topological group $G$ will be called:
\begin{itemize}
\item[(i)]
{\em Cauchy summable\/} provided that the sequence
$\left\{\sum_{n=0}^m b_n:m\in\N\/\right\}$ is a Cauchy sequence (in any of the three coinciding uniformities on $G$);
\item[(ii)]
{\em summable\/} provided that the sequence $\left\{\sum_{n=0}^m b_n:m\in\N\/\right\}$ converges to some element of $G$.
\end{itemize}
\end{definition}

\begin{lemma}
\label{productive:and:Cauchy:productive:coincide}
A sequence of elements of a sequentially Weil complete (abelian) group is Cauchy productive (summable) if and only if it is productive (summable).
\end{lemma}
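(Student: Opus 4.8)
The plan is to prove the two implications separately. The statement is essentially a reformulation of the definitions, and the only non-trivial ingredient is the standard fact that every convergent sequence in a topological group is left Cauchy.

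First I would treat the direction ``productive $\Rightarrow$ Cauchy productive''. If $\{b_n:n\in\N\}$ is productive, then the sequence of partial products $s_m=\prod_{n=0}^m b_n$ converges to some $s\in G$ by Definition \ref{productive}(ii). I would then invoke the elementary observation that every convergent sequence in a topological group is left Cauchy: given a neighborhood $U$ of $e$, choose a symmetric neighborhood $V$ of $e$ with $VV\subseteq U$ and use convergence to secure $k\in\N$ with $s^{-1}s_m\in V$ for all $m\geq k$, so that $s_l^{-1}s_m=(s^{-1}s_l)^{-1}(s^{-1}s_m)\in V^{-1}V=VV\subseteq U$ whenever $l,m\geq k$. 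Hence $\{s_m:m\in\N\}$ is left Cauchy and the sequence is Cauchy productive. Note that this half of the argument does not use the completeness hypothesis.

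For the reverse implication, suppose $\{b_n:n\in\N\}$ is Cauchy productive, so that $\{s_m:m\in\N\}$ is left Cauchy by Definition \ref{productive}(i). Since $G$ is sequentially Weil complete, every left Cauchy sequence in $G$ converges to some element of $G$; in particular $\{s_m:m\in\N\}$ converges, which is precisely the assertion that $\{b_n:n\in\N\}$ is productive.

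The abelian (summable) case is then immediate, since Cauchy summability and summability are the additive reformulations of Cauchy productivity and productivity (compare Definitions \ref{productive} and \ref{summable}), and in an abelian group all three uniformities---and hence the corresponding Cauchy conditions---coincide. The only point requiring any genuine argument is the passage from convergence to the left Cauchy property in the first implication; everything else is a direct unwinding of the definitions together with the completeness hypothesis, so I do not anticipate a substantive obstacle.
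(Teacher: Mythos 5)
Your proof is correct: the paper states this lemma without any proof at all, treating it as an immediate consequence of the definitions, and your argument (convergent sequences are left Cauchy via a symmetric neighborhood $V$ with $VV\subseteq U$, plus the definition of sequential Weil completeness for the converse) is exactly the standard unwinding that the authors implicitly rely on. The remark that completeness is only needed in one direction, and that the abelian case follows since the three uniformities coincide there, is also accurate.
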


The following lemma is an analogue of the Cauchy criterion for convergence of a series in the real line.

\begin{lemma}
\label{Cauchy:productive:criterion}
For a sequence $B=\{b_n:n\in\N\}$ of elements of a topological group $G$ the following conditions are equivalent:
\begin{itemize}
\item[(i)] $B$ is Cauchy productive;
\item[(ii)] for every open neighborhood $U$ of $e$ there exists $k\in\N$ such that $\prod_{n=l}^m b_n\in U$ whenever $l,m\in\N$ and $k\le l\le m$.
\end{itemize}
\end{lemma}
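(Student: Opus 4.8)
The statement is the group-theoretic Cauchy criterion: $B=\{b_n\}$ is Cauchy productive (i.e.\ the partial products $P_m=\prod_{n=0}^m b_n$ form a left Cauchy sequence) if and only if the "tail products" $\prod_{n=l}^m b_n$ can be made to lie in any prescribed neighborhood $U$ of $e$ for all sufficiently large $l\le m$. The whole proof rests on a single algebraic identity, so the plan is to isolate that identity first and then read off both implications from it.

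The plan is to begin by observing that for $l\le m$ one has
\[
P_{l-1}^{-1}P_m=\Bigl(\prod_{n=0}^{l-1}b_n\Bigr)^{-1}\prod_{n=0}^{m}b_n=\prod_{n=l}^{m}b_n,
\]
since the first $l$ factors cancel. (One must be slightly careful at the boundary $l=0$; there $P_{l-1}=P_{-1}$ is the empty product $e$, and the identity still reads $P_m=\prod_{n=0}^m b_n$, so no special case is actually needed if we adopt the empty-product convention.) This is the key step, and it is where the noncommutativity is handled correctly: because it is the \emph{left} Cauchy condition $a_l^{-1}a_m\in U$ that appears in the definition, the cancellation produces precisely the tail product in the correct left-to-right order $\prod_{n=l}^m b_n$, with no reversal or conjugation. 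That is exactly why the lemma pairs "left Cauchy" with this particular tail condition.

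From this identity both directions are immediate. For (i)$\Rightarrow$(ii): given $U$, apply the definition of left Cauchy to the sequence $\{P_m\}$ to obtain $k$ with $P_l^{-1}P_m\in U$ for all $l,m\ge k$; replacing the index $l$ by $l-1$ (equivalently, taking the threshold one step earlier) and using the identity gives $\prod_{n=l}^m b_n=P_{l-1}^{-1}P_m\in U$ whenever $k< l\le m$, which yields (ii) after relabelling the constant. For (ii)$\Rightarrow$(i): given $U$, choose $k$ from (ii); then for any $l,m\ge k+1$ with, say, $l\le m$, the identity gives $P_{l-1}^{-1}P_m=\prod_{n=l}^m b_n\in U$, so $\{P_m\}$ is left Cauchy by definition. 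The only bookkeeping is the harmless index shift between "$\,l\,$" in the Cauchy condition and "$\,l-1\,$" in the tail product, together with the trivial case $l=m$ where the tail product reduces to $b_l\in U$.

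The main (and essentially only) obstacle is purely notational rather than mathematical: one must keep the order of multiplication straight and verify that the telescoping cancellation $P_{l-1}^{-1}P_m=\prod_{n=l}^m b_n$ genuinely holds in a nonabelian group, which it does because the inverse of the length-$(l)$ initial product cancels the matching \emph{left} segment of the length-$(m{+}1)$ product from the left. No completeness, metrizability, or abelianness is invoked, so the lemma is valid in an arbitrary (Hausdorff) topological group.
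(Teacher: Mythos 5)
The paper states this lemma without any proof, treating it as routine; your telescoping argument via the identity $P_{l-1}^{-1}P_m=\prod_{n=l}^{m}b_n$ is exactly the intended one, and your direction (i)$\Rightarrow$(ii) is complete and correct, including the index bookkeeping.

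There is, however, one small gap in (ii)$\Rightarrow$(i). The paper's definition of a left Cauchy sequence requires $P_l^{-1}P_m\in U$ for \emph{all} pairs $l,m\ge k$, not only those with $l\le m$, so your phrase ``with, say, $l\le m$'' is not a harmless normalization: for $l>m$ the identity gives $P_l^{-1}P_m=\left(\prod_{n=m+1}^{l}b_n\right)^{-1}$, and condition (ii) only guarantees that the tail product itself, not its inverse, lies in $U$. Since a neighborhood of $e$ need not be symmetric, this case does not follow directly from what you wrote. The fix is standard and short: given $U$, set $V=U\cap U^{-1}$, an open symmetric neighborhood of $e$, and apply (ii) to $V$ to get $k$; then for $l,m\ge k$ one has $P_l^{-1}P_m=e\in U$ if $l=m$, $P_l^{-1}P_m=\prod_{n=l+1}^{m}b_n\in V\subseteq U$ if $l<m$, and $P_l^{-1}P_m=\left(\prod_{n=m+1}^{l}b_n\right)^{-1}\in V^{-1}=V\subseteq U$ if $l>m$. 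With this patch your proof is complete; no other changes are needed, and, as you note, no completeness, metrizability, or commutativity is used anywhere.
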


\begin{corollary}
Every Cauchy productive sequence in a topological group converges to the identity element.
\end{corollary}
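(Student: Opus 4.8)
The plan is to obtain this statement as an essentially immediate consequence of the Cauchy criterion recorded in Lemma \ref{Cauchy:productive:criterion}. Let $B=\{b_n:n\in\N\}$ be a Cauchy productive sequence in a topological group $G$. I want to show that $\{b_n:n\in\N\}$ converges to $e$, that is, that for every open neighborhood $U$ of $e$ we have $b_n\in U$ for all sufficiently large $n$. The whole point is that the individual terms of the sequence are themselves products of consecutive terms, so the criterion, which controls such products, already controls the terms.

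Concretely, fix an arbitrary open neighborhood $U$ of $e$. By the implication (i)$\Rightarrow$(ii) of Lemma \ref{Cauchy:productive:criterion}, there exists $k\in\N$ such that $\prod_{n=l}^m b_n\in U$ whenever $l,m\in\N$ satisfy $k\le l\le m$. The decisive observation is the single-index specialization: for any fixed $i\ge k$, taking $l=m=i$ gives $\prod_{n=i}^i b_n=b_i\in U$. Hence $b_i\in U$ for every $i\ge k$, which is exactly the assertion that $\{b_n:n\in\N\}$ converges to $e$.

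Alternatively, one can argue straight from Definition \ref{productive}(i) without invoking the criterion: writing $P_m=\prod_{n=0}^m b_n$ for the partial products, left Cauchyness of $\{P_m:m\in\N\}$ yields, for each neighborhood $U$ of $e$, an index $k$ with $P_l^{-1}P_m\in U$ for all $l,m\ge k$; specializing to $l=m-1$ and using the telescoping identity $P_{m-1}^{-1}P_m=b_m$ gives $b_m\in U$ for all $m>k$. There is essentially no obstacle here, since the genuine content has already been absorbed into Lemma \ref{Cauchy:productive:criterion}. The only point worth flagging is that, because $G$ need not be abelian, it is important that the left Cauchy condition places the inverse on the left, so that $P_{m-1}^{-1}P_m$ collapses to $b_m$ rather than to a conjugate; this is the sole place where non-commutativity could intrude, and it causes no trouble.
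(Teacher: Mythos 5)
Your proof is correct and matches the paper's intended argument: the corollary is stated immediately after Lemma \ref{Cauchy:productive:criterion} precisely because the single-index specialization $l=m$ gives $b_i\in U$ for all $i\ge k$, which is exactly what you do. Your alternative telescoping argument (including the correct observation that the left Cauchy condition is what makes $P_{m-1}^{-1}P_m$ collapse to $b_m$) is a fine equivalent phrasing of the same content.
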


\begin{corollary}\label{no:sequences:no:Cauchy:prod:sequences}
A topological group without nontrivial convergent sequences has no nontrivial Cauchy productive sequences.
\end{corollary}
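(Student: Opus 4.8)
The plan is to derive this directly from the corollary just stated, namely that every Cauchy productive sequence in a topological group converges to the identity element $e$. First I would fix an arbitrary Cauchy productive sequence $B=\{b_n:n\in\N\}$ in $G$ and apply that corollary to conclude that $b_n\to e$ as $n\to\infty$. (If one prefers to argue from scratch, the same conclusion follows by taking $l=m$ in condition (ii) of Lemma \ref{Cauchy:productive:criterion}, which gives $b_m\in U$ for all $m\ge k$ and every neighborhood $U$ of $e$.)

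Next I would invoke the standing hypothesis on $G$: since $G$ contains no nontrivial convergent sequences, every convergent sequence in $G$ is eventually constant. Applied to $\{b_n:n\in\N\}$, which converges to $e$, this produces an index $N\in\N$ such that $b_n=e$ for all $n\ge N$. Hence $B$ is eventually equal to $e$, i.e.\ it is not a nontrivial Cauchy productive sequence. As $B$ was arbitrary, $G$ has no nontrivial Cauchy productive sequences.

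I expect no genuine obstacle here, since the entire content is packaged in the preceding corollary together with the definition of a group ``without nontrivial convergent sequences''; the argument mirrors the proof of Lemma \ref{G:with:no:conv:seq:is:seq:Weil:complete}. The only points meriting a word of care are matching the term \emph{nontrivial} for a Cauchy productive sequence with ``not eventually equal to $e$'', and observing that a sequence converging to $e$ that is eventually constant must in fact be eventually equal to $e$.
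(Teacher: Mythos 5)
Your proof is correct and is exactly the argument the paper intends: the corollary is stated without proof as an immediate consequence of the preceding corollary (every Cauchy productive sequence converges to $e$), combined with the hypothesis that convergent sequences in $G$ are eventually constant, and uniqueness of limits (the paper's standing Hausdorff assumption) to pin the eventual constant down to $e$. Your fallback derivation via $l=m$ in Lemma \ref{Cauchy:productive:criterion}(ii) is also the route the paper's own corollary rests on, so there is nothing to add.
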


\begin{example}
\label{symmetric:group:example}
 Let  $G =\{f:\N\to\N\ |\ f$ is a bijection$\}\subseteq\N^\N$ be the symmetric group with the usual
 pointwise convergence topology. For each $n\in\N$, denote by $b_n$ the transposition
of $n$ and $n +1$. Let $B=\{b_n:n\in\N\}\subseteq G$. Then $G$ is a metric group with the following properties:
\begin{itemize}
\item[(i)] $G$ is two-sided (Raikov) complete;
\item[(ii)] $G$ is not Weil complete;
\item[(iii)] the sequence $B$ converges to the identity element $id$ of $G$;
\item[(iv)] the sequence $B$ is Cauchy productive;
\item[(v)] the sequence $B$ is not productive.
 \end{itemize}
 Indeed, (i) and (ii) are well known; see \cite[Chapter 7]{DPS}. To check (iii), it suffices to note
 that for every $m\in\N$ and each integer $n>m$ one has $b_n(k) = k = id(k)$
 for all $k\in\N$ with $k \leq m$. This proves that $b_n$ converges to the identity element $id$ of $G$.

Let us prove (iv). Given $k,l,m,\in\N$ with $k\le l\le m$, one can easily see that  $b_{l} b_{l+1}\dots b_m(i)=i$ for any $i\in\N$ with
$i<k$. Since $G$ has the topology of pointwise convergence, Lemma \ref{Cauchy:productive:criterion} yields that $B$ is Cauchy productive.

To prove (v), let $\pi_n= b_0 b_1 \ldots b_n$ and note that
\begin{equation}
\label{pi:is:a:cycle}
\mbox{$\pi_n$ is the cycle $0\to 1\to 2\to\dots\to n\to n+1\to 0$ with support $\{0,1, \ldots, n, n+1\}$.}
\end{equation}
Assume that the sequence $\{\pi_n:n\in\N\}$ converges to some $\sigma \in G$. Since $\sigma$ is surjective, there exists $k \in \N$ such that  $1 = \sigma(k)$.
Since $G$ has the topology of pointwise convergence,  from $\pi_n\to\sigma$  it follows that  there exists an integer $n_0 > k$ such that
\begin{equation}
\label{eq:dagger}
 \pi_n(k) = \sigma(k)=1
\mbox{ for all integers }
n>n_0.
\end{equation}
On the other hand, $\pi_n(k) = k +1  \ne 1$ for $n>n_0$ by \eqref{pi:is:a:cycle}, which contradicts \eqref{eq:dagger}.   Hence, $B$ is not productive.
\end{example}

\begin{lemma}\label{sum:of:lin:is:lim:of:sum}
If $\{a_n:n\in\N\}$ and $\{b_n:n\in\N\}$ are (Cauchy) summable sequences in an abelian topological group $G$,
then the sequence $\{a_n+b_n:n\in\N\}$ is also (Cauchy) summable in $G$.
\end{lemma}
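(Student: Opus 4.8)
The plan is to exploit commutativity to rewrite the partial sums of $\{a_n+b_n:n\in\N\}$ as the termwise sum of the partial sums of the two given sequences, and then invoke continuity of the group operation. Writing $S_m=\sum_{n=0}^m a_n$, $T_m=\sum_{n=0}^m b_n$ and $R_m=\sum_{n=0}^m(a_n+b_n)$ for each $m\in\N$, commutativity of $G$ yields the key identity $R_m=S_m+T_m$. Both asserted implications will then reduce to standard facts about convergent and Cauchy sequences under a continuous binary operation.

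For the summable case, the hypothesis supplies elements $A,B\in G$ with $S_m\to A$ and $T_m\to B$. Since addition $G\times G\to G$ is continuous, $R_m=S_m+T_m\to A+B$, so $\{a_n+b_n:n\in\N\}$ is summable (with sum $A+B$).

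For the Cauchy summable case I would argue directly from the definition. Fix a neighborhood $U$ of $0$ and, using continuity of addition at $(0,0)$, choose a neighborhood $V$ of $0$ with $V+V\subseteq U$. Cauchyness of $\{S_m:m\in\N\}$ and of $\{T_m:m\in\N\}$ provides an index $k\in\N$ beyond which $S_m-S_l\in V$ and $T_m-T_l\in V$ whenever $l,m\ge k$. For such $l,m$, the identity $R_m=S_m+T_m$ together with commutativity gives $R_m-R_l=(S_m-S_l)+(T_m-T_l)\in V+V\subseteq U$, which shows that $\{R_m:m\in\N\}$ is Cauchy.

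I expect no serious obstacle: the argument is entirely formal, its only ingredients being the commutativity identity $R_m=S_m+T_m$ and the continuity of addition (in the Cauchy case used through the existence of a neighborhood $V$ with $V+V\subseteq U$). The two parenthetical variants are thus handled uniformly by the same computation.
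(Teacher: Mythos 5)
Your proposal is correct and is essentially the paper's own argument: both rest on the commutativity identity $\sum_{n}(a_n+b_n)=\sum_n a_n+\sum_n b_n$ and the choice of a neighborhood $V$ of $0$ with $V+V\subseteq U$, the only cosmetic difference being that you phrase Cauchyness via differences of partial sums while the paper invokes its tail-sum criterion (Lemma \ref{Cauchy:productive:criterion}), which amounts to the same thing. You also write out the convergent case via continuity of addition, which the paper leaves to the reader.
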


\begin{proof}
Suppose that $\{a_n:n\in\N\}$ and $\{b_n:n\in\N\}$ are Cauchy summable in $G$. Let $U$ be an open neighborhood of $0$.
Choose an open neighborhood $V$ of $0$ such that $V+V\subseteq U$. By Lemma \ref{Cauchy:productive:criterion}, we can
find $k\in\N$ such that $\sum_{n=l}^m a_n\in V$ and $\sum_{n=l}^m b_n\in V$ whenever $l$ and $m$ are integers satisfying $k\le l\le m$. Since $G$ is abelian,
$\sum_{n=l}^m (a_n+b_n)= \sum_{n=l}^m a_n + \sum_{n=l}^m b_n \in V+V\subseteq U$.
Applying Lemma \ref{Cauchy:productive:criterion} once again, we conclude that the sequence $\{a_n+b_n:n\in\N\}$ is Cauchy summable.

A similar proof for summable sequences is left to the reader.
\end{proof}

\section{$f$-(Cauchy) productive sequences, for a given weight function $f$}
\label{sec:f-(Cauchy) productive sequences, for a given function f:N to omega+1}

\begin{definition}
\begin{itemize}
\item[(i)]
Denote by $\W_{\infty}$ the set of all functions $f:\N\to(\omega+1)\setminus\{0\}$, and let $\W$ be the subset of all $f\in \W_\infty$ such that $f(n)\not=\omega$ for all $n\in\N$.
\item[(ii)]
Let $f, g\in \Wi$.
\begin{itemize}
\item[(a)] $f\le g$ means that $f(n)\le g(n)$ for every $n\in\N$.
\item[(b)] $f\le^* g$ means that there exists $i\in\N$ such that $f(n)\le g(n)$ for all integers $n$ satisfying $n\ge i$. Similarly, $f=^* g$ means that there exists $i\in\N$ such that $f(n)= g(n)$ for all integers $n$ satisfying $n\ge i$.
\end{itemize}
\item[(iii)]
$f_\omega\in \Wi$ is the function defined by $f_\omega(n)=\omega$ for all $n\in\N$, and $f_1\in \Wi $ is the function defined by $f_1(n)=1$ for all $n\in\N$.
\end{itemize}
\end{definition}

Note that $f_\omega$ and $f_1$ are the biggest and the smallest element, respectively, of the poset $(\Wi, \leq)$.

\begin{definition}
For a function $z:\N\to\Z$ we denote by $|z|$ the function $f:\N\to\N$ defined by $f(n)=|z(n)|$ for all $n\in\N$.
\end{definition}

\begin{definition}\label{def:ap-sequence}
For $f\in \Wi$ we say that a faithfully indexed sequence $\{a_n:n\in\N\}$ of elements of a topological group $G$ is:
\begin{itemize}
\item[(i)]
{\em $f$-Cauchy productive ($f$-productive) in $G$\/} provided that the sequence $\{a_n^{z(n)}:n\in\N\}$ is Cauchy productive (respectively, productive) in $G$ for every function $z:\N\to\Z$ such that $|z|\le f$;
\item[(ii)]
{\em $f^\star$-Cauchy productive ($f^\star$-productive) in $G$\/} provided that the sequence $\{a_n^{z(n)}:n\in\N\}$ is Cauchy productive (respectively, productive) in $G$ for every function $z:\N\to\N$ such that $z\le f$.
\end{itemize}
\end{definition}

When the group $G$ is abelian, we will use the natural variant of the
above notions with ``productive'' replaced by ``summable''.  We will
also omit ``in $G$'' when the group $G$ in question is clear from the
context.

\begin{remark}
If $f\in\Wi$ and $H$ is a sequentially closed subgroup of a topological group $G$, then a sequence of elements of $H$ is $f$-productive in $G$ if and only if it is $f$-productive in $H$.
\end{remark}

\begin{lemma}
\label{easy:connections}
Let $f\in \Wi$ and let $A=\{a_n:n\in\N\}$ be a faithfully indexed sequence of elements of a topological group.
\begin{itemize}
\item[(i)]
If $A$ is $f$-productive, then $A$ is $f$-Cauchy productive.
\item[(ii)]
If one additionally assumes that $G$ is sequentially Weil complete, then $A$ is $f$-productive if and only if it is $f$-Cauchy productive.
\end{itemize}
\end{lemma}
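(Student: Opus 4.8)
The plan is to reduce everything to the single-sequence statement relating productivity and Cauchy productivity, namely Lemma~\ref{productive:and:Cauchy:productive:coincide}, applied separately to each auxiliary sequence $\{a_n^{z(n)}:n\in\N\}$ arising from a multiplier $z:\N\to\Z$ with $|z|\le f$. Both halves of Definition~\ref{def:ap-sequence}(i) are quantified over exactly the same index set of multipliers $z$, so it suffices to compare Cauchy productivity and productivity of a single fixed such sequence and then quantify.

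For item (i), I would first record the elementary fact that every convergent sequence in a topological group is left Cauchy. Concretely, if the partial products $s_m=\prod_{n=0}^m a_n^{z(n)}$ converge to some $s\in G$, then given an open neighbourhood $U$ of $e$ I choose an open neighbourhood $V$ of $e$ with $V^{-1}V\subseteq U$ and pick $k\in\N$ with $s^{-1}s_m\in V$ for all $m\ge k$; then for $l,m\ge k$ one has $s_l^{-1}s_m=(s^{-1}s_l)^{-1}(s^{-1}s_m)\in V^{-1}V\subseteq U$, so $\{s_m:m\in\N\}$ is left Cauchy. Consequently, if $A$ is $f$-productive, then each $\{a_n^{z(n)}:n\in\N\}$ with $|z|\le f$ is productive, hence Cauchy productive, and therefore $A$ is $f$-Cauchy productive.

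For item (ii), the implication ``$f$-productive $\Rightarrow$ $f$-Cauchy productive'' is exactly item (i) and needs no completeness hypothesis. For the converse, I would invoke sequential Weil completeness of $G$: by definition every left Cauchy sequence in $G$ converges, so each $\{a_n^{z(n)}:n\in\N\}$ that is Cauchy productive is automatically productive. Applying this to every multiplier $z$ with $|z|\le f$ shows that $f$-Cauchy productivity implies $f$-productivity. Equivalently, one applies Lemma~\ref{productive:and:Cauchy:productive:coincide} to each $\{a_n^{z(n)}:n\in\N\}$ and then quantifies over all admissible $z$.

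There is essentially no genuine obstacle here: the statement is a formal consequence of the definitions, of the standard ``convergent $\Rightarrow$ left Cauchy'' fact, and of the defining property of sequential Weil completeness. The only minor point requiring care is keeping the noncommutative order straight in the left-Cauchy estimate, using $V^{-1}V\subseteq U$ rather than a symmetric-square neighbourhood, since $G$ is not assumed abelian.
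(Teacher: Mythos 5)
Your proof is correct and follows essentially the same route as the paper: item (i) is the standard ``convergent $\Rightarrow$ left Cauchy'' fact (which the paper simply calls clear), and item (ii) is obtained by applying Lemma~\ref{productive:and:Cauchy:productive:coincide} to each multiplier sequence $\{a_n^{z(n)}:n\in\N\}$ and quantifying over all admissible $z$, exactly as the paper does. Your explicit $V^{-1}V\subseteq U$ estimate is a fine way of filling in the part the paper leaves unstated.
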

\begin{proof}
(i) is clear, and (ii) follows from Lemma \ref{productive:and:Cauchy:productive:coincide} and Definition \ref{def:ap-sequence}.
\end{proof}

\begin{lemma}\label{left:cauchy:condition}
Let
$f \in \Wi$.  For a faithfully indexed sequence
$A=\{a_n:n\in\N\}$ of elements of a topological group $G$,
the following statements are equivalent:
\begin{itemize}
\item[(i)] $A$ is $f$-Cauchy productive in $G$;
\item[(ii)]
 for every neighborhood $U$ of the identity of $G$ and every function $z:\N\to\Z$ with $|z|\le f$, there exists $k\in\N$ such that $\prod_{n=l}^m a_n^{z(n)}\in U$ for every $l,m\in\N$ satisfying $k\leq l\leq m$.
\end{itemize}
\end{lemma}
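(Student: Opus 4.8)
The plan is to reduce the statement directly to Lemma \ref{Cauchy:productive:criterion} by unwinding Definition \ref{def:ap-sequence}. First I would fix an arbitrary function $z:\N\to\Z$ with $|z|\le f$ and introduce the auxiliary sequence $B_z=\{a_n^{z(n)}:n\in\N\}$ of elements of $G$. By Definition \ref{def:ap-sequence}(i), the sequence $A$ is $f$-Cauchy productive precisely when the sequence $B_z$ is Cauchy productive for every such $z$. This is the only place where the hypothesis $|z|\le f$ enters: it simply determines the family of multiplier sequences $z$ that must be considered.

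Next I would apply Lemma \ref{Cauchy:productive:criterion} to the sequence $B_z$ itself, taking $b_n=a_n^{z(n)}$ in the notation of that lemma. It yields that $B_z$ is Cauchy productive if and only if for every open neighborhood $U$ of the identity $e$ there exists $k\in\N$ such that $\prod_{n=l}^m a_n^{z(n)}\in U$ whenever $l,m\in\N$ satisfy $k\le l\le m$. Combining this with the previous paragraph, $A$ is $f$-Cauchy productive if and only if, for every $z:\N\to\Z$ with $|z|\le f$ and every neighborhood $U$ of $e$, such a threshold $k$ exists. Since the quantifiers ``for every $z$'' and ``for every $U$'' are both universal, they commute freely, and the resulting statement is verbatim condition (ii). This establishes both implications simultaneously.

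I do not expect any genuine obstacle here, as the lemma is essentially a restatement of the Cauchy criterion of Lemma \ref{Cauchy:productive:criterion} applied uniformly across the multiplier family $\{z:|z|\le f\}$. The only point deserving a moment's care is bookkeeping of the quantifier dependencies: the threshold $k$ supplied by Lemma \ref{Cauchy:productive:criterion} is allowed to depend on both the neighborhood $U$ and the chosen $z$ (through the sequence $B_z$), which matches exactly the dependence permitted in condition (ii), where $k$ is chosen after both $U$ and $z$ are given.
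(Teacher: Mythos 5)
Your proof is correct and is exactly the paper's argument: the paper's proof reads, in its entirety, ``This follows from Lemma \ref{Cauchy:productive:criterion} and Definition \ref{def:ap-sequence}'', and your proposal simply spells out that reduction, including the correct observation that the threshold $k$ may depend on both $U$ and $z$. Nothing further is needed.
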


\begin{proof}
This follows from Lemma \ref{Cauchy:productive:criterion} and Definition \ref{def:ap-sequence}.
\end{proof}

For future reference, let us state explicitly a particular corollary of Lemma \ref{left:cauchy:condition}.

\begin{lemma}
\label{AP:is:null:sequence} Let
$f\in \Wi$.
Suppose that $\{a_n:n\in\N\}$ is an $f$-Cauchy productive sequence in a topological group $G$. Then the sequence
$\{a_n^{z(n)}:n\in\N\}$ converges to the identity $e$ of $G$ for every function $z:\N\to\Z$ satisfying $|z|\le f$;
in particular, $\{a_n:n\in\N\}$ converges to $e$.
\end{lemma}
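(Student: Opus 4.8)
The plan is to derive this directly from the Cauchy-productivity hypothesis via Lemma~\ref{left:cauchy:condition}, applied to the special case of singleton index blocks. Fix a function $z:\N\to\Z$ with $|z|\le f$, and let $U$ be an arbitrary neighborhood of the identity $e$ of $G$. Since $\{a_n:n\in\N\}$ is $f$-Cauchy productive and $|z|\le f$, Lemma~\ref{left:cauchy:condition}(ii) supplies an index $k\in\N$ such that $\prod_{n=l}^{m} a_n^{z(n)}\in U$ for all $l,m\in\N$ with $k\le l\le m$.

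The key observation is to specialize this to the degenerate block $l=m$, which collapses the product $\prod_{n=l}^{m} a_n^{z(n)}$ to the single factor $a_m^{z(m)}$. Thus for every integer $m\ge k$ we get $a_m^{z(m)}\in U$. Since $U$ was an arbitrary neighborhood of $e$, this is exactly the statement that the sequence $\{a_n^{z(n)}:n\in\N\}$ converges to $e$. The \emph{in particular} clause then follows by taking $z$ to be the constant function $z\equiv 1$; this $z$ satisfies $|z|\le f$ because every value $f(n)$ lies in $(\omega+1)\setminus\{0\}$ and is therefore at least $1$, so the general conclusion applies and yields $a_n\to e$.

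I do not anticipate any genuine obstacle here: the entire content is that a left-Cauchy product sequence has consecutive increments tending to the identity, which is the $l=m$ boundary case of the Cauchy criterion already isolated in Lemma~\ref{left:cauchy:condition}. The only point requiring a moment's care is the bookkeeping that $|z|\le f$ is a legitimate hypothesis for every $z$ we use (in particular for the constant function $1$, where one must invoke $f(n)\ge 1$), and the trivial but necessary remark that $\prod_{n=l}^{m}$ with $l=m$ denotes a one-term product rather than an empty one. Everything else is a single invocation of the preceding lemma, so the proof should be two or three lines in total.
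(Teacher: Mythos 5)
Your proof is correct and is exactly the paper's argument: the paper states this lemma as ``a particular corollary of Lemma~\ref{left:cauchy:condition},'' i.e., precisely the specialization to singleton blocks $l=m$ that you carry out, with the same observation that the constant function $z\equiv 1$ is admissible since $f(n)\ge 1$ for all $n$.
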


Recall that a topological group $G$ is called {\em NSS\/} provided that there exists a
neighborhood of the identity of $G$ that contains no nontrivial subgroup of $G$.
Our next theorem strengthens \cite[Theorem 4.9]{Sp}.

\begin{theorem}\label{NSS:is:NACP} An NSS group contains no $f_\omega$-Cauchy productive sequences.
\end{theorem}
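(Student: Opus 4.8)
The plan is to argue by contradiction using the Cauchy criterion of Lemma \ref{left:cauchy:condition}. Suppose $G$ is NSS, witnessed by an open neighborhood $U$ of $e$ containing no nontrivial subgroup of $G$, and suppose toward a contradiction that $A=\{a_n:n\in\N\}$ is an $f_\omega$-Cauchy productive sequence in $G$. Since $|z(n)|\le f_\omega(n)=\omega$ holds for every $z:\N\to\Z$ and every $n$, being $f_\omega$-Cauchy productive means that $\{a_n^{z(n)}:n\in\N\}$ is Cauchy productive for \emph{every} $z\in\Z^\N$ whatsoever. I would exploit this total freedom in the choice of multipliers to produce a single $z$ for which the criterion in Lemma \ref{left:cauchy:condition}(ii) fails for the neighborhood $U$.

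The construction of $z$ rests on the defining property of $U$. Because $A$ is faithfully indexed, the terms $a_n$ are pairwise distinct, so $a_n=e$ for at most one index $n$. For every $n$ with $a_n\ne e$, the cyclic subgroup $\langle a_n\rangle$ is a nontrivial subgroup of $G$ and hence, by the choice of $U$, is not contained in $U$; consequently there is an integer $z(n)$ with $a_n^{z(n)}\notin U$. For the at most one remaining index I set $z(n)=0$. This yields a function $z:\N\to\Z$ with the property that $a_n^{z(n)}\notin U$ for all but at most one $n\in\N$.

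To finish, I would apply Lemma \ref{left:cauchy:condition} to the multiplier $z$ just constructed and the neighborhood $U$. If $A$ were $f_\omega$-Cauchy productive, there would exist $k\in\N$ with $\prod_{n=l}^m a_n^{z(n)}\in U$ whenever $k\le l\le m$. Choosing any index $n\ge k$ with $a_n\ne e$ (such an $n$ exists, since at most one index is excluded) and taking $l=m=n$ gives the one-term product $a_n^{z(n)}\in U$, contradicting $a_n^{z(n)}\notin U$. Hence $A$ cannot be $f_\omega$-Cauchy productive, and $G$ has no such sequence.

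There is essentially no hard computational step here: the entire argument collapses to a single-term block in the Cauchy criterion, made possible by the fact that the NSS neighborhood $U$ lets one eject each nontrivial $a_n$ from $U$ by a single (possibly large) power. The only point worth emphasizing is where the hypothesis $f=f_\omega$ is genuinely used: the exponents $z(n)$ supplied by the failure of $\langle a_n\rangle\subseteq U$ may be arbitrarily large, so the argument would break down for a bounded weight function $f$, for which small powers of $a_n$ could well remain inside $U$. This is consistent with the fact, recorded in the introduction, that being NSS obstructs only $f_\omega$-productivity and not $f$-productivity for finitely-valued $f$.
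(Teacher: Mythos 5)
Your proof is correct and is essentially the paper's own argument: both exploit the NSS neighborhood $U$ to choose exponents $z(n)$ with $a_n^{z(n)}\notin U$ for every $a_n\neq e$, and then contradict Cauchy productivity. The only cosmetic difference is that you apply Lemma \ref{left:cauchy:condition} directly with one-term blocks $l=m=n$, whereas the paper cites Lemma \ref{AP:is:null:sequence} --- which is itself precisely that one-term corollary of Lemma \ref{left:cauchy:condition}.
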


\begin{proof}
Let $A=\{a_n:n\in\N\}$ be a faithfully indexed sequence in an NSS group $G$. Fix a neighborhood $U$ of the identity $e$ of $G$
that contains no non-trivial subgroups of $G$. Then for each $n\in\N$  such that $a_n\neq e$
there exists $z_n\in\Z$  with $a_n^{z_n}\not\in U$. Clearly, the sequence $\{a_n^{z_n}:n\in\N\}$ does not converge to $e$.
Therefore, $A$ is not $f_\omega$-Cauchy productive by Lemma \ref{AP:is:null:sequence}.
\end{proof}

\begin{lemma}\label{if:f<g}
Let
$f,g \in \Wi$
be functions such that $g\le^*f$ (in particular; $g\le f$).
Then every $f$-productive ($f$-Cauchy productive) sequence
is also $g$-productive ($g$-Cauchy productive, respectively).
\end{lemma}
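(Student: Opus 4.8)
The plan is to witness $g$-productivity (resp.\ $g$-Cauchy productivity) of $A=\{a_n:n\in\N\}$ directly from the corresponding property with the larger weight $f$, by correcting each candidate multiplier on a finite initial segment. Fix $i\in\N$ with $g(n)\le f(n)$ for all $n\ge i$, which exists since $g\le^* f$. Given any $w:\N\to\Z$ with $|w|\le g$, define $z:\N\to\Z$ by $z(n)=0$ for $n<i$ and $z(n)=w(n)$ for $n\ge i$. Since $f(n)\ge 1$ for every $n$ (as $f\in\Wi$ takes values in $(\omega+1)\setminus\{0\}$), we have $|z(n)|=0\le f(n)$ when $n<i$, and $|z(n)|=|w(n)|\le g(n)\le f(n)$ when $n\ge i$; hence $|z|\le f$. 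By the $f$-productivity (resp.\ $f$-Cauchy productivity) of $A$ together with Definition \ref{def:ap-sequence}, the sequence $\{a_n^{z(n)}:n\in\N\}$ is therefore productive (resp.\ Cauchy productive).

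Next I would compare the partial products of $\{a_n^{w(n)}:n\in\N\}$ and $\{a_n^{z(n)}:n\in\N\}$. Setting $c=\prod_{n=0}^{i-1}a_n^{w(n)}$, a fixed element of $G$, one checks that for every $m\ge i$ the factorization at position $i$ gives $\prod_{n=0}^m a_n^{w(n)}=c\cdot\prod_{n=i}^m a_n^{w(n)}=c\cdot\prod_{n=0}^m a_n^{z(n)}$, the last equality holding because $a_n^{z(n)}=e$ for $n<i$ and $a_n^{z(n)}=a_n^{w(n)}$ for $n\ge i$. For the productive case this immediately finishes the argument: if $\prod_{n=0}^m a_n^{z(n)}$ converges to some $L\in G$, then by continuity of left translation by $c$ the products $\prod_{n=0}^m a_n^{w(n)}$ converge to $cL$, so $\{a_n^{w(n)}:n\in\N\}$ is productive; as $w$ was an arbitrary multiplier with $|w|\le g$, the sequence $A$ is $g$-productive.

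For the Cauchy case I would instead appeal to the tail criterion of Lemma \ref{Cauchy:productive:criterion}. Given a neighborhood $U$ of the identity, Cauchy productivity of $\{a_n^{z(n)}:n\in\N\}$ yields $k\in\N$ with $\prod_{n=l}^m a_n^{z(n)}\in U$ whenever $k\le l\le m$. Putting $k'=\max\{k,i\}$ and observing that $z(n)=w(n)$ for all $n\ge i$, we obtain $\prod_{n=l}^m a_n^{w(n)}=\prod_{n=l}^m a_n^{z(n)}\in U$ whenever $k'\le l\le m$; Lemma \ref{Cauchy:productive:criterion} then shows $\{a_n^{w(n)}:n\in\N\}$ is Cauchy productive, and hence $A$ is $g$-Cauchy productive.

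The argument is elementary; the only point requiring care is that the correction must preserve the left-to-right order of the product in the possibly non-abelian group $G$, which is exactly why I split off the fixed \emph{left} factor $c$ and invoke continuity of left translation rather than rearranging terms. The availability of the exponent $0$, guaranteed by $f(n)\ge 1$, is what lets the modified multiplier $z$ stay within the weight $f$ on the initial segment while leaving the relevant tail untouched.
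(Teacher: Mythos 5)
Your proof is correct. The paper states Lemma \ref{if:f<g} without proof, treating it as routine, and your argument is exactly the routine argument the authors intend: since $g\le^* f$, any multiplier $w$ with $|w|\le g$ can be zeroed out on the finite initial segment where $g$ may exceed $f$ to obtain a multiplier $z$ with $|z|\le f$, and the two resulting product sequences differ only by a fixed \emph{left} factor (handled by continuity of left translation) and on finitely many initial terms (irrelevant for the tail criterion of Lemma \ref{Cauchy:productive:criterion}), so both the productive and the Cauchy productive cases follow; your care with the order of factors in the non-abelian setting is exactly the right point to attend to.
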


Our next proposition demonstrates that the notions of an $f^\star$-productive sequence and an $f^\star$-Cauchy
productive sequence may lead to something new only in non-commutative groups.

\begin{proposition}\label{fstar:is:f:for:abel} Let  $f \in \Wi$.
A sequence  $\{a_n:n\in\N\}$ of elements of an abelian topological group is $f$-summable ($f$-Cauchy summable) if and only if it is $f^\star$-summable ($f^\star$-Cauchy summable, respectively).
\end{proposition}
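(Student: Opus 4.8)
The plan is to observe that one implication is immediate from the inclusion $\mathscr{F}^\star_f \subseteq \mathscr{F}_f$, and to obtain the reverse implication by a positive/negative-part decomposition combined with the additivity of (Cauchy) summability already established in Lemma \ref{sum:of:lin:is:lim:of:sum}.

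First, since $0 \le z(n) \le f(n)$ trivially entails $|z(n)| \le f(n)$, we have $\mathscr{F}^\star_f \subseteq \mathscr{F}_f$. Hence, if $\{a_n:n\in\N\}$ is $f$-summable ($f$-Cauchy summable), then for every $z\in\mathscr{F}^\star_f\subseteq\mathscr{F}_f$ the sequence $\{z(n)a_n:n\in\N\}$ is summable (Cauchy summable), so $\{a_n:n\in\N\}$ is $f^\star$-summable ($f^\star$-Cauchy summable). This direction uses neither commutativity nor the additivity lemma.

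For the converse, I would assume $\{a_n:n\in\N\}$ is $f^\star$-summable and fix an arbitrary $z:\N\to\Z$ with $|z|\le f$. Define $z^+(n)=\max\{z(n),0\}$ and $z^-(n)=\max\{-z(n),0\}$ for each $n\in\N$, so that $z=z^+-z^-$ pointwise and $0\le z^\pm(n)\le |z(n)|\le f(n)$; thus both $z^+$ and $z^-$ lie in $\mathscr{F}^\star_f$ (the degenerate case $f(n)=\omega$ causes no trouble, as $z^\pm(n)\le\omega$ holds automatically). By $f^\star$-summability, the sequences $\{z^+(n)a_n:n\in\N\}$ and $\{z^-(n)a_n:n\in\N\}$ are summable; since inversion is a continuous automorphism of the abelian group $G$, the sequence $\{-z^-(n)a_n:n\in\N\}$ is summable as well. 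Writing $z(n)a_n = z^+(n)a_n + \bigl(-z^-(n)a_n\bigr)$ and applying Lemma \ref{sum:of:lin:is:lim:of:sum} shows that $\{z(n)a_n:n\in\N\}$ is summable, as required. The Cauchy-summable case is handled verbatim, replacing ``summable'' by ``Cauchy summable'' throughout and invoking the Cauchy clause of Lemma \ref{sum:of:lin:is:lim:of:sum}.

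The only place where commutativity genuinely enters is this last step: the identity $\sum_{n=l}^m z(n)a_n = \sum_{n=l}^m z^+(n)a_n - \sum_{n=l}^m z^-(n)a_n$ splitting each partial sum into its positive and negative contributions requires $G$ to be abelian. I expect no real obstacle beyond carefully verifying $z^+,z^-\in\mathscr{F}^\star_f$ and that (Cauchy) summability is preserved under negation; both are routine, so the proposition follows quickly once the decomposition is set up.
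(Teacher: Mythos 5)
Your proposal is correct and follows essentially the same route as the paper: the ``only if'' direction from the inclusion $\mathscr{F}^\star_f \subseteq \mathscr{F}_f$, and the ``if'' direction via the decomposition $z = z^+ - z^-$ with $z^\pm \in \mathscr{F}^\star_f$, closure of (Cauchy) summability under negation, and Lemma \ref{sum:of:lin:is:lim:of:sum}. In fact your write-up is slightly cleaner than the paper's, which contains a harmless typo ($z(n)=z_-(n)+(-z_+(n))$ where $z(n)=z_+(n)+(-z_-(n))$ is meant) and leaves the negation step unjustified, whereas you explicitly invoke continuity of inversion.
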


\begin{proof}
The ``only if'' part is obvious. To show the ``if'' part, suppose that the sequence $\{a_n:n\in\N\}$ is $f^\star$-summable ($f^\star$-Cauchy summable).
Fix a function $z:\N\to\Z$ such that $|z|\le f$. Define functions $z_+:\N\to\N$ and $z_-:\N\to\N$ by $z_+(n)=\max\{0,z(n)\}$ and  $z_-(n)=-\min\{0,z(n)\}$  for each $n\in\N$.
By our assumption, the sequences $\{z_+(n)a_n:n\in\N\}$ and $\{z_-(n)a_n:n\in\N\}$ are (Cauchy) summable. Then $\{-z_-(n)a_n:n\in\N\}$
is (Cauchy) summable as well. Since
$z(n)=z_-(n)+(-z_+(n))$  for every $n\in\N$, from Lemma \ref{sum:of:lin:is:lim:of:sum}
 we conclude that the sequence $\{z(n)a_n:n\in\N\}$ is (Cauchy) summable.
Therefore, the sequence $\{a_n:n\in\N\}$ is $f$-summable (respectively, $f$-Cauchy summable).
\end{proof}

\begin{proposition}
\label{bounded:f}
If $f\in \W$ is a bounded function, then a sequence of elements of an abelian topological group is $f$-summable ($f$-Cauchy summable) if and only if it is
$f_1^\star$-summable ($f_1^\star$-Cauchy summable, respectively).
\end{proposition}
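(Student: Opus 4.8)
The plan is to reduce both notions to their starred counterparts via Proposition \ref{fstar:is:f:for:abel}, and then to exploit that a bounded nonnegative integer multiplier is a sum of boundedly many $0/1$ multipliers. First fix an integer $N$ with $f(n)\le N$ for all $n\in\N$; such $N$ exists because $f\in\W$ is bounded and finite-valued (note also $f(n)\ge 1$ for every $n$). Throughout I write the group additively and argue for summability; the argument for Cauchy summability is identical after replacing ``summable'' by ``Cauchy summable'' everywhere and using the Cauchy clauses of the quoted results.

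The ``only if'' implication is immediate: every $z\in\mathscr{F}^\star_{f_1}$ satisfies $0\le z(n)\le 1\le f(n)$, so $\mathscr{F}^\star_{f_1}\subseteq\mathscr{F}_f$, and hence $f$-summability forces summability of $\{z(n)a_n:n\in\N\}$ for each such $z$, which is exactly $f_1^\star$-summability.

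For the ``if'' implication, assume $\{a_n:n\in\N\}$ is $f_1^\star$-summable. By Proposition \ref{fstar:is:f:for:abel} it suffices to prove that it is $f^\star$-summable, so fix $z:\N\to\N$ with $z\le f$; then $0\le z(n)\le N$ for every $n$. The key step is a ``layer-cake'' decomposition: for each $j\in\{1,\dots,N\}$ define $z_j:\N\to\N$ by $z_j(n)=1$ when $z(n)\ge j$ and $z_j(n)=0$ otherwise. Each $z_j$ takes values in $\{0,1\}$, so $z_j\in\mathscr{F}^\star_{f_1}$, and by construction $z(n)=\sum_{j=1}^N z_j(n)$ for every $n$, whence $z(n)a_n=\sum_{j=1}^N z_j(n)a_n$.

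Now $f_1^\star$-summability guarantees that each of the finitely many sequences $\{z_j(n)a_n:n\in\N\}$, $j=1,\dots,N$, is summable. A straightforward induction on $N$ using Lemma \ref{sum:of:lin:is:lim:of:sum} then shows that the coordinatewise sum $\{z(n)a_n:n\in\N\}$ is summable as well. Since $z$ was an arbitrary element of $\mathscr{F}^\star_f$, the sequence is $f^\star$-summable, and Proposition \ref{fstar:is:f:for:abel} upgrades this to $f$-summability. The argument is entirely elementary; the only thing that matters is the boundedness of $f$, which is precisely what makes the decomposition $z=\sum_{j=1}^N z_j$ into \emph{finitely many} $0/1$ summands available, so that Lemma \ref{sum:of:lin:is:lim:of:sum} need be invoked only finitely often. (If $f$ were unbounded this finite decomposition would fail, which is the one potential obstacle, and it is exactly the hypothesis that removes it.)
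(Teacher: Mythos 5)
Your proof is correct and follows essentially the same route as the paper's: reduction to $f^\star$-summability via Proposition \ref{fstar:is:f:for:abel}, the identical ``layer-cake'' decomposition $z=\sum_{j=1}^{N} z_j$ into finitely many $0/1$ multipliers, and finitely many applications of Lemma \ref{sum:of:lin:is:lim:of:sum}. The only cosmetic difference is in the ``only if'' direction, where you note the inclusion $\mathscr{F}^\star_{f_1}\subseteq\mathscr{F}_f$ directly instead of citing Lemma \ref{if:f<g} together with Proposition \ref{fstar:is:f:for:abel}.
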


\begin{proof}
Since $f_1\le f$, the ``only if'' part follows from Lemma \ref{if:f<g} and
Proposition \ref{fstar:is:f:for:abel}.
To prove the ``if'' part, assume that a sequence $A=\{a_n:n\in\N\}$ of points in an abelian topological group is $f_1^\star$-summable ($f_1^\star$-Cauchy summable).
By Proposition \ref{fstar:is:f:for:abel}, it suffices to show that $A$ is $f^\star$-summable ($f^\star$-Cauchy summable). To this end, we are going to prove that
the sequence $B=\{z(n)a_n:n\in\N\}$ is (Cauchy) summable whenever $z:\N\to\N$ is a function such that $z\le f$. Since $f$ is bounded, there exists $k\in\N$ such that $z(n)\le f(n)\le k$ for all $n\in\N$. For $i=1,\ldots,k$ define a function $z_i:\N\to\{0,1\}$ by letting $z_{i}(n)=0$ if $i> z(n)$ and $z_{i}(n)=1$ if $i\le z(n)$, for all $n\in\N$.
Note that $z=\sum_{i=1}^k z_i$, and so $z(n)a_n=\sum_{i=1}^k z_i(n) a_n$ for every $n\in\N$.
Since $A$ is $f_1^\star$-summable ($f_1^\star$-Cauchy summable), each sequence $\{z_i(n) a_n:n\in\N\}$ is (Cauchy) summable
for $i=1,\dots,k$. From Lemma \ref{sum:of:lin:is:lim:of:sum} (applied $k-1$ many times consequently), we conclude
that the sequence $B$ is (Cauchy) summable as well.
\end{proof}

\begin{remark}
\label{DomTar}
\begin{itemize}
\item[(i)]
Our terminology differs from that of Dominguez and Tarieladze \cite{DT-private}. Indeed, our $f_1^\star$-productive sequences are called super-multipliable sequences in \cite{DT-private}, and our $f_\omega$-productive sequences are precisely the hyper-multipliable  sequences
from \cite{DT-private}.
\item[(ii)]
A faithfully indexed sequence $\{a_n:n\in\N\}$ is $f_1^\star$-productive if and only if the series $\sum_{n=0}^\infty a_n$ is subseries convergent in the sense of \cite{Kalton}. 
\end{itemize}
\end{remark}

\section{Unconditionally $f$-(Cauchy) productive sequences, for a given weight function $f \in \Wi$}
\label{f-(Cauchy) productive sets, for a given function f: N to omega+1}

\begin{definition}
\label{f:productive:sets}
Let $f\in\Wi$.
\begin{itemize}
\item[(i)]
A faithfully indexed sequence $A=\{a_n:n\in\N\}$ in a topological group $G$
will be called {\em an unconditionally $f$-Cauchy productive\/} (an {\em unconditionally $f$-productive\/}) provided that the sequence
$\{a_{\varphi(n)}:n\in\N\}$ is $(f\circ\varphi)$-Cauchy productive (respectively, $(f\circ\varphi)$-productive) for every bijection $\varphi:\N\to\N$.
\item[(ii)]
When $G$ is abelian, we use the word ``summable'' instead of ``productive'', thereby obtaining
the abelian counterparts of an {\em unconditionally $f$-Cauchy summable sequence\/} and an {\em unconditionally $f$-summable sequence\/}.
\end{itemize}
\end{definition}

\begin{lemma}
\label{easy:connections:Cauchy}
Let $f\in \Wi$ and let $A=\{a_n:n\in\N\}$ be a faithfully indexed sequence of elements of a topological group.
\begin{itemize}
\item[(i)]
If $A$ is unconditionally $f$-productive, then $A$ is unconditionally $f$-Cauchy productive.
\item[(ii)]
If one additionally assumes that $G$ is sequentially Weil complete, then $A$ is unconditionally $f$-productive if and only if it is unconditionally $f$-Cauchy productive.
\end{itemize}
\end{lemma}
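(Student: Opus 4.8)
The plan is to reduce the claim for the unconditional notions to the corresponding (already established) non-unconditional statement in Lemma \ref{easy:connections}, by quantifying over all bijections of $\N$. The point is that both ``unconditionally $f$-productive'' and ``unconditionally $f$-Cauchy productive'' are defined (Definition \ref{f:productive:sets}) as universal statements over bijections $\varphi:\N\to\N$, applied to the rearranged sequence $\{a_{\varphi(n)}:n\in\N\}$ together with the reweighted function $f\circ\varphi$. So the entire argument should factor through the observation that the two properties of the single sequence $\{a_{\varphi(n)}:n\in\N\}$ with respect to the weight $f\circ\varphi$ are already related by Lemma \ref{easy:connections}.

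For part (i), I would argue as follows. Fix an arbitrary bijection $\varphi:\N\to\N$ and set $g=f\circ\varphi\in\Wi$. Since $A$ is unconditionally $f$-productive, the sequence $A_\varphi=\{a_{\varphi(n)}:n\in\N\}$ is $g$-productive by Definition \ref{f:productive:sets}(i). By Lemma \ref{easy:connections}(i) applied to $A_\varphi$ and the weight $g$, the sequence $A_\varphi$ is therefore $g$-Cauchy productive, i.e. $(f\circ\varphi)$-Cauchy productive. As $\varphi$ was arbitrary, this says precisely that $A$ is unconditionally $f$-Cauchy productive, again by Definition \ref{f:productive:sets}(i). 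This direction needs only that $\varphi$ ranges over all bijections and that $f\circ\varphi$ is a legitimate element of $\Wi$, which it is since $\varphi$ is a bijection of $\N$ and $f\in\Wi$.

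For part (ii), assume additionally that $G$ is sequentially Weil complete. The forward implication is part (i), so only the converse needs attention. Suppose $A$ is unconditionally $f$-Cauchy productive, and fix an arbitrary bijection $\varphi:\N\to\N$, writing again $g=f\circ\varphi$. By Definition \ref{f:productive:sets}(i), $A_\varphi=\{a_{\varphi(n)}:n\in\N\}$ is $g$-Cauchy productive; since $G$ is sequentially Weil complete, Lemma \ref{easy:connections}(ii) applied to $A_\varphi$ and the weight $g$ gives that $A_\varphi$ is $g$-productive, that is, $(f\circ\varphi)$-productive. Letting $\varphi$ range over all bijections and invoking Definition \ref{f:productive:sets}(i) once more yields that $A$ is unconditionally $f$-productive.

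I do not anticipate a genuine obstacle here: the statement is a formal ``universally quantified'' upgrade of Lemma \ref{easy:connections}, and the only thing to be careful about is the bookkeeping of the weight function under rearrangement, namely that when the rearranged sequence $\{a_{\varphi(n)}:n\in\N\}$ is considered, the associated weight is $f\circ\varphi$ rather than $f$, so that Lemma \ref{easy:connections} must be applied to the pair $(A_\varphi,\,f\circ\varphi)$ and not to $(A,f)$. Once that substitution is made correctly, both implications are immediate from Lemma \ref{easy:connections}, and the proof can legitimately be stated in one or two sentences per part.
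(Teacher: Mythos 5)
Your proof is correct and follows exactly the paper's route: the paper likewise disposes of (i) as clear and derives (ii) from Lemma \ref{easy:connections}(ii) together with Definition \ref{f:productive:sets}, i.e.\ by applying the non-unconditional lemma to each rearranged pair $(\{a_{\varphi(n)}:n\in\N\},\,f\circ\varphi)$. Your careful bookkeeping of the weight function under rearrangement is precisely the point the paper leaves implicit.
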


\begin{proof} (i) is clear, and (ii) follows from Lemma \ref{easy:connections}(ii) and Definition \ref{f:productive:sets}.
\end{proof}

Our next lemma shows that in  Definition \ref{f:productive:sets} bijections can be replaced with injections.

\begin{lemma}
\label{AP:set:has:AP:subsets}
For $f\in \Wi$ and a faithfully indexed sequence $A=\{a_n:n\in\N\}$ of elements of a topological group, the following conditions are equivalent:
\begin{itemize}
\item[(i)] $A$ is unconditionally $f$-Cauchy productive (unconditionally $f$-productive);
\item[(ii)] for every injection $\varphi:\N\to\N$, the sequence $\{a_{\varphi(n)}:n\in\N\}$ is  $(f\circ\varphi)$-Cauchy productive  (respectively, $(f\circ\varphi)$-productive).
\end{itemize}
\end{lemma}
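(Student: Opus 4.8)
The implication (ii)$\Rightarrow$(i) is immediate, since every bijection of $\N$ is in particular an injection. For the converse (i)$\Rightarrow$(ii), the plan is to realize the subsequence selected by an arbitrary injection $\varphi:\N\to\N$ as the ``nonzero part'' of a genuine rearrangement of $A$, obtained by padding the associated weight with zeros. So fix an injection $\varphi$ and set $C=\N\setminus\varphi(\N)$. Since $C\subseteq\N$ is either finite or countably infinite, I choose a strictly increasing map $\iota:\N\to\N$ whose complement $\N\setminus\iota(\N)$ has exactly the same cardinality as $C$ (take $\iota(n)=2n$ when $C$ is infinite, so the complement is the odd numbers, and $\iota(n)=n+|C|$ when $C$ is finite, so the complement is $\{0,\dots,|C|-1\}$), together with a bijection $\beta:\N\setminus\iota(\N)\to C$.

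Using these I define $\psi:\N\to\N$ by $\psi(\iota(n))=\varphi(n)$ for $n\in\N$ and $\psi(t)=\beta(t)$ for $t\in\N\setminus\iota(\N)$. Then $\psi$ is a bijection: its range is $\varphi(\N)\cup C=\N$, and it is injective because $\varphi$ maps injectively into $\varphi(\N)$, $\beta$ maps injectively into $C$, and $\varphi(\N)\cap C=\emptyset$. Since $A$ is faithfully indexed, so is $\{a_{\psi(n)}:n\in\N\}$, and by (i) this sequence is $(f\circ\psi)$-Cauchy productive (respectively, $(f\circ\psi)$-productive). Now, given any $z:\N\to\Z$ with $|z|\le f\circ\varphi$, I define $w:\N\to\Z$ by $w(\iota(n))=z(n)$ and $w(t)=0$ for $t\in\N\setminus\iota(\N)$. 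Because every value of $f$ is a nonzero element of $\omega+1$, hence at least $1$, one has $|w|\le f\circ\psi$: on the index $\iota(n)$ this reads $|z(n)|\le f(\varphi(n))$, while on the complement it reads $0\le f(\beta(t))$. Consequently $\{a_{\psi(n)}^{w(n)}:n\in\N\}$ is Cauchy productive (respectively, productive).

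It remains to transfer this conclusion back to $\{a_{\varphi(n)}^{z(n)}:n\in\N\}$, which is pure index bookkeeping resting on two facts: the padded factors $a_{\psi(t)}^{0}=e$ disappear from every product, and $\iota$ is increasing, so the surviving factors keep their multiplicative order. Concretely, for any $l'\le m'$ one has $\prod_{n=\iota(l')}^{\iota(m')}a_{\psi(n)}^{w(n)}=\prod_{n=l'}^{m'}a_{\varphi(n)}^{z(n)}$ and $\prod_{n=0}^{\iota(m')}a_{\psi(n)}^{w(n)}=\prod_{n=0}^{m'}a_{\varphi(n)}^{z(n)}$. For the Cauchy case I invoke Lemma \ref{left:cauchy:condition}: given a neighborhood $U$ of $e$, pick $k$ as provided for $\{a_{\psi(n)}^{w(n)}\}$, then choose $k'$ with $\iota(k')\ge k$; for $k'\le l'\le m'$ the first identity puts $\prod_{n=l'}^{m'}a_{\varphi(n)}^{z(n)}$ into $U$. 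For the productive case, the second identity exhibits the partial products of $\{a_{\varphi(n)}^{z(n)}\}$ as the subsequence $\{\prod_{n=0}^{\iota(m')}a_{\psi(n)}^{w(n)}\}_{m'}$ of a convergent sequence, hence they converge to the same limit. Since $z$ was an arbitrary function with $|z|\le f\circ\varphi$, the sequence $\{a_{\varphi(n)}:n\in\N\}$ is $(f\circ\varphi)$-Cauchy productive (respectively, $(f\circ\varphi)$-productive), as required.

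I expect no deep obstacle here; the only point that demands care is the index translation of the last paragraph, where keeping $\iota$ strictly increasing is what guarantees that the nonzero factors are multiplied in the correct order and that the interval $[\iota(l'),\iota(m')]$ captures exactly $a_{\varphi(l')}^{z(l')},\dots,a_{\varphi(m')}^{z(m')}$ with only trivial factors interspersed. The one genuinely substantive idea is matching the size of the complement $\N\setminus\iota(\N)$ to that of $C$, which is precisely what lets a single construction handle both a cofinite image (finite $C$) and an image with infinite complement.
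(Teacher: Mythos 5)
Your proof is correct and takes essentially the same route as the paper's: the paper factors the injection as $\varphi=\phi\circ\sigma$ with $\phi$ a bijection and $\sigma$ order-preserving (your $\psi\circ\iota$), pads the multiplier with zeros off the image of the increasing map, and recovers the partial products of $\{a_{\varphi(n)}^{z(n)}:n\in\N\}$ as a subsequence of those of the bijectively rearranged weighted sequence. The only cosmetic differences are that you construct the factorization explicitly (matching the cardinality of $\N\setminus\iota(\N)$ to that of $\N\setminus\varphi(\N)$) and that you handle the Cauchy case via the block-product criterion of Lemma \ref{Cauchy:productive:criterion}, whereas the paper simply uses that a subsequence of a left Cauchy sequence is left Cauchy.
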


\begin{proof} The implication (ii)$\to$(i) is obvious. Let us prove the implication (i)$\to$(ii). Assume that $A$ is an unconditionally $f$-Cauchy productive (unconditionally $f$-productive)  sequence in a topological group $G$, and consider an injection $\varphi:\N\to\N$ and a function $z:\N\to\Z$  such that
\begin{equation}\label{eq:|z|<f:varphi}
\mbox{$|z|\leq f\circ \varphi$.}
\end{equation}
We need to show that the sequence
$$
B=\left\{\prod_{j=0}^ka_{\varphi(j)}^{z(j)}:k\in\N\right\}
$$
 is left Cauchy (respectively,  converges in $G$).

Since $\varphi:\N\to\N$ is an injection, we can fix a bijection $\phi:\N\to\N$ and an order-preserving map $\sigma:\N\to\N$ such that
\begin{equation}
\label{eq:xi}
\varphi=\phi\circ\sigma,
\ \ \mbox{and so}\ \
\sigma=\phi^{-1}\circ \varphi.
\end{equation}
Define a map $z': \N \to \Z$ by
$$
z'(i)=\left\{\begin{array}{ll}
z\circ \sigma^{-1}(i) & \mbox{if $i\in\sigma(\N)$}\\
0 & \mbox{if $i\in\N\setminus \sigma(\N)$}
\end{array}
\right.
\ \ \
\mbox{for all}
\ \
i\in\N.
$$
In particular,
\begin{equation}
\label{z:and:z'}
z=z'\circ\sigma.
\end{equation}

Since $\sigma$ is order preserving and $a_{\phi(i)}^{z'(i)}=a_{\phi(i)}^0=e$ for $i\in\N\setminus\sigma(\N)$, from \eqref{eq:xi} and \eqref{z:and:z'} we get
\begin{equation}
\label{subsequence:product}
\prod_{i=0}^{\sigma(k)}a_{\phi(i)}^{z'(i)}
=
\prod_{j=0}^{k}
a_{\phi(\sigma(j))}^{z'(\sigma(j))}
=
\prod_{j=0}^{k}
a_{\phi\circ\sigma(j)}^{z'\circ\sigma(j)}
=
\prod_{j=0}^{k}
a_{\varphi(j)}^{z(j)}
\ \
\mbox{for every}
\ \
k\in\N.
\end{equation}

For $i\in \sigma(\N)$, one has
$
|z'(i)|=|z\circ \sigma^{-1}(i)|\le f\circ \varphi\circ\sigma^{-1}(i)= f\circ \varphi\circ\varphi^{-1}\circ\phi(i) =f\circ \phi(i)
$
by \eqref{eq:|z|<f:varphi} and \eqref{eq:xi}. For $i\in \N\setminus\sigma(\N)$, one again has
$|z'(i)|=0\le f\circ \phi(i)$. This shows that
\begin{equation}
\label{hoshi}
|z'| \leq f\circ\phi.
\end{equation}

Since $\phi$ is a bijection and $A$ is unconditionally $f$-Cauchy productive (unconditionally $f$-productive), the sequence
$\{a_{\phi(n)}:n\in\N\}$ is $(f\circ\phi)$-Cauchy productive (respectively, $(f\circ\phi)$-productive). Now from \eqref{hoshi} we conclude that the sequence
$$
C=\left\{\prod_{i=0}^ka_{\phi(i)}^{z'(i)}:k\in\N\right\}
$$
is left Cauchy (converges in $G$, respectively). Since $\sigma$ is order preserving, it follows from \eqref{subsequence:product} that $B$ is a subsequence of the sequence $C$.
Therefore, $B$ is also left Cauchy (converges in $G$, respectively).
\end{proof}

\begin{remark}
\label{reformulation:of:f-productivity}
\begin{itemize}
\item[(i)]
For a constant function $f\in \Wi$ (in particular, for $f_\omega$), Lemma \ref{AP:set:has:AP:subsets} says that a faithfully indexed sequence is unconditionally
$f$-Cauchy productive (unconditionally $f$-productive) if and only if all of its faithfully indexed subsequences are $f$-Cauchy productive ($f$-productive).
\item[(ii)]
 From Lemma \ref{AP:set:has:AP:subsets} it follows that unconditionally $f_\omega$-productive sequences coincide with countably
infinite absolutely productive sets in the sense of \cite[Definition 4.1]{Sp}.
\end{itemize}
\end{remark}

Item (ii) of this remark shows that our next definition is equivalent to \cite[Definition 4.5]{Sp}.

\begin{definition}
\label{def:TAP}
{\rm (\cite{Sp})} A topological group is {\em TAP\/} if it contains no unconditionally $f_\omega$-productive sequences.
\end{definition}

The following lemma is an unconditional analogue of Lemma \ref{if:f<g}.
\begin{lemma}
\label{unconditional:if:f<g}
Let $f,g \in \Wi$ be functions such that $g\le^*f$ (in particular, $g\le f$).
Then every unconditionally $f$-productive (unconditionally $f$-Cauchy productive) sequence is also unconditionally
$g$-productive (unconditionally $g$-Cauchy productive, respectively).
\end{lemma}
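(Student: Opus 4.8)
The statement to prove is Lemma~\ref{unconditional:if:f<g}: if $f,g\in\Wi$ satisfy $g\le^* f$, then every unconditionally $f$-productive sequence is unconditionally $g$-productive, and likewise in the Cauchy case. The natural plan is to reduce the unconditional statement to the non-unconditional one already established in Lemma~\ref{if:f<g}, by unwinding the definition of unconditional productivity through an arbitrary bijection.

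The plan is as follows. Let $A=\{a_n:n\in\N\}$ be unconditionally $f$-productive (respectively, unconditionally $f$-Cauchy productive), and fix an arbitrary bijection $\varphi:\N\to\N$. By Definition~\ref{f:productive:sets}, to show $A$ is unconditionally $g$-productive I must verify that the sequence $\{a_{\varphi(n)}:n\in\N\}$ is $(g\circ\varphi)$-productive. But by hypothesis $\{a_{\varphi(n)}:n\in\N\}$ is $(f\circ\varphi)$-productive, since $A$ is unconditionally $f$-productive and $\varphi$ is a bijection. Thus it suffices to observe that $(g\circ\varphi)\le^* (f\circ\varphi)$ and then invoke Lemma~\ref{if:f<g} applied to the single sequence $\{a_{\varphi(n)}:n\in\N\}$ with weight functions $g\circ\varphi$ and $f\circ\varphi$ in place of $g$ and $f$.

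The only point requiring a small argument is the transfer of the $\le^*$ relation under composition with $\varphi$: I must check that $g\le^* f$ implies $g\circ\varphi\le^* f\circ\varphi$. Here one has to be slightly careful, because $\le^*$ means ``for all sufficiently large indices,'' and precomposition with the bijection $\varphi$ permutes the indices. Spelled out: $g\le^* f$ means there is $i\in\N$ with $g(m)\le f(m)$ for all $m\ge i$, i.e.\ $g(m)\le f(m)$ holds for all $m$ outside the finite set $\{0,1,\dots,i-1\}$. Since $\varphi$ is a bijection, the preimage $\varphi^{-1}(\{0,\dots,i-1\})$ is again finite, so there exists $j\in\N$ with $\varphi(n)\ge i$ whenever $n\ge j$; for all such $n$ we get $g(\varphi(n))\le f(\varphi(n))$, which is precisely $g\circ\varphi\le^* f\circ\varphi$.

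I expect no genuine obstacle here: the lemma is a routine ``unconditional analogue'' obtained by quantifying Lemma~\ref{if:f<g} over all bijections, and the mild subtlety is just the verification that $\le^*$ is preserved under the reindexing by $\varphi$, which uses only that a bijection maps finite sets to finite sets. The Cauchy version is handled by the identical argument, reading ``Cauchy productive'' in place of ``productive'' throughout and using the corresponding clause of Lemma~\ref{if:f<g}. Finally, because Lemma~\ref{AP:set:has:AP:subsets} lets one replace bijections by injections, one could alternatively phrase the argument with an arbitrary injection $\varphi$, but the bijection formulation is the most direct route straight from the definition.
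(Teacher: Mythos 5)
Your proposal is correct and takes essentially the same route as the paper's own proof: fix an arbitrary bijection $\varphi$, use unconditional $f$-productivity to get that $\{a_{\varphi(n)}:n\in\N\}$ is $(f\circ\varphi)$-productive, observe $g\circ\varphi\le^* f\circ\varphi$, and apply Lemma~\ref{if:f<g}. The only difference is that the paper asserts the transfer $g\circ\varphi\le^* f\circ\varphi$ without proof, whereas you spell out the (correct) verification that a bijection pulls finite sets back to finite sets.
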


\begin{proof}
Let $A=\{a_n:n\in\N\}$ be an unconditionally $f$-productive (unconditionally $f$-Cauchy productive) sequence. Let $\varphi:\N\to\N$ be an arbitrary bijection. Then
the sequence $B=\{a_{\varphi(n)}:n\in\N\}$ is $(f\circ\varphi)$-productive (respectively, $(f\circ\varphi)$-Cauchy productive). Since $g\le^*f$ and $\varphi$ is a bijection,
$g\circ\varphi\le^* f\circ\varphi$. From Lemma \ref{if:f<g}, we conclude that the sequence $B$ is $(g\circ\varphi)$-productive (respectively,
$(g\circ\varphi)$-Cauchy productive). Since $\varphi:\N\to\N$ was an arbitrary bijection,
it follows that $A$ is unconditionally $g$-productive (unconditionally $g$-Cauchy productive).
\end{proof}

The following lemma is a reformulation of the first part of \cite[Lemma 2.4(a)]{DT-private} in our terminology; see Remark~\ref{DomTar}(i).

\begin{lemma}
{\rm (\cite[Lemma 2.4(a)]{DT-private})}
 \label{vajas:lemma}
If $\{b_n:n\in\N\}$ is an $f_1^\star$-summable sequence in an abelian topological
group, then the sequence $\{b_{\varphi(n)}:n\in\N\}$ is summable for every bijection $\varphi:\N\to\N$.
\end{lemma}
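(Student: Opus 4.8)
The plan is to show that an $f_1^\star$-summable sequence $\{b_n:n\in\N\}$ yields convergence of $\{\sum_{n=0}^m b_{\varphi(n)}:m\in\N\}$ for every bijection $\varphi$. Since an abelian topological group need not be sequentially complete, I cannot simply exhibit a Cauchy sequence and invoke completeness; instead I must identify the actual limit and prove convergence to it directly. The natural candidate for the limit is $s=\sum_{n=0}^\infty b_n$, the limit of the (ordinary, unpermuted) partial sums, which exists because $f_1^\star$-summability in particular forces $\{b_n:n\in\N\}$ itself to be summable (take the all-ones multiplier $z\equiv 1$).

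First I would record the key hypothesis in usable form. By Proposition \ref{fstar:is:f:for:abel} the sequence is $f_1$-summable, hence (by Definition \ref{def:ap-sequence}) for \emph{every} function $z:\N\to\{0,1\}$ the series $\sum_{n=0}^\infty z(n)\,b_n$ converges; equivalently, $\sum_{n\in S}b_n$ converges for every subset $S\subseteq\N$, where the sum is taken along the increasing enumeration of $S$. In particular, for each tail index the partial sums over any subset are controlled. The crucial consequence I would extract is a tail-smallness statement: for every neighborhood $U$ of $0$ there exists $k\in\N$ such that for \emph{every} finite $F\subseteq\N$ with $\min F\ge k$ one has $\sum_{n\in F}b_n\in U$. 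This follows from the $f_1^\star$-Cauchy summability applied to $z=\chi_F$ together with Lemma \ref{Cauchy:productive:criterion} (or Lemma \ref{left:cauchy:condition}); it is precisely the uniform control over arbitrary finite blocks that subseries/$f_1^\star$ convergence provides, and it is stronger than ordinary Cauchy summability.

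Next I would combine this with the fact that $\varphi$ is a bijection. Fix a neighborhood $U$ of $0$, choose $V$ with $V+V\subseteq U$, and pick $k$ as above for $V$. Let $m_0$ be large enough that $\{0,1,\dots,k-1\}\subseteq\{\varphi(0),\dots,\varphi(m_0)\}$, which is possible since $\varphi$ is onto. For $m\ge m_0$, compare $\sum_{n=0}^m b_{\varphi(n)}$ with $\sum_{i=0}^{k-1}b_i$: their difference is $\sum_{n\in F}b_n$ where $F=\{\varphi(0),\dots,\varphi(m)\}\setminus\{0,\dots,k-1\}$ is a finite set with $\min F\ge k$, hence lies in $V\subseteq U$ by the tail bound. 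Likewise $s-\sum_{i=0}^{k-1}b_i=\sum_{n\ge k}b_n$ lies in $V$ once $k$ is (possibly) enlarged so that the genuine tail of the convergent series $\sum b_n$ is also in $V$. Therefore $\sum_{n=0}^m b_{\varphi(n)}-s\in V+V\subseteq U$ for all $m\ge m_0$, which shows $\sum_{n=0}^m b_{\varphi(n)}\to s$.

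The main obstacle I anticipate is the order-of-quantifiers issue in establishing the uniform tail bound: I must ensure a \emph{single} $k$ works for \emph{all} finite blocks $F$ with $\min F\ge k$ simultaneously, not merely for each fixed $F$ separately. This is exactly what $f_1^\star$-Cauchy summability delivers over ordinary summability, and extracting it cleanly from Lemma \ref{left:cauchy:condition} applied to the family of characteristic functions $\{\chi_F\}$ is the technical heart of the argument; the rest is routine bookkeeping exploiting surjectivity of $\varphi$ to swallow the fixed initial segment $\{0,\dots,k-1\}$.
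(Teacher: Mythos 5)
The paper itself gives no proof of this lemma: it is quoted verbatim from \cite[Lemma 2.4(a)]{DT-private}, so your proposal can only be judged on its own terms. Your overall architecture is sound and is the standard route: identify the limit $s$ of the unpermuted partial sums, establish the uniform tail bound (for every neighborhood $U$ of $0$ there is a \emph{single} $k$ such that $\sum_{n\in F}b_n\in U$ for \emph{every} finite $F$ with $\min F\ge k$), and then use surjectivity of $\varphi$ to absorb the initial segment $\{0,\dots,k-1\}$. With that bound in hand, the rest of your argument does prove the lemma.

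However, there is a genuine gap exactly at the step you yourself call ``the technical heart'': the uniform tail bound is asserted, not proved, and the derivation you offer for it fails. Applying Lemma \ref{left:cauchy:condition} (or Lemma \ref{Cauchy:productive:criterion}) to a single multiplier $z=\chi_F$ produces a threshold $k$ that depends on $F$; since $F$ is itself constrained by $\min F\ge k$, this is circular and yields nothing. The quantifier order in Lemma \ref{left:cauchy:condition} is ``for every $U$ and every $z$ there exists $k$'', and no formal rearrangement of it gives the needed ``for every $U$ there exists $k$ uniform over all $z=\chi_F$''. What is actually required is a gliding-hump argument that uses one \emph{infinite} multiplier built from infinitely many bad blocks: assume the uniform bound fails for some $U$; inductively choose pairwise disjoint finite blocks $F^{(1)},F^{(2)},\dots$ with $\min F^{(i+1)}>\max F^{(i)}$ and $\sum_{n\in F^{(i)}}b_n\notin U$; set $S=\bigcup_i F^{(i)}$ and $z=\chi_S$. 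By $f_1^\star$-summability the series $\sum_n z(n)b_n$ converges, hence its partial sums are Cauchy, so all sufficiently late blocks $\sum_{n=l}^{m}z(n)b_n$ lie in $U$; but taking $l=\min F^{(i)}$, $m=\max F^{(i)}$ for large $i$ gives $\sum_{n\in F^{(i)}}b_n\notin U$, a contradiction. This assembly of the bad blocks into a single subseries is the idea missing from your write-up; the characteristic functions $\chi_F$ used one at a time cannot produce it. A second, minor and fixable, point: the tail $s-\sum_{i<k}b_i$ is a limit of block sums lying in $V$, hence lies only in $\overline{V}$, not in $V$; choose $V$ with $\overline{V}+V\subseteq U$, or use $V+V+V\subseteq U$ together with $\overline{V}\subseteq V+V$.
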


\begin{corollary}
\label{abelian:corollary}
For $f\in\Wi$, a sequence in an abelian topological group is $f$-summable ($f$-Cauchy summable)
if and only if it is unconditionally $f$-summable (unconditionally $f$-Cauchy summable, respectively).
\end{corollary}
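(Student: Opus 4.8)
The plan is to establish the nontrivial direction, $f$-summable $\Rightarrow$ unconditionally $f$-summable; the reverse implication is immediate in either version, since specializing the defining condition of unconditional (Cauchy) $f$-summability to the identity bijection $\varphi=\mathrm{id}_\N$ returns exactly (Cauchy) $f$-summability. I will first settle the (non-Cauchy) summable case for an arbitrary abelian topological group $G$, and then reduce the Cauchy-summable case to it by passing to the Raikov completion of $G$.

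For the summable case, suppose $A=\{a_n:n\in\N\}$ is $f$-summable; by Proposition \ref{fstar:is:f:for:abel} it is then $f^\star$-summable. Fix a bijection $\varphi:\N\to\N$. Since $\{a_{\varphi(n)}:n\in\N\}$ is again faithfully indexed, Proposition \ref{fstar:is:f:for:abel} reduces the required $(f\circ\varphi)$-summability of $\{a_{\varphi(n)}:n\in\N\}$ to its $(f\circ\varphi)^\star$-summability. Accordingly I fix $z:\N\to\N$ with $z\le f\circ\varphi$ and must show that $\{z(n)a_{\varphi(n)}:n\in\N\}$ is summable. The key device is the reweighting $w=z\circ\varphi^{-1}:\N\to\N$, which satisfies $w\le f$ and $w(\varphi(n))=z(n)$ for every $n$. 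Setting $b_m=w(m)a_m$, the target sequence is precisely the rearrangement $\{b_{\varphi(n)}:n\in\N\}$ of $\{b_m:m\in\N\}$.

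It therefore suffices to show that $\{b_m:m\in\N\}$ is $f_1^\star$-summable and to invoke Lemma \ref{vajas:lemma}. For this, fix any $\varepsilon:\N\to\{0,1\}$; the multiplier $m\mapsto\varepsilon(m)w(m)$ is dominated by $w\le f$, so $\{\varepsilon(m)w(m)a_m:m\in\N\}=\{\varepsilon(m)b_m:m\in\N\}$ is summable because $A$ is $f^\star$-summable. Thus every $\{0,1\}$-multiple of $\{b_m\}$ is summable, i.e. $\{b_m\}$ is $f_1^\star$-summable, and Lemma \ref{vajas:lemma} yields that the rearrangement $\{b_{\varphi(n)}:n\in\N\}=\{z(n)a_{\varphi(n)}:n\in\N\}$ is summable, as needed. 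For the Cauchy statement I would pass to the Raikov completion $\widetilde{G}$, a complete (hence sequentially Weil complete) abelian group containing $G$ as a topological subgroup with the induced uniformity: for each admissible multiplier the partial sums lie in $G$ and are Cauchy in $G$ exactly when they converge in $\widetilde{G}$, and this equivalence holds uniformly over all bijections $\varphi$. Hence $A$ is (unconditionally) $f$-Cauchy summable in $G$ if and only if it is (unconditionally) $f$-summable in $\widetilde{G}$, and the summable equivalence already proved inside $\widetilde{G}$ delivers the Cauchy statement in $G$.

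I expect the main obstacle to be precisely the Cauchy case: Lemma \ref{vajas:lemma} is available only for honestly summable sequences, so there is no direct Cauchy analogue to feed into the reweighting argument, and the completion detour above is the cleanest remedy. A secondary technical point requiring a word of care is that the reweighted sequence $\{b_m=w(m)a_m\}$ need not be faithfully indexed, as some terms may vanish or coincide, whereas $f_1^\star$-summability is phrased for faithfully indexed sequences; one should note, as is routine, that subseries convergence and the conclusion of Lemma \ref{vajas:lemma} are insensitive to vanishing or repeated terms among the $b_m$.
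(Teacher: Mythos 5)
Your proposal is correct and follows essentially the same route as the paper: the same reweighting $b_m=z(\varphi^{-1}(m))a_m$, the same reduction to $f_1^\star$-summability via $\{0,1\}$-valued multipliers dominated by $f$, the same appeal to Lemma \ref{vajas:lemma}, and the same passage to the (sequentially Weil complete) completion for the Cauchy version. Your detour through Proposition \ref{fstar:is:f:for:abel} and your remark on non-faithfully-indexed $\{b_m\}$ are minor points of bookkeeping (the latter slightly more careful than the paper itself), not a different argument.
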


\begin{proof}
We first prove the ``$f$-summable'' version. The ``if'' part follows from Definition \ref{f:productive:sets}. To prove the ``only if'' part, assume that $A=\{a_n:n\in\N\}$ is an $f$-summable sequence in an abelian topological group $G$. Let $\varphi:\N\to\N$ be a bijection and let $z:\N\to\Z$ be a function such that $|z|\le f\circ\varphi$. We have to show that the sequence
$\{z(n)a_{\varphi(n)}:n\in\N\}$ is summable in $G$.

For every $n\in\N$, let $b_n=z(\varphi^{-1}(n))a_n$. For each map $\varepsilon:\N\to\{0,1\}$, we have $|\varepsilon(n) \cdot z(\varphi^{-1}(n))|\le |z\circ\varphi^{-1}(n)|\leq f(n)$
for all $n\in\N$, and since $A$ is an $f$-summable sequence, the sequence $\{\varepsilon(n)\cdot (z(\varphi^{-1}(n))a_n:n\in\N\}=\{\varepsilon(n) b_n:n\in\N\}$
must be summable. It follows that the sequence $\{b_n:n\in\N\}$ is $f_1^\star$-summable. By Lemma \ref{vajas:lemma}, the sequence
$\{b_{\varphi(n)}:n\in\N\}$ is summable. Since $b_{\varphi(n)}=z(\varphi^{-1}(\varphi(n)))a_{\varphi(n)}=z(n) a_{\varphi(n)}$ for all $n\in\N$, this finishes the proof.

Next, we prove the ``$f$-Cauchy summable'' version. The ``if'' part follows from Definition \ref{f:productive:sets}(ii).
To prove the ``only if'' part, assume that $A=\{a_n:n\in\N\}$ is an $f$-Cauchy summable sequence in an abelian
topological group $G$. The completion $H$ of $G$ is a sequentially Weil complete abelian group. Clearly, $A$ is an $f$-Cauchy summable sequence in $H$, so
$A$ is $f$-summable in $H$ by Lemma \ref{easy:connections}(ii). By the first part of the proof, $A$ is unconditionally $f$-summable in $H$.
Applying Lemma \ref{easy:connections:Cauchy}(i), we conclude that $A$ is unconditionally $f$-Cauchy summable in $H$ (and thus, also in $G$).
\end{proof}

\begin{remark}
In Theorem \ref{TAP:group:with:ap:sequence} we construct an $f_\omega$-productive sequence $B=\{b_n:n\in\N\}$ in a (necessarily non-abelian) separable metric group $H$ and a bijection $\varphi:\N\to\N$ such  that the sequence $\{b_{\varphi(n)}:n\in\N\}$ is not productive in $H$. Since $B$ is clearly $f_1^\star$-productive,
this shows that Lemma \ref{vajas:lemma} fails for non-commutative groups, thereby providing a strong negative answer to a question of Dominguez and Tarieladze from
\cite[Remark 2.5]{DT-private}. Note that, for every function $f\in\Wi$, the sequence $B$ is $f$-productive but not unconditionally $f$-productive in $H$. This demonstrates that
Corollary \ref{abelian:corollary} strongly fails for non-commutative groups as well.
\end{remark}

\section{Unconditionally $f$-(Cauchy) productive sequences in metric groups}
\label{f-(Cauchy) productive sets in metric groups}

\begin{lemma}\label{reshuffling}
Let $\{U_n:n\in\N\}$ be a family of subsets of a group $G$ such that
$e\in U_{n+1}^3\subseteq U_{n}$ for every integer $n\in\N$. Assume
that $\varphi:\N\to\N$ is an injection, $l$ and $m$
are integers
satisfying
$0\leq l\leq m$ and
$s=\min\{\varphi(n):n\in\N, l\le n\le m\}>0$.
Then
$\prod_{n=l}^m U_{\varphi(n)}\subseteq U_{s-1}$.
\end{lemma}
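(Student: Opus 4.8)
The plan is to reduce the statement to a purely combinatorial claim about products of the sets $U_k$ indexed by an arbitrary finite collection of pairwise distinct integers, and then to prove that claim by induction on the number of factors. The injectivity of $\varphi$ will enter only through the observation that it makes the indices $\varphi(l),\dots,\varphi(m)$ pairwise distinct.

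First I would record two elementary consequences of the hypothesis $e\in U_{n+1}^3\subseteq U_n$. Since $e\in U_{n+1}^3$ and $U_{n+1}^3\subseteq U_n$, we get $e\in U_n$ for every $n\in\N$. Using $e\in U_{n+1}$, for any $x\in U_{n+1}$ we then have $x=xee\in U_{n+1}U_{n+1}U_{n+1}=U_{n+1}^3\subseteq U_n$, so the family is decreasing: $U_k\subseteq U_j$ whenever $k\ge j\ge 0$.

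The heart of the argument is the following auxiliary claim, to be proved by induction on $r\in\N$: if $t\ge 1$ and $k_1,\dots,k_r$ are pairwise distinct integers with $k_i\ge t$ for all $i$, then $U_{k_1}U_{k_2}\cdots U_{k_r}\subseteq U_{t-1}$. The base case $r=0$ is the empty product $\{e\}\subseteq U_{t-1}$, which holds since $e\in U_{t-1}$ (note $t-1\ge 0$). For the inductive step, let $s=\min_i k_i\ge t$. Because the $k_i$ are pairwise distinct, the value $s$ is attained at a unique position $p$, so $k_p=s$ while $k_i\ge s+1$ for $i\ne p$. I would then cut the ordered product at position $p$:
\[
U_{k_1}\cdots U_{k_r}=\bigl(U_{k_1}\cdots U_{k_{p-1}}\bigr)\,U_s\,\bigl(U_{k_{p+1}}\cdots U_{k_r}\bigr).
\]
Each of the two flanking products has fewer than $r$ factors and all of its indices are $\ge s+1$, so the inductive hypothesis, applied with lower bound $s+1$, shows that each flank lies in $U_{(s+1)-1}=U_s$. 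Hence the whole product is contained in $U_s\,U_s\,U_s=U_s^3\subseteq U_{s-1}\subseteq U_{t-1}$, where the final inclusion is the monotonicity recorded above together with $s\ge t$.

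Finally I would specialize, taking $k_j=\varphi(l+j-1)$ for $j=1,\dots,r$ with $r=m-l+1$: injectivity of $\varphi$ makes $k_1,\dots,k_r$ pairwise distinct, their minimum is exactly $s=\min\{\varphi(n):l\le n\le m\}$, and the claim with $t=s$ gives $\prod_{n=l}^m U_{\varphi(n)}\subseteq U_{s-1}$, as required (the assumption $s>0$ being what makes $U_{s-1}$ meaningful). The only genuine subtlety is the non-commutativity of $G$, which forbids any rearrangement of the factors; this is precisely what forces the use of injectivity, since pairwise distinctness of the indices makes the minimum occur exactly once and thereby permits a clean split of the \emph{ordered} product into two flanks whose strictly larger lower bound lets the single cubing relation $U_s^3\subseteq U_{s-1}$ absorb all three pieces.
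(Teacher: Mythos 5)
Your proof is correct and follows essentially the same route as the paper's: induction on the number of factors, splitting the ordered product at the position of the minimum index (unique by injectivity/distinctness), absorbing the three pieces via $U_s^3\subseteq U_{s-1}$, with monotonicity of the family filling in the rest. Your repackaging as an auxiliary claim with an explicit lower-bound parameter $t$, plus the explicit verification that the family is decreasing and contains $e$, is only a cosmetic reorganization of the paper's argument, which inducts directly on $m-l$.
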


\begin{proof} We will prove our lemma by induction on $m-l$.
If $m-l=0$, then $\prod_{n=l}^m U_{\varphi(n)}=U_{\varphi(l)}=U_s\subseteq
U_{s-1}$. (Note that $U_{s-1}$ is defined since $s>0$.)

Assume that $m-l\ge 1$, and suppose that the conclusion of our lemma holds for all integers $l',m'$ such that $0\leq l'\leq m'$ and $m'-l'<m-l$. Choose an integer $i$ such that $l\le i\le m$ and $\varphi(i)=\min\{\varphi(n):n=l,\ldots,m\}=s$. Since $\varphi$ is an injection,
$\min\{\varphi(n):n=l,\ldots,i-1\}> s$ (if $i>l$) and $\min\{\varphi(n):n=i+1,\ldots,m\}> s$ (if $i<m$).
Applying the inductive assumption, we obtain
$$
\prod_{n=l}^m
U_{\varphi(n)}=\left(\prod_{n=l}^{i-1}U_{\varphi(n)}\right)U_{\varphi(i)}\left(\prod_{n=i+1}^mU_{\varphi(n)}\right)\subseteq
U_{s}U_{s}U_{s}\subseteq U_{s-1},
$$
where the left or the right factor of the product may be equal to $e$ (when $i=l$ or $i=m$, respectively).
\end{proof}

Our next lemma offers a way to build unconditionally $f$-Cauchy  productive sequences in a metric group.

\begin{lemma}
\label{building:f-Cauchy:sequences:in:a:metric:group} Let $f\in \Wi$ and let $\mathscr{U}=\{U_n:n\in\N\}$ be an open base at $e$ of a metric group $G$ such that
$U_{n+1}^3\subseteq
U_n$ for every $n\in\N$. Assume that
$A=\{a_n:n\in\N\}$
is a
faithfully indexed sequence of elements of $G$ such that
\begin{equation}
\label{bounded:multiples:in:U} \mbox{$\{a_n^z: z\in\Z, |z|\le
f(n)\}\subseteq U_n$ for every $n\in\N$.}
\end{equation}
 Then
$A$
is unconditionally $f$-Cauchy productive.
\end{lemma}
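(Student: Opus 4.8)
The plan is to peel off the definitions one layer at a time and reduce everything to the ``reshuffling'' estimate of Lemma~\ref{reshuffling}. By Lemma~\ref{AP:set:has:AP:subsets} it suffices to show that for every injection $\varphi:\N\to\N$ the (faithfully indexed) sequence $\{a_{\varphi(n)}:n\in\N\}$ is $(f\circ\varphi)$-Cauchy productive. Unwinding Definition~\ref{def:ap-sequence}(i), this means that for every $w:\N\to\Z$ with $|w|\le f\circ\varphi$ the sequence $b_n=a_{\varphi(n)}^{w(n)}$ is Cauchy productive, and by Lemma~\ref{Cauchy:productive:criterion} the latter amounts to the following: for each neighborhood $U$ of $e$ there is $k\in\N$ with $\prod_{n=l}^m b_n\in U$ whenever $k\le l\le m$. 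So I fix $\varphi$, $w$ and $U$, and aim to produce such a $k$.

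The crucial observation is that the hypothesis \eqref{bounded:multiples:in:U} localizes each factor: since $|w(n)|\le f(\varphi(n))$, we get $b_n=a_{\varphi(n)}^{w(n)}\in U_{\varphi(n)}$ for every $n$. First I verify that Lemma~\ref{reshuffling} applies to the family $\{U_n:n\in\N\}$: because $\mathscr{U}$ is a base at $e$, each $U_n$ contains $e$, whence $e\in U_{n+1}^3$, while $U_{n+1}^3\subseteq U_n$ holds by hypothesis. Note also that $U_{n+1}=U_{n+1}\cdot e\cdot e\subseteq U_{n+1}^3\subseteq U_n$, so the base is decreasing, a fact I will use to compare tails.

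Now I choose $N$ with $U_N\subseteq U$, which is possible since $\mathscr{U}$ is a base at $e$. Here the injectivity of $\varphi$ does the real work: the preimage $\varphi^{-1}(\{0,1,\dots,N\})$ is finite, so I may take $k$ to be one more than its maximum (and $k=0$ if it is empty), guaranteeing $\varphi(n)\ge N+1$ for all $n\ge k$. Then for any $l,m$ with $k\le l\le m$ the quantity $s=\min\{\varphi(n):l\le n\le m\}$ satisfies $s\ge N+1>0$, so Lemma~\ref{reshuffling} yields $\prod_{n=l}^m U_{\varphi(n)}\subseteq U_{s-1}$. Combining the containments $b_n\in U_{\varphi(n)}$ with the monotonicity of the base gives $\prod_{n=l}^m b_n\in\prod_{n=l}^m U_{\varphi(n)}\subseteq U_{s-1}\subseteq U_N\subseteq U$, which is exactly the Cauchy condition sought.

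The only genuinely non-routine point is the interplay between injectivity and the nesting $U_{n+1}^3\subseteq U_n$: the ``mixing'' of indices caused by $\varphi$ is harmless precisely because Lemma~\ref{reshuffling} telescopes an arbitrarily ordered product of the $U_{\varphi(n)}$ down to a single $U_{s-1}$, and injectivity forces $s$ to grow without bound along any tail of $\varphi$. Everything else is bookkeeping with the definitions.
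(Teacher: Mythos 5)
Your proof is correct and follows essentially the same route as the paper's: localize each factor via \eqref{bounded:multiples:in:U} to get $b_n\in U_{\varphi(n)}$, pick $k$ so that the tail of $\varphi$ exceeds the index of a basic neighborhood contained in $U$, and apply Lemma \ref{reshuffling} together with Lemma \ref{Cauchy:productive:criterion}. The only (harmless) difference is that you handle arbitrary injections $\varphi$ directly---which trivially subsumes the bijections required by Definition \ref{f:productive:sets}, so the appeal to Lemma \ref{AP:set:has:AP:subsets} is not even needed---whereas the paper fixes a bijection and takes $k=\max\{\varphi^{-1}(n):n\le i\}+1$.
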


\begin{proof}
According to Definition \ref{f:productive:sets}, we must prove that
the sequence $\{a_{\varphi(n)}:n\in\N\}$ is $(f\circ\varphi)$-Cauchy
productive for every bijection $\varphi:\N\to\N$. Fix such a
bijection $\varphi$. Let $z:\N\to\Z$ be a function such that $|z|\le
f\circ\varphi$. Note that $|z(n)|\le f(\varphi(n))$ for every
$n\in\N$, so \eqref{bounded:multiples:in:U} yields
\begin{equation}
\label{b_n:equation} b_n= a_{\varphi(n)}^{z(n)}\subseteq
U_{\varphi(n)} \mbox{ for every } n\in\N.
\end{equation}
It remains only to show that the sequence $\{b_{n}:n\in\N\}$ is
Cauchy productive. Let $U$ be an open neighborhood of $e$ in $G$.
Since $\mathscr{U}$ is a base at $e$, there exists
$i\in\N\setminus\{0\}$ such that $U_{i}\subseteq U$. Define
$k=\max\{\varphi^{-1}(n):n=0,1,\dots, i\}+1$. Suppose that
$l,m\in\N$ and $k\le l\le m$. Then $s=\min\{\varphi(n):n\in\N, l\le
n\le m\}>i$, and so
$$
\prod_{n=l}^m b_n \in \prod_{n=l}^mU_{\varphi(n)}\subseteq
U_{s-1}\subseteq U_i \subseteq U
$$
by \eqref{b_n:equation} and Lemma \ref{reshuffling}. Applying the
implication (ii)$\to$(i) of Lemma \ref{Cauchy:productive:criterion},
we conclude that the sequence $\{b_n:n\in\N\}$ is Cauchy productive.
\end{proof}

\begin{theorem}\label{non1-TAP}
A non-discrete metric group contains an unconditionally $f$-Cauchy productive sequence for every $f\in  \W$.
\end{theorem}

\begin{proof}
Let $G$ be a non-discrete metric group, and let $\{U_n:n\in\N\}$ be
as in the assumption of Lemma
\ref{building:f-Cauchy:sequences:in:a:metric:group}. For every
$n\in\N$, the set
$$
\mbox{$V_n=\{x\in G: x^k\in U_n$ for all $k\in\Z$ with $|k|\le
f(n)$\}}
$$
 is open in $G$.  Since $G$ is non-discrete, by induction on $n\in\N$ we can choose $a_n\in V_n\setminus\{a_0,\dots,a_{n-1}\}$. Clearly,
 the sequence
$A=\{a_n:n\in\N\}$ is faithfully indexed. Applying Lemma
\ref{building:f-Cauchy:sequences:in:a:metric:group}, we conclude
that $A$ is unconditionally $f$-Cauchy productive.
\end{proof}

\begin{corollary}\label{no:f-product:seq:in:nondiscrete:metr:group}
A non-discrete metric Weil complete  group contains an unconditionally $f$-productive sequence for every $f\in  \W$.
\end{corollary}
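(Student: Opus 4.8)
The plan is to deduce this statement directly from Theorem \ref{non1-TAP}, using the completeness hypothesis together with Lemma \ref{easy:connections:Cauchy}(ii) to upgrade ``Cauchy productive'' to ``productive''. In other words, the substantive work has already been done in Theorem \ref{non1-TAP}; the corollary is obtained by adding one observation about completeness.

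First I would fix an arbitrary $f\in\W$ and apply Theorem \ref{non1-TAP} to the non-discrete metric group $G$. This immediately produces an unconditionally $f$-Cauchy productive sequence $A=\{a_n:n\in\N\}$ in $G$. Note that at this stage only non-discreteness and metrizability are used, so the sequence $A$ is available regardless of completeness.

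Next I would invoke the Weil completeness of $G$. Since in a Weil complete group every left Cauchy filter (equivalently, every left Cauchy net) converges, in particular every left Cauchy \emph{sequence} converges; hence $G$ is sequentially Weil complete in the sense recalled at the start of Section \ref{sec:(Cauchy) productive sequences}. With this in hand, Lemma \ref{easy:connections:Cauchy}(ii) applies to the sequence $A$: in a sequentially Weil complete group an unconditionally $f$-Cauchy productive sequence is automatically unconditionally $f$-productive. Therefore $A$ is unconditionally $f$-productive, which is exactly what is required, and since $f\in\W$ was arbitrary the corollary follows.

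There is essentially no hard step here. The only point that requires justification is the passage from Weil completeness to sequential Weil completeness, and this is immediate because sequences are a special case of filters (or nets). All of the genuine content is carried by Theorem \ref{non1-TAP} and by the equivalence recorded in Lemma \ref{easy:connections:Cauchy}(ii), so I expect the write-up to be only a few lines long.
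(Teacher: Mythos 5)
Your proof is correct and follows exactly the route the paper intends: the corollary is stated immediately after Theorem \ref{non1-TAP} with no separate proof, precisely because it is the combination of that theorem with the observation that Weil completeness (hence sequential Weil completeness) upgrades ``unconditionally $f$-Cauchy productive'' to ``unconditionally $f$-productive'' via Lemma \ref{easy:connections:Cauchy}(ii). Nothing is missing.
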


The real line $\mathbb{R}$ is NSS, so it does not contain $f_\omega$-Cauchy productive sequences by Theorem \ref{NSS:is:NACP}.
This shows that one cannot replace $f\in\W$ with $f_\omega$ either in Theorem \ref{non1-TAP} or Corollary \ref{no:f-product:seq:in:nondiscrete:metr:group}.
Moreover, the non-existence of $f_\omega$-Cauchy productive sequences characterizes the NSS property in metric groups.

\begin{theorem}\label{metric:NACP:iff:NSS}
For a metric group $G$ the following conditions are equivalent:
\begin{itemize}
\item[(i)] $G$ is NSS;
\item[(ii)] $G$ does not contain an $f_\omega$-Cauchy productive sequence;
\item[(iii)] $G$ does not contain an
unconditionally $f_\omega$-Cauchy productive sequence.
\end{itemize}
\end{theorem}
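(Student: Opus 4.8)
The plan is to establish the cycle of implications (i)$\Rightarrow$(ii)$\Rightarrow$(iii)$\Rightarrow$(i). The first two implications should be immediate. Indeed, (i)$\Rightarrow$(ii) is precisely Theorem \ref{NSS:is:NACP}, which already asserts that an NSS group contains no $f_\omega$-Cauchy productive sequence. For (ii)$\Rightarrow$(iii), I would observe that an unconditionally $f_\omega$-Cauchy productive sequence is, by Definition \ref{f:productive:sets} applied with the identity bijection $\varphi$ (for which $f_\omega\circ\varphi=f_\omega$), in particular $f_\omega$-Cauchy productive; hence the absence of $f_\omega$-Cauchy productive sequences forces the absence of unconditionally $f_\omega$-Cauchy productive ones.

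The real content lies in (iii)$\Rightarrow$(i), which I would prove in contrapositive form: if $G$ is not NSS, I will construct an unconditionally $f_\omega$-Cauchy productive sequence in $G$. Since $G$ is a metric group, first fix an open base $\{U_n:n\in\N\}$ at $e$ with $e\in U_n$ and $U_{n+1}^3\subseteq U_n$ for all $n$, exactly as in the hypotheses of Lemma \ref{building:f-Cauchy:sequences:in:a:metric:group}; note this forces $U_{n+1}\subseteq U_n$ and, by the Hausdorff property, $\bigcap_{n\in\N}U_n=\{e\}$. Because $G$ is not NSS, every neighborhood of $e$, and in particular every $U_n$, contains a nontrivial subgroup of $G$.

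I would then build the sequence $A=\{a_n:n\in\N\}$ by induction so that, for each $n$, the element $a_n$ is nontrivial, the cyclic subgroup $\langle a_n\rangle$ is contained in $U_n$, and $a_n\notin\{a_0,\dots,a_{n-1}\}$. At stage $n$, having chosen distinct nontrivial $a_0,\dots,a_{n-1}$, I use $\bigcap_m U_m=\{e\}$ to select an index $m\ge n$ with $U_m\cap\{a_0,\dots,a_{n-1}\}=\emptyset$ (each nontrivial $a_j$ escapes some $U_{k_j}$, so take $m$ larger than $n$ and all $k_j$); since $U_m$ contains a nontrivial subgroup $H$, pick $a_n\in H\setminus\{e\}$. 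Then $\langle a_n\rangle\subseteq H\subseteq U_m\subseteq U_n$, while $a_n\in U_m$ guarantees $a_n\ne a_j$ for $j<n$, so $A$ is faithfully indexed. As $f_\omega(n)=\omega$, the set $\{a_n^z:z\in\Z,\ |z|\le f_\omega(n)\}$ is exactly $\langle a_n\rangle\subseteq U_n$, so condition \eqref{bounded:multiples:in:U} holds; Lemma \ref{building:f-Cauchy:sequences:in:a:metric:group} then delivers that $A$ is unconditionally $f_\omega$-Cauchy productive, completing the contrapositive and hence the cycle.

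I expect the only delicate point to be securing the faithful indexing of $A$: the construction must simultaneously drive $\langle a_n\rangle$ into arbitrarily small neighborhoods of $e$ while avoiding the finitely many previously chosen elements. This is exactly where the Hausdorff assumption, equivalently $\bigcap_n U_n=\{e\}$, is indispensable, since without it the nontrivial subgroups guaranteed by the failure of NSS could conceivably share a single fixed nontrivial element, and no faithfully indexed sequence would emerge. Everything else reduces to invoking the already-established Theorem \ref{NSS:is:NACP} and Lemma \ref{building:f-Cauchy:sequences:in:a:metric:group}.
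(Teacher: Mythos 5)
Your proposal is correct and follows essentially the same route as the paper: the same cycle (i)$\Rightarrow$(ii)$\Rightarrow$(iii)$\Rightarrow$(i), with (i)$\Rightarrow$(ii) delegated to Theorem \ref{NSS:is:NACP}, (ii)$\Rightarrow$(iii) via the identity bijection, and (iii)$\Rightarrow$(i) in contrapositive by picking nontrivial subgroups inside the $U_n$ and invoking Lemma \ref{building:f-Cauchy:sequences:in:a:metric:group} with $f=f_\omega$. The only difference is that you spell out explicitly (via $\bigcap_n U_n=\{e\}$) the step the paper compresses into ``we may assume, without loss of generality, that $a_n\notin\{a_0,\dots,a_{n-1}\}$,'' which is a legitimate elaboration rather than a deviation.
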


\begin{proof}
The implication (i)$\to$(ii) is proved in Theorem \ref{NSS:is:NACP}. The implication (ii)$\to$(iii) is clear.

It remains to show that (iii)$\to$(i). Assume that $G$ is not NSS. Let $\{U_n:n\in\N\}$ be as in the assumption of Lemma~\ref{building:f-Cauchy:sequences:in:a:metric:group}. Since $G$ is not NSS, for every $n\in\N$ there exists $a_n\in G\setminus\{0\}$ such that $\{a_n^z:z\in\Z\}\subseteq U_n$. Since the sequence
$\{U_n:n\in\N\}$ is decreasing, we may assume, without loss of generality, that $a_n\not\in \{a_0,\dots,a_{n-1}\}$ for every
$n\in\N$. Thus, $A=\{a_n:n\in\N\}$ is a faithfully indexed sequence satisfying the assumption of Lemma \ref{building:f-Cauchy:sequences:in:a:metric:group} with
$f=f_\omega$. Applying this lemma, we conclude that $A$ is unconditionally $f_\omega$-Cauchy productive.
\end{proof}

Combining Theorem \ref{metric:NACP:iff:NSS} and Lemma
\ref{productive:and:Cauchy:productive:coincide}, we obtain the following

\begin{corollary}\label{weil:NSS:iff:TAP}
For a  Weil complete metric group $G$ the following conditions are equivalent:
\begin{itemize}
\item[(i)] $G$ is NSS;
\item[(ii)] $G$ does not contain an $f_\omega$-productive sequence;
\item[(iii)] $G$ does not contain an unconditionally $f_\omega$-productive sequence (that is, $G$ is TAP).
\end{itemize}
\end{corollary}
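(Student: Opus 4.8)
The plan is to deduce this Corollary directly from Theorem \ref{metric:NACP:iff:NSS}, using Weil completeness only to identify the ``Cauchy'' and ``non-Cauchy'' versions of $f_\omega$-productivity. The first thing I would note is that a Weil complete metric group is in particular \emph{sequentially} Weil complete: every left Cauchy sequence is a left Cauchy filter base and hence converges. This is exactly the hypothesis required to invoke the ``easy connections'' lemmas (equivalently, Lemma \ref{productive:and:Cauchy:productive:coincide}).

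Next I would translate conditions (ii) and (iii) of the Corollary into their Cauchy counterparts. By Lemma \ref{easy:connections}(ii), a faithfully indexed sequence in $G$ is $f_\omega$-productive if and only if it is $f_\omega$-Cauchy productive; consequently $G$ contains an $f_\omega$-productive sequence precisely when it contains an $f_\omega$-Cauchy productive one. Thus condition (ii) of the present Corollary is equivalent to condition (ii) of Theorem \ref{metric:NACP:iff:NSS}. In the same way, Lemma \ref{easy:connections:Cauchy}(ii) shows that unconditional $f_\omega$-productivity and unconditional $f_\omega$-Cauchy productivity coincide in a sequentially Weil complete group, so condition (iii) of the Corollary is equivalent to condition (iii) of Theorem \ref{metric:NACP:iff:NSS}.

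Finally I would apply Theorem \ref{metric:NACP:iff:NSS}, which already establishes, for any metric group, the equivalence of being NSS, of having no $f_\omega$-Cauchy productive sequence, and of having no unconditionally $f_\omega$-Cauchy productive sequence. Composing that equivalence with the two translations above yields at once the equivalence of (i), (ii), and (iii) in the statement. I do not expect any genuine obstacle here; the only point demanding care is checking that Weil completeness really supplies the sequential Weil completeness hypothesis of Lemmas \ref{easy:connections}(ii) and \ref{easy:connections:Cauchy}(ii), after which the Corollary is a routine restatement of Theorem \ref{metric:NACP:iff:NSS}.
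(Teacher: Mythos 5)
Your proposal is correct and follows essentially the same route as the paper: the paper derives Corollary \ref{weil:NSS:iff:TAP} by combining Theorem \ref{metric:NACP:iff:NSS} with Lemma \ref{productive:and:Cauchy:productive:coincide}, which is precisely the content of the ``easy connections'' lemmas you invoke. Your extra care in noting that Weil completeness implies sequential Weil completeness is a sound (if routine) verification of the hypothesis needed for that identification.
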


The implication (ii)$\to$(i) of this corollary is proved also in \cite{DT}, and the implication (iii)$\to$(i) is proved also in \cite{DT-private}.

The assumption that $G$ is metrizable is essential in both Theorem \ref{metric:NACP:iff:NSS} and
Corollary \ref{weil:NSS:iff:TAP}; see Example \ref{ex:not:NSS:NACP}.

\section{Unconditionally $f$-(Cauchy) productive sequences in linear groups}
\label{f-(Cauchy) productive sets in linear groups}

 Call a topological group $G$  {\em  linear\/} (and its topology a {\em linear group topology\/})
 if $G$ has a base of  $e$ formed by open subgroups of $G$. Clearly, a linear group is NSS if and only if it is discrete.
 Therefore,  the non-discrete linear groups can be considered as strongly missing the NSS property.

Lemma \ref{left:cauchy:condition} can be simplified substantially for a linear group $G$.

\begin{theorem}\label{AP-Cauchy:seq:in:lin:groups}
For a faithfully indexed sequence $A=\{a_n:n\in\N\}$ of points of a linear group $G$, the following statements are equivalent:
\begin{itemize}
\item[(i)] $A$ converges to $e$;
\item[(ii)] $A$ is $f$-Cauchy productive for some $f\in \Wi$;
\item[(iii)] $A$ is $f$-Cauchy productive for every $f\in \Wi$;
\item[(iv)] $A$ is unconditionally $f_\omega$-Cauchy productive.
\end{itemize}
\end{theorem}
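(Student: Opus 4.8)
The plan is to establish the four equivalences by proving the cycle $(i)\Rightarrow(iv)\Rightarrow(iii)\Rightarrow(ii)\Rightarrow(i)$. The whole point is that a linear group has a base at $e$ consisting of open \emph{subgroups}, and an open subgroup $U$ absorbs all integer powers and all products of its elements: if $a\in U$ then $a^z\in U$ for every $z\in\Z$, and $\prod_{n=l}^m a_{\varphi(n)}^{z(n)}\in U$ as soon as each factor lies in $U$. This single observation is what collapses the Cauchy-productivity criterion to something trivial and drives the only substantial implication, namely $(i)\Rightarrow(iv)$.

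For that central implication, I would assume $A$ converges to $e$ and fix an arbitrary bijection $\varphi:\N\to\N$; since $f_\omega\circ\varphi=f_\omega$, I must show that $\{a_{\varphi(n)}:n\in\N\}$ is $f_\omega$-Cauchy productive. By Lemma \ref{left:cauchy:condition} it suffices, given a neighborhood $U$ of $e$ and an arbitrary function $z:\N\to\Z$, to produce $k\in\N$ with $\prod_{n=l}^m a_{\varphi(n)}^{z(n)}\in U$ for all $l,m$ with $k\le l\le m$. As $G$ is linear, I may assume $U$ is an open subgroup. Since $A$ converges to $e$, there is $N\in\N$ with $a_j\in U$ for every $j\ge N$; because $\varphi$ is a bijection the set $\varphi^{-1}(\{0,\dots,N-1\})$ is finite, so I may take $k$ larger than all its elements. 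Then for $k\le l\le m$ every index $\varphi(n)$ with $l\le n\le m$ is at least $N$, so each $a_{\varphi(n)}\in U$, hence $a_{\varphi(n)}^{z(n)}\in U$, and since $U$ is a subgroup the whole product lies in $U$. This yields $(i)\Rightarrow(iv)$, and I expect this to be the only step genuinely using the linear hypothesis; the rest is formal bookkeeping.

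The remaining implications follow directly from earlier results. For $(iv)\Rightarrow(iii)$, instantiating the identity bijection in the definition of unconditional $f_\omega$-Cauchy productivity shows $A$ is $f_\omega$-Cauchy productive, and since $f\le f_\omega$ for every $f\in\Wi$, Lemma \ref{if:f<g} gives that $A$ is $f$-Cauchy productive for all $f$. The implication $(iii)\Rightarrow(ii)$ is immediate (take any single $f$, e.g.\ $f_1$). Finally, for $(ii)\Rightarrow(i)$, if $A$ is $f$-Cauchy productive for some $f\in\Wi$, then Lemma \ref{AP:is:null:sequence} applies, and its concluding clause states precisely that $\{a_n:n\in\N\}$ converges to $e$. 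This closes the cycle and completes the argument.
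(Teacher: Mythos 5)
Your proposal is correct and follows essentially the same route as the paper: the same cycle $(i)\Rightarrow(iv)\Rightarrow(iii)\Rightarrow(ii)\Rightarrow(i)$, with the key step $(i)\Rightarrow(iv)$ resting on the identical observation that an open subgroup absorbs all powers $a_{\varphi(n)}^{z(n)}$ and their products, and the remaining implications handled by Lemma \ref{if:f<g} and Lemma \ref{AP:is:null:sequence} exactly as in the paper. Your explicit choice of $k$ beyond $\varphi^{-1}(\{0,\dots,N-1\})$ merely spells out the paper's remark that the permuted sequence still converges to $e$.
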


\begin{proof}
(i)$\to$(iv)
Take an arbitrary bijection $\varphi:\N\to\N$. To establish (iv), it suffices to show that the sequence
$\{a_{\varphi(n)}:n\in\N\}$ is $(f_\omega\circ\varphi)$-Cauchy productive. Since $f_\omega=f_\omega\circ\varphi$,
we must prove that, for an arbitrary function $z:\N\to\Z$,
the sequence $\left\{a_{\varphi(n)}^{z(n)}:n\in\N\right\}$ is Cauchy productive. Let $U$ be a neighborhood of the identity of $G$.
Since $G$ is linear, there exists an open subgroup $H$ of $G$ with $H\subseteq U$. Since
$\{a_n:n\in\N\}$
converges to $e$ by (i), so does the sequence $\{a_{\varphi(n)}:n\in\N\}$. Therefore, there is some $k\in\N$ such
that $a_{\varphi(n)}\in H$ for every integer $n\geq k$. Since $H$ is a group, it follows that $\prod_{n=l}^m a_{\varphi(n)}^{z(n)}\in H\subseteq U$
for every $l,m\in\N$
such that $k\leq l\le m$.  Therefore, the sequence $\left\{a_{\varphi(n)}^{z(n)}:n\in\N\right\}$ is Cauchy productive by Lemma
\ref{Cauchy:productive:criterion}.

(iv)$\to$(iii) and (iii)$\to$(ii) are straightforward, and (ii)$\to$(i) follows from Lemma \ref{AP:is:null:sequence}.
\end{proof}

\begin{corollary}
\label{Weil:complete:linear:group}
For a faithfully indexed sequence $A=\{a_n:n\in\N\}$ of points of a sequentially Weil complete linear group,
the following statements are equivalent:
\begin{itemize}
\item[(i)] $A$ converges to $e$;
\item[(ii)] $A$ is $f$-productive for some $f\in \Wi$;
\item[(iii)] $A$ is $f$-productive for every $f\in \Wi$;
\item[(iv)] $A$ is unconditionally $f_\omega$-productive.
\end{itemize}
\end{corollary}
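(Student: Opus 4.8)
The plan is to deduce this corollary directly from Theorem~\ref{AP-Cauchy:seq:in:lin:groups} by exploiting the sequential Weil completeness of $G$ to trade ``productive'' for ``Cauchy productive'' in each of the four conditions. Observe that $G$ is simultaneously linear and sequentially Weil complete, so Theorem~\ref{AP-Cauchy:seq:in:lin:groups} applies and furnishes the equivalence of the four \emph{Cauchy productive} statements: that $A$ converges to $e$; that $A$ is $f$-Cauchy productive for some $f\in\Wi$; that $A$ is $f$-Cauchy productive for every $f\in\Wi$; and that $A$ is unconditionally $f_\omega$-Cauchy productive. I would then match the present conditions (i)--(iv) against these one by one.

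The key step is the term-by-term translation. Condition (i) is verbatim the same as the first condition of Theorem~\ref{AP-Cauchy:seq:in:lin:groups}, so nothing is needed there. For condition (iii), I would invoke Lemma~\ref{easy:connections}(ii), which (under sequential Weil completeness) asserts that for each fixed $f\in\Wi$ the sequence $A$ is $f$-productive precisely when it is $f$-Cauchy productive; quantifying over all $f$ shows that (iii) is equivalent to the ``for every $f$'' statement of the theorem. Condition (ii) is handled identically, applying the same lemma to a single witnessing $f$ in each direction, so that (ii) is equivalent to the ``for some $f$'' statement. Finally, for condition (iv) I would appeal to Lemma~\ref{easy:connections:Cauchy}(ii), the unconditional analogue, which gives that $A$ is unconditionally $f_\omega$-productive if and only if it is unconditionally $f_\omega$-Cauchy productive in a sequentially Weil complete group.

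Once each of (i)--(iv) has been identified with the correspondingly numbered condition of Theorem~\ref{AP-Cauchy:seq:in:lin:groups}, the cycle of equivalences supplied by that theorem transfers verbatim to the present statement, completing the argument. I do not expect any genuine obstacle here: the corollary is a routine reformulation, and the only point requiring mild care is to apply Lemma~\ref{easy:connections}(ii) at the level of a single fixed $f$ (so that the ``some $f$'' and ``every $f$'' quantifiers are preserved under the passage between productivity and Cauchy productivity), rather than attempting to phrase the equivalence uniformly in $f$ at once.
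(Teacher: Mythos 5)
Your proposal is correct and is essentially the paper's own (implicit) derivation: the corollary follows from Theorem~\ref{AP-Cauchy:seq:in:lin:groups} by converting each condition between ``productive'' and ``Cauchy productive'' via Lemma~\ref{easy:connections}(ii) and Lemma~\ref{easy:connections:Cauchy}(ii) under sequential Weil completeness. Your care about applying the lemma at a fixed $f$ (so the quantifiers transfer correctly) is exactly the right point, and nothing further is needed.
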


\begin{corollary}\label{NACP:iff:no:conv:sequences}\label{char:of:complete:linear:TAP:groups}
A linear (sequentially Weil complete) group contains an unconditionally $f_\omega$-Cauchy productive sequence (an unconditionally $f_\omega$-productive sequence)
if and only if it contains a nontrivial convergent sequence.
\end{corollary}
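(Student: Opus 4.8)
The plan is to read off both assertions directly from the two characterizations established immediately above: Theorem \ref{AP-Cauchy:seq:in:lin:groups} for the (Cauchy productive) version over an arbitrary linear group, and Corollary \ref{Weil:complete:linear:group} for the (productive) version over a sequentially Weil complete linear group. In each case the decisive point is that both characterizations list convergence of a faithfully indexed sequence to $e$ as equivalent to unconditional $f_\omega$-(Cauchy) productivity, so the entire content of the corollary is the passage between \emph{a} nontrivial convergent sequence and \emph{a} faithfully indexed sequence converging to $e$. I would treat the two versions in parallel, since the arguments differ only in whether one invokes Theorem \ref{AP-Cauchy:seq:in:lin:groups} or Corollary \ref{Weil:complete:linear:group}.

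For the forward implication, suppose $G$ contains an unconditionally $f_\omega$-Cauchy productive (respectively, unconditionally $f_\omega$-productive) sequence $A=\{a_n:n\in\N\}$. Taking $\varphi$ to be the identity bijection in Definition \ref{f:productive:sets}, $A$ is in particular $f_\omega$-Cauchy productive (in the productive case one first passes to Cauchy productivity via Lemma \ref{easy:connections}(i)). Lemma \ref{AP:is:null:sequence} then yields $a_n\to e$. Since $A$ is faithfully indexed, it is not eventually constant, and hence it is itself a nontrivial convergent sequence in $G$, as required.

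For the reverse implication, suppose $G$ contains a nontrivial convergent sequence, say $c_n\to c$. As $G$ is Hausdorff and the sequence is not eventually constant, its range is infinite, so I can extract a faithfully indexed subsequence $\{c_{n_k}:k\in\N\}$, which still converges to $c$. Setting $a_k=c^{-1}c_{n_k}$ and using continuity of left translation by $c^{-1}$ produces a faithfully indexed sequence $\{a_k:k\in\N\}$ converging to $e$. Now the implication (i)$\to$(iv) of Theorem \ref{AP-Cauchy:seq:in:lin:groups} makes $\{a_k:k\in\N\}$ unconditionally $f_\omega$-Cauchy productive; in the sequentially Weil complete case, the implication (i)$\to$(iv) of Corollary \ref{Weil:complete:linear:group} makes it unconditionally $f_\omega$-productive. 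This supplies the sought-after sequence.

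There is no genuine obstacle here: the corollary is essentially a repackaging of the two preceding characterizations. The only step deserving a little care is the reverse direction, where the hypothesis furnishes merely \emph{some} convergent sequence, possibly with repeated terms and with a limit different from $e$; it is the extraction of a faithfully indexed subsequence together with the left translation by $c^{-1}$ that normalizes it into the form (faithfully indexed, converging to $e$) required to apply Theorem \ref{AP-Cauchy:seq:in:lin:groups} and Corollary \ref{Weil:complete:linear:group}.
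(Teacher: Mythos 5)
Your proof is correct and follows exactly the route the paper intends: the corollary is read off from the equivalences (i)$\leftrightarrow$(iv) of Theorem \ref{AP-Cauchy:seq:in:lin:groups} and Corollary \ref{Weil:complete:linear:group}, which is why the paper states it without proof. The only detail you add explicitly --- normalizing a nontrivial convergent sequence into a faithfully indexed sequence converging to $e$ via a subsequence and left translation, using Hausdorffness --- is precisely the routine step the paper leaves implicit, and you handle it correctly.
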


\begin{remark}
Let $G$ be the symmetric group discussed in Example \ref{symmetric:group:example}. Note that {\em $G$ has a liner metric topology\/}.
This example shows that one cannot replace ``sequentially Weil complete'' by ``Raikov complete'' in Corollary \ref{Weil:complete:linear:group}.
\end{remark}

Our next two corollaries show that Theorem \ref{non1-TAP}  and Corollary \ref{no:f-product:seq:in:nondiscrete:metr:group}
can be significantly strengthened for linear groups. (Note that every complete group is trivially sequentially complete.)

\begin{corollary}
\label{sequentially:complete:liner:TAP:group:is:discrete} A non-discrete  (sequentially Weil complete) sequential linear group
contains an unconditionally $f_\omega$-Cauchy productive sequence (an unconditionally $f_\omega$-productive sequence).
\end{corollary}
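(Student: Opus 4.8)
The plan is to reduce the whole statement to the equivalence already established in Corollary~\ref{char:of:complete:linear:TAP:groups}, so that the only thing left to produce is a single nontrivial convergent sequence in $G$. That corollary says that a linear group contains an unconditionally $f_\omega$-Cauchy productive sequence exactly when it contains a nontrivial convergent sequence, and that, under the extra hypothesis of sequential Weil completeness, the same dichotomy holds with ``unconditionally $f_\omega$-productive'' in place of ``unconditionally $f_\omega$-Cauchy productive''. Hence both (parenthetical) versions of the present corollary follow at once, provided I can manufacture a nontrivial convergent sequence. Note that this last task will use only non-discreteness and sequentiality; linearity and completeness enter solely through the cited corollary.

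So the core step is the purely topological claim that \emph{every non-discrete sequential group has a nontrivial convergent sequence}. First I would observe that in a non-discrete topological group the singleton $\{e\}$ cannot be open: otherwise, since left translations are homeomorphisms, every singleton $\{g\}=g\{e\}$ would be open and $G$ would be discrete, contrary to assumption. Consequently $G\setminus\{e\}$ is not closed in $G$. Now I would invoke sequentiality: by definition a subset of a sequential space is closed if and only if it is sequentially closed, so the non-closed set $G\setminus\{e\}$ must fail to be sequentially closed. This yields a sequence $\{x_n:n\in\N\}$ in $G\setminus\{e\}$ converging to some $x\in G$ with $x\notin G\setminus\{e\}$, i.e.\ $x=e$. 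Since every $x_n$ differs from $e$, this sequence cannot be eventually constant (an eventually constant sequence would converge to its eventual value, which lies in $G\setminus\{e\}$ and hence differs from the actual limit $e$); therefore $\{x_n:n\in\N\}$ is a nontrivial sequence converging to $e$.

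Finally, feeding this nontrivial convergent sequence into Corollary~\ref{char:of:complete:linear:TAP:groups} produces the desired unconditionally $f_\omega$-Cauchy productive sequence in general, and the unconditionally $f_\omega$-productive sequence when $G$ is additionally sequentially Weil complete. I do not expect any genuine obstacle here: the only substantive point is the interplay in the middle paragraph, where non-discreteness forces $G\setminus\{e\}$ to be non-closed and sequentiality upgrades this to a failure of sequential closedness, thereby supplying the convergent sequence. Everything else is a direct citation of the preceding corollary.
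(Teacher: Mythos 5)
Your proof is correct and follows the same route as the paper: the paper's entire proof is the one-line observation that a non-discrete sequential group contains a nontrivial convergent sequence, followed by an application of Corollary~\ref{char:of:complete:linear:TAP:groups}. The only difference is that you spell out the (folklore) topological fact the paper takes for granted, and your argument for it --- $\{e\}$ not open, hence $G\setminus\{e\}$ not closed, hence by sequentiality not sequentially closed, yielding a non-eventually-constant sequence converging to $e$ --- is sound.
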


\begin{proof}
A non-discrete sequential group contains a nontrivial convergent sequence, and we can apply Corollary \ref{char:of:complete:linear:TAP:groups}.
\end{proof}

\begin{corollary}
\label{linear:complete:corollary}
A non-discrete (Weil complete) metric linear group contains an unconditionally $f_\omega$-Cauchy productive sequence
(an unconditionally $f_\omega$-productive sequence).
\end{corollary}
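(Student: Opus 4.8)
The plan is to imitate the short proof of Corollary~\ref{sequentially:complete:liner:TAP:group:is:discrete}, replacing its ``sequential'' hypothesis by metrizability and delegating all of the substance to Corollary~\ref{char:of:complete:linear:TAP:groups}. Let $G$ be a non-discrete metric linear group. Since $G$ is metric it is first-countable, so I would fix a decreasing open base $\{U_n:n\in\N\}$ at the identity $e$. Because $G$ is non-discrete, $e$ is not isolated, whence $U_n\setminus\{e\}\neq\emptyset$ for every $n\in\N$; choosing $a_n\in U_n\setminus\{e\}$ produces a nontrivial sequence $\{a_n:n\in\N\}$ converging to $e$. Thus a non-discrete metric group always contains a nontrivial convergent sequence.

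For the unconditionally $f_\omega$-Cauchy productive assertion, I would then simply observe that $G$ is a linear group containing a nontrivial convergent sequence, so Corollary~\ref{NACP:iff:no:conv:sequences} immediately yields that $G$ contains an unconditionally $f_\omega$-Cauchy productive sequence. For the parenthetical (productive) version, assume in addition that $G$ is Weil complete. Then $G$ is in particular sequentially Weil complete (completeness trivially implies sequential completeness), so the ``sequentially Weil complete'' branch of Corollary~\ref{char:of:complete:linear:TAP:groups} applies to the same nontrivial convergent sequence and furnishes an unconditionally $f_\omega$-productive sequence in $G$.

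There is essentially no obstacle in this argument, since all of the real work has already been carried out in Corollary~\ref{NACP:iff:no:conv:sequences}; the only elementary point that must be verified independently is that a non-discrete metric group carries a nontrivial sequence converging to $e$, which is exactly the first-countability construction given above. The role of metrizability is thus twofold and purely formal: it guarantees the existence of the convergent sequence (so that the ``only if'' direction of Corollary~\ref{char:of:complete:linear:TAP:groups} has something to act on), and, under Weil completeness, it supplies the sequential Weil completeness needed to upgrade ``Cauchy productive'' to ``productive.''
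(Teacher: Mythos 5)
Your proposal is correct and follows essentially the paper's own route: the paper obtains this corollary from Corollary~\ref{sequentially:complete:liner:TAP:group:is:discrete} (metric groups being sequential, and Weil completeness implying sequential Weil completeness), whose proof is precisely ``a non-discrete group of this kind contains a nontrivial convergent sequence, now apply Corollary~\ref{char:of:complete:linear:TAP:groups}.'' Your only deviation is replacing the appeal to sequentiality by the explicit first-countability construction of a nontrivial sequence converging to $e$, which is an immaterial difference.
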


\begin{corollary} For a linear sequentially Weil complete sequential group $G$, the following statements are equivalent:
\begin{itemize}
\item[(i)] $G$ is TAP;
\item[(ii)] $G$ is discrete.
\end{itemize}
\end{corollary}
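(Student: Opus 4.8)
The plan is to prove the two implications separately, observing that Corollary \ref{sequentially:complete:liner:TAP:group:is:discrete} already does essentially all the work for the nontrivial direction, so that the present statement is little more than a repackaging of that corollary as an equivalence.

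First I would handle the easy implication (ii)$\to$(i). Suppose $G$ is discrete. Then $G$ contains no nontrivial convergent sequences, since any convergent sequence in a discrete group is eventually constant. As noted in the discussion following Definition \ref{def:TAP}, a group with no nontrivial convergent sequences is automatically TAP. If one prefers an explicit argument, one can combine Lemma \ref{AP:is:null:sequence}, which forces every $f_\omega$-Cauchy productive sequence (and hence every unconditionally $f_\omega$-productive sequence) to converge to $e$, with the observation that a faithfully indexed sequence in a discrete group cannot converge to $e$; this is also the content of Corollary \ref{no:sequences:no:Cauchy:prod:sequences}. Either route shows that a discrete $G$ admits no unconditionally $f_\omega$-productive sequence, that is, $G$ is TAP.

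Next I would prove (i)$\to$(ii) by contraposition. Assume $G$ is not discrete. Since $G$ is by hypothesis a sequential linear group that is sequentially Weil complete, the group $G$ meets every assumption of Corollary \ref{sequentially:complete:liner:TAP:group:is:discrete} (non-discrete, sequentially Weil complete, sequential, linear). That corollary therefore furnishes an unconditionally $f_\omega$-productive sequence in $G$, and by Definition \ref{def:TAP} the existence of such a sequence means precisely that $G$ is \emph{not} TAP. Hence if $G$ is TAP it must be discrete.

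I do not expect any genuine obstacle here: the only point requiring care is the routine verification that the standing hypotheses of the present corollary exactly match those of Corollary \ref{sequentially:complete:liner:TAP:group:is:discrete}, so that the latter may be invoked in the contrapositive. Once that is checked, both implications follow immediately, and the equivalence of (i) and (ii) is established.
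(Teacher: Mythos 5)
Your proposal is correct and matches the paper's (implicit) argument: the paper states this corollary without proof precisely because it follows immediately from Corollary \ref{sequentially:complete:liner:TAP:group:is:discrete} in the contrapositive for (i)$\to$(ii), together with the trivial observation that a discrete group, having no nontrivial convergent sequences, admits no unconditionally $f_\omega$-productive sequence (via Lemma \ref{AP:is:null:sequence}) and hence is TAP. The only nitpick is a citation slip: the remark that groups without nontrivial convergent sequences are TAP appears in the Introduction rather than after Definition \ref{def:TAP}, but your explicit fallback argument covers this.
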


\begin{theorem}\label{linear:TAP}
For a linear group $G$, the following statements are equivalent:
\begin{itemize}
\item[(i)] $G$ is sequentially  Weil complete and does not contain an $f_\omega$-productive sequence;
\item[(ii)] $G$ is sequentially  Weil complete and does not contain an unconditionally $f_\omega$-productive sequence;
\item[(iii)] $G$ has no nontrivial convergent sequences.
\end{itemize}
\end{theorem}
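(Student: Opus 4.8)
The plan is to prove the cycle of implications $(iii)\to(i)\to(ii)\to(iii)$, which establishes the equivalence of all three conditions. All of the genuinely substantive work has already been isolated in the preceding results, so what remains is to assemble them correctly and to keep careful track of which hypothesis (linearity versus sequential Weil completeness) is being used in each step.

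For $(iii)\to(i)$, suppose $G$ has no nontrivial convergent sequences. Then Lemma \ref{G:with:no:conv:seq:is:seq:Weil:complete} gives at once that $G$ is sequentially Weil complete, which is the first half of (i). For the second half I would argue by contradiction: if $A=\{a_n:n\in\N\}$ were an $f_\omega$-productive sequence, then by Lemma \ref{easy:connections}(i) it is $f_\omega$-Cauchy productive, and applying Definition \ref{def:ap-sequence} to the constant multiplier $z\equiv 1$ (which trivially satisfies $|z|\le f_\omega$) shows that the underlying faithfully indexed sequence $\{a_n:n\in\N\}$ is itself Cauchy productive. Being faithfully indexed, it is a \emph{nontrivial} Cauchy productive sequence, contradicting Corollary \ref{no:sequences:no:Cauchy:prod:sequences}. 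Note that this step uses neither linearity nor completeness.

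The implication $(i)\to(ii)$ is immediate. Any unconditionally $f_\omega$-productive sequence is in particular $f_\omega$-productive: applying Definition \ref{f:productive:sets} to the identity bijection $\varphi=\mathrm{id}_\N$ and using $f_\omega\circ\mathrm{id}_\N=f_\omega$ shows that the sequence itself is $f_\omega$-productive. Hence the nonexistence of $f_\omega$-productive sequences forces the nonexistence of unconditionally $f_\omega$-productive sequences, while the sequential Weil completeness clause is carried over verbatim.

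Finally, for $(ii)\to(iii)$ I would argue contrapositively. Suppose $G$ contains a nontrivial convergent sequence. Under (ii) the group $G$ is linear and sequentially Weil complete, so the sequentially-Weil-complete branch of Corollary \ref{char:of:complete:linear:TAP:groups} produces an unconditionally $f_\omega$-productive sequence in $G$, contradicting (ii). Thus (ii) forces $G$ to have no nontrivial convergent sequences. The only real content of the whole argument is buried in Corollary \ref{char:of:complete:linear:TAP:groups} (which itself rests on Theorem \ref{AP-Cauchy:seq:in:lin:groups} and Corollary \ref{Weil:complete:linear:group}); consequently the anticipated difficulty is not in any new computation but merely in checking that the two hypotheses of that corollary, linearity together with sequential Weil completeness, are both available precisely in the branch where it is invoked.
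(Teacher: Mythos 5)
Your proposal is correct and follows essentially the same route as the paper: the identical cycle of implications, with (i)$\to$(ii) being the trivial observation that every unconditionally $f_\omega$-productive sequence is $f_\omega$-productive, and (ii)$\to$(iii) obtained from Corollary \ref{char:of:complete:linear:TAP:groups} exactly as in the paper. The only minor divergence is in (iii)$\to$(i), where the paper again cites Corollary \ref{char:of:complete:linear:TAP:groups} (together with Lemma \ref{G:with:no:conv:seq:is:seq:Weil:complete}), whereas you derive the non-existence of $f_\omega$-productive sequences directly from the constant multiplier $z\equiv 1$ and Corollary \ref{no:sequences:no:Cauchy:prod:sequences}; this is, if anything, slightly more precise, since that corollary as literally stated concerns only \emph{unconditionally} $f_\omega$-productive sequences, and your argument also makes transparent that this step needs neither linearity nor completeness.
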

\begin{proof}
(i)$\to$(ii) holds since every unconditionally $f_\omega$-productive sequence is $f_\omega$-productive.

(ii) $\Rightarrow$(iii) follows from Corollary \ref{char:of:complete:linear:TAP:groups}.

(iii)$\to$(i) follows from Lemma
\ref{G:with:no:conv:seq:is:seq:Weil:complete} and
Corollary
\ref{char:of:complete:linear:TAP:groups}.
\end{proof}

Let us give an example of a topological group satisfying three equivalent conditions of the above theorem.

\begin{example}\label{ex:B:with:cocount:top}\label{ex:not:NSS:NACP}  Let $B$ be the Boolean group of size $\cont$  equipped with the
co-countable topology having as a base of neighborhoods of $0$ all subgroups of at most countable index. Let $G$ be the completion of $B$.
\begin{itemize}
\item[(i)] {\em $G$ is a complete non-discrete linear group without nontrivial convergent sequences\/} because every $G_\delta$-subset of $G$ is open in $G$.
\item[(ii)] {\em $G$ is not NSS and contains no (unconditionally) $f_\omega$-Cauchy productive sequence\/}.
Indeed, $G$ is not not NSS because it is non-discrete and linear, and $G$ does not contain an (unconditionally) $f_\omega$-Cauchy productive sequence by Corollary \ref{NACP:iff:no:conv:sequences}.
\end{itemize}
\end{example}

Example \ref{ex:not:NSS:NACP} shows that metrizability cannot be dropped from the assumptions of Theorem \ref{metric:NACP:iff:NSS} and  Corollary \ref{weil:NSS:iff:TAP}.

\section{Unconditionally $f$-productive sequences in locally compact and $\omega$-bounded groups}
\label{f-productive sets in locally compact and omega-bounded groups}

\begin{proposition}
\label{totally:disconnected:locally:compact:have:f_omega:sets}
A non-discrete locally compact totally disconnected group contains an unconditionally $f_\omega$-productive sequence.
\end{proposition}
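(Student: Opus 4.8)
The plan is to exploit the structure theory of locally compact totally disconnected groups to produce a decreasing neighborhood base at the identity consisting of compact open \emph{subgroups}, and then to build a faithfully indexed null sequence inside these subgroups that will automatically be unconditionally $f_\omega$-productive. The key structural fact is van Dantzig's theorem: a locally compact totally disconnected group $G$ has a base at the identity $e$ consisting of compact open subgroups. Since $G$ is non-discrete, I can arrange these subgroups to form a strictly decreasing chain $G\supseteq H_0\supsetneq H_1\supsetneq H_2\supsetneq\cdots$ of open (hence closed) subgroups with $\bigcap_{n}H_n$ still a neighborhood filter base at $e$, because in a non-discrete group $\{e\}$ is not open and so no open subgroup is trivial.

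First I would use non-discreteness to pick, for each $n\in\N$, an element $a_n\in H_n\setminus H_{n+1}$; passing to the strictly decreasing chain guarantees $a_n\neq a_m$ for $n\neq m$ (since $a_n\in H_n$ but $a_m\in H_{m+1}\subseteq H_n$ forces distinctness once one checks the membership pattern), so $A=\{a_n:n\in\N\}$ is faithfully indexed. Because each $H_n$ is a \emph{subgroup}, we automatically have $a_n^z\in H_n$ for every $z\in\Z$, and in particular
$$
\{a_n^z:z\in\Z,\ |z|\le f_\omega(n)\}=\{a_n^z:z\in\Z\}\subseteq H_n
\quad\text{for every }n\in\N.
$$
This is exactly the hypothesis that allows a linear-group argument to go through: the subgroups $H_n$ play the role of the base $\{U_n\}$, and the condition $U_{n+1}^3\subseteq U_n$ needed in Lemma \ref{building:f-Cauchy:sequences:in:a:metric:group} is trivially satisfied here since $H_{n+1}^3=H_{n+1}\subseteq H_n$.

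The cleanest route to the conclusion is to observe that $G$, having a base of open subgroups at $e$, is a \emph{linear} group in the sense of Section \ref{f-(Cauchy) productive sets in linear groups}. Since the sequence $A$ converges to $e$ (as $a_n\in H_n$ and $\{H_n\}$ is a neighborhood base), Theorem \ref{AP-Cauchy:seq:in:lin:groups} immediately yields that $A$ is unconditionally $f_\omega$-Cauchy productive. To upgrade Cauchy productivity to genuine productivity, I would use that a locally compact group is Raikov complete, hence in particular sequentially Weil complete; then Lemma \ref{easy:connections:Cauchy}(ii) (or equivalently Corollary \ref{Weil:complete:linear:group}) converts unconditional $f_\omega$-Cauchy productivity into unconditional $f_\omega$-productivity. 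I expect the only genuinely delicate point to be the invocation of van Dantzig's theorem together with the verification that the compact open subgroups can be chosen to form a countable decreasing base with trivial enough intersection to force the sequence to be null and faithfully indexed; everything after that reduces to the linear-group machinery already established, so the main obstacle is structural (producing the chain of open subgroups) rather than analytic.
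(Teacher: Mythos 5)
Your overall frame (van Dantzig's theorem gives a base of compact open subgroups, hence linearity; then a null sequence plus Theorem \ref{AP-Cauchy:seq:in:lin:groups} plus Weil completeness of locally compact groups) is the same frame the paper uses, but there is a genuine gap at the step where you produce the null sequence. You claim the compact open subgroups can be arranged into a strictly decreasing chain $H_0\supsetneq H_1\supsetneq H_2\supsetneq\cdots$ that is a neighborhood base at $e$, and you deduce $a_n\to e$ from $a_n\in H_n$. But a countable neighborhood base at $e$ makes the group first countable, hence metrizable by the Birkhoff--Kakutani theorem, and a non-discrete locally compact totally disconnected group need not be metrizable: $G=\{0,1\}^{\omega_1}$ is compact, totally disconnected, non-discrete and not first countable, so no countable chain of open subgroups is a base at $e$. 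Moreover, without the base property the convergence really can fail for your construction: in this $G$ take $H_n=\{x\in G: x(\alpha)=0 \mbox{ for all } \alpha<n\}$ and let $a_n$ be the element equal to $1$ exactly at the coordinates $n$ and $\omega$. Then $a_n\in H_n\setminus H_{n+1}$, so the chain is strictly decreasing and the sequence is faithfully indexed (that part of your argument is correct), yet $a_n$ never enters the open subgroup $\{x\in G: x(\omega)=0\}$, so $a_n\not\to e$. Since Theorem \ref{AP-Cauchy:seq:in:lin:groups} needs its condition (i), convergence to $e$, your argument only establishes the proposition for metrizable $G$, where it essentially reproduces Corollary \ref{linear:complete:corollary}.

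What is missing is precisely the non-trivial ingredient the paper invokes: every non-discrete locally compact group contains a non-trivial convergent sequence (cited from \cite{Sha}). This fact cannot be obtained by naively selecting points from a decreasing chain of open subgroups; it is where the real work lies in the non-metrizable case. Once one has such a sequence, one translates it to a faithfully indexed sequence converging to $e$, and then your remaining steps --- linearity via van Dantzig (the paper cites \cite[Theorem (7.7)]{HR}) and sequential Weil completeness of locally compact groups --- feed into Corollary \ref{char:of:complete:linear:TAP:groups}, which is exactly how the paper concludes. So the repair is to replace your chain construction of the null sequence by an appeal to that known result, keeping the rest of your argument intact.
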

\begin{proof}
It is well known that every non-discrete locally compact group contains a non-trivial convergent sequence; see, for example,  \cite{Sha}. Moreover, totally disconnected locally compact groups have a linear topology \cite[Theorem (7.7)]{HR}.  The conclusion now follows from Corollary \ref{char:of:complete:linear:TAP:groups}.
\end{proof}

Recall that a topological group $G$ is {\em $\omega$-bounded\/} if the closure of every countable subset of $G$ is compact.

\begin{corollary}
\label{tot:disc:Obounded:TAP:iff:finite}
An infinite totally disconnected $\omega$-bounded group contains an unconditionally $f_\omega$-productive sequence.
\end{corollary}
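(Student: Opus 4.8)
The plan is to reduce to the locally compact case already settled in Proposition~\ref{totally:disconnected:locally:compact:have:f_omega:sets} by extracting a suitable compact subgroup from $G$. First I would use $\omega$-boundedness to produce an infinite compact subgroup $K$ of $G$: since $G$ is infinite, choose a countably infinite subset $S\subseteq G$ and let $D=\langle S\rangle$ be the (countable) subgroup it generates. Setting $K=\overline{D}$, recall that the closure of a subgroup is again a subgroup, and since $D$ is countable, $\omega$-boundedness guarantees that $K$ is compact. As $K\supseteq D\supseteq S$ is infinite, $K$ is an infinite compact subgroup of $G$.

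Next I would verify that $K$ satisfies the hypotheses of Proposition~\ref{totally:disconnected:locally:compact:have:f_omega:sets}. Total disconnectedness is hereditary, so the subspace $K$ is totally disconnected; being compact, $K$ is in particular locally compact. Finally, $K$ is non-discrete, since a discrete compact group is finite whereas $K$ is infinite. Thus $K$ is a non-discrete locally compact totally disconnected group, and Proposition~\ref{totally:disconnected:locally:compact:have:f_omega:sets} yields an unconditionally $f_\omega$-productive sequence $A=\{a_n:n\in\N\}$ all of whose terms lie in $K$.

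It remains to transfer this property from $K$ up to $G$. Because $K$ is compact inside the Hausdorff group $G$, it is closed, hence sequentially closed, in $G$. By the remark following Definition~\ref{def:ap-sequence}, a sequence of elements of the sequentially closed subgroup $K$ is $f_\omega$-productive in $K$ if and only if it is $f_\omega$-productive in $G$. Since $f_\omega\circ\varphi=f_\omega$ for every bijection $\varphi:\N\to\N$, the same equivalence applies verbatim to each rearrangement $\{a_{\varphi(n)}:n\in\N\}$; hence $A$ is unconditionally $f_\omega$-productive in $G$ as well, which completes the argument.

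The only genuinely delicate point is the initial extraction of the compact subgroup: one must confirm that $\omega$-boundedness is applicable to the closure of a \emph{countable} subgroup (it is, since $D$ is countable) and that the resulting $K$ is simultaneously infinite and non-discrete. Everything else --- heredity of total disconnectedness, closedness of compact subgroups in Hausdorff groups, and the passage between productivity in $K$ and in $G$ --- is routine.
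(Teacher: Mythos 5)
Your proposal is correct and follows essentially the same route as the paper: generate a countable subgroup from a countably infinite subset, take its closure $K$ (compact by $\omega$-boundedness, infinite hence non-discrete, totally disconnected by heredity), and apply Proposition~\ref{totally:disconnected:locally:compact:have:f_omega:sets}. The only difference is that you spell out the (easy) transfer of productivity from $K$ to $G$, which the paper leaves implicit; this is a valid and harmless addition.
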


\begin{proof}
Let $D$ be a countably infinite subset of a totally disconnected $\omega$-bounded group $G$. Then the subgroup $H$ of $G$ generated by $D$ is countable. Since $G$ is $\omega$-bounded, 
the closure $K$ of $H$ in $G$
is a compact subgroup of $G$. Since $K$ is infinite, it is non-discrete. As a subgroup of a totally disconnected group $G$, $K$ is totally disconnected.
 From Proposition \ref{totally:disconnected:locally:compact:have:f_omega:sets} we conclude that $K$ contains an unconditionally $f_\omega$-productive sequence.
\end{proof}

The following example shows that neither local compactness nor $\omega$-boundedness can be omitted in Proposition
\ref{totally:disconnected:locally:compact:have:f_omega:sets} and Corollary \ref{tot:disc:Obounded:TAP:iff:finite}.

\begin{example}\label{last:example} The metric  zero-dimensional (hence, totally disconnected) group $\Q$ does not contain any $f_\omega$-Cauchy productive sequence. Indeed, $\Q$ is a subgroup of $\R$ that does not contain any $f_\omega$-Cauchy productive sequence.
\end{example}

\begin{theorem}\label{zerodim:TAP:compact:is:finite}
A non-discrete locally compact group contains an unconditionally $f$-productive sequence for every function $f\in \W$.
\end{theorem}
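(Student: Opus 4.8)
The plan is to reduce, in every case, to one of the two construction results already available: Proposition~\ref{totally:disconnected:locally:compact:have:f_omega:sets}, which handles totally disconnected groups, and Corollary~\ref{no:f-product:seq:in:nondiscrete:metr:group}, which produces unconditionally $f$-productive sequences inside any non-discrete \emph{metric Weil complete} group. Two observations make such a reduction legitimate. First, since $f(n)\le\omega=f_\omega(n)$ for every $n\in\N$, we have $f\le f_\omega$, so an unconditionally $f_\omega$-productive sequence is automatically unconditionally $f$-productive by Lemma~\ref{unconditional:if:f<g}; hence Proposition~\ref{totally:disconnected:locally:compact:have:f_omega:sets} already settles the totally disconnected case. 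Second, if $H$ is any subgroup of $G$ with the subspace topology and a faithfully indexed sequence of elements of $H$ is unconditionally $f$-productive \emph{in} $H$, then it is unconditionally $f$-productive in $G$ as well, since for each bijection $\varphi$ and each admissible $z$ the relevant partial products converge in $H$ and hence to the same limit in $G$. Thus it suffices to locate inside $G$ a non-discrete metric Weil complete subgroup. Here I will use that every locally compact group is Weil complete: a left Cauchy filter is eventually trapped in a translate $xV$ of a fixed compact neighbourhood $V$ of $e$, hence clusters in the compact set $xV$, and a Cauchy filter with a cluster point converges.

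First I would dispose of the totally disconnected case by Proposition~\ref{totally:disconnected:locally:compact:have:f_omega:sets} as above. Assume henceforth that $G$ is not totally disconnected, so its identity component $G_0$ is a nontrivial connected locally compact group. By the Iwasawa structure theorem for connected locally compact groups, either $G_0$ contains a closed subgroup topologically isomorphic to $\R$, or $G_0$ is compact. In the first case $\R$ is a non-discrete metric Weil complete subgroup of $G$, and Corollary~\ref{no:f-product:seq:in:nondiscrete:metr:group} supplies an unconditionally $f$-productive sequence in $\R$, which then works in $G$ by the transfer observation.

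It remains to treat the case in which $G_0$ is a nontrivial \emph{compact} connected group; this is where the real work lies. Such a group has positive topological dimension, and the goal is to extract from it a non-discrete \emph{metrizable} closed subgroup, which, being compact metric, is automatically Weil complete, so that Corollary~\ref{no:f-product:seq:in:nondiscrete:metr:group} applies once more. To produce it I would first pass to a nontrivial compact connected \emph{abelian} subgroup $A\le G_0$ --- for instance the closure of the image of a nontrivial one-parameter subgroup of $G_0$, which exists because $G_0$ has a nontrivial compact Lie quotient and one-parameter subgroups lift through the projective limit. Then I would invoke Pontryagin duality: the dual $\widehat A$ is a nontrivial torsion-free discrete abelian group, hence admits a torsion-free quotient of rank one, and dualizing this quotient embeds a one-dimensional compact connected metrizable group (a circle $\T$ or a solenoid) as a closed subgroup of $A\le G$. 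This subgroup is non-discrete, compact and metrizable, so Corollary~\ref{no:f-product:seq:in:nondiscrete:metr:group} finishes the proof.

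The main obstacle is precisely this last step. Unlike the totally disconnected and the $\R$ cases, a non-abelian or non-metrizable compact connected group (such as $\T^{\omega_1}$, or a group with a huge maximal protorus) forces us to manufacture a genuine metrizable \emph{subgroup} rather than merely a metrizable quotient; a metrizable quotient would be useless here, since productive sequences need not lift across compact quotients --- the lifted partial products can oscillate inside the compact kernel, so convergence downstairs does not entail convergence upstairs. The delicate points to verify with care are therefore the existence and the lifting of a nontrivial one-parameter subgroup in an arbitrary nontrivial compact connected group, together with the duality argument producing the circle or the solenoid; both are standard consequences of the structure theory of (locally) compact groups, but they are the genuine content of the argument and should be stated and cited precisely.
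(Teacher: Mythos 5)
Your proof is correct, but it takes a genuinely different route from the paper's in the hard case. Both arguments reduce to the same two building blocks --- Proposition~\ref{totally:disconnected:locally:compact:have:f_omega:sets} for the totally disconnected situation (together with Lemma~\ref{unconditional:if:f<g}, since $f\le f_\omega$) and Corollary~\ref{no:f-product:seq:in:nondiscrete:metr:group} for non-discrete metric Weil complete groups, combined with your (correct) transfer observation that a sequence unconditionally $f$-productive in a subgroup remains so in the ambient group. The difference lies in the case decomposition and in the machinery used to reach a metrizable piece. The paper splits on metrizability of $G$ rather than on total disconnectedness: if $G$ is metrizable, Corollary~\ref{no:f-product:seq:in:nondiscrete:metr:group} applies at once; if not, the theorem of Davis \cite{Dav} writes $G$ as homeomorphic to $K\times\R^n\times D$ with $K$ a compact subgroup, necessarily non-metrizable, and the relevant dichotomy is torsion versus non-torsion for $K$: if $K$ is torsion it is totally disconnected \cite{D}, so Proposition~\ref{totally:disconnected:locally:compact:have:f_omega:sets} applies, while if $K$ has a non-torsion element $a$, the monothetic group $N=\overline{\langle a\rangle}$ is an infinite compact abelian group which by \cite[Theorem 2.5.2]{Rudin} contains an infinite compact metrizable subgroup. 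That single trick --- one non-torsion element generating a monothetic subgroup --- is precisely what lets the paper avoid all the structure theory you invoke: the Iwasawa--Yamabe decomposition of connected locally compact groups, the existence and lifting of one-parameter subgroups through projective limits of compact Lie groups (Hofmann--Morris level machinery), and the Pontryagin duality argument producing a circle or solenoid. Those facts are all true, and your argument does go through once they are cited precisely; but note that your compact connected case can be collapsed by the same trick: a nontrivial compact connected group cannot be torsion (compact torsion groups are totally disconnected \cite{D}), so it contains a non-torsion element, whose closed monothetic span is infinite compact abelian, and Rudin's theorem finishes the proof with no one-parameter subgroups and no duality. What your route buys is a more conceptual connectedness-based picture (and, in the non-compact case, an explicit closed copy of $\R$); what the paper's route buys is economy of means. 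Your closing remark that metrizable \emph{quotients} would be useless is well taken, and it explains why both proofs must manufacture metrizable \emph{subgroups}.
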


\begin{proof}
If $G$ is metrizable, then  the conclusion follows from  Corollary \ref{no:f-product:seq:in:nondiscrete:metr:group},
since locally compact groups are Weil complete. Suppose now that $G$ is not metrizable. By  theorem of Davis \cite{Dav}, $G$ is
homeomorphic to a product $K \times \R^n \times D$, where $K$ is a compact subgroup of $G$, $n\in \N$ is
an integer and $D$ is a discrete space. Our assumption on $G$ implies that the compact subgroup $K$ of $G$ is not metrizable. We consider two cases now.

\medskip
{\it Case 1\/}: {\sl $K$ is torsion\/}. In this case  $K$ is totally disconnected \cite{D}, so $K$ contains an unconditionally $f_\omega$-productive sequence $A$ by Proposition \ref{totally:disconnected:locally:compact:have:f_omega:sets}. Clearly, $A$ is also unconditionally $f$-productive.

\medskip
{\it Case 2\/}: {\sl $K$ is not torsion\/}. Let $a$ be a non-torsion element of $K$. Then the closed subgroup $N$ of $K$ generated by $a$ is a compact abelian group.
By \cite[Theorem 2.5.2]{Rudin}, $N$ contains an infinite compact metrizable subgroup $M$. Since compact groups are Weil complete, applying  Corollary \ref{no:f-product:seq:in:nondiscrete:metr:group} to  $M$, we  find an unconditionally $f$-productive sequence in $M$ (and thus, in $G$ as well).
\end{proof}

\begin{corollary}
A locally compact group is discrete if and only if it does not contain an $f_1^\star$-productive sequence.
\end{corollary}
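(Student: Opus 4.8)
The plan is to prove the two implications separately, reducing each to results already established earlier in the paper. The nontrivial content lies entirely in the ``non-discrete $\Rightarrow$ existence'' direction, which I would extract from Theorem~\ref{zerodim:TAP:compact:is:finite}; the converse is a soft consequence of the fact that Cauchy productive sequences must converge to the identity.

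First I would dispatch the easy direction: if $G$ is discrete, then $G$ has no $f_1^\star$-productive sequence. Suppose, toward a contradiction, that $A=\{a_n:n\in\N\}$ were such a sequence. Since the constant function $z\equiv 1$ satisfies $z\le f_1$, Definition~\ref{def:ap-sequence}(ii) forces $\{a_n:n\in\N\}$ itself to be productive, hence Cauchy productive. As $A$ is faithfully indexed, its terms are pairwise distinct, so this is a nontrivial Cauchy productive sequence in $G$. But a discrete group has no nontrivial convergent sequences, and therefore no nontrivial Cauchy productive sequences by Corollary~\ref{no:sequences:no:Cauchy:prod:sequences}; this is the desired contradiction.

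For the converse, suppose $G$ is a non-discrete locally compact group. Since $f_1\in\W$, Theorem~\ref{zerodim:TAP:compact:is:finite} provides an unconditionally $f_1$-productive sequence $A$ in $G$; specializing to the identity bijection, $A$ is in particular $f_1$-productive. The key observation is the inclusion of multiplier sets
$$
\mathscr{F}^\star_{f_1}=\{z\in\Z^\N:0\le z(n)\le 1\ \text{for all}\ n\in\N\}\subseteq\{z\in\Z^\N:|z(n)|\le 1\ \text{for all}\ n\in\N\}=\mathscr{F}_{f_1},
$$
so every $z$ admissible for $f_1^\star$-productivity is already admissible for $f_1$-productivity. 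Consequently $A$ is $f_1^\star$-productive, and $G$ contains an $f_1^\star$-productive sequence.

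I expect no genuine obstacle here: the whole weight of the statement has been front-loaded into Theorem~\ref{zerodim:TAP:compact:is:finite}, whose proof (via the structure theorem of Davis together with the metrizable case handled in Corollary~\ref{no:f-product:seq:in:nondiscrete:metr:group}) does the real work. The only point requiring a moment's care is verifying that $f_1^\star$-productivity is a formal weakening of $f_1$-productivity, which follows immediately from the inclusion $\mathscr{F}^\star_{f_1}\subseteq\mathscr{F}_{f_1}$ displayed above.
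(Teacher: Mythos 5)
Your proof is correct and follows exactly the route the paper intends: the corollary is stated without proof immediately after Theorem~\ref{zerodim:TAP:compact:is:finite} because the non-discrete direction is that theorem specialized to $f_1\in\W$ (plus the trivial implication from unconditionally $f_1$-productive to $f_1^\star$-productive via $\mathscr{F}^\star_{f_1}\subseteq\mathscr{F}_{f_1}$), and the discrete direction is the soft fact that a Cauchy productive sequence converges to the identity, which a faithfully indexed sequence in a discrete group cannot do. Both of your implications match this intended argument, so there is nothing to correct.
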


\begin{corollary}
\label{omega:bounded}
An infinite $\omega$-bounded (in particular, compact) group has an unconditionally $f$-productive sequence for every function $f\in \W$.
\end{corollary}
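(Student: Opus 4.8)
The plan is to mimic the proof of Corollary~\ref{tot:disc:Obounded:TAP:iff:finite}, replacing the appeal to Proposition~\ref{totally:disconnected:locally:compact:have:f_omega:sets} by the stronger Theorem~\ref{zerodim:TAP:compact:is:finite} and dropping the total disconnectedness hypothesis. First I would fix a countably infinite subset $D$ of the infinite group $G$ and let $H$ be the subgroup it generates; since $D$ is countable, so is $H$. Because $G$ is $\omega$-bounded, the closure $K$ of $H$ is compact, and as the closure of a subgroup it is itself a subgroup of $G$. Since $K$ contains the infinite set $D$, it is an infinite compact group, hence non-discrete (a discrete compact space is finite), and in particular a non-discrete locally compact group.

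With $K$ in hand, fix an arbitrary $f\in\W$ and apply Theorem~\ref{zerodim:TAP:compact:is:finite} to obtain an unconditionally $f$-productive sequence $A=\{a_n:n\in\N\}$ in $K$. It then remains to promote this property from $K$ to the ambient group $G$. As $K$ is compact, it is closed, hence sequentially closed, in $G$; applying the Remark following Definition~\ref{def:ap-sequence} with the function $f\circ\varphi$ in place of $f$, for every bijection $\varphi:\N\to\N$ the sequence $\{a_{\varphi(n)}:n\in\N\}$ is $(f\circ\varphi)$-productive in $G$ exactly when it is $(f\circ\varphi)$-productive in $K$. Since the latter holds for all $\varphi$, the sequence $A$ is unconditionally $f$-productive in $G$, as required.

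I do not expect a genuine obstacle here: essentially all the content is already carried by Theorem~\ref{zerodim:TAP:compact:is:finite}, and the only points needing care are that compactness of $K$ supplies both the local compactness needed to invoke that theorem and the (sequential) closedness needed for the transfer, and that this transfer must be carried out uniformly over \emph{all} reindexings $\varphi$ rather than merely for the identity indexing.
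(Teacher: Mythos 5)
Your proof is correct and follows essentially the same route as the paper: construct an infinite compact subgroup $K$ of $G$ via $\omega$-boundedness (exactly the argument of Corollary~\ref{tot:disc:Obounded:TAP:iff:finite}) and then apply Theorem~\ref{zerodim:TAP:compact:is:finite} to $K$. Your only addition is making explicit the transfer of the property from $K$ to $G$, which the paper leaves implicit; note that the direction actually needed is immediate (convergence of the partial products in the subspace $K$ already gives convergence in $G$), so invoking sequential closedness of $K$ is harmless but not required.
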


\begin{proof} Let $G$ be an infinite $\omega$-bounded group. Arguing as in the proof of Corollary \ref{tot:disc:Obounded:TAP:iff:finite},
we find an infinite compact subgroup $K$ of $G$. Now we can apply Theorem \ref{zerodim:TAP:compact:is:finite} to $K$.
\end{proof}

\section{Descriptive properties of groups having $f_1^\star$-productive sequences}
\label{Descriptive properties of groups having f_1star-productive sequences}

We will need the following lemma essentially due to Kalton. In
the abelian case this lemma is proved in
\cite{Kalton} (see also \cite{Jameson}),
and its non-commutative version appears
in \cite{Drewnowski}, with the proof attributed to Kalton.
\begin{lemma}
\label{lemma:Kalton}
Let $A=\{a_n:n\in\N\}$ be an $f_1^\star$-productive sequence in a
topological group $G$.
Then the map $\lambda_A:2^\N\to G$ defined by
$$
\lambda_A(f)=\lim_{n\to\infty} \prod_{i=0}^{n} a_{i}^{f(i)}
\ \
\mbox{for every}
\ \
f\in 2^\N,
$$
is continuous when $2^\N$ is taken with
its usual Cantor set topology.
\end{lemma}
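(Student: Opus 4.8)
The plan is to reduce the continuity of $\lambda_A$ to a single uniform estimate that holds simultaneously for all $f\in 2^\N$. Concretely, I would first establish the following \emph{uniform Cauchy condition}: for every neighborhood $U$ of $e$ there exists $k\in\N$ such that $\prod_{n=l}^m a_n^{f(n)}\in U$ for \emph{every} $f\in 2^\N$ and all $l,m\in\N$ with $k\le l\le m$. Since the block product $\prod_{n=l}^m a_n^{f(n)}$ depends only on the restriction of $f$ to $\{l,l+1,\dots,m\}$, this is really a statement about finite $0/1$-patterns. Granting it, continuity follows by a routine factorization; the genuine content, and the main obstacle, is the uniform condition itself.

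I would prove the uniform Cauchy condition by contradiction, via the block (diagonalization) argument that is the heart of Kalton's technique. Suppose it fails for some neighborhood $U$ of $e$. Then for every $k$ one finds $l,m$ with $k\le l\le m$ and a $0/1$-pattern on $\{l,\dots,m\}$ whose block product lies outside $U$. Choosing $k$ larger and larger, I would recursively extract pairwise disjoint, consecutively ordered blocks $[l_j,m_j]$ with $l_j\to\infty$, together with patterns $\varepsilon^{(j)}$ on each block such that $\prod_{n=l_j}^{m_j} a_n^{\varepsilon^{(j)}(n)}\notin U$. Splicing all these finite patterns into a single $z\in 2^\N$ (setting $z=0$ off the blocks), the $f_1^\star$-productivity of $A$ makes the sequence $\{a_n^{z(n)}:n\in\N\}$ productive, hence Cauchy productive; so Lemma \ref{Cauchy:productive:criterion} yields a $K$ with $\prod_{n=l}^m a_n^{z(n)}\in U$ for all $K\le l\le m$. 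Picking $j$ with $l_j\ge K$ and using that $z$ agrees with $\varepsilon^{(j)}$ on $[l_j,m_j]$ gives $\prod_{n=l_j}^{m_j} a_n^{z(n)}=\prod_{n=l_j}^{m_j} a_n^{\varepsilon^{(j)}(n)}\notin U$, a contradiction. I expect the bookkeeping that keeps the blocks disjoint while forcing their left endpoints to infinity to be the only delicate point here.

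Finally, I would deduce continuity at an arbitrary $f_0\in 2^\N$. For $k\in\N$ write $P_k(f)=\prod_{i=0}^{k-1} a_i^{f(i)}$ and $T_k(f)=\lim_m \prod_{i=k}^m a_i^{f(i)}=P_k(f)^{-1}\lambda_A(f)$, so that $\lambda_A(f)=P_k(f)\,T_k(f)$. Given a neighborhood $W$ of $\lambda_A(f_0)$, I would pick a symmetric neighborhood $V$ of $e$ with $\lambda_A(f_0)\,V\,V\subseteq W$ and then, by regularity of topological groups, a neighborhood $U$ with $\overline{U}\subseteq V$. Applying the uniform condition with $l=k$ produces a $k$ for which every tail product $\prod_{i=k}^m a_i^{f(i)}$ lies in $U$ for all $f$ and all $m\ge k$, whence $T_k(f)\in\overline{U}\subseteq V$ for every $f\in 2^\N$. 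The set $N_{f_0,k}=\{f\in 2^\N: f(i)=f_0(i)\text{ for }i<k\}$ is a basic clopen neighborhood of $f_0$, and for $f\in N_{f_0,k}$ we have $P_k(f)=P_k(f_0)$, so
\[
\lambda_A(f)=P_k(f_0)\,T_k(f)=\lambda_A(f_0)\,T_k(f_0)^{-1}\,T_k(f)\in \lambda_A(f_0)\,V\,V\subseteq W,
\]
using $T_k(f_0)^{-1}\in V^{-1}=V$ and $T_k(f)\in V$. Thus $\lambda_A(N_{f_0,k})\subseteq W$, giving continuity at $f_0$; as $f_0$ was arbitrary, $\lambda_A$ is continuous.
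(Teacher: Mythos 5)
Your proof is correct. Note, however, that the paper itself contains no proof of this lemma: it is stated as ``essentially due to Kalton,'' with the abelian case cited to Kalton's paper (see also Jameson) and the non-commutative version to Drewnowski, so there is no in-paper argument to compare yours against. What you have written is a complete, self-contained proof in the spirit of Kalton's original gliding-hump technique, and all three steps check out: (1) the uniform Cauchy estimate over all $0/1$-patterns is genuinely the heart of the matter, and your block-splicing contradiction is sound --- the spliced $z$ (equal to $\varepsilon^{(j)}$ on the $j$-th block, $0$ elsewhere) lies in $2^\N$, so $f_1^\star$-productivity makes $\{a_n^{z(n)}:n\in\N\}$ productive, hence left Cauchy, and Lemma \ref{Cauchy:productive:criterion} then contradicts the choice of the blocks once $l_j\ge K$; (2) the bookkeeping you flag as delicate is in fact routine, since taking $k_{j+1}=m_j+1$ automatically keeps the blocks disjoint with $l_j\to\infty$; (3) the factorization $\lambda_A(f)=P_k(f)\,T_k(f)$ with $T_k(f)=P_k(f)^{-1}\lambda_A(f)$ a limit of tail products, combined with the uniform estimate, gives continuity exactly as you say. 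One cosmetic remark: you do not need to invoke regularity to get $\overline{U}\subseteq V$; since $\overline{U}\subseteq UU$ in any topological group, it suffices to apply the uniform estimate to a neighborhood $U$ with $UU\subseteq V$ and use that $T_k(f)$, being a limit of elements of $U$, lies in $\overline{U}$.
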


Our next theorem provides an additional information about the ``limit map''
$\lambda$.
\begin{theorem}
\label{Cantor:set:theorem}
Every $f_1^\star$-productive sequence $A$ contains a subsequence $B$ such that $\lambda_B$ is a homeomorphic embedding.
\end{theorem}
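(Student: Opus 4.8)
The plan is to extract the subsequence by a recursive construction that rules out all ``collisions'' $\lambda_A(f)=\lambda_A(g)$, and then to upgrade injectivity to an embedding for free. Passing to a subsequence $B=\{a_{n_k}:k\in\N\}$ of $A$ (with $n_0<n_1<\cdots$ and $S=\{n_k:k\in\N\}$) makes the partial products defining $\lambda_B$ exactly those of $\lambda_A$ evaluated on the $z\in 2^\N$ supported on $S$; in particular every such $B$ is again $f_1^\star$-productive, so $\lambda_B:2^\N\to G$ is continuous by Lemma \ref{lemma:Kalton}. As a continuous injection from the compact space $2^\N$ into the Hausdorff group $G$, such a $\lambda_B$ is automatically a homeomorphic embedding. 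Hence the whole task reduces to choosing $S$ so that $\lambda_B$ is \emph{injective}. Since $A$ is faithfully indexed, at most one term equals $e$; discarding it, I may assume $a_n\neq e$ for all $n$.

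The single engine of the construction is a uniform smallness of tails, which I read off directly from Lemma \ref{lemma:Kalton}. Continuity of $\lambda_A$ at the constant function $\mathbf{0}\in 2^\N$ (where $\lambda_A(\mathbf{0})=e$) says: for every neighborhood $W$ of $e$ there is $N\in\N$ such that $\lambda_A(z)\in W$ for every $z\in 2^\N$ vanishing on $[0,N)$. Equivalently, writing $T_N=\{\lim_{m\to\infty}\prod_{i=N}^{m}a_i^{z(i)}:z\in 2^\N\}$ for the set of all ``tails from $N$ on'', I obtain $T_N\subseteq W$. Thus the tail-sets $T_N$ shrink to $e$ as $N\to\infty$, and this is the only quantitative input the argument needs.

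With this in hand I build $n_0<n_1<\cdots$ recursively, carrying auxiliary data. Suppose $c_0=a_{n_0},\dots,c_{j-1}=a_{n_{j-1}}$ together with thresholds $N_0,\dots,N_{j-1}$ have been chosen. Pick $n_j>\max\{n_{j-1},N_0,\dots,N_{j-1}\}$ with $a_{n_j}\neq e$ and put $c_j=a_{n_j}$. Because $c_j\neq e$ and $G$ is Hausdorff, continuity of $(x,y)\mapsto xy^{-1}$ furnishes an open neighborhood $W_j$ of $e$ with $c_j\notin W_jW_j^{-1}$. Using the tail observation, choose $N_j$ with $T_{N_j}\subseteq W_j$, and insist that all later indices exceed $N_j$. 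This determines $S$ and the subsequence $B$.

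Finally I verify injectivity of $\mu:=\lambda_B$. Suppose $\mu(h)=\mu(h')$ for distinct $h,h'\in 2^\N$, and let $j_0$ be the least coordinate at which they differ, say $h(j_0)=1$ and $h'(j_0)=0$. Writing $p=\prod_{j<j_0}c_j^{h(j)}$ for the common prefix and $\tau=\lim_k\prod_{j_0<j\le k}c_j^{h(j)}$, $\tau'=\lim_k\prod_{j_0<j\le k}c_j^{h'(j)}$ for the two tails, continuity of left translation by $p$ gives $\mu(h)=p\,c_{j_0}\,\tau$ and $\mu(h')=p\,\tau'$; cancelling $p$ yields $c_{j_0}=\tau'\tau^{-1}$. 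But all indices $n_{j_0+1},n_{j_0+2},\dots$ exceed $N_{j_0}$, so $\tau,\tau'\in T_{N_{j_0}}\subseteq W_{j_0}$, whence $c_{j_0}=\tau'\tau^{-1}\in W_{j_0}W_{j_0}^{-1}$, contradicting the choice of $W_{j_0}$. Hence $\mu$ is injective, and the reduction in the first paragraph completes the proof. The only genuinely delicate point is the bookkeeping that forces both tails $\tau,\tau'$ into the prescribed $W_{j_0}$ \emph{uniformly over all future $0/1$-patterns} and that respects the noncommutative ordering of the product; the first is supplied exactly by the tail observation (continuity of $\lambda_A$ at $\mathbf{0}$), and the second is handled by isolating the first difference $j_0$ so that the disagreeing factor $c_{j_0}$ sits between a common prefix and two genuine tails.
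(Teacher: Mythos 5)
Your proof is correct, but the mechanism by which you obtain injectivity is genuinely different from the paper's. The paper runs a self-contained induction choosing indices $k_n$ together with a chain of neighborhoods $U_n$ satisfying $U_{n+1}^2\subseteq U_n$ and $a_{k_{n+1}}\in U_n$, plus the separation condition that the $2^{n+1}$ finite partial products $b_h$, $h\in 2^{\{0,1,\dots,n\}}$, have pairwise disjoint sets $b_h\overline{U_n^2}$; the containment $\lambda_B(f)\in\bigcap_n b_{f\restriction_{\{0,1,\dots,n\}}}\overline{U_n^2}$ then forces injectivity, and Lemma \ref{lemma:Kalton} enters only at the very end, to get continuity. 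You instead invoke Lemma \ref{lemma:Kalton} a second time, applied to $A$ at the constant function $\mathbf{0}$, to extract the uniform tail estimate $T_N\subseteq W$; after that you need only one neighborhood $W_j$ per index (chosen so that $c_j\notin W_jW_j^{-1}$, which indeed exists in a Hausdorff group) and a first-disagreement argument with prefix cancellation, which correctly respects non-commutativity because the common prefix $p$ and the disagreeing factor $c_{j_0}$ both sit to the left of the two tails. All your steps check out: subsequences of $f_1^\star$-productive sequences are again $f_1^\star$-productive, the tail observation is exactly continuity of $\lambda_A$ at $\mathbf{0}$, and the limits $\tau,\tau'$ exist and land in $T_{N_{j_0}}\subseteq W_{j_0}$ by your threshold bookkeeping. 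Your route buys a much lighter construction --- no exponentially many separation conditions, no manipulation of closures. What the paper's heavier construction buys is recorded in the remark following the theorem: there, injectivity (hence $|\lambda_A(2^\N)|\ge\cont$, and with it Corollaries \ref{|G|<cont:is:NAP} and \ref{|G|<cont:is:TAP}) is obtained with no recourse to Lemma \ref{lemma:Kalton} at all, whereas your argument is inextricably tied to that lemma through the tail estimate.
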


\begin{proof}
Let $A=\{a_k:k\in\N\}$ be an $f_1^\star$-productive sequence in a topological group $G$.
For $X\subseteq G$
we denote by $\overline{X}$ the closure of $X$ in $G$. 

By induction on $n\in\N$ we choose an integer $k_n\in\N$ and an open neighborhood $U_n$ of $e$ with the following properties:
\begin{itemize}
\item[(i$_n$)] if $n\ge 1$, then $k_{n-1}<k_n$,
\item[(ii$_n$)] if $n\ge 1$, then  $a_{k_n}\in U_{n-1}$,
\item[(iii$_n$)] if $n\ge 1$, then  $U_n^2\subseteq U_{n-1}$,
\item[(iv$_n$)] $b_f\overline{U_n^2}\cap b_g\overline{U_n^2}=\emptyset$ whenever $f,g\in 2^{\{0,1,\dots,n\}}$ and $f\not=g$,
where $b_h=\prod_{i=0}^{n} a_{k_i}^{h(i)}$ for $h\in 2^{\{0,1,\dots,n\}}$.
\end{itemize}

For $n=0$ choose $k_0\in\N$ with $a_{k_0}\not = e$ and fix 
an
open neighborhood $U_0$ of $e$ satisfying
(iv$_0$). Note that (i$_0$), (ii$_0$) and (iii$_0$) are trivially satisfied for $n=0$.

Suppose now that an integer $k_m\in\N$ and an  open neighborhood $U_m$ of $e$ have already been constructed so that the
properties (i$_m$), (ii$_m$), (iii$_m$) and (iv$_m$) hold for every integer $m$ with $0\le m\le n$. Let us define an integer
$k_{n+1}\in\N$ and an  open neighborhood $U_{n+1}$ of $e$ satisfying properties (i$_{n+1}$), (ii$_{n+1}$), (iii$_{n+1}$) and (iv$_{n+1}$). It follows from
(iv$_n$) that
\begin{equation}
\label{eq:8.2}
e\not\in \{b_f^{-1} b_g: f,g\in 2^{\{0,1,\dots,n\}}, f\not=g\}.
\end{equation}
Since $\lim_{k\to\infty} a_k=e$ by Lemma \ref{AP:is:null:sequence}, we can  choose $k_{n+1}\in \N$ such that $k_{n}<k_{n+1}$ and
\begin{equation}
\label{eq:8.1}
e\not=a_{k_{n+1}}\in U_{n}\setminus
\{b_f^{-1} b_g: f,g\in 2^{\{0,1,\dots,n\}}, f\not=g\}.
\end{equation}
In particular, (i$_{n+1}$) and (ii$_{n+1}$) hold.

 From \eqref{eq:8.2} and \eqref{eq:8.1} one easily concludes that $b_f\not=b_g$ whenever $f,g\in 2^{\{0,1,\dots,n+1\}}$ and $f\not=g$. This allows us to fix an
open neighborhood $U_{n+1}$ of $e$ satisfying (iii$_{n+1}$) and (iv$_{n+1}$).
The inductive step is now complete.

Since (i$_{n}$) holds for every $n\in\N$, the sequence
$B=\{a_{k_n}:n\in\N\}$ is a subsequence of $A$. Since $A$ is $f_1^\star$-productive in $G$, so is $B$.
Let us show that $B$ is the required subsequence of $A$.

First, we claim that
\begin{equation}
\label{eq:lambda}
\lambda_B(f)\in
\bigcap\left\{ \left(\prod_{i=0}^{n} a_{k_i}^{f(i)}\right)\overline{U_n^2}:n\in\N\right\}
=
\bigcap\left\{b_{f\restriction_{\{0,1,\dots,n\}}}\overline{U_n^2}:n\in\N\right\}.
\end{equation}
To show this, fix $n\in\N$.
Since (ii$_j$) and  (iii$_j$) hold for all $j\in\N$,
for every $m\in\N$ with $m\ge n$, we have
$$
\prod_{i=0}^{m} a_{k_i}^{f(i)}
\in
\left(\prod_{i=0}^{n} a_{k_i}^{f(i)}\right) U_{n}U_{n+1}\dots U_{m-1}
\subseteq
\left(\prod_{i=0}^{n} a_{k_i}^{f(i)}\right)U_{n}^2,
$$
which yields
$$
\lambda_B(f)=
\lim_{m\to\infty} \prod_{i=0}^{m} a_{k_i}^{f(i)}
\subseteq
\overline{\left(\prod_{i=0}^{n} a_{k_i}^{f(i)}\right)U_{n}^2}
=
\left(\prod_{i=0}^{n} a_{k_i}^{f(i)}\right)\overline{U_{n}^2}.
$$
Since
$n\in\N$ was chosen arbitrarily,
this finishes the proof of \eqref{eq:lambda}.

Let $f,g\in 2^\N$ and $f\not=g$. Then there exists $n\in\N$ such that $f'=f\restriction_{\{0,1,\dots,n\}}\not=g\restriction_{\{0,1,\dots,n\}}=g'$.
Therefore, $b_{f'}\overline{U_n^2}\cap b_{g'}\overline{U_n^2}=\emptyset$ by (iv$_n$). Since $\lambda_B(f)\in b_{f'}\overline{U_n^2}$ and $\lambda_B(g)\in
b_{g'}\overline{U_n^2}$, we conclude that $\lambda_B(f)\not=\lambda_B(g)$. This shows that $\lambda_B:2^\N\to G$ is an injection. Furthermore, $\lambda_B$ is continuous by Lemma \ref{lemma:Kalton}. As a continuous injection from a compact space to a Hausdorff space, $\lambda_B$ is a homeomorphic embedding.
\end{proof}

\begin{corollary}
\label{size:of:f_1-productive:groups>c} A topological group  having an $f_1^\star$-productive sequence contains a (subspace homeomorphic to the) Cantor set.
\end{corollary}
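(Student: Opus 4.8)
The plan is to deduce Corollary~\ref{size:of:f_1-productive:groups>c} directly from Theorem~\ref{Cantor:set:theorem}, which is already available to us. The key observation is that the existence of \emph{any} $f_1^\star$-productive sequence in $G$ is enough, since Theorem~\ref{Cantor:set:theorem} hands us a subsequence whose limit map is a topological embedding of $2^\N$ into $G$. So the entire argument is a short chain of implications, and the only genuine content lies in packaging the hypothesis correctly.

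First I would let $A=\{a_n:n\in\N\}$ be an $f_1^\star$-productive sequence in the topological group $G$, as provided by the hypothesis. Applying Theorem~\ref{Cantor:set:theorem} to $A$, I obtain a subsequence $B$ of $A$ such that the associated limit map $\lambda_B:2^\N\to G$ is a homeomorphic embedding. Since $2^\N$, equipped with its usual product (Cantor set) topology, is homeomorphic to the Cantor set, the image $\lambda_B(2^\N)$ is a subspace of $G$ homeomorphic to the Cantor set. This already establishes the conclusion.

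One point I would be careful about is the logical form of the hypothesis. The corollary states that $G$ \emph{has} an $f_1^\star$-productive sequence, so I should simply fix one such sequence at the outset; there is no need to quantify over all sequences or to verify anything about $G$ beyond this single existence assumption. I should also make sure the parenthetical phrasing ``(subspace homeomorphic to the)'' in the statement is honored: the embedding $\lambda_B$ produces precisely such a subspace, namely its range $\lambda_B(2^\N)\subseteq G$.

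I do not anticipate a genuine obstacle here, since Theorem~\ref{Cantor:set:theorem} does all the heavy lifting; the corollary is essentially a restatement of that theorem at the level of the ambient group rather than in terms of the embedding map. The only thing worth flagging for the reader is the standard fact that $2^\N$ with its product topology is homeomorphic to the Cantor set, which I would invoke without proof. Thus the proof reduces to: fix the sequence, invoke Theorem~\ref{Cantor:set:theorem} to get a subsequence $B$ with $\lambda_B$ an embedding, and conclude that $\lambda_B(2^\N)$ is the desired Cantor subspace of $G$.
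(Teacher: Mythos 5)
Your proof is correct and is exactly the paper's intended argument: the corollary follows immediately from Theorem~\ref{Cantor:set:theorem}, since the embedding $\lambda_B$ of $2^\N$ gives the desired Cantor subspace $\lambda_B(2^\N)\subseteq G$. The paper in fact states the corollary without proof, precisely because this one-step deduction is all that is needed.
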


\begin{corollary}\label{|G|<cont:is:NAP}
A topological group of size $<\cont$ does not contain an $f$-productive sequence for any function $f\in \Wi$.
\end{corollary}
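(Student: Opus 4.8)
The plan is to derive the statement as a short consequence of Corollary \ref{size:of:f_1-productive:groups>c} together with the fact that the Cantor set has cardinality $\cont$. Arguing by contradiction, suppose that $G$ is a topological group with $|G| < \cont$ which nevertheless contains an $f$-productive sequence $A = \{a_n : n \in \N\}$ for some $f \in \Wi$.

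First I would observe that such an $A$ is automatically $f_1^\star$-productive. Every $f \in \Wi$ takes values in $(\omega+1)\setminus\{0\}$, so $f(n) \geq 1$ for all $n \in \N$, i.e.\ $f_1 \le f$ (recall that $f_1$ is the least element of $(\Wi, \le)$); hence Lemma \ref{if:f<g} shows that $A$ is $f_1$-productive, and in particular $f_1^\star$-productive. More directly, for every $z \in \mathscr{F}^\star_{f_1} = \{0,1\}^\N$ one has $|z(n)| = z(n) \le 1 \le f(n)$ for all $n$, so $z \in \mathscr{F}_f$; thus the defining clause of $f$-productivity already yields that $\{a_n^{z(n)} : n \in \N\}$ is productive for every such $z$, which is exactly the assertion that $A$ is $f_1^\star$-productive.

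Next I would apply Corollary \ref{size:of:f_1-productive:groups>c} to this $f_1^\star$-productive sequence $A$, concluding that $G$ contains a subspace homeomorphic to the Cantor set $2^\N$. Since $|2^\N| = \cont$, this forces $|G| \geq \cont$, contradicting the hypothesis $|G| < \cont$. Therefore $G$ can contain no $f$-productive sequence for any $f \in \Wi$, as required.

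There is no genuine obstacle in this argument once Corollary \ref{size:of:f_1-productive:groups>c} is in hand; the only point demanding (routine) care is the reduction from arbitrary $f$-productivity to $f_1^\star$-productivity, which rests entirely on the minimality of $f_1$ in $(\Wi, \le)$ and on the inclusion $\mathscr{F}^\star_{f_1} = \{0,1\}^\N \subseteq \mathscr{F}_f$.
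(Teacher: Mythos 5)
Your proposal is correct and follows exactly the route the paper intends: reduce $f$-productivity to $f_1^\star$-productivity via the minimality of $f_1$ in $(\Wi,\le)$ (equivalently, the inclusion $\mathscr{F}^\star_{f_1}\subseteq\mathscr{F}_f$), then invoke Corollary \ref{size:of:f_1-productive:groups>c} to embed a Cantor set and conclude $|G|\ge\cont$. The paper leaves this deduction implicit, but it is the same argument it uses explicitly elsewhere (e.g.\ in Example \ref{NACP:size<c}(iv)).
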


\begin{corollary}\label{|G|<cont:is:TAP}
Every topological group of size $<\cont$ is TAP.
\end{corollary}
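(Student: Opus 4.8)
The plan is to deduce this immediately from Corollary \ref{|G|<cont:is:NAP}. Recall that by Definition \ref{def:TAP} a group $G$ is TAP precisely when it contains no unconditionally $f_\omega$-productive sequence. So I would let $G$ be a topological group with $|G|<\cont$ and argue directly: if $G$ had an unconditionally $f_\omega$-productive sequence $A=\{a_n:n\in\N\}$, then, taking the identity bijection $\varphi=\mathrm{id}_\N$ in Definition \ref{f:productive:sets}, the sequence $A$ itself would be $(f_\omega\circ\mathrm{id})$-productive, that is, $f_\omega$-productive. Since $f_\omega\in\Wi$, this contradicts Corollary \ref{|G|<cont:is:NAP}, which asserts that a group of size $<\cont$ has no $f$-productive sequence for any $f\in\Wi$. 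Hence $G$ admits no unconditionally $f_\omega$-productive sequence and is therefore TAP.

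There is no genuine obstacle at this stage, since all the substance has already been extracted in the descriptive results of this section. The real work is Theorem \ref{Cantor:set:theorem}, which (via Corollary \ref{size:of:f_1-productive:groups>c}) forces any group carrying an $f_1^\star$-productive sequence to have cardinality at least $\cont$; Corollary \ref{|G|<cont:is:NAP} then packages this together with the trivial observation that every $f$-productive sequence (for $f\in\Wi$, so $f\ge f_1$) is in particular $f_1^\star$-productive. The only additional ingredient needed for the present corollary is the elementary implication ``unconditionally $f_\omega$-productive $\Rightarrow$ $f_\omega$-productive,'' which is read off at once from the definition by restricting attention to the identity permutation. I expect the entire argument to occupy a single line.
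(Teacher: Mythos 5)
Your proposal is correct and is exactly the paper's (implicit) argument: the corollary is stated without proof as an immediate consequence of Corollary \ref{|G|<cont:is:NAP}, using only Definition \ref{def:TAP} and the trivial fact that an unconditionally $f_\omega$-productive sequence is $f_\omega$-productive (take the identity bijection), with $f_\omega\in\Wi$. Your surrounding remarks about where the real content lies (Theorem \ref{Cantor:set:theorem} via Corollary \ref{size:of:f_1-productive:groups>c}, plus the observation that $f$-productive implies $f_1^\star$-productive) also match the paper's chain of reasoning.
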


\begin{remark} 
A careful analysis of  the proof of Theorem \ref{Cantor:set:theorem} shows that no recourse to Lemma \ref{lemma:Kalton} is necessary for proving that $|\lambda_A(2^\N)|\ge \cont$ for every $f_1^\star$-productive sequence $A$ in $G$. In particular, Lemma \ref{lemma:Kalton} is not necessary to prove
Corollaries \ref{|G|<cont:is:NAP} and \ref{|G|<cont:is:TAP}.
\end{remark}

\begin{example}\label{NACP:size<c}\label{question1}
Consider the topological group $G=(\Z,\tau_p)$, where $\tau_p$ is the $p$-adic topology on $\Z$. Then:
{\em
\begin{itemize}
\item[(i)]  $G=(\Z,\tau_p)$ is a countable precompact metric group with a linear topology;
\item[(ii)] $G$ is not NSS;
\item[(iii)] $G$ contains an unconditionally $f_\omega$-Cauchy productive sequence;
\item[(iv)] $G$ does not contain an $f_1^\star$-productive sequence, and so $G$ does not contain an $f$-productive sequence for any function $f\in \Wi$; in particular, $G$ is TAP.
\end{itemize}
}
Indeed, (i) is straightforward. Since $G$ is non-discrete and has a liner topology, we get (ii). Item (iii) follows from (i), (ii) and Theorem \ref{metric:NACP:iff:NSS}. Finally, (iv) follows from Corollary
\ref{size:of:f_1-productive:groups>c}, because $G$ is countable.
\end{example}

\begin{remark}
Example \ref{NACP:size<c} demonstrates that:
\begin{itemize}
\item[(i)]
sequential completeness cannot be omitted in Corollary \ref{sequentially:complete:liner:TAP:group:is:discrete},
\item[(ii)]
completeness cannot be dropped in Corollary \ref{linear:complete:corollary}.
\item[(iii)]
local compactness cannot be replaced by precompactness
in Theorem
\ref{zerodim:TAP:compact:is:finite},
\item[(iv)]
compactness cannot be weakened to precompactness
in Corollary \ref{omega:bounded},
and
\item[(v)]
``$f$-productive'' can not be replaced by ``$f$-Cauchy productive'' in Corollary
\ref{|G|<cont:is:NAP}.
\end{itemize}
\end{remark}

\begin{remark}
Combining Corollaries
 \ref{no:f-product:seq:in:nondiscrete:metr:group} and
\ref{size:of:f_1-productive:groups>c},
we obtain the well-known fact that every non-discrete  Weil complete metric group contains a homeomorphic copy of the Cantor set.
\end{remark}

\section{An application to function spaces $C_p(X,G)$: a solution of \cite[Question 11.1]{Sp}}
\label{sec:appl} As an application of our results, we offer a solution of a question from \cite{Sp}. Given a topological group $G$
and a space $X$, the symbol $C_p(X,G)$ denotes the topological subgroup of $G^X$ consisting of all continuous functions from $X$ to $G$.
In \cite{Sp} a space $X$ is called:
\begin{enumerate}
\item[(i)]{\em$G$-regular\/} if for every $x\in X$ and each closed set $F\subseteq X$ with $x\not\in F$ there exist
$f\in\Cp{X}{G}$ and $g\in G\setminus\{e\}$ such that $f(x)=g$ and $f(F)\subseteq\{e\}$;
\item[(ii)]{\em $G^\star$-regular\/} if there exists $g\in G\setminus\{e\}$ such that for every closed set $F\subseteq X$ and each $x\not\in F$ one can
find $f\in\Cp{X}{G}$ such that $f(x)=g$ and $f(F)\subseteq\{e\}$;
\item[(iii)]
{\em $G^{\star\star}$-regular} provided that, whenever $g\in G$, $x\in X$ and $F$ is a closed subset of $X$ satisfying $x\not\in F$,
 there exists $f\in\Cp{X}{G}$ such that $f(x)=g$ and $f(F)\subseteq\{e\}$.
\end{enumerate}

One of the main results from \cite{Sp} says that, for an NSS group $G$, a $G$-regular space $X$ is pseudocompact if and only if the group $C_p(X,G)$ is TAP (see \cite[Theorem 6.5]{Sp}). Demonstrating limits of this result, it was  proved in \cite{Sp} that there exist a precompact TAP group $G$ and
a countably compact $G^\star$-regular space $X$ such that $C_p(X,G)$ is not TAP (\cite[Theorem 6.8]{Sp}). The authors of \cite{Sp} have asked whether for a TAP group $G$ and a $G$-regular ($G^\star$-regular, $G^{\star\star}$-regular) compact space $X$, the group $C_p(X,G)$ must be TAP (\cite[Question 11.1]{Sp}). Our next example answers this question in the negative.

\begin{example}
Let $p$ be a prime number and $X=\{0\}\cup \{1/n:n\in \N\}$ a convergent sequence. Furthermore, let $G$ be any TAP subgroup of $\Z_p$ (for example, the cyclic group $\Z$ from Example \ref{question1} will do). Then {\em $G$ is a precompact metric TAP group and $X$ is
a compact $G^{\star\star}$-regular space such that $C_p(X,G)$ is not TAP.\/} Indeed, since
$$
\{0\}\times\prod_{n\in\N} p^nG = \{f\in C_p(X,G):
f(0)=0
\mbox{ and }
f({1}/{n})\in p^n G
\mbox { for every }
n\in\N\}\subseteq C_p(X,G)
$$
and each $p^nG$ is nontrivial,  we conclude that $C_p(X,G)$ contains a subgroup topologically isomorphic  to an infinite product
$H=\prod_{n\in\N} H_n$ of non-trivial groups $H_n=p^nG$. For every $n\in\N$, choose $h_n\in H$ such that $h_n(m)\not=0$ if and only if
$m=n$. Then the sequence $A=\{h_n:n\in\N\}$ is unconditionally $f_\omega$-productive in $H$ (and thus, in $C_p(X,G)$ as well).
Therefore, $C_p(X,G)$ is not TAP. Since $X$ is zero-dimensional (in the sense of ind), it follows that $X$ is $G^{\star\star}$-regular;
see also \cite[Proposition 2.3(iii)]{Sp}.
\end{example}

\section{Four productivity spectra of a topological group}
\label{four:spectra:section}

In this section we consider a general question of existence of (unconditionally) $f$-productive sequences and unconditionally $f$-Cauchy productive sequences for
various functions $f\in \Wi$.

\begin{proposition}
\label{infinitely:many}
Let $g\in \Wi$ be such that the set $\{n\in\N: g(n)=\omega\}$ is infinite. If a topological group $G$ contains a $g$-productive ($g$-Cauchy productive)
sequence, then $G$ contains also an $f_\omega$-productive (an $f_\omega$-Cauchy productive) sequence.
\end{proposition}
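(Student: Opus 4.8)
The plan is to extract from the given sequence a subsequence carried by precisely those indices where $g$ attains the value $\omega$, and to observe that along this subsequence the weight constraint evaporates. Concretely, let $S=\{n\in\N:g(n)=\omega\}$, which is infinite by hypothesis, and enumerate it in increasing order as $S=\{n_k:k\in\N\}$ with $n_0<n_1<\cdots$. Let $A=\{a_n:n\in\N\}$ be the given $g$-productive (respectively, $g$-Cauchy productive) sequence in $G$, and set $B=\{a_{n_k}:k\in\N\}$. Since $A$ is faithfully indexed and $B$ is a subsequence of it, $B$ is faithfully indexed as well. I claim that $B$ is $f_\omega$-productive (respectively, $f_\omega$-Cauchy productive), which is exactly what is needed.

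To prove the claim I would fix an arbitrary $w:\N\to\Z$; since $f_\omega(k)=\omega\ge|w(k)|$ for every $k$, establishing productivity (respectively, Cauchy productivity) of $\{a_{n_k}^{w(k)}:k\in\N\}$ for all such $w$ is precisely the definition of $f_\omega$-productivity of $B$. The key device is to lift $w$ back to a weight on all of $A$ by padding with zeros: define $z:\N\to\Z$ by $z(n_k)=w(k)$ for $n_k\in S$ and $z(n)=0$ for $n\in\N\setminus S$. Then $|z|\le g$, because $|z(n_k)|\le\omega=g(n_k)$ on $S$, while $z(n)=0\le g(n)$ off $S$ (here the hypothesis $g\in\Wi$, hence $g(n)\ge1$ everywhere, is used). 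Consequently the sequence of partial products $\left\{\prod_{n=0}^m a_n^{z(n)}:m\in\N\right\}$ converges (respectively, is left Cauchy) by $g$-productivity (respectively, $g$-Cauchy productivity) of $A$.

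The remaining step is bookkeeping: because $a_n^{z(n)}=a_n^0=e$ for $n\notin S$, only the factors indexed by $S$ survive, so $\prod_{n=0}^{n_j}a_n^{z(n)}=\prod_{k=0}^{j}a_{n_k}^{w(k)}$ for every $j\in\N$. Thus the partial products of $\{a_{n_k}^{w(k)}:k\in\N\}$ form a subsequence of the partial products of $\{a_n^{z(n)}:n\in\N\}$. A subsequence of a convergent sequence converges to the same limit, and a subsequence of a left Cauchy sequence is again left Cauchy; either way $\{a_{n_k}^{w(k)}:k\in\N\}$ is productive (respectively, Cauchy productive). As $w$ was arbitrary, $B$ is $f_\omega$-productive (respectively, $f_\omega$-Cauchy productive).

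I do not anticipate a genuine obstacle here. The entire argument rests on the single observation that an $\omega$-valued weight imposes no restriction, so zero-padding turns an unconstrained multiplier on the subsequence into an admissible multiplier on the full sequence. The only points demanding any care are administrative — verifying $|z|\le g$ (where $g\ge1$ is needed off $S$) and identifying the partial products of $B$ as a subsequence of those of $A$ — both of which are routine.
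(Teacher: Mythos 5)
Your proof is correct and takes essentially the same approach as the paper: both arguments pass to the subsequence indexed by $\{n\in\N:g(n)=\omega\}$ and conclude that it is $f_\omega$-productive (respectively, $f_\omega$-Cauchy productive). The paper simply asserts this final step, while you supply the routine justification via zero-padding the multiplier and noting that the resulting partial products of the subsequence form a subsequence of those of the full sequence.
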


\begin{proof}
Let $\{a_k:k\in\N\}$ be a $g$-productive sequence (a $g$-Cauchy productive sequence) in $G$. Choose a strictly increasing sequence $\{k_n:n\in\N,n\ge m\}\subseteq\{n\in\N: g(n)=\omega\}$. Since $\{a_k:k\in\N\}$ is $g$-productive ($g$-Cauchy productive), the subsequence $\{a_{k_n}:n\in\N, n\ge m\}$ of the sequence $\{a_k:k\in\N\}$ is $f_\omega$-productive ($f_\omega$-Cauchy productive, respectively).
\end{proof}

\begin{proposition}
\label{finitely:many:omegas}
If a topological group $G$ contains an $f$-productive sequence (an $f$-Cauchy productive sequence) for some unbounded function
$f\in \W$, then $G$ contains a $g$-productive sequence (a $g$-Cauchy productive sequence) for every function $g\in\Wi$ such that the set $\{n\in\N: g(n)=\omega\}$ is finite.
\end{proposition}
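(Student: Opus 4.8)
The plan is to reduce to the case $g\in\W$ (finitely valued) and then transport productivity from $A$ along a carefully chosen subsequence by a multiplier substitution. Fix an $f$-productive (respectively, $f$-Cauchy productive) sequence $A=\{a_n:n\in\N\}$ with $f\in\W$ unbounded, and let $g\in\Wi$ have $S=\{n\in\N:g(n)=\omega\}$ finite. Set $N=1+\max S$ (with $N=0$ when $S=\emptyset$), so that $S\subseteq\{0,\dots,N-1\}$ and $g$ is finite on $[N,\infty)$; define $g'\in\W$ by $g'(j)=g(N+j)$. The key structural observation is that the finitely many $\omega$-valued coordinates of $g$ all lie in the initial segment $\{0,\dots,N-1\}$, so in any partial product they contribute only a fixed left factor, which can affect neither convergence nor the left-Cauchy property.

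First I would record that an unbounded $f\in\W$ has $\{n\in\N:f(n)\ge M\}$ infinite for each $M\in\N$, for otherwise $f(n)<M$ for all but finitely many $n$ and $f$ would be bounded. Using this, choose recursively integers $N\le n_0<n_1<\cdots$ with $f(n_j)\ge g'(j)$ for all $j\in\N$. I claim the subsequence $B=\{a_{n_j}:j\in\N\}$ is $g'$-productive (respectively, $g'$-Cauchy productive). Indeed, given $\tilde z:\N\to\Z$ with $|\tilde z|\le g'$, define $w:\N\to\Z$ by $w(n_j)=\tilde z(j)$ and $w(n)=0$ for $n\notin\{n_j:j\in\N\}$; then $|w|\le f$, since $|w(n_j)|=|\tilde z(j)|\le g'(j)\le f(n_j)$. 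As $A$ is $f$-productive (respectively, $f$-Cauchy productive), the sequence $\{\prod_{n=0}^m a_n^{w(n)}:m\in\N\}$ converges (respectively, is left Cauchy). Because $w$ is supported on the increasing set $\{n_j\}$, one has $\prod_{n=0}^{n_J}a_n^{w(n)}=\prod_{j=0}^{J}a_{n_j}^{\tilde z(j)}$, so the partial products of $B$ form a subsequence of the preceding sequence and therefore converge (respectively, are left Cauchy).

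Next I would assemble the desired sequence. Let $C=\{c_n:n\in\N\}$ with $c_n=a_n$ for $n<N$ and $c_n=a_{n_{n-N}}$ for $n\ge N$; the indices $0,\dots,N-1,n_0,n_1,\dots$ are pairwise distinct and $A$ is faithfully indexed, so $C$ is faithfully indexed. Given $z:\N\to\Z$ with $|z|\le g$, put $\tilde z(j)=z(N+j)$, so $|\tilde z|\le g'$, and let $P=\prod_{n=0}^{N-1}c_n^{z(n)}$, a fixed element of $G$. For $m\ge N$ we have $\prod_{n=0}^m c_n^{z(n)}=P\cdot\prod_{j=0}^{m-N}a_{n_j}^{\tilde z(j)}$, and the right-hand factor converges (respectively, is left Cauchy) by the previous step. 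Since left translation by $P$ is a homeomorphism, and since $(Px_l)^{-1}(Px_m)=x_l^{-1}x_m$ shows that left translation preserves the left-Cauchy property, the partial products of $C$ converge (respectively, are left Cauchy). Hence $C$ is $g$-productive (respectively, $g$-Cauchy productive), completing the argument.

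The only delicate point is the bookkeeping in the subsequence step: one must check that, for the multiplier $w$, the partial products $\prod_{j=0}^{J}a_{n_j}^{\tilde z(j)}$ really are the values $\prod_{n=0}^{n_J}a_n^{w(n)}$ and so sit inside a convergent (or left Cauchy) sequence; once the indices $n_j$ are chosen to be strictly increasing this is immediate. The other conceptual ingredient, isolating the finitely many $\omega$-valued coordinates as a single fixed left factor $P$, is exactly what lets an unbounded finitely valued weight $f$ simulate finitely many infinite powers.
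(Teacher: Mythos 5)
Your proof is correct and takes essentially the same route as the paper's: use unboundedness of $f$ to select a strictly increasing sequence of indices on which $f$ dominates the finite values of $g$, transfer (Cauchy) productivity to that subsequence by zero-padding the multiplier, and absorb the finitely many positions where $g$ takes the value $\omega$. The only difference is presentational: where the paper delegates the last step to Lemma \ref{if:f<g} (applied with $g\le^* h$ for $h=f\circ k$), you prove that step inline via the fixed left factor $P$ together with the observation that left translation preserves both convergence and the left Cauchy property.
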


\begin{proof}
By our assumption on $g$, there exists $m\in\N$ such that $g(n)\not=\omega$ (and thus, $g(n)\in\N$) for all $n\in\N$ with $n\ge m$.
Let $\{a_k:k\in\N\}$ be an $f$-productive sequence (an $f$-Cauchy productive sequence) in $G$ for an unbounded function $f\in\W$.
Since $f$ is unbounded, by induction on $n\in\N$ we can choose a strictly increasing sequence $\{k_n:n\in\N,n\ge m\}\subseteq\N$ such that
$f(k_n)\ge g(n)$ for every $n\in\N$ with $n\ge m$. Define $h:\N\to\N$ by $h(n)=f(k_n)$ for all $n\in\N$. Since $\{a_k:k\in\N\}$ is $f$-productive
($f$-Cauchy productive), the subsequence $\{a_{k_n}:n\in\N, n\ge m\}$ of the sequence $\{a_k:k\in\N\}$ is $h$-productive ($h$-Cauchy productive). Since $g\le^*h$, from Lemma
\ref{if:f<g}
we conclude that the sequence $\{a_{k_n}:n\in\N\}$ in $G$ is $g$-productive ($g$-Cauchy productive) as well.
\end{proof}

\begin{definition}
\label{spectrum}
Let $G$ be a topological group.
\begin{itemize}
\item[(i)]
We call the sets
$$
\mathscr{P}(G)=\{f\in\Wi: G
\mbox{ contains an
$f$-productive sequence}\}
$$
and
$$
\mathscr{P}_C(G)=\{f\in\Wi: G
\mbox{ contains an
$f$-Cauchy productive sequence}\}
$$
the {\em productivity spectrum of $G$\/} and the {\em Cauchy productivity spectrum of $G$\/},
respectively.
\item[(ii)]
We call the sets
$$
\mathscr{P}_u(G)=\{f\in\Wi: G
\mbox{ contains an unconditionally $f$-productive sequence}\}
$$
and
$$
\mathscr{P}_{uC}(G)=\{f\in\Wi: G
\mbox{ contains an unconditionally $f$-Cauchy productive sequence}\}
$$
the {\em unconditional productivity spectrum of $G$\/} and the {\em unconditional Cauchy productivity spectrum of $G$\/}, respectively.
\item[(iii)]
When $G$ is abelian, we replace the word ``productive'' with the word ``summable'' in items (i) and (ii) to get the abelian version of these notions.
\end{itemize}
\end{definition}

\begin{lemma}
\label{general:properties:of:spectra}
Let $G$ be a topological group.
\begin{itemize}
\item[(i)] $\mathscr{P}_u(G)\subseteq \mathscr{P}(G)\subseteq \mathscr{P}_C(G)$.
\item[(ii)] $\mathscr{P}_{u}(G)\subseteq \mathscr{P}_{uC}(G)\subseteq \mathscr{P}_C(G)$.
\item[(iii)] If $G$ is abelian, then $\mathscr{P}_u(G) = \mathscr{P}(G)$ and
$\mathscr{P}_{uC}(G)=\mathscr{P}_C(G)$.
\item[(iv)] If $G$ is sequentially Weil complete, then $\mathscr{P}(G)= \mathscr{P}_C(G)$
and $\mathscr{P}_{u}(G)=\mathscr{P}_{uC}(G)$.
\end{itemize}
\end{lemma}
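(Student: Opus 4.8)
The plan is to note that all four spectra are defined purely through the existence of a witnessing sequence, so every inclusion reduces to the statement that one and the same sequence carrying a stronger productivity property also carries a weaker one; the two claimed equalities will then each need a converse passage, supplied by the lemmas already proved. Thus the whole lemma is bookkeeping over Definitions \ref{def:ap-sequence} and \ref{f:productive:sets}, Lemmas \ref{easy:connections} and \ref{easy:connections:Cauchy}, and Corollary \ref{abelian:corollary}, and I do not expect a genuine obstacle beyond keeping track of which property is the stronger one in each inclusion.

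For items (i) and (ii) I would argue straight from the definitions. For $\mathscr{P}_u(G)\subseteq\mathscr{P}(G)$ and $\mathscr{P}_{uC}(G)\subseteq\mathscr{P}_C(G)$, I take a sequence $A$ witnessing membership in the unconditional spectrum and specialize the bijection $\varphi$ in Definition \ref{f:productive:sets} to the identity; then $A$ itself is $f$-productive (respectively, $f$-Cauchy productive), so it witnesses membership in the corresponding conditional spectrum. For the remaining inclusions $\mathscr{P}(G)\subseteq\mathscr{P}_C(G)$ and $\mathscr{P}_u(G)\subseteq\mathscr{P}_{uC}(G)$, I invoke Lemma \ref{easy:connections}(i) and Lemma \ref{easy:connections:Cauchy}(i), which say precisely that an $f$-productive (respectively, unconditionally $f$-productive) sequence is automatically $f$-Cauchy productive (respectively, unconditionally $f$-Cauchy productive). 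Chaining these yields the two displayed chains.

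For item (iii) the two inclusions $\mathscr{P}_u(G)\subseteq\mathscr{P}(G)$ and $\mathscr{P}_{uC}(G)\subseteq\mathscr{P}_C(G)$ are already established in (i) and (ii), so only the reverse inclusions remain, and this is exactly where abelianness enters. Given $f\in\mathscr{P}(G)$, a witnessing $f$-summable sequence is unconditionally $f$-summable by Corollary \ref{abelian:corollary}, whence $f\in\mathscr{P}_u(G)$; the identity $\mathscr{P}_{uC}(G)=\mathscr{P}_C(G)$ is obtained verbatim using the $f$-Cauchy summable half of the same corollary.

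For item (iv) the forward inclusions $\mathscr{P}(G)\subseteq\mathscr{P}_C(G)$ and $\mathscr{P}_u(G)\subseteq\mathscr{P}_{uC}(G)$ again come from (i) and (ii). For the reverse inclusions I use sequential Weil completeness: given $f\in\mathscr{P}_C(G)$, a witnessing $f$-Cauchy productive sequence is $f$-productive by Lemma \ref{easy:connections}(ii), so $f\in\mathscr{P}(G)$; and given $f\in\mathscr{P}_{uC}(G)$, a witnessing unconditionally $f$-Cauchy productive sequence is unconditionally $f$-productive by Lemma \ref{easy:connections:Cauchy}(ii), so $f\in\mathscr{P}_u(G)$. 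This closes both equalities, completing the proof.
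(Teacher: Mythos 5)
Your proposal is correct and takes essentially the same route as the paper: the paper's proof is a one-line citation of exactly the ingredients you use, namely Definitions \ref{def:ap-sequence}, \ref{f:productive:sets} and \ref{spectrum}, Lemmas \ref{easy:connections} and \ref{easy:connections:Cauchy}, and Corollary \ref{abelian:corollary}. Your write-up merely makes explicit the bookkeeping (specializing the bijection to the identity for the unconditional-to-plain inclusions, and splitting each equality into a forward inclusion from (i)/(ii) plus a converse from the cited lemma) that the paper leaves to the reader.
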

\begin{proof}
Item (i) follows from Definitions \ref{f:productive:sets}(i), \ref{spectrum} and Lemma \ref{easy:connections}(i);
item (ii) follows from Lemma \ref{easy:connections:Cauchy}(i) and Definitions \ref{f:productive:sets}, \ref{spectrum};
item (iii) follows from Definition \ref{spectrum} and Corollary \ref{abelian:corollary};
item (iv) follows from Definition \ref{spectrum} and Lemmas \ref{easy:connections}(ii), \ref{easy:connections:Cauchy}(ii).
\end{proof}

\begin{definition}
\label{star:of:the:family}
For $\mathscr{A}\subseteq \Wi$, define $\mathscr{A}^*=\{g\in \Wi: g=^* f$ for some $f\in\mathscr{A}\}$.
\end{definition}

One can easily check that $\mathscr{A}\mapsto \mathscr{A}^*$ is a closure operator on
$\Wi$. Our next lemma says that the four spectra from Definition \ref{spectrum}
are closed sets with respect to this closure operator.

\begin{lemma}
\label{closed:under:taling:stars}
$\mathscr{P}(G)=\mathscr{P}(G)^*$, $\mathscr{P}_C(G)=\mathscr{P}_C(G)^*$,
$\mathscr{P}_u(G)=\mathscr{P}_u(G)^*$ and $\mathscr{P}_{uC}(G)=\mathscr{P}_{uC}(G)^*$.
\end{lemma}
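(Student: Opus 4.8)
The plan is to reduce all four equalities to the monotonicity results already available, namely Lemma \ref{if:f<g} and its unconditional analogue Lemma \ref{unconditional:if:f<g}. Since the excerpt records that $\mathscr{A}\mapsto\mathscr{A}^*$ is a closure operator, each of the four spectra is automatically contained in its own star; hence in every case only the reverse inclusion needs to be established. So the whole lemma comes down to showing that membership in a spectrum is preserved when a weight function is replaced by one that agrees with it on a tail.

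For the equality $\mathscr{P}(G)=\mathscr{P}(G)^*$ I would take an arbitrary $g\in\mathscr{P}(G)^*$ and, invoking Definition \ref{star:of:the:family}, fix $f\in\mathscr{P}(G)$ with $g=^*f$. The defining condition $g=^*f$ gives in particular $g\le^*f$, while $f\in\mathscr{P}(G)$ furnishes an $f$-productive sequence $A$ in $G$. Lemma \ref{if:f<g} then yields that $A$ is $g$-productive as well, so $g\in\mathscr{P}(G)$. This proves $\mathscr{P}(G)^*\subseteq\mathscr{P}(G)$, and combined with the trivial inclusion $\mathscr{P}(G)\subseteq\mathscr{P}(G)^*$ it gives the claimed equality.

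The remaining three equalities follow from the very same two-line argument, only the transfer lemma invoked changes. The $f$-Cauchy productive clause of Lemma \ref{if:f<g} handles $\mathscr{P}_C(G)$, and the two forms of Lemma \ref{unconditional:if:f<g} handle $\mathscr{P}_u(G)$ (its unconditionally $f$-productive clause) and $\mathscr{P}_{uC}(G)$ (its unconditionally $f$-Cauchy productive clause). I expect no real obstacle here: the substance of the statement is precisely that $=^*$ can only weaken a weight function on a cofinite tail, and Lemmas \ref{if:f<g} and \ref{unconditional:if:f<g} already guarantee that all four productivity notions are insensitive to such tail-weakening.
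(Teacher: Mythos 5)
Your proof is correct and follows exactly the paper's own route: the trivial inclusion comes from the closure-operator property of $\mathscr{A}\mapsto\mathscr{A}^*$, and the reverse inclusions are obtained by observing that $g=^*f$ implies $g\le^*f$ and then invoking Lemma \ref{if:f<g} for $\mathscr{P}(G)$ and $\mathscr{P}_C(G)$, and Lemma \ref{unconditional:if:f<g} for $\mathscr{P}_u(G)$ and $\mathscr{P}_{uC}(G)$. The paper states this in one line; you have merely spelled out the same argument in full.
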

\begin{proof}
The first two equalities follow from Definition \ref{spectrum} and Lemma \ref{if:f<g}, and the last two equalities follow from Definition \ref{spectrum} and Lemma \ref{unconditional:if:f<g}.
\end{proof}

Let
$$
\mathscr{B}=\{f\in \W: \exists\ k\in\N\
\forall\ n\in\N\
f(n)\le k\}
$$
be the set of all bounded functions.

\begin{theorem}
\label{three:alternatives}
Let $G$ be a  topological group.
\begin{itemize}
\item[(i)] Either $\mathscr{P}(G)\subseteq \mathscr{B}^*$ or $\mathscr{P}(G)\in\{\W^*, \Wi\}$.
\item[(ii)] Either $\mathscr{P}_C(G)\subseteq \mathscr{B}^*$ or $\mathscr{P}_C(G)\in\{\W^*, \Wi\}$.
\end{itemize}
\end{theorem}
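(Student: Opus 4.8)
The plan is to prove (i) by splitting into three cases according to how ``tall'' the functions realized in $\mathscr{P}=\mathscr{P}(G)$ are, and then to observe that (ii) follows by the identical argument. I would begin with two reformulations that drive everything. First, Lemma~\ref{if:f<g} shows that $\mathscr{P}$ is downward closed under $\le^*$: if $f\in\mathscr{P}$ and $g\le^* f$, then $g\in\mathscr{P}$ (this is essentially $\mathscr{P}=\mathscr{P}^*$, Lemma~\ref{closed:under:taling:stars}). Second, I would record the combinatorial meaning of the two reference families: $\W^*$ is exactly the set of $f\in\Wi$ with $\{n\in\N:f(n)=\omega\}$ finite, while $\mathscr{B}^*$ is exactly the set of $f$ that are eventually dominated by a constant. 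The recurring technical device will be \emph{truncation}: given $f$ with only finitely many $\omega$-values, replacing each $\omega$ by $1$ produces a function $f'\in\W$ with $f'\le f$ and $f=^* f'$, so that $f'\in\mathscr{P}$ whenever $f\in\mathscr{P}$.

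First I would dispose of the case where some $f\in\mathscr{P}$ has $\{n:f(n)=\omega\}$ infinite. Here Proposition~\ref{infinitely:many} yields $f_\omega\in\mathscr{P}$, and since $f_\omega$ is the maximum of $(\Wi,\le)$, every $g\in\Wi$ satisfies $g\le f_\omega\le^* f_\omega$, so downward closure forces $\mathscr{P}=\Wi$. In the complementary case every $f\in\mathscr{P}$ has only finitely many $\omega$-values, that is $\mathscr{P}\subseteq\W^*$, and I would split on whether $\mathscr{P}\cap\W$ contains an unbounded function. If it does, then applying Proposition~\ref{finitely:many:omegas} to such an unbounded $f\in\W\cap\mathscr{P}$ gives a $g$-productive sequence for every $g$ with $\{n:g(n)=\omega\}$ finite, that is $\W^*\subseteq\mathscr{P}$; together with $\mathscr{P}\subseteq\W^*$ this yields $\mathscr{P}=\W^*$. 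If it does not, then for an arbitrary $f\in\mathscr{P}$ its truncation $f'\in\W\cap\mathscr{P}$ is bounded, so $f'\in\mathscr{B}$, and $f=^* f'$ gives $f\in\mathscr{B}^*$; hence $\mathscr{P}\subseteq\mathscr{B}^*$.

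These three cases are exhaustive and each lands in precisely one of the stated alternatives, which proves (i). I do not expect a genuine obstacle, since the substantive content is already carried by Propositions~\ref{infinitely:many} and~\ref{finitely:many:omegas}; the only care needed is bookkeeping, namely identifying $\W^*$ and $\mathscr{B}^*$ with the combinatorial conditions above and checking at each step that the truncated function remains in $\mathscr{P}$, which is exactly where downward closure under $\le^*$ is invoked. Finally, for (ii) I would repeat the argument verbatim, replacing ``productive'' by ``Cauchy productive'' throughout and using the parenthetical Cauchy-productive versions of Propositions~\ref{infinitely:many} and~\ref{finitely:many:omegas} together with the Cauchy clause of Lemma~\ref{if:f<g}.
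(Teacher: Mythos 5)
Your proof is correct and follows essentially the same route as the paper's: the same dichotomy on whether some member of $\mathscr{P}(G)$ takes the value $\omega$ infinitely often (settled by Proposition~\ref{infinitely:many} plus downward closure under $\le^*$ from Lemma~\ref{if:f<g}), with the finite-$\omega$ case split further and resolved by Proposition~\ref{finitely:many:omegas}. Your subcase criterion --- whether $\mathscr{P}(G)\cap\W$ contains an unbounded function, handled via truncation --- is merely the contrapositive packaging of the paper's step of choosing $g\in\mathscr{P}(G)\setminus\mathscr{B}^*$ and writing $g=^*f$ for an unbounded $f\in\W$, so the two arguments coincide in substance.
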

\begin{proof}
(i) We consider two cases.

\medskip
{\em Case 1\/}. {\sl The set $\{n\in\N:h(n)=\omega\}$ is finite for every $h\in \mathscr{P}(G)$.\/}
 From this and Definition \ref{star:of:the:family} one easily concludes that
\begin{equation}
\label{eq:20}
\mathscr{P}(G)\subseteq\W^*.
\end{equation}
Suppose that $\mathscr{P}(G)\setminus \mathscr{B}^*\not=\emptyset$ and choose $g\in \mathscr{P}(G)\setminus \mathscr{B}^*$.
Since $g\in \mathscr{P}(G)\subseteq\W^*$, there exists $f\in\W$ such that $g=^* f$.
Since $f=^*g$ and $g\not\in\mathscr{B}^*$, the function $f\in\W$ is unbounded.
Since $g\in \mathscr{P}(G)$, there exists a $g$-productive sequence in $G$.
Since $f=^* g$, the same sequence is also $f$-productive by Lemma \ref{if:f<g}. Applying Proposition \ref{finitely:many:omegas}, we conclude that
$\W^*\subseteq \mathscr{P}(G)$. Combining this with \eqref{eq:20}, we get $\mathscr{P}(G)=\W^*$.

\medskip
{\em Case 2\/}. {\sl The set $\{n\in\N:g(n)=\omega\}$ is infinite for some $g\in \mathscr{P}(G)$.\/}
In this case $f_\omega\in \mathscr{P}(G)$ by Proposition \ref{infinitely:many}. Since $f\le f_\omega$ for every $f\in\Wi$, from Lemma \ref{if:f<g} we obtain
$\Wi\subseteq \mathscr{P}(G)$. Since the inclusion $\mathscr{P}(G)\subseteq \Wi$ holds trivially, we get
$\mathscr{P}(G)=\Wi$.

(ii) The proof similar to that of (i) is left to the reader.
\end{proof}

\begin{corollary}
\label{four:abelian:spectra}
Let $G$ be an abelian topological group. Then:
\begin{itemize}
\item[(i)] $\mathscr{P}(G)\in\{\emptyset, \mathscr{B}^*, \W^*, \Wi\}$;
\item[(ii)] $\mathscr{P}_C(G)\in\{\emptyset, \mathscr{B}^*, \W^*, \Wi\}$.
\end{itemize}
\end{corollary}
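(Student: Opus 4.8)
The plan is to treat Corollary \ref{four:abelian:spectra} as a refinement of Theorem \ref{three:alternatives}. That theorem already does most of the work: for part~(i) it asserts that either $\mathscr{P}(G)\in\{\W^*,\Wi\}$ — in which case two of the four listed values are attained and we are done — or else $\mathscr{P}(G)\subseteq\mathscr{B}^*$. So the only genuine task is to show that, in the abelian case, the possibility $\mathscr{P}(G)\subseteq\mathscr{B}^*$ forces $\mathscr{P}(G)$ to be exactly $\emptyset$ or $\mathscr{B}^*$; equivalently, that a \emph{nonempty} $\mathscr{P}(G)$ sitting inside $\mathscr{B}^*$ must fill up all of $\mathscr{B}^*$. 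Part~(ii) will then be the verbatim Cauchy analogue, replacing ``summable'' by ``Cauchy summable'' and citing the Cauchy halves of the same results.

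The key observation is that for \emph{bounded} weight functions, $f$-summability collapses to a single notion in an abelian group. Concretely, Proposition \ref{bounded:f} says that for every bounded $f\in\W$ a sequence is $f$-summable if and only if it is $f_1^\star$-summable. Hence all bounded functions determine the \emph{same} summability property. I would exploit this as follows: assuming $\mathscr{P}(G)\subseteq\mathscr{B}^*$ and $\mathscr{P}(G)\neq\emptyset$, fix any $g\in\mathscr{P}(G)$. Since $g\in\mathscr{B}^*$, there is a bounded $f_0\in\mathscr{B}$ with $g=^* f_0$, so in particular $f_0\le^* g$; by Lemma \ref{if:f<g} the witnessing $g$-summable sequence is also $f_0$-summable, giving $f_0\in\mathscr{P}(G)\cap\mathscr{B}$. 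Now let $A$ be an $f_0$-summable sequence. By Proposition \ref{bounded:f} the sequence $A$ is $f_1^\star$-summable, and then, applying Proposition \ref{bounded:f} a second time to an arbitrary $h\in\mathscr{B}$, the sequence $A$ is $h$-summable. Thus $h\in\mathscr{P}(G)$ for every $h\in\mathscr{B}$, i.e. $\mathscr{B}\subseteq\mathscr{P}(G)$.

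Finally I would pass to the closure. Since $\mathscr{A}\mapsto\mathscr{A}^*$ is a closure operator (hence monotone) and $\mathscr{P}(G)=\mathscr{P}(G)^*$ by Lemma \ref{closed:under:taling:stars}, the inclusion $\mathscr{B}\subseteq\mathscr{P}(G)$ yields $\mathscr{B}^*\subseteq\mathscr{P}(G)^*=\mathscr{P}(G)$. Combined with the standing hypothesis $\mathscr{P}(G)\subseteq\mathscr{B}^*$, this gives $\mathscr{P}(G)=\mathscr{B}^*$, completing the trichotomy in Case~1 and hence establishing (i); (ii) follows by the same argument with the Cauchy versions of Proposition \ref{bounded:f}, Lemma \ref{if:f<g}, and Lemma \ref{closed:under:taling:stars}. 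I do not anticipate a serious obstacle here, since the statement is essentially an assembly of earlier results; the only points requiring care are the bookkeeping distinguishing $=^*$ from $\le^*$ when invoking Lemma \ref{if:f<g}, and the recognition that Proposition \ref{bounded:f} is precisely the ``flattening'' of $\mathscr{B}$ on which the whole argument turns, so that commutativity of $G$ enters the proof only through that proposition.
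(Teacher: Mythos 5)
Your proposal is correct and follows essentially the same route as the paper's own proof: reduce to the case $\mathscr{P}(G)\subseteq\mathscr{B}^*$ via Theorem \ref{three:alternatives}(i), use Lemma \ref{if:f<g} to replace a $g\in\mathscr{P}(G)$ by a bounded $f_0=^*g$, apply Proposition \ref{bounded:f} to get $\mathscr{B}\subseteq\mathscr{P}(G)$, and close up with Lemma \ref{closed:under:taling:stars}. The only difference is cosmetic: you make explicit the double application of Proposition \ref{bounded:f} (through $f_1^\star$-summability) that the paper compresses into a single citation, and you handle (ii) exactly as the paper does, by invoking the Cauchy versions of the same lemmas.
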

\begin{proof}
(i)
Suppose that $\mathscr{P}(G)\notin\{\emptyset, \W^*, \Wi\}$.
Then
\begin{equation}
\label{eq:16}
\mathscr{P}(G)\subseteq \mathscr{B}^*
\end{equation}
by Theorem \ref{three:alternatives}(i), and we can also fix $g\in \mathscr{P}(G)$. Then $G$ contains a $g$-productive sequence. Since $g\in \mathscr{B}^*$,
$g=^*f$ for some $f\in\mathscr{B}$. By Lemma \ref{if:f<g}, the same sequence is also $f$-productive. Applying Proposition \ref{bounded:f}, we conclude that
$\mathscr{B}\subseteq \mathscr{P}(G)$. Therefore, $\mathscr{B}^*\subseteq \mathscr{P}(G)^*=\mathscr{P}(G)$ by Lemma \ref{closed:under:taling:stars}.
 From this and \eqref{eq:16}, we obtain $\mathscr{P}(G)= \mathscr{B}^*$.

(ii) has a similar proof that is omitted.
\end{proof}

\begin{proposition}
\label{general:estimate:for:Pu}
Let $G$ be a topological group. Then:
\begin{itemize}
\item[(i)] either $\mathscr{P}_u(G)\subseteq \W^*$ or $\mathscr{P}_u(G)= \Wi$;
\item[(ii)] either $\mathscr{P}_{uC}(G)\subseteq \W^*$ or $\mathscr{P}_{uC}(G)= \Wi$.
\end{itemize}
\end{proposition}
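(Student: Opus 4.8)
The plan is to mirror the proof of Theorem \ref{three:alternatives}, splitting on whether some function in the unconditional spectrum takes the value $\omega$ infinitely often, but with the appeal to Proposition \ref{infinitely:many} replaced by an unconditional counterpart obtained through Lemma \ref{AP:set:has:AP:subsets}. I would carry out (i) in detail; part (ii) is identical with ``productive'' replaced by ``Cauchy productive'' throughout.

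First I would dispose of the easy case: suppose that for every $h\in\mathscr{P}_u(G)$ the set $\{n\in\N:h(n)=\omega\}$ is finite. Given such an $h$, pick $i\in\N$ with $h(n)\ne\omega$ for all $n\ge i$, and let $f\in\W$ agree with $h$ on $\{n\in\N:n\ge i\}$ and take the value $1$ below $i$. Then $f\in\W$ and $h=^* f$, so $h\in\W^*$ by Definition \ref{star:of:the:family}. Hence $\mathscr{P}_u(G)\subseteq\W^*$, which is the first alternative.

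The substantive case is when some $g\in\mathscr{P}_u(G)$ satisfies that $S=\{n\in\N:g(n)=\omega\}$ is infinite; here the goal is to show $f_\omega\in\mathscr{P}_u(G)$. Fix an unconditionally $g$-productive sequence $A=\{a_k:k\in\N\}$ and let $\iota:\N\to S$ be the increasing enumeration of $S$. The key step is to prove that the subsequence $B=\{a_{\iota(n)}:n\in\N\}$ is unconditionally $f_\omega$-productive. By Lemma \ref{AP:set:has:AP:subsets} applied to $B$, it suffices to check that $\{a_{\iota(\psi(n))}:n\in\N\}$ is $f_\omega$-productive for every injection $\psi:\N\to\N$. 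But $\iota\circ\psi:\N\to\N$ is itself an injection whose image is contained in $S$, so applying Lemma \ref{AP:set:has:AP:subsets} to $A$ shows that $\{a_{(\iota\circ\psi)(n)}:n\in\N\}$ is $(g\circ\iota\circ\psi)$-productive; since $\iota\circ\psi$ lands in $S$ we have $g\circ\iota\circ\psi=f_\omega$, which is exactly what is required. Thus $f_\omega\in\mathscr{P}_u(G)$. Finally, since $f\le f_\omega$ for every $f\in\Wi$, Lemma \ref{unconditional:if:f<g} converts the unconditionally $f_\omega$-productive sequence $B$ into an unconditionally $f$-productive one for each such $f$, so $\Wi\subseteq\mathscr{P}_u(G)$ and hence $\mathscr{P}_u(G)=\Wi$, the second alternative.

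The one point that needs genuine care is precisely this unconditional passage to a subsequence, which is why Proposition \ref{infinitely:many} cannot be quoted verbatim: that proposition only yields an $f_\omega$-productive sequence, i.e.\ membership in $\mathscr{P}(G)$, whereas here I need membership in the smaller set $\mathscr{P}_u(G)$. The obstacle is to thread the ``for every injection'' quantifier hidden in unconditional productivity through the composition with the enumeration $\iota$ of $S$, and Lemma \ref{AP:set:has:AP:subsets}, used once in each direction, is the device that lets the composite injection $\iota\circ\psi$ inherit the needed productivity from $A$ while collapsing the weight $g\circ\iota\circ\psi$ to $f_\omega$ on the support $S$. Everything else is the same bookkeeping as in Theorem \ref{three:alternatives}.
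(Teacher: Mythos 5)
Your proof is correct and follows essentially the same route as the paper: the paper's Case 2 likewise fixes an injection $\sigma:\N\to S$ and applies Lemma \ref{AP:set:has:AP:subsets} to $A$ with the composite map into $S$, concluding that $\{a_{\sigma(n)}:n\in\N\}$ is unconditionally $f_\omega$-productive before finishing with Lemma \ref{unconditional:if:f<g}. The only cosmetic difference is that you verify unconditional productivity of the subsequence via injections (invoking the lemma in both directions), while the paper checks arbitrary bijections directly against the definition.
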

\begin{proof}
(i) We consider two cases.

\medskip
{\em Case 1\/}. {\sl The set $\{n\in\N:f(n)=\omega\}$ is finite for every $f\in \mathscr{P}_u(G)$.\/}
 From this and Definition \ref{star:of:the:family} one easily concludes that
$\mathscr{P}_u(G)\subseteq\W^*$.

\medskip
{\em Case 2\/}. {\sl The set $S=\{n\in\N:f(n)=\omega\}$ is infinite for some $f\in \mathscr{P}_u(G)$.\/} Fix an injection $\sigma:\N\to
S$. Since $f\in \mathscr{P}_u(G)$, there exists an unconditionally $f$-productive sequence $A=\{a_n:n\in\N\}$ in $G$. Let
$\varphi:\N\to\N$ be a bijection. Since $A$ is unconditionally $f$-productive, it follows from Lemma \ref{AP:set:has:AP:subsets}
that the sequence $B_\varphi=\{a_{\sigma\circ\varphi(n)}:n\in\N\}$ is $(f\circ\sigma\circ\varphi)$-productive. Since $\sigma\circ\varphi(\N)\subseteq S$, from our assumption we conclude that $f\circ\sigma\circ\varphi=f_\omega$. Therefore, $B_\varphi$ is $f_\omega$-productive. Since bijection $\varphi$
was chosen arbitrarily, it follows that the sequence
$\{a_{\sigma(n)}:n\in\N\}$ is unconditionally $f_\omega$-productive.
Since $f\le f_\omega$ for every $f\in \Wi$, from Lemma
\ref{unconditional:if:f<g} we conclude that $G$ has an
unconditionally $f$-productive sequence for every $f\in\Wi$. Thus,
$\mathscr{P}_u(G)=\Wi$.

A similar proof of (ii) is omitted.
\end{proof}

\begin{corollary}
\label{TAP:in:terms:of:spectra}
A topological group $G$ is TAP if and only if $\mathscr{P}_u(G)\subseteq \W^*$.
\end{corollary}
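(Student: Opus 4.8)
The plan is to observe that both implications collapse to a single membership question, namely whether $f_\omega\in\mathscr{P}_u(G)$. Indeed, by Definition \ref{def:TAP}, $G$ is TAP precisely when $G$ carries no unconditionally $f_\omega$-productive sequence, which by Definition \ref{spectrum}(ii) is exactly the assertion that $f_\omega\notin\mathscr{P}_u(G)$. So I would rephrase the corollary as: $f_\omega\notin\mathscr{P}_u(G)$ if and only if $\mathscr{P}_u(G)\subseteq\W^*$. The engine of the argument is the dichotomy of Proposition \ref{general:estimate:for:Pu}(i), which asserts that $\mathscr{P}_u(G)$ is either contained in $\W^*$ or equal to all of $\Wi$.

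For the forward implication, I would assume $G$ is TAP, so that $f_\omega\notin\mathscr{P}_u(G)$. Applying Proposition \ref{general:estimate:for:Pu}(i) leaves two possibilities. The case $\mathscr{P}_u(G)=\Wi$ is impossible, since $f_\omega\in\Wi$ would then force $f_\omega\in\mathscr{P}_u(G)$, contradicting the TAP hypothesis. Hence the only surviving alternative is $\mathscr{P}_u(G)\subseteq\W^*$, as desired.

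For the reverse implication, the key observation is that $f_\omega\notin\W^*$. Unwinding Definition \ref{star:of:the:family}, a function $g$ lies in $\W^*$ exactly when $g=^* f$ for some $f\in\W$, that is, when $g(n)\in\N$ for all but finitely many $n$; equivalently, the set $\{n\in\N: g(n)=\omega\}$ is finite. Since $\{n\in\N: f_\omega(n)=\omega\}=\N$ is infinite, we conclude $f_\omega\notin\W^*$. Therefore, if $\mathscr{P}_u(G)\subseteq\W^*$, then $f_\omega\notin\mathscr{P}_u(G)$, which by the rephrasing above means that $G$ is TAP.

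There is essentially no genuine obstacle once Proposition \ref{general:estimate:for:Pu}(i) is in hand; the only point demanding care is the bookkeeping that identifies $\W^*$ with the functions taking the value $\omega$ only finitely often, so that $f_\omega$ is seen to lie outside $\W^*$ while still lying inside $\Wi$. All the substantive content—in particular the non-trivial Case 2 argument that manufactures an unconditionally $f_\omega$-productive sequence out of any member of $\mathscr{P}_u(G)$ having infinitely many $\omega$-values—has already been absorbed into that proposition, so the corollary follows formally.
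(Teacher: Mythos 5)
Your proof is correct and takes essentially the same route as the paper: both reduce TAP to the single membership question $f_\omega\notin\mathscr{P}_u(G)$ via Definitions \ref{def:TAP} and \ref{spectrum}, and then settle it with the dichotomy of Proposition \ref{general:estimate:for:Pu}(i). The only difference is cosmetic: the paper passes through the intermediate equivalence $f_\omega\notin\mathscr{P}_u(G)\Leftrightarrow\mathscr{P}_u(G)\neq\Wi$ using Lemma \ref{unconditional:if:f<g}, whereas you instead verify directly that $f_\omega\in\Wi\setminus\W^*$, making explicit a small point the paper leaves implicit.
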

\begin{proof}
By Definitions \ref{def:TAP} and \ref{spectrum}, $G$ is TAP if and only if
$f_\omega\not\in \mathscr{P}_u(G)$, and since $f\le f_\omega$ for every $f\in\Wi$, the latter condition is equivalent to
$\mathscr{P}_u(G)\not=\Wi$ by Lemma \ref{unconditional:if:f<g}.
Finally, $\mathscr{P}_u(G)\not=\Wi$ is equivalent to $\mathscr{P}_u(G)\subseteq \W^*$ by Proposition \ref{general:estimate:for:Pu}(i).
\end{proof}

Our next example shows that all four cases in Corollary \ref{four:abelian:spectra}(i)
can be realized even for some {\em separable metric\/} abelian group.

\begin{example}
\begin{itemize}
\item[(i)]
Every metric abelian group $G$ of size $<\cont$ does not contain
$f_1$-summable sequences by Corollary \ref{|G|<cont:is:NAP}, and so $\mathscr{P}(G)=\emptyset$ by Corollary \ref{four:abelian:spectra}(i).
\item[(ii)]
Let $H$ be the group constructed in Example \ref{bounded:functions} below. Then $\mathscr{P}(H)\not=\emptyset$ by Example \ref{bounded:functions}(i) and $\mathscr{P}(H)\not\in\{\W^*, \Wi\}$ by Example \ref{bounded:functions}(ii). Therefore,  $\mathscr{P}(H)=\mathscr{B}^*$ by Corollary \ref{four:abelian:spectra}(i).
\item[(iii)]
 The group $\R$ of real numbers contains $g$-summable sequences for all functions $g\in\W$ by Corollary \ref{no:f-product:seq:in:nondiscrete:metr:group},
so $\W^*\subseteq \mathscr{P}(\R)^*=\mathscr{P}(\R)$ by Lemma \ref{closed:under:taling:stars}.
Since $\R$  does not contain any $f_\omega$-summable sequence by Corollary \ref{weil:NSS:iff:TAP}, we have $\mathscr{P}(\R)\not=\Wi$.
Thus, $\mathscr{P}(\R)=\W^*$ by Corollary \ref{four:abelian:spectra}(i).
\item[(iv)]
For each $n\in\N$ define $a_n\in \Z^\N$ by $a_n(n)=1$ and $a_n(m)=0$ for $m\in\N\setminus\{n\}$. One can easily see that the sequence $\{a_n:n\in\N\}$ is
$f_\omega$-summable. By Lemma \ref{if:f<g}, $\Z^\N$ contains an $f$-summable sequence for every $f\in\Wi$. Therefore, $\mathscr{P}(\Z^\N)=\Wi$.
\end{itemize}
\end{example}

We now summarize our principal results from previous sections in the language of productivity spectra.

\begin{theorem}
Let $G$ be a metric group.
\begin{itemize}
\item[(i)] $G$ is discrete if and only if $\mathscr{P}_C(G)=\emptyset$;
\item[(ii)] If $G$ is non-discrete, then either $\mathscr{P}_C(G)=\W^*$ or $\mathscr{P}_C(G)=\Wi$.
\item[(iii)] $\mathscr{P}_C(G)=\W^*$ if and only if $G$ is NSS and non-discrete.
\end{itemize}
In particular, a non-discrete sequentially Weil complete metric group $G$ is NSS if and only if $\mathscr{P}(G)=\W^*$.
\end{theorem}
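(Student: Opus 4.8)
The engine driving all three items is Theorem~\ref{non1-TAP}: whenever $G$ is a non-discrete metric group it already contains an (unconditionally, hence in particular an) $f$-Cauchy productive sequence for every $f\in\W$. Thus $\W\subseteq\mathscr{P}_C(G)$ in the non-discrete case, and since $*$ is a closure operator, Lemma~\ref{closed:under:taling:stars} upgrades this to $\W^*\subseteq\mathscr{P}_C(G)$. The plan is to feed this inclusion into the trichotomy of Theorem~\ref{three:alternatives}(ii) to kill the ``bounded'' alternative, and then to use the NSS characterization of Theorem~\ref{metric:NACP:iff:NSS} to decide between the two survivors $\W^*$ and $\Wi$.

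For item (i), if $G$ is discrete then by Lemma~\ref{AP:is:null:sequence} any $f$-Cauchy productive sequence would converge to $e$, forcing its terms to be eventually equal to $e$ and contradicting faithful indexing; hence no such sequence exists and $\mathscr{P}_C(G)=\emptyset$. (Equivalently, a discrete group has no nontrivial convergent sequences, so Corollary~\ref{no:sequences:no:Cauchy:prod:sequences} applies once one notes that an $f$-Cauchy productive sequence is in particular Cauchy productive, via the multiplier $z\equiv 1$.) Conversely, if $G$ is non-discrete then $\W\subseteq\mathscr{P}_C(G)$ by Theorem~\ref{non1-TAP}, so $\mathscr{P}_C(G)\neq\emptyset$.

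For item (ii), assume $G$ non-discrete, so $\W^*\subseteq\mathscr{P}_C(G)$ as above. The key separating observation is that $\W^*\not\subseteq\mathscr{B}^*$: any unbounded finite-valued $f\in\W$ (say $f(n)=n+1$) lies in $\W^*$ but not in $\mathscr{B}^*$, since membership in $\mathscr{B}^*$ forces eventual boundedness by a constant. Hence $\mathscr{P}_C(G)\not\subseteq\mathscr{B}^*$, and Theorem~\ref{three:alternatives}(ii) leaves only $\mathscr{P}_C(G)\in\{\W^*,\Wi\}$.

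For item (iii) and the final claim, note first that $f_\omega\in\Wi\setminus\W^*$ (no finite-valued function is eventually equal to $f_\omega$), so $\W^*\neq\Wi$, while $f_1\in\W^*$ gives $\W^*\neq\emptyset$. If $\mathscr{P}_C(G)=\W^*$ then nonemptiness forces $G$ non-discrete by (i), and $f_\omega\notin\W^*=\mathscr{P}_C(G)$ means $G$ has no $f_\omega$-Cauchy productive sequence, so $G$ is NSS by Theorem~\ref{metric:NACP:iff:NSS}. Conversely, if $G$ is NSS and non-discrete, then Theorem~\ref{metric:NACP:iff:NSS} gives $f_\omega\notin\mathscr{P}_C(G)$, so $\mathscr{P}_C(G)\neq\Wi$, and (ii) forces $\mathscr{P}_C(G)=\W^*$. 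The ``in particular'' statement then follows immediately: for a sequentially Weil complete metric group Lemma~\ref{general:properties:of:spectra}(iv) yields $\mathscr{P}(G)=\mathscr{P}_C(G)$, so under the standing non-discreteness hypothesis $\mathscr{P}(G)=\W^*$ holds if and only if $\mathscr{P}_C(G)=\W^*$, which by (iii) is equivalent to $G$ being NSS. I do not foresee a genuine obstacle here; the only point requiring care is the clean bookkeeping of the two exclusions $\W^*\not\subseteq\mathscr{B}^*$ and $f_\omega\notin\W^*$, which is precisely what converts the abstract trichotomy into the sharp dichotomy of (ii).
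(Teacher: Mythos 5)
Your proof is correct and follows essentially the same route as the paper's: Theorem \ref{non1-TAP} (upgraded via the $*$-closure of Lemma \ref{closed:under:taling:stars}) gives $\W^*\subseteq\mathscr{P}_C(G)$ for non-discrete $G$, Theorem \ref{three:alternatives}(ii) then yields the dichotomy in (ii), Theorem \ref{metric:NACP:iff:NSS} settles (iii), and Lemma \ref{general:properties:of:spectra}(iv) gives the final claim. The only difference is that you make explicit two small facts the paper leaves implicit, namely $\W^*\not\subseteq\mathscr{B}^*$ and $f_\omega\notin\W^*$, which is a welcome clarification rather than a deviation.
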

\begin{proof}
(i) A discrete group does not have non-trivial convergent sequences,
so $\mathscr{P}_C(G)=\emptyset$ by Lemma \ref{AP:is:null:sequence}
and Definition \ref{spectrum}.
If $G$ is non-discrete, then $\mathscr{P}_C(G)\not=\emptyset$ by Theorem \ref{non1-TAP}.

(ii) Since $\W^*\subseteq \mathscr{P}_C(G)$ by Theorem \ref{non1-TAP},
the conclusion follows from Theorem \ref{three:alternatives}(ii).

(iii) If $\mathscr{P}_C(G)=\W^*$, then $G$ is non-discrete by (i), and since $f_\omega\not\in \W^*=\mathscr{P}_C(G)$, $G$ does not contain an $f_\omega$-Cauchy productive sequence; thus, $G$ is NSS by Theorem \ref{metric:NACP:iff:NSS}. Suppose now that $G$ is non-discrete and NSS. Then $\mathscr{P}_C(G)\not=\emptyset$ by (i), and $G$ does not have an $f_\omega$-Cauchy productive sequence by Theorem \ref{metric:NACP:iff:NSS}. In particular, $\mathscr{P}_C(G)\not=\Wi$, and so so $\mathscr{P}_C(G)=\W^*$ by (ii).

Finally, the ``in particular'' part of our theorem follows from (iii) and Lemma \ref{general:properties:of:spectra}(iv).
\end{proof}

\begin{theorem}
Let $G$ be either a non-discrete locally compact or an infinite $\omega$-bounded group. Then
either $\mathscr{P}_u(G)=\mathscr{P}_{uC}(G)=\W^*$ or $\mathscr{P}_u(G)=\mathscr{P}_{uC}(G)=\Wi$.
Moreover, $\mathscr{P}_u(G)=\W^*$ if and only if $G$ is NSS (if and only if $G$ is a Lie group).
\end{theorem}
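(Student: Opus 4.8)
The plan is to settle the dichotomy first and then reduce the ``moreover'' clause to the TAP property. In either case, $\W\subseteq\mathscr{P}_u(G)$: by Theorem~\ref{zerodim:TAP:compact:is:finite} when $G$ is locally compact, and by Corollary~\ref{omega:bounded} when $G$ is $\omega$-bounded. Since $\mathscr{P}_u(G)=\mathscr{P}_u(G)^*$ by Lemma~\ref{closed:under:taling:stars} and $*$ is monotone, this upgrades to $\W^*\subseteq\mathscr{P}_u(G)$, and then Proposition~\ref{general:estimate:for:Pu}(i) forces $\mathscr{P}_u(G)\in\{\W^*,\Wi\}$. To collapse the Cauchy spectrum onto this, I would check that $G$ is sequentially Weil complete: locally compact groups are Weil complete, while in an $\omega$-bounded group the closure of the (countable) range of a left Cauchy sequence is compact, so the sequence has a cluster point and, being left Cauchy, converges to it. Lemma~\ref{general:properties:of:spectra}(iv) then gives $\mathscr{P}_u(G)=\mathscr{P}_{uC}(G)$, so $\mathscr{P}_u(G)=\mathscr{P}_{uC}(G)\in\{\W^*,\Wi\}$.

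For the ``moreover'' clause, observe that by the dichotomy (and since $f_\omega\in\Wi\setminus\W^*$) the equality $\mathscr{P}_u(G)=\W^*$ is equivalent to $\mathscr{P}_u(G)\subseteq\W^*$, which by Corollary~\ref{TAP:in:terms:of:spectra} says precisely that $G$ is TAP. Thus everything reduces to proving, for our $G$, that TAP is equivalent to NSS and to being a Lie group. The implication NSS $\Rightarrow$ TAP holds for every topological group by Theorem~\ref{NSS:is:NACP} together with Definition~\ref{def:TAP}. When $G$ is locally compact, the converse TAP $\Rightarrow$ NSS is \cite{DSS}, and NSS $\Leftrightarrow$ Lie is the classical structure theory (a locally compact NSS group is a Lie group, and Lie groups have no small subgroups).

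The main obstacle I expect is the $\omega$-bounded half of TAP $\Rightarrow$ NSS. I would argue the contrapositive: if an infinite $\omega$-bounded $G$ is not NSS, then $G$ contains an infinite compact subgroup $K$ that is not NSS; being compact, $K$ is locally compact, so \cite{DSS} shows $K$ is not TAP, i.e. $K$ carries an unconditionally $f_\omega$-productive sequence. As $K$ is closed, that sequence is unconditionally $f_\omega$-productive in $G$ as well, so $G$ is not TAP. Producing such a $K$ is the delicate point: a single element yields only a monothetic compact closure $\overline{\langle a\rangle}$, which may well be a torus and hence NSS, so one cannot argue element by element. Here the totally disconnected situation is already clean (an infinite totally disconnected $\omega$-bounded group is automatically non-TAP by Corollary~\ref{tot:disc:Obounded:TAP:iff:finite}), so the real work concerns groups with nontrivial connected component. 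My approach would be to exploit the failure of NSS to collect nontrivial subgroups inside arbitrarily small neighborhoods, extract a countable witnessing family (using $\omega$-boundedness so that the generated subgroup has compact closure), and thereby build a compact $K$ that still contains nontrivial subgroups in arbitrarily small neighborhoods of its own identity, hence is non-NSS; if $K$ turns out metrizable one may instead run the construction of Theorem~\ref{metric:NACP:iff:NSS} inside an infinite compact metrizable subgroup furnished by \cite[Theorem~2.5.2]{Rudin} and conclude via Corollary~\ref{weil:NSS:iff:TAP}. The genuinely nontrivial step is the countable extraction when $G$ is not first countable at $e$.

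Finally, for NSS $\Leftrightarrow$ Lie in the $\omega$-bounded case I would invoke the structural fact that a countably compact NSS group is metrizable, hence (being countably compact) compact, hence a compact Lie group; thus an infinite $\omega$-bounded group is NSS exactly when it is a (necessarily compact) Lie group, while conversely an $\omega$-bounded Lie group is metrizable and countably compact, so compact, and therefore NSS. Combining all of the above yields $\mathscr{P}_u(G)=\mathscr{P}_{uC}(G)=\W^*$ precisely when $G$ is NSS, equivalently a Lie group, and $\mathscr{P}_u(G)=\mathscr{P}_{uC}(G)=\Wi$ otherwise.
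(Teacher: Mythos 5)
The first half of your proposal is correct and is essentially the paper's own argument: the inclusion $\W^*\subseteq\mathscr{P}_u(G)$ from Theorem \ref{zerodim:TAP:compact:is:finite} and Corollary \ref{omega:bounded} (upgraded via Lemma \ref{closed:under:taling:stars}), the dichotomy from Proposition \ref{general:estimate:for:Pu}(i), the identification $\mathscr{P}_u(G)=\mathscr{P}_{uC}(G)$ from Lemma \ref{general:properties:of:spectra}(iv), and the reduction of $\mathscr{P}_u(G)=\W^*$ to the TAP property via Corollary \ref{TAP:in:terms:of:spectra}. Your explicit verification that an $\omega$-bounded group is sequentially Weil complete (cluster point of a left Cauchy sequence is its limit) is a correct filling-in of a step the paper merely asserts.

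The genuine gap is in the ``moreover'' clause. The paper settles TAP $\Leftrightarrow$ NSS $\Leftrightarrow$ Lie in one stroke by citing \cite[Theorems 10.8 and 10.9]{DSS}, covering the locally compact and the $\omega$-bounded case respectively. Your locally compact case does the same and is fine; but for $\omega$-bounded groups you attempt to re-prove \cite[Theorem 10.9]{DSS} by reduction to the compact case, and the decisive step --- producing, inside an infinite $\omega$-bounded non-NSS group $G$, an infinite compact subgroup $K$ that is itself non-NSS --- is left as a strategy sketch which you yourself flag as incomplete (``the genuinely nontrivial step is the countable extraction''). This is not a routine verification: failure of NSS in $G$ supplies a nontrivial witnessing subgroup inside each member of a possibly uncountable neighborhood family, and after extracting countably many witnesses and forming the compact closure $K$ of the subgroup they generate, there is no reason why arbitrarily small neighborhoods of $e$ \emph{in $K$} should contain any of the chosen subgroups, since smallness was measured in $G$, not in $K$. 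Nor can you assume $K$ is metrizable so as to run Theorem \ref{metric:NACP:iff:NSS} inside it: the closure of a countable subset of an $\omega$-bounded group can be a non-metrizable compact group (the closure of a countable dense subset of $\{0,1\}^{\cont}$ is the whole group), and \cite[Theorem 2.5.2]{Rudin} produces an infinite compact metrizable subgroup only from a non-torsion element, and that subgroup may well be a torus, hence NSS and useless here. In fact the intermediate statement you aim for is essentially equivalent to \cite[Theorem 10.9]{DSS} itself, so the honest repair is to cite that theorem, as the paper does, or to import its proof. A secondary and much smaller issue: your ``structural fact'' that a countably compact NSS group is metrizable is true but not obvious; it needs the observations that NSS forces the identity to be a $G_\delta$ (intersect a chain $V_{n+1}^2\subseteq V_n$ inside a witnessing neighborhood), that in countably compact regular spaces points of countable pseudocharacter are points of first countability, and then Birkhoff--Kakutani --- this should be argued or referenced rather than asserted.
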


\begin{proof}
Note that $G$ is sequentially Weil complete, so $\mathscr{P}_u(G)=\mathscr{P}_{uC}(G)$ by Lemma \ref{general:properties:of:spectra}(iv).
The inclusion $\mathscr{W}^*\subseteq \mathscr{P}_u(G)$ follows from Theorem \ref{zerodim:TAP:compact:is:finite} and Corollary \ref{omega:bounded}.
Therefore,  $\mathscr{P}_u(G)\in\{\W^*,\Wi\}$ by Proposition \ref{general:estimate:for:Pu}(i). By Corollary \ref{TAP:in:terms:of:spectra},
$\mathscr{P}_u(G)=\W^*$ if and only if $G$ is TAP. Now the final claim of our theorem follows from
\cite[Theorems 10.8 and 10.9]{DSS}.
\end{proof}

For a group $H$ constructed in Theorem \ref{TAP:group:with:ap:sequence} we have
$\mathscr{P}_{uC}(H)=\mathscr{P}_C(H)=\mathscr{P}(H)=\Wi$ but $\mathscr{P}_u(H)\subseteq \W^*$. We do not know if $\mathscr{P}_u(H)\not=\emptyset$.

\begin{question}
What are possible values of $\mathscr{P}(G)$ for an arbitrary (not necessarily abelian) topological group $G$?
\end{question}

\section{Eliminating $f$-productive sequences in products}
\label{Eliminating f-productive sequences in products}

\begin{definition}
\label{definition:of:a:seminorm}
Let $\R^+$ denote the set of all non-negative real numbers. Recall that a {\em seminorm\/} on a group $G$ is a function $\nu:G\to\R^+$
satisfying the following conditions:
\begin{itemize}
\item[(S1)] $\nu(ab)\le\nu(a)+\nu(b)$ for every $a,b\in G$;
\item[(S2)] $\nu(a)=\nu(a^{-1})$ for each $a\in G$.
\end{itemize}
\end{definition}

We will need the following folklore lemma.

\begin{lemma}\label{second:seminorm:inequality}
If $\nu:G\to\R^+$ is a seminorm on a group $G$ and $a,b\in G$, then $|\nu(a)-\nu(b)|\le\nu(ab)$.
\end{lemma}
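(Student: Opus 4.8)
The plan is to establish the desired inequality $|\nu(a)-\nu(b)|\le\nu(ab)$ by splitting it into the two one-sided estimates $\nu(a)-\nu(b)\le\nu(ab)$ and $\nu(b)-\nu(a)\le\nu(ab)$, and then combining them. This mimics the standard derivation of the reverse triangle inequality for norms, the only twist being that the non-commutativity of $G$ forces us to choose the factorizations carefully so that condition (S2) can be used to eliminate the inverses that appear.

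First I would write $a=(ab)b^{-1}$ and apply subadditivity (S1) to obtain $\nu(a)\le\nu(ab)+\nu(b^{-1})$. Invoking the symmetry condition (S2), which gives $\nu(b^{-1})=\nu(b)$, this becomes $\nu(a)\le\nu(ab)+\nu(b)$, i.e. $\nu(a)-\nu(b)\le\nu(ab)$.

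Symmetrically, I would write $b=a^{-1}(ab)$ and apply (S1) to get $\nu(b)\le\nu(a^{-1})+\nu(ab)$; using (S2) in the form $\nu(a^{-1})=\nu(a)$ yields $\nu(b)\le\nu(a)+\nu(ab)$, that is, $\nu(b)-\nu(a)\le\nu(ab)$. Since $|\nu(a)-\nu(b)|$ equals whichever of the two differences $\nu(a)-\nu(b)$ and $\nu(b)-\nu(a)$ is non-negative, and both are bounded above by $\nu(ab)$, the claim follows.

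There is no genuine obstacle here: the statement is a routine folklore fact, and the whole argument is three lines once the right two factorizations are selected. The only point requiring the slightest care is remembering that in a possibly non-abelian group one must factor $a$ on the right by $b^{-1}$ and $b$ on the left by $a^{-1}$, so that (S1) produces exactly the combination $\nu(ab)$ together with a single term that (S2) converts back into $\nu(b)$ or $\nu(a)$.
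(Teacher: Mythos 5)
Your proof is correct. The paper states this lemma as a ``folklore lemma'' and gives no proof at all, so there is nothing to compare against; your argument --- factoring $a=(ab)b^{-1}$ and $b=a^{-1}(ab)$, applying (S1), and using (S2) to replace $\nu(b^{-1})$ and $\nu(a^{-1})$ --- is exactly the standard reverse-triangle-inequality derivation the authors evidently had in mind, including the correct left/right placement of the inverses needed in the non-abelian setting.
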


\begin{lemma}
\label{technical:lemma}
Suppose that $I\not=\emptyset$ is a set, $g\in \Wi$, $\nu$ is a seminorm on a discrete group $D$, $H$ is a subgroup of the Tychonoff product $D^I$ such that the set $\{\nu(h(i)):i\in I\}$ is bounded for every $h\in H$. Let $A=\{a_m:m\in\N\}$ be a faithfully indexed sequence of elements of $H$, $M$ be an infinite subset of $\N$, $\{z_m:m\in\N\}\subseteq\Z$ and $\{i_m:m\in\N\}\subseteq I$. Assume also that for every $m\in\N$ the following three conditions hold:
\begin{enumerate}
\item[(i$_m$)] $|z_m|\le g(m)$;
\item[(ii$_m$)] $a_m(i_k)=e$ for every $k\in M$ such that $0\le k<m$;
\item[(iii$_m$)]
$\nu\left(\prod_{j=0}^ma_j^{z_j}(i_m)\right)\ge m$ whenever $m\in M$.
\end{enumerate}
Then $A$ is not a $g$-productive sequence in $H$.
\end{lemma}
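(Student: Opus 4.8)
The plan is to argue by contradiction, testing $g$-productivity against the single multiplier supplied by the hypotheses. I would define $z:\N\to\Z$ by $z(m)=z_m$; condition (i$_m$) guarantees $|z|\le g$, so if $A$ were $g$-productive in $H$, then the sequence $\{a_n^{z_n}:n\in\N\}$ would be productive, i.e. the partial products $P_m=\prod_{j=0}^m a_j^{z_j}$ would converge to some element $P\in H$. The aim is to show that this forces the set $\{\nu(P(i)):i\in I\}$ to be unbounded, contradicting the standing boundedness assumption on members of $H$.

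The heart of the argument is a ``coordinate freezing'' observation. Fix $m\in M$. For every index $j>m$, condition (ii$_j$) applied with $k=m$ (legitimate since $m\in M$ and $0\le m<j$) yields $a_j(i_m)=e$, whence $a_j^{z_j}(i_m)=e$ in the discrete group $D$. Consequently, evaluating the partial products at the coordinate $i_m$ gives $P_k(i_m)=P_m(i_m)$ for every $k\ge m$; that is, the $i_m$-th coordinate stabilizes at step $m$. Because $D$ is discrete and $H\subseteq D^I$ carries the Tychonoff product topology, coordinatewise convergence $P_k\to P$ at the coordinate $i_m$ means $P(i_m)=\lim_{k\to\infty}P_k(i_m)=P_m(i_m)$.

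Combining this with (iii$_m$) gives $\nu(P(i_m))=\nu(P_m(i_m))\ge m$ for every $m\in M$. Since $M$ is infinite, the set $\{\nu(P(i)):i\in I\}$ contains arbitrarily large values and is therefore unbounded, contradicting the hypothesis that $\{\nu(h(i)):i\in I\}$ is bounded for every $h\in H$, applied to $h=P\in H$. Hence $A$ cannot be $g$-productive.

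The only delicate point is the bookkeeping in the freezing step: one must apply (ii$_j$) with the running index $j$ playing the ``exponent/term'' role and the fixed $m\in M$ playing the ``coordinate'' role, using precisely that $m\in M$ and $m<j$. Everything else — the choice of multiplier via (i$_m$), the use of discreteness of $D$ to convert coordinatewise convergence into eventual equality, and the final boundedness contradiction — is routine, so I do not expect any genuine obstacle. The lemma is essentially a clean mechanism, with all the real work deferred to whoever later arranges conditions (i$_m$)--(iii$_m$).
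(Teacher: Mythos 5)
Your proof is correct and follows essentially the same route as the paper's: assume $g$-productivity, use (i$_m$) to get convergence of the partial products to some $P\in H$, use (ii$_m$) to freeze the $i_m$-th coordinate at $P_m(i_m)$ for $m\in M$, and then (iii$_m$) makes $\{\nu(P(i)):i\in I\}$ unbounded, contradicting the hypothesis on $H$. The paper states the coordinate-stabilization step more tersely, but your explicit bookkeeping (applying (ii$_j$) with $k=m$) is exactly the intended argument.
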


\begin{proof}
Suppose that $A$ is a $g$-productive sequence in $H$. Since (i$_m$) holds for every $m\in\N$, the sequence
$\left\{\prod_{j=0}^k a_j^{z_j}:k\in\N\right\}$ must have a limit $a\in H$. Since (ii$_m$) holds for every $m\in\N$,
we must have $a(i_m)=\prod_{j=0}^m a_j^{z_j}(i_m)$ for every $m\in M$. Combining this with (iii$_m$), we conclude that $\nu(a(i_m))\ge m$ for each
$m\in M$. Since $M$ is infinite, this means that the  set $\{\nu(a(i)):i\in I\}$ is unbounded. Since $a\in H$, this contradicts the assumption of our lemma.
\end{proof}

\begin{lemma}\label{impossible:case:for:unbounded:seminorm}
Let $I\not=\emptyset$ be a set, $f\in \Wi$ and $\nu$ be a seminorm on a discrete group $D$. Suppose that  a subgroup $H$ of the Tychonoff
product $D^I$ and  a faithfully indexed sequence $\{h_n:n\in\N\}$ of elements of $H$ satisfy the following conditions:
\begin{itemize}
\item[(i)]
the set $\{\nu(h(i)):i\in I\}$ is bounded for every $h\in H$;
\item[(ii)]
the set
\begin{equation}
\label{eq:N_r} N_r=\{n\in \N: \exists i\in I\ \exists z\in\Z\
\left(
|z|\le f(n)
\mbox{ and }
\nu(h_n(i)^z)\ge r\right)\}
\end{equation}
is infinite for every $r\in\R$.
\end{itemize}
Then $\{h_n:n\in\N\}$ is not an $f$-productive sequence in $H$.
\end{lemma}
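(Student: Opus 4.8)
The plan is to argue by contradiction: assume that $\{h_n:n\in\N\}$ \emph{is} $f$-productive in $H$, and then use the two hypotheses to manufacture exactly the data demanded by Lemma \ref{technical:lemma}, whose conclusion flatly contradicts this assumption. To set up the bookkeeping I would first record, for each $n\in\N$, the quantity $C_n=\sup\{\nu(h_n(i)):i\in I\}$, which is finite by hypothesis (i). The construction then runs by recursion on $m$, producing a strictly increasing sequence $n_0<n_1<\cdots$, witnessing coordinates $i_m\in I$, and integers $z_m$ with $|z_m|\le f(n_m)$; outside $\{n_m:m\in\N\}$ I would set $z_n=0$, and take $M=\{n_m:m\in\N\}$. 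The point is that evaluation at a fixed coordinate is a homomorphism into the discrete group $D$, so all the relevant products split coordinatewise and $\nu$ can be applied factor by factor.

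At stage $m$ the quantitative condition to force is (iii$_m$) of Lemma \ref{technical:lemma}, namely that the partial product read at $i_m$ has $\nu$-value at least $m$. Writing this product as $P_m\cdot h_{n_m}(i_m)^{z_m}$, where $P_m$ gathers the earlier factors, Lemma \ref{second:seminorm:inequality} gives $\nu\bigl(P_m\,h_{n_m}(i_m)^{z_m}\bigr)\ge \nu\bigl(h_{n_m}(i_m)^{z_m}\bigr)-\nu(P_m)$, and hypothesis (i) bounds $\nu(P_m)\le\sum_{l<m}|z_l|\,C_{n_l}$, a quantity already fixed before stage $m$. Thus it suffices to pick $n_m$ together with a coordinate and a multiplier for which $\nu\bigl(h_{n_m}(i_m)^{z_m}\bigr)$ exceeds $m+\sum_{l<m}|z_l|\,C_{n_l}$, and the existence of such an index (beyond $n_{m-1}$) is precisely what the infinitude of the set $N_r$ in \eqref{eq:N_r} supplies, taken with $r$ that large. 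Securing (ii$_m$) — that each newly chosen element is trivial at the coordinates $i_0,\dots,i_{m-1}$ already frozen — is arranged by carrying along a shrinking infinite reservoir of admissible indices: after committing to $i_{m-1}$ I would restrict to those indices whose $h$-value is trivial there, so that the supply of large-$\nu$ witnesses furnished by (ii) is never exhausted. With these choices (i$_m$), (ii$_m$) and (iii$_m$) all hold, and Lemma \ref{technical:lemma} then yields that $\{h_n:n\in\N\}$ is not $f$-productive, contradicting the standing assumption.

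The hard part will be the middle step, the verification of (ii$_m$): nothing in the hypotheses prevents the \emph{later} terms $h_{n_l}$ (with $l>m$) from being nontrivial at the already-fixed coordinate $i_m$, and if they are, the large $\nu$-value built up at $i_m$ can be destroyed in passage to the limit. Condition (ii$_m$) of Lemma \ref{technical:lemma} is exactly the device that freezes the coordinate $i_m$ against the tail of the sequence, so the whole argument hinges on keeping the reservoir of ``tail-trivial'' indices infinite while still meeting every $N_r$ — this is the one place where hypotheses (i) and (ii) must be used in tandem, the former to guarantee that discarding indices nontrivial at a fixed coordinate removes only a controllable amount, the latter to keep infinitely many high-$\nu$ witnesses available. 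The genuinely delicate case is the one in which no such reservoir survives; there the obstruction itself signals that the partial products fail to stabilize at some coordinate, so one concludes non-productivity directly from the divergence at that coordinate rather than through Lemma \ref{technical:lemma}.
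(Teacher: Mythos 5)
Your skeleton coincides with the paper's: assume $f$-productivity for contradiction, recursively build $n_m$, $i_m$, $z_m$ using \eqref{eq:N_r} with $r$ exceeding a bound on the previous partial product, apply Lemma \ref{second:seminorm:inequality} to push the $\nu$-value of the new partial product above $m$, and finish with Lemma \ref{technical:lemma}. But the step you yourself flag as the hard part --- securing (ii$_m$) --- is a genuine gap, and the mechanism you offer for it is wrong. Hypothesis (i) cannot ``guarantee that discarding indices nontrivial at a fixed coordinate removes only a controllable amount'': it bounds $\nu(h(i))$ over $i\in I$ for a \emph{fixed} $h\in H$, and says nothing about how many terms of the sequence are nontrivial at a fixed coordinate. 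The missing idea is that this finiteness comes for free from the standing contradiction hypothesis: if $\{h_n:n\in\N\}$ is $f$-productive, then $h_n\to e$ by Lemma \ref{AP:is:null:sequence}, and since $D$ is discrete and $H$ carries the subspace topology of $D^I$, this says precisely that $\{n\in\N:h_n(i)\not=e\}$ is finite for every $i\in I$. Hence at stage $m$ the set $F_{m-1}=\bigcup_{k<m}\{n\in\N:h_n(i_k)\not=e\}$ is finite, and one simply picks $n_m\in N_{s+m}$ with $n_m>\max F_{m-1}$ and $n_m>n_{m-1}$; no ``reservoir'' maintenance is needed, and your ``genuinely delicate case'' can never occur under the standing assumption. (Your instinct there is the contrapositive of the same fact: if some coordinate $i$ had infinitely many $n$ with $h_n(i)\not=e$, then already the multiplier $z\equiv 1\le f$ produces partial products that never stabilize at $i$, hence do not converge in the discrete group $D$; but you leave this as an unresolved worry instead of closing the loop.)

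There is also a quantitative flaw in your bookkeeping. You apply Lemma \ref{technical:lemma} to the full sequence with $M=\{n_m:m\in\N\}$ and $z_n=0$ off $M$. Then condition (iii$_k$) of that lemma, required for every $k\in M$, i.e.\ for $k=n_m$, demands $\nu\left(\prod_{j=0}^{n_m}h_j^{z_j}(i_{n_m})\right)\ge n_m$, whereas your construction forces only ``at least $m$'' at stage $m$; since $n_m\ge m$ and hypothesis (ii) gives no control relating $\sup\{\nu(h_n(i)^z):i\in I,\ |z|\le f(n)\}$ to the index $n$ itself, condition (iii) cannot be verified under your choice of $M$. Moreover, condition (ii$_k$) of the lemma then constrains the \emph{non-selected} terms $h_n$ at the frozen coordinates, something your construction never arranges. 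Both defects disappear with the paper's bookkeeping: pass to the subsequence $a_m=h_{n_m}$, the reindexed weight $g(m)=f(n_m)$, and $M=\N$; Lemma \ref{technical:lemma} then yields that this subsequence is not $g$-productive, which (since $g(m)=f(n_m)$) contradicts the assumed $f$-productivity of $\{h_n:n\in\N\}$.
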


\begin{proof}
Assume that $\{h_n:n\in\N\}$ is an $f$-productive sequence in $H$. Then $\{h_n:n\in\N\}$ converges to the identity of $H$ by Lemma
\ref{AP:is:null:sequence}. Since $H\subseteq D^I$ and $D$ is discrete, it follows that
\begin{equation}
\label{twin:of:eq:8} \{n\in \N: h_n(i)\not=e\} \mbox{ is finite for
every } i\in I.
\end{equation}

By induction on $m\in\N$ we will choose $n_m\in\N$, $z_m\in\Z$ and $i_m\in I$ satisfying the following conditions:
\begin{enumerate}
\item[(1$_m$)] $n_m>n_{m-1}$ for $m\ge 1$;
\item[(2$_m$)] $|z_m|\le f(n_m)$;
\item[(3$_m$)] $h_{n_m}(i_k)=e$ for every $k=0,1,\dots,m-1$;
\item[(4$_m$)]
$\nu(g_m(i_m))\ge m$, where $g_m=\prod_{j=0}^mh_{n_j}^{z_j}$.
\end{enumerate}
Choose $n_0\in\N$, $i_0\in I$ arbitrarily, put $z_0=0$ and note that conditions (1$_0$), (2$_0$), (3$_0$) and (4$_0$) are satisfied.

Let  $m\in\N\setminus\{0\}$, and assume that $n_j\in\N$, $z_j\in\Z$ and $i_j\in I$ satisfying conditions (1$_j$), (2$_j$), (3$_j$) and (4$_j$)
have already been chosen for every $j=0,1,\dots, m-1$. We will choose $n_m\in\N$, $z_m\in\Z$ and $i_m\in I$ satisfying conditions
(1$_m$), (2$_m$), (3$_m$) and (4$_m$). First, apply (i) to choose $s\in \N$ such that
\begin{equation}
\label{psi:is:bounded:on:h_{m-1}} \nu(g_{m-1}(i))\le s \mbox{ for every } i\in I.
\end{equation}
 From \eqref{twin:of:eq:8} it follows that the set
\begin{equation}
\label{eq:F_{m-1}} F_{m-1}=\bigcup_{k=0}^{m-1}\{n\in\N:h_n(i_k)\neq
e\}
\end{equation}
 is finite, so we can apply (ii) to choose $n_m\in N_{s+m}$ such that $n_m>\max F_{m-1}$
and $n_m>n_{m-1}$. In particular, (1$_m$) is satisfied. Since
$n_m\not\in F_{m-1}$, from \eqref{eq:F_{m-1}} we get (3$_m$).

Since $n_m\in N_{s+m}$, from \eqref{eq:N_r} it follows that there exist $i_m\in I$ and $z_m\in\Z$ such that (2$_m$) holds and
\begin{equation}
\label{eq:psi(a_m(i_m))>n_m:new} \nu(h_{n_m}(i_m)^{z_m})\ge s+m.
\end{equation}
 From \eqref{psi:is:bounded:on:h_{m-1}} and
\eqref{eq:psi(a_m(i_m))>n_m:new} we get $
\nu\left(h_{n_m}(i_m)^{z_m}\right)-\nu(g_{m-1}(i_m))\ge s+m-s=m. $ Since $\nu$ is a seminorm, combining this with Lemma
\ref{second:seminorm:inequality}, we obtain
$$
\nu(g_m(i_m))=\nu(g_{m-1}(i_m) h_{n_m}(i_m)^{z_m})\ge \left|\nu\left(h_{n_m}(i_m)^{z_m}\right)-\nu(g_{m-1}(i_m))\right|\ge m.
$$
Thus, (4$_m$) holds. This finishes our inductive construction.

For $m\in\N$ let $a_m=h_{n_m}$. Define the function $g\in\Wi$ by
$g(m)=f(n_m)$ for $m\in\N$. We claim that $I$, $g$, $\nu$, $H$,
$A=\{a_m:m\in\N\}$, $M=\N$, $\{z_m:m\in\N\}$ and $\{i_m:m\in\N\}$
satisfy the assumptions of Lemma \ref{technical:lemma}. Indeed,
(2$_m$) implies the condition (i$_m$) of Lemma
\ref{technical:lemma}, (3$_m$) implies (ii$_m$) and (4$_m$) implies
(iii$_m$). From Lemma \ref{technical:lemma} we conclude that $A$ is
not a $g$-productive sequence in $H$. Since (1$_m$) holds for every
$m\in\N$, $A=\{h_{n_m}:m\in\N\}$ is a subsequence of the sequence
$\{h_n:n\in\N\}$. Since $g(m)=f(n_m)$ for every $m\in\N$, it follows
that $\{h_n:n\in\N\}$ is not an $f$-productive sequence in $H$, a
contradiction.
\end{proof}

We
finish this section with
an example demonstrating an application of Lemma
\ref{impossible:case:for:unbounded:seminorm}.

\begin{example}
\label{bounded:functions} Let
$H=\{h\in\Z^\N: \exists\ k\in\N\
\forall\ i\in\N\ |h(i)|\le k\}$
be the subgroup of the linear metric
group $\Z^\N$ (equipped with the Tychonoff product topology).
For each $n\in\N$ define $a_n\in H$ by
$a_n(i)=1$ if $i=n$ and $a_n(i)=0$ otherwise ($i\in\N$).
Then:
\begin{itemize}
\item[(a)]
{\em
the sequence $A=\{a_n:n\in\N\}\subseteq H$ is
$f$-summable in $H$
for every bounded
function
$f\in\W$;\/}
\item[(b)]
{\em
$H$ does not contain  $f$-summable
sequences for any unbounded function
$f\in\W$\/}.
\end{itemize}
Indeed,
(a) is straightforward.
To check (b),
assume that the function
$f\in \W$
is unbounded and $\{h_n:n\in\N\}\subseteq H$ is an arbitrary faithfully
indexed sequence in $H$.
Note that the function $\nu:\Z\to\N$ defined by $\nu(z)=|z|$
is a seminorm on $\Z$.
We claim
that conditions (i) and (ii) of Lemma
\ref{impossible:case:for:unbounded:seminorm} are satisfied
with $D=\Z$ and $I=\N$.
Indeed, (i) is satisfied by our definitions of $H$ and $\nu$. To show that
condition (ii) holds, fix $r\in\R$. Since $f\in\Wi$ is unbounded, the set $A_r=\{n\in\N:f(n)>r\}$ is infinite. Furthermore,
if $n\in A_r$ and $h_n\neq e$, then there exists $i\in\N$ such that $|h_n(i)|\ge 1$ and consequently, $|f(n)h_n(i)|\ge f(n)> r$.
Thus, condition (ii) of Lemma \ref{impossible:case:for:unbounded:seminorm} is satisfied
as well.
Applying Lemma \ref{impossible:case:for:unbounded:seminorm}, we conclude that
the sequence $\{h_n:n\in\N\}$ is not $f$-summable in $H$.
\end{example}

\begin{remark}
Example \ref{bounded:functions} shows that:
\begin{itemize}
\item[(i)] ``$f$-Cauchy productive'' cannot be replaced by ``$f$-productive'' in Theorems \ref{non1-TAP} and \ref{AP-Cauchy:seq:in:lin:groups},
\item[(ii)] Weil completeness cannot be omitted in Corollaries \ref{no:f-product:seq:in:nondiscrete:metr:group},
\ref{Weil:complete:linear:group} and \ref{linear:complete:corollary}, and
\item[(iii)] sequential Weil completeness is essential in Corollaries \ref{NACP:iff:no:conv:sequences}
and \ref{sequentially:complete:liner:TAP:group:is:discrete}.
\end{itemize}
\end{remark}

\section{A necessary background on free groups}
\label{background:on:free:groups}

\begin{definition}
Let
$k\in\N$ and  let $s=(x_0, x_1,\dots, x_{k})$ be a finite sequence  of elements of a set $X$.
\begin{itemize}
\item[(i)] The sequence $s$  is said to be a {\em \sequence\ } if $x_i\neq x_{i+1}$ for every integer $i$ with $0\le i< k$.
We denote by $\Seq(X)$ the family of all sequences with distinct neighbors in $X$.
\item[(ii)]
For an integer $k'\ge 0$ and natural numbers $i_0,i_1,\dots,i_{k'}$ such that $0\leq i_0< i_1<  \ldots  < i_{k'}\leq  k$,
we call $s'=(x_{i_0}, x_{i_1},\ldots , x_{i_{k'}})$ a {\em subsequence} of $s$.
\end{itemize}
\end{definition}

Thereafter, $F(X)$ denotes the free group with an alphabet $X$. Recall that every element $w\in F(X)\setminus\{e\}$ has a unique representation
\begin{equation}
\label{word:a}
w=x_1^{z_1}x_2^{z_2}\dots x_{k}^{z_{k}},
\end{equation}
 where  $k\in\N\setminus\{0\}$,  $\support{w}=(x_1,x_2,\dots,x_k)\in\Seq(X)$
and
$z_1,z_2,\dots,z_k\in \Z\setminus\{0\}$. We call \eqref{word:a}
the {\em canonical representation\/} of $w$ and the integer $k$ the {\em length\/} of $w$.
The length of $w$ will be denoted by $l_w$.  For every integer $i$ with $1\le i\le l_w$ we define
$$
\letter{w}{i}=x_i,
\
\power{w}{i}=z_i
\
\mbox{ and }
w[i]=x_i^{z_i}.
$$
In particular,
$$
\support{w}=(\letter{w}{1},\letter{w}{2},\dots,\letter{w}{l_w}).
$$
We define $l_e=0$ and identify $\support{e}$ with the empty sequence.

We call elements of $F(X)$ {\em words\/}. A word of length $1$ is called a {\em monom\/}.
An element of the set $\{w[i]:i\in \N, 1\le i\le l_w\}$ is called a
{\em monom of $w$\/}.
Monoms $x_1^{z_1}$ and $x_2^{z_2}$ are called {\em independent\/} provided that
$x_1\neq x_2$; otherwise we call them {\em dependent\/}. In the former case the product of
$x_1^{z_1}$ and $x_2^{z_2}$ in $F(X)$ coincides with the word
$x_1^{z_1}x_2^{z_2}$, and in the latter case the product of
$x_1^{z_1}$ and $x_2^{z_2}$ in $F(X)$ is equal to the word
$x_1^{z_1+z_2}$. It is a monom if $z_1+z_2\neq 0$ and it is equal to $e$ otherwise.
In particular,
\eqref{word:a} becomes
$w=w[1]\cdot\ldots\cdot w[l_w]$; that is, $w$ is a product, taken in  $F(X)$, of the independent monoms $w[i]$ of $w$.

Given a word $w\in F(X)$ as in
\eqref{word:a},
a {\em subword\/} of $w$ is a word of the form
$$
u=w[i]\cdot w[i+1]\cdot\ldots \cdot w[j]
$$
for some integers $i,j$ with $1\le i\le j\le l_w$.
If $i=1$, we call $u$ an {\em initial subword\/} of $w$, and if $i=j=1$, we call $u$ an {\em initial monom\/} of $w$.
Similarly, if $j=l_w$, we call $u$ a {\em final subword\/} of $w$, and if $i=j=l_w$, we call $u$ a {\em final monom\/} of $w$.
We consider $e$ to be both an initial and a final subword of every word $w$.

We say that the product $w_0w_1\dots w_n$ of words
$w_0,w_1,\dots,w_n\in F(X)$ is {\em cancellation-free\/}  provided
that the final monom of $w_i$ and the initial monom of $w_{i+1}$ are
independent for every integer $i$ such that $0\le i< n$.

\begin{definition}
\label{def:of:c}
For $v,w\in F(X)$ we denote by $\cancel{v}{w}$ the initial subword $u$
of $w$ of maximal length such that $u^{-1}$ is a final subword of $v$.
\end{definition}

Let $v'$ be the initial subword of $v$ such that $v=v'\cdot
(\cancel{v}{w})^{-1}$. Similarly, let $w'$ be the final subword of
$w$ such that $w=(\cancel{v}{w})\cdot w'$. Then
\begin{equation}
\label{eq:v:w}
vw=\left(v'\cdot (\cancel{v}{w})^{-1}\right)\left((\cancel{v}{w})\cdot w'\right)
=
v'\cdot w'.
\end{equation}
We say that all monoms in $(\cancel{v}{w})^{-1}$ and $\cancel{v}{w}$
{\em cancel completely\/} in the product $vw$. If the product $v'\cdot w'$ on the right of \eqref{eq:v:w} is not cancellation-free,
then from the maximality of length of $u$ in Definition \ref{def:of:c} it follows that the final monom $v'[l_{v'}]$ of $v'$
and the initial monom $w'[1]$ of $w'$ are dependent and $v'[l_{v'}]\cdot w'[1]\not=e$. That is, $\letter{v'}{l_{v'}}=\letter{w'}{1}=x_{v,w}$ for some $x_{v,w}\in X$
and  $z_{v,w}=\power{v'}{l_{v'}}+\power{w'}{1}\not=0$. Furthermore,
\begin{equation}
\label{eq:vw:2}
vw=v[1]\cdot\ldots \cdot v[l_{v'}-1]\cdot x_{v,w}^{z_{v,w}}\cdot w'[2]\cdot\ldots\cdot w'[l_{w'}],
\end{equation}
and the product on the right in \eqref{eq:vw:2} is cancellation-free. (Here we let $v[1]\cdot\ldots \cdot v[l_{v'}-1]=e$ when $l_{v'}=1$
and $w'[2]\cdot\ldots\cdot w'[l_{w'}]=e$ when $l_{w'}=1$.) In this case, we say that the final monom of $v'$ and the initial monom of
$w'$ {\em cancel partially\/} in the product $vw$. Finally, a monom of $v$ and $w$ that neither cancels completely nor cancels partially
in the product $vw$ is said to {\em remain unchanged\/} in this product.

\begin{definition}
Let $X$ be a set. For every $x\in X$, define the function $\mu_x:F(X)\to\N\subseteq\R^+$ by
\begin{equation}
\label{def:eq:phi}
\mu_x(w)=\max\left(\{|\power{w}{i}|: i\in \N,\
1\le i\le l_w,\ \letter{w}{i}=x\}\cup\{0\}\right)
\end{equation}
for
$w\in F(X)$.
\end{definition}

An alternative reformulation of the meaning of the formula \eqref{def:eq:phi} can be given as follows:
$\mu_x(w)$ is the maximal absolute value $|z|$ of the power $z$ of a monom $x^z$ of $w$, if such a monom exists, or $0$ otherwise.

\begin{lemma}
\label{a(g):equation:1}
Let $X$ be a set. For every $x\in X$, the function $\mu_x$
is a
seminorm on $F(X)$.
\end{lemma}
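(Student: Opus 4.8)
The plan is to verify the two seminorm axioms (S1) and (S2) of Definition \ref{definition:of:a:seminorm} directly from the formula \eqref{def:eq:phi}, by reading off the canonical representation \eqref{word:a} of a word and exploiting the cancellation analysis recorded in \eqref{eq:v:w} and \eqref{eq:vw:2}. The symmetry axiom (S2) is immediate; the subadditivity axiom (S1) is where the cancellation bookkeeping is needed.

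For (S2) I would start from the canonical representation $w=\letter{w}{1}^{\power{w}{1}}\cdots\letter{w}{l_w}^{\power{w}{l_w}}$ and observe that the canonical representation of $w^{-1}$ is $\letter{w}{l_w}^{-\power{w}{l_w}}\cdots\letter{w}{1}^{-\power{w}{1}}$: it lies in $\Seq(X)$ because reversing a sequence with distinct neighbors again has distinct neighbors, and no exponent vanishes. Consequently $\{|\power{w^{-1}}{j}|:\letter{w^{-1}}{j}=x\}=\{|{-\power{w}{i}}|:\letter{w}{i}=x\}=\{|\power{w}{i}|:\letter{w}{i}=x\}$ since $|{-z}|=|z|$. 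Thus the set whose maximum defines $\mu_x$ in \eqref{def:eq:phi} is the same for $w$ and $w^{-1}$, whence $\mu_x(w)=\mu_x(w^{-1})$.

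For (S1), fix $a,b\in F(X)$ and analyze the canonical representation of $ab$ using Definition \ref{def:of:c}. Writing $a=v'\cdot(\cancel{a}{b})^{-1}$ and $b=(\cancel{a}{b})\cdot w'$ as in the discussion preceding \eqref{eq:v:w}, the key observation is that \emph{every} monom occurring in the canonical representation of $ab$ is of exactly one of three kinds: (1) a monom inherited unchanged from $a$; (2) a monom inherited unchanged from $b$; or (3) in the partial-cancellation case, the single fused monom $x_{a,b}^{z_{a,b}}$ of \eqref{eq:vw:2}, where $z_{a,b}=\power{v'}{l_{v'}}+\power{w'}{1}$ and $\letter{v'}{l_{v'}}=\letter{w'}{1}=x_{a,b}$. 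This trichotomy is precisely what \eqref{eq:v:w} and \eqref{eq:vw:2} supply, and the maximality clause of Definition \ref{def:of:c} guarantees there is at most one fused monom.

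It then suffices to bound $|z|$ by $\mu_x(a)+\mu_x(b)$ for the exponent $z$ of each monom $x^z$ in the canonical representation of $ab$; taking the maximum over such monoms (or the value $0$ when $x$ does not occur in $ab$) gives $\mu_x(ab)\le\mu_x(a)+\mu_x(b)$ via \eqref{def:eq:phi}. For a monom of type (1) one has $|z|\le\mu_x(a)$ and for type (2) one has $|z|\le\mu_x(b)$, directly from \eqref{def:eq:phi}. For the type (3) monom, $z=z_{a,b}=\power{v'}{l_{v'}}+\power{w'}{1}$; since $v'$ is an initial subword of $a$, its final monom has letter $\letter{v'}{l_{v'}}=x_{a,b}=x$, so $|\power{v'}{l_{v'}}|\le\mu_x(a)$, and symmetrically $|\power{w'}{1}|\le\mu_x(b)$. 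The triangle inequality for integers then yields $|z_{a,b}|\le|\power{v'}{l_{v'}}|+|\power{w'}{1}|\le\mu_x(a)+\mu_x(b)$, as required. The one point I would check carefully---and the only content beyond bookkeeping---is that the fused letter being $x$ forces $\power{v'}{l_{v'}}$ and $\power{w'}{1}$ to be exponents of genuine $x$-monoms of $a$ and of $b$; this is exactly what $\letter{v'}{l_{v'}}=\letter{w'}{1}=x_{a,b}$ together with $v'$ being an initial subword of $a$ and $w'$ a final subword of $b$ provide.
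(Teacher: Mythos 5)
Your proof is correct and takes essentially the same route as the paper's: (S2) by inspecting the canonical representation of $w^{-1}$, and (S1) by classifying each monom $x^z$ of $ab$ as either a monom that remains unchanged from $a$ or from $b$ (hence $|z|\le\max\{\mu_x(a),\mu_x(b)\}$) or the single partially cancelled monom $x^{z_a+z_b}$, bounded by $|z_a|+|z_b|\le\mu_x(a)+\mu_x(b)$ via the triangle inequality. The only difference is one of detail: you spell out the bookkeeping behind Definition \ref{def:of:c} and \eqref{eq:vw:2} (in particular, that the fused exponents come from genuine $x$-monoms of $a$ and $b$), which the paper treats as immediate from its cancellation discussion.
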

\begin{proof}
Fix $x\in X$. Obviously, $\mu_x(w)\ge 0$
for every $w\in F(X)$. The condition (S2) from Definition
\ref{definition:of:a:seminorm} is obvious, so it remains only to
check condition (S1). Fix $a,b\in F(X)$. Let $x^z$ be an arbitrary monom of $ab$.
(If no such a monom exists, then (S1) is trivially satisfied.)
If this monom comes from a monom that remains unchanged in the
product of $a$ and $b$, then
$$
|z|\le \max\left\{\mu_x(a),\mu_x(b)\right\}\le \mu_x(a)+ \mu_x(b).
$$
Otherwise, the monom $x^z$ must be the result of the partial
cancellation of some monom $x^{z_a}$ of $a$ with some monom $x^{z_b}$
of $b$. That is, $x^z=x^{z_a+z_b}$ and
$$
|z|=|z_a+z_b|\le |z_a|+|z_b|\le \mu_x(a)+ \mu_x(b).
$$
This finishes the proof of (S1).
\end{proof}

Our last lemma in this section shall be needed only in Section \ref{proof:of:item:v}, so the reader can safely skip it at first reading.

Suppose that $w_0,w_1,\dots,w_m$  are words such that each $w_j$ contains a distinguished
monom $x_j^{z_j}$ that may be called its ``mountain'', because we additionally assume that $|z_j|$ is
at least twice as big as the absolute value $|z|$ of the power of {\em every\/} monom $x_j^{z}$
of {\em any\/} $w_k$ with $k\not = j$. Then  the ``in particular'' part of  our  next lemma guarantees that
each ``mountain'' monom $x_j^{z_j}$ of $w_j$ is ``high enough'' to ensure that it does not cancel  completely in the product $w_0w_1\dots w_m$.
The somewhat technical conditions (i$_m$)--(iv$_m$) are needed only for  carrying an inductive argument.

\begin{lemma}
\label{real:final:step} Assume that $X$ is a set, $m\in\N$, $w_0,w_1,\dots,w_m\in F(X)$, $x_0,x_1,\allowbreak\dots,\allowbreak x_m\in X$ and
\begin{equation}
\label{mountain:eq}
\mu_{x_j}(w_j)>2\mu_{x_j}(w_k)
\mbox{ whenever }
j,k\in\N, j\le m,
k\le m
\mbox{ and }
j\not = k.
\end{equation}
Let $\mu_{x_0}(w_0)>0$. Define $v_m=w_0 w_1 \dots w_m$. Then there exists an integer $n_m$
such that:
\begin{itemize}
\item[(i$_m$)]  $1\le n_m\le l_{v_m}$;
\item[(ii$_m$)] $v_m[n_m]=x_m^{z_m}$, where $2|z_m|>\mu_{x_m}(w_m)$;
\item[(iii$_m$)] $v_m[n_m+1]\cdot v_m[n_m+2]\cdot\ldots\cdot v_m[l_{v_m}]$ is a final subword of $w_m$;
\item[(iv$_m$)] $(x_0,x_1,\dots,x_m)$ is a subsequence of the sequence
$(\letter{v_m}{1}, \letter{v_m}{2},\dots, \letter{v_m}{n_m})$.
\end{itemize}
In particular, $(x_0,x_1,\dots,x_m)$ is a subsequence of the sequence $\support{v_m}$.
\end{lemma}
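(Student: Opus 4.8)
I would argue by induction on $m$, constructing $n_m$ together with the monom $v_m[n_m]=x_m^{z_m}$ recursively. Before starting I would isolate one clean consequence of the hypothesis \eqref{mountain:eq}: \emph{the letters $x_0,\dots,x_m$ are pairwise distinct}. Indeed, if $x_j=x_k$ with $j\ne k$, then \eqref{mountain:eq} applied to $(j,k)$ and to $(k,j)$ gives $\mu_{x_j}(w_j)>2\mu_{x_j}(w_k)$ and $\mu_{x_j}(w_k)>2\mu_{x_j}(w_j)$ simultaneously, whence $\mu_{x_j}(w_j)>4\mu_{x_j}(w_j)$, which is absurd since $\mu_{x_j}\ge 0$. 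This is precisely the fact that will keep the previous ``mountain'' from colliding with the new one. The base case $m=0$ is immediate: since $\mu_{x_0}(w_0)>0$, by \eqref{def:eq:phi} there is a position $n_0$ with $\letter{w_0}{n_0}=x_0$ and $|\power{w_0}{n_0}|=\mu_{x_0}(w_0)$, and with $z_0=\power{w_0}{n_0}$ the conditions (i$_0$)--(iv$_0$) hold trivially.

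For the inductive step I would write $v_m=v_{m-1}w_m$ and dissect the single cancellation occurring in this product through $c=\cancel{v_{m-1}}{w_m}$, using the decomposition $v_{m-1}=v'c^{-1}$, $w_m=c\,w''$, and hence $v_m=v'w''$, as in \eqref{eq:v:w}. Two ``shallowness'' observations carry the argument. First, the monom $v_{m-1}[n_{m-1}]=x_{m-1}^{z_{m-1}}$ cannot cancel completely: otherwise $c$ would contain $x_{m-1}^{-z_{m-1}}$, forcing $\mu_{x_{m-1}}(w_m)\ge|z_{m-1}|>\tfrac12\mu_{x_{m-1}}(w_{m-1})$, contradicting \eqref{mountain:eq}. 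Thus this monom lies in $v'$, and since $c^{-1}$ is then a final subword of $v_{m-1}$ lying strictly past position $n_{m-1}$, condition (iii$_{m-1}$) shows that $c^{-1}$ is a final subword of $w_{m-1}$. Second, fix a monom $x_m^{z_m}$ of $w_m$ with $|z_m|=\mu_{x_m}(w_m)$ (it exists because \eqref{mountain:eq} forces $\mu_{x_m}(w_m)>0$); this ``new mountain'' is not contained in $c$, for otherwise the final subword $c^{-1}$ of $w_{m-1}$ would contain $x_m^{-z_m}$, forcing $\mu_{x_m}(w_{m-1})\ge\mu_{x_m}(w_m)$ and again contradicting \eqref{mountain:eq}. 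Hence the new mountain survives into $w''$, and I take $n_m$ to be its position in $v_m=v'w''$.

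The delicate point, which I expect to be the main obstacle, is (ii$_m$): the new mountain may be the \emph{initial} monom of $w''$ and thus partially cancel against the final monom of $v'$, as in \eqref{eq:vw:2}, changing its power. Here the preliminary distinctness pays off. If no partial cancellation occurs, the power is untouched and $2|z_m|=2\mu_{x_m}(w_m)>\mu_{x_m}(w_m)$. If it does occur, the final monom of $v'$ must carry the letter $x_m$; since $c^{-1}$ is a final subword of $w_{m-1}$ lying strictly past $n_{m-1}$, that final monom of $v'$ is either an $x_m$-monom of $w_{m-1}$ -- whose power is at most $\mu_{x_m}(w_{m-1})<\tfrac12\mu_{x_m}(w_m)$ by \eqref{mountain:eq} -- or it is $v_{m-1}[n_{m-1}]=x_{m-1}^{z_{m-1}}$, which is excluded because $x_{m-1}\ne x_m$. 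In the admissible case the merged power $z=z_m+z'$, with $|z'|<\tfrac12\mu_{x_m}(w_m)$, satisfies $|z|\ge|z_m|-|z'|>\tfrac12\mu_{x_m}(w_m)$, which is exactly (ii$_m$).

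Finally, the remaining conditions fall out of the same picture. Condition (i$_m$) is clear. For (iii$_m$), whatever lies to the right of position $n_m$ in $v_m$ is the part of $w''$ after the new mountain; since the only cancellation in $v'w''$ is at the junction, this part is untouched and is a final subword of $w_m$. For (iv$_m$), the previous mountain at position $n_{m-1}$ survives in $v_m$ retaining its letter $x_{m-1}$ (the junction cancellation can alter only its power, never its letter, and nothing earlier), so positions $1,\dots,n_{m-1}$ of $v_{m-1}$ reappear in $v_m$ with the same letters; by (iv$_{m-1}$) the tuple $(x_0,\dots,x_{m-1})$ embeds as a subsequence of $(\letter{v_m}{1},\dots,\letter{v_m}{n_{m-1}})$, and since $\letter{v_m}{n_m}=x_m$ with $n_m>n_{m-1}$, the tuple $(x_0,\dots,x_m)$ is a subsequence of $(\letter{v_m}{1},\dots,\letter{v_m}{n_m})$. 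The ``in particular'' clause then follows by combining (iv$_m$) with (i$_m$).
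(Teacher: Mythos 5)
Your proof is correct and takes essentially the same route as the paper's: induction on $m$ with exactly the same four invariants, using \eqref{mountain:eq} together with (ii$_{m-1}$)--(iii$_{m-1}$) to rule out complete cancellation of either mountain and to show that a partial cancellation at the junction can destroy at most half of the new mountain's power, with letter-distinctness forcing $n_m>n_{m-1}$. The only difference is organizational -- the paper phrases the step as two claims about which monoms of $v_m w_{m+1}$ remain unchanged or cancel partially, while you make the same cancellation structure explicit through the decomposition $v_{m-1}=v'\cdot(\cancel{v_{m-1}}{w_m})^{-1}$, $w_m=(\cancel{v_{m-1}}{w_m})\cdot w''$ -- but the underlying estimates are identical.
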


\begin{proof}
We will prove our lemma by induction on $m\in\N$. By assumption of our lemma, $\mu_{x_0}(w_0)>0$, so there exists an integer $n_0$ such that $1\le n_0\le l_{w_0}$ and
$w_0[n_0]=x_0^{z_0}$, where $|z_0|=\mu_{x_0}(w_0)>0$. A straightforward check shows that this $n_0$ satisfies conditions
(i$_0$)--(iv$_0$).

Let us make an inductive step from $m\in\N$ to $m+1$. Fix $w_0,w_1,\dots,w_{m+1}\in F(X)$ and $x_0,x_1,\dots,x_{m+1}\in X$ satisfying
\eqref{mountain:eq}, with $m$ replaced by $m+1$. Our inductive assumption allows us to choose an integer $n_m$
satisfying conditions (i$_m$)--(iv$_m$). To finish the inductive step, we must find $n_{m+1}\in\N$  satisfying conditions (i$_{m+1}$)--(iv$_{m+1}$).

It easily follows from \eqref{mountain:eq} that
\begin{equation}
\label{xs:are:different}
x_m\not=x_{m+1}.
\end{equation}

\begin{claim}
\label{claim:1:new}
\begin{itemize}
\item[(a)]
$\letter{v_{m+1}}{n_m}=\letter{v_m}{n_m}=x_m$.
\item[(b)]
$v_{m+1}[i]=v_m[i]$ for every integer $i$ with $1\le i<n_m$;
that is, each monom to the left of $\letter{v_m}{n_m}$
remains unchanged in the product $v_mw_{m+1}$.
\end{itemize}
\end{claim}
\begin{proof}
If the monom $v_m[n_m]=x_m^{z_m}$ from (ii$_m$) remains unchanged in the product $v_mw_{m+1}$, then our claim holds trivially. Assume now
that this monom undergoes a cancellation in the product $v_mw_{m+1}$ with some monom $x_m^z$ of $w_{m+1}$. Since $2|z|\le
2\mu_{x_m}(w_{m+1})<\mu_{x_m}(w_m)<2|z_m|$ by \eqref{mountain:eq} and (ii$_m$), we get  $|z_m+z|\ge |z_m|-|z|>0$. That is, the
cancellation in question is only partial and results in the monom $x_m^{z_m+z}$ of $v_{m+1}=v_mw_{m+1}$. In particular, both (a) and (b) hold.
\end{proof}

\begin{claim}
\label{claim:2:new}
There exists an integer $n_{m+1}>n_m$ satisfying conditions (i$_{m+1}$), (ii$_{m+1}$) and (iii$_{m+1}$).
\end{claim}
\begin{proof}
 From \eqref{mountain:eq} we  get $\mu_{x_{m+1}}(w_{m+1})>2\mu_{x_m+1}(w_m)\ge 0$, so $w_{m+1}$ contains a monom $x_{m+1}^p$, where $|p|=\mu_{x_{m+1}}(w_{m+1})>0$. The proof now branches into two cases.

\medskip
{\em Case 1\/}. {\sl The monom $x_{m+1}^p$ of $w_{m+1}$ remains unchanged in the product $v_mw_{m+1}$\/}.
Then $x_{m+1}^p=v_{m+1}[n_{m+1}]$ for some integer $n_{m+1}$ satisfying (i$_{m+1}$),
so $z_{m+1}=p$ and $2|z_{m+1}|=2|p|>|p|=\mu_{x_{m+1}}(w_{m+1})$. This proves (ii$_{m+1}$).
Condition (iii$_{m+1}$) trivially holds as well.
 From Claim \ref{claim:1:new}(b) it follows that $n_m\le n_{m+1}$.
Since $v_{m+1}[n_m]=v_m[n_m]=x_m\not=x_{m+1}=v_{m+1}[n_{m+1}]$ by
Claim \ref{claim:1:new}(a), \eqref{xs:are:different} and (ii$_{m+1}$),
we conclude that $n_m\not=n_{m+1}$.
Thus, $n_m<n_{m+1}$.

\medskip
{\em Case 2\/}. {\sl The monom $x_{m+1}^p$ of $w_{m+1}$ undergoes a cancellation in the product $v_mw_{m+1}$ with some monom
$v_m[n_{m+1}]=x_{m+1}^q$ of $v_m$ for a suitable integer $n_{m+1}$.\/} Then $n_m\le n_{m+1}$ by Claim \ref{claim:1:new}(b).
Since $\letter{v_m}{n_m}=x_m\not=x_{m+1}=\letter{v_m}{n_{m+1}}$ by (ii$_m$) and \eqref{xs:are:different}, we have $n_m\neq n_{m+1}$,
which yields $n_m< n_{m+1}$. From (iii$_m$) it now follows that $v_m[n_{m+1}]=x_{m+1}^q$ is a monom of $w_m$. In particular,
\begin{equation}
\label{y:and:mu}
|q|\le \mu_{x_{m+1}}(w_m).
\end{equation}
Therefore,
\begin{equation}
\label{eq:p+y}
|p+q|\ge |p|-|q|=\mu_{x_{m+1}}(w_{m+1})-|q|>2\mu_{x_{m+1}}(w_m)-|q|\ge \mu_{x_{m+1}}(w_m)\ge 0
\end{equation}
by \eqref{mountain:eq}. This means that the cancellation in question
is only partial and results in the monom
$v_{m+1}[n_{m+1}]=x_{m+1}^{p+q}$ of $v_{m+1}=v_mw_{m+1}$. In
particular, (i$_{m+1}$) and (iii$_{m+1}$) hold.

Define $z_{m+1}=p+q$.  Since $2|q|\le 2 \mu_{x_{m+1}}(w_m)<\mu_{x_{m+1}}(w_{m+1})$ by \eqref{y:and:mu} and
\eqref{mountain:eq},
we get
$$
2|z_{m+1}|=2|p+q|\ge
2\mu_{x_{m+1}}(w_{m+1})-2|q|>\mu_{x_{m+1}}(w_{m+1}).
$$
This proves (ii$_{m+1}$).
\end{proof}

It remains only to check that the integer $n_{m+1}$  from Claim
\ref{claim:2:new} satisfies the condition (iv$_{m+1}$). By (iv$_m$)
and Claim \ref{claim:1:new}, the sequence $(x_0,x_1,\dots,x_m)$ is a
subsequence of the sequence $(\letter{v_{m+1}}{1},
\letter{v_{m+1}}{2},\allowbreak \dots,\allowbreak
\letter{v_{m+1}}{n_m})$. Since $x_{m+1}=\letter{v_{m+1}}{n_{m+1}}$
by (ii$_{m+1}$),  and $n_m<n_{m+1}$ by Claim \ref{claim:2:new}, this
yields (iv$_{m+1}$). The inductive step is complete.
\end{proof}

\section{Seminorms on free groups arising from linear orders}
\label{seminorms:from:linear:order}

\begin{definition}
Let $(X,<)$ be a linearly ordered set, $k\ge 0$ an integer and $s=(x_0, x_1, ..., x_k)$ a sequence of elements of $X$. Define
$$
\lma{s}=\{i\in\N: 0\le i\le k
\mbox{ and }
(i\not=0\to x_{i-1}<x_i)\wedge (i\not=k\to x_i>x_{i+1})\},
$$
$$
\lmi{s}=\{i\in\N: 0\le i\le k
\mbox{ and }
(i\not=0\to x_{i-1}>x_i)\wedge (i\not=k\to x_i<x_{i+1})\}
$$
and $\ext{s}=\lma{s}\cup\lmi{s}$. If $i\in \lma{s}$, then the $i$th element $x_i$ of $s$ will be called a {\em local maximum of $s$\/}.
Similarly, if $i\in \lmi{s}$, then the $i$th element $x_i$ of $s$ will be called a {\em local minimum of $s$\/}. Finally, if $i\in
\ext{s}$, then $x_i$ is said to be a {\em local extremum of $s$\/}.

For the empty sequence $\emptyset$, we define $\lmi{\emptyset}=\lma{\emptyset}=\emptyset$, so $\ext{\emptyset}=\emptyset$.
\end{definition}

\begin{example}\label{First:example}  Let $(X,<)$ be a linearly ordered set.
\begin{itemize}
\item[(a)]
If $k\in\N$ and $s=(x_0, x_1, ..., x_k)\in\Seq(X)$, then $0,k\in \ext{s}$; that is, the first and the last element of a sequence $s\in\Seq(X)$ are always
local extrema of $s$.
\item[(b)] If $k\in\N$ and $s=(x_0,x_1,\dots,x_k)$ is a sequence of elements of $X$
such that $x_0<x_1<\dots<x_k$, then $x_0$ is the only local minimum of $s$ and $x_k$ is the only local maximum of $s$; in particular, $\ext{s}=\{0,k\}$.
\item[(c)] If $x\ne y$ are two distinct elements of $X$, then for every $n\in\N\setminus\{0\}$
the finite sequence $s= (x,y,x,y, \ldots, x, y )$ having $2n$ alternating terms $x$ and $y$, satisfies $|\ext{s}| = 2n$.
\end{itemize}
\end{example}

\begin{lemma}
\label{zigzag:sequence}
Let $(X,<)$ be a linearly ordered set and let  $X=\{y_n:n\in\N\}$ be the faithful enumeration of $X$ such that $y_m<y_n$
whenever $m,n\in\N$ and $m<n$. Define $x_n=y_{n+1}$ for an even $n\in\N$ and $x_n=y_{n-1}$ for an odd $n\in\N$.
Then $X=\{x_m:m\in\N\}$ is the faithful enumeration of $X$ satisfying the following properties:
\begin{itemize}
\item[(i)]
$|\ext{(x_0,x_1,\ldots,x_m)}|=m+1$
for every $m\in\N$;
\item[(ii)] $\max\{x_0,x_1,\dots,x_{2s+1}\}<x_m$ whenever $m,s\in\N$ and $2s+1<m$.
\end{itemize}
\end{lemma}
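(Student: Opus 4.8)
The plan is to exploit the fact that the reindexing $n\mapsto x_n$ merely swaps the two members of each block $\{2k,2k+1\}$: indeed $x_{2k}=y_{2k+1}$ and $x_{2k+1}=y_{2k}$, so if we write $x_n=y_{\theta(n)}$ then $\theta$ is the involution of $\N$ interchanging $2k$ and $2k+1$. In particular $\theta$ is a bijection, so $\{x_m:m\in\N\}$ is again a faithful enumeration of $X$, and the whole lemma reduces to tracking indices through this block-swap together with the hypothesis $m<n\Rightarrow y_m<y_n$. The single order-theoretic fact that drives everything is that the sequence strictly zigzags:
\[
x_{2k}>x_{2k+1}<x_{2k+2}\qquad\text{for every } k\in\N.
\]
This holds because $x_{2k}=y_{2k+1}>y_{2k}=x_{2k+1}$ and $x_{2k+1}=y_{2k}<y_{2k+3}=x_{2k+2}$ by monotonicity of the $y$-enumeration. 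Thus the step $x_i\to x_{i+1}$ goes down when $i$ is even and up when $i$ is odd; in particular consecutive terms are distinct, so $(x_0,\dots,x_m)\in\Seq(X)$.

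For (i), I would read off from the zigzag that every index of the prefix $(x_0,\dots,x_m)$ is a local extremum. An even interior index $i$ satisfies $x_{i-1}<x_i>x_{i+1}$, so $i\in\lma{(x_0,\dots,x_m)}$, while an odd interior index satisfies $x_{i-1}>x_i<x_{i+1}$, so $i\in\lmi{(x_0,\dots,x_m)}$. The two endpoints $0$ and $m$ lie in $\ext{(x_0,\dots,x_m)}$ by Example \ref{First:example}(a) (equivalently, $0$ is a local maximum because $x_0>x_1$, and $m$ is a local maximum or minimum according to the parity of the final step). Hence $\ext{(x_0,\dots,x_m)}=\{0,1,\dots,m\}$ and $|\ext{(x_0,\dots,x_m)}|=m+1$; the case $m=0$ is covered by Example \ref{First:example}(a) with $k=0$ (or by the empty-sequence conventions).

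For (ii), the key observation is that the indices $0,1,\dots,2s+1$ are exactly the blocks $0,1,\dots,s$, and within block $k$ the values $x_{2k},x_{2k+1}$ are $y_{2k+1},y_{2k}$; taking the union over $k=0,\dots,s$ gives
\[
\{x_0,x_1,\dots,x_{2s+1}\}=\{y_0,y_1,\dots,y_{2s+1}\},
\]
so that $\max\{x_0,\dots,x_{2s+1}\}=y_{2s+1}$ by monotonicity. Finally, for $m>2s+1$ one has $x_m=y_{m+1}$ when $m$ is even and $x_m=y_{m-1}$ when $m$ is odd; in either case the $y$-index of $x_m$ is at least $2s+2>2s+1$ (using that an odd $m>2s+1$ forces $m\ge 2s+3$), whence $x_m>y_{2s+1}=\max\{x_0,\dots,x_{2s+1}\}$, as required.

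I do not expect a genuine obstacle: the statement is bookkeeping about a fixed permutation of $\N$. The only places demanding care are the endpoint cases in the definitions of $\lma{}$ and $\lmi{}$ (where one of the two defining inequalities is vacuous) and the parity case-split in (ii), both of which become routine once the zigzag identity and the block-swap description of the initial segments are established.
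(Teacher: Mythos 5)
Your proof is correct. The paper in fact states Lemma \ref{zigzag:sequence} without any proof, treating it as routine bookkeeping; your verification --- the block-swap description $x_{2k}=y_{2k+1}$, $x_{2k+1}=y_{2k}$, the zigzag inequality $x_{2k}>x_{2k+1}<x_{2k+2}$ making every index of each prefix a local extremum, and the identity $\{x_0,\dots,x_{2s+1}\}=\{y_0,\dots,y_{2s+1}\}$ with the parity check on $y$-indices for (ii) --- is exactly the intended argument, with the endpoint and parity cases handled correctly.
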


\begin{lemma}\label{subsequence:has:less:extrems}
Let $(X,<)$ be a linearly ordered set. If $s\in\Seq(X)$, then $|\ext{s'}|\le|\ext{s}|$ for every subsequence $s'$ of $s$.
\end{lemma}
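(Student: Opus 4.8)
The plan is to avoid any naive ``delete one element at a time'' induction. The reason such an induction fails is that a single deletion from a sequence in $\Seq(X)$ may create equal neighbours, and once ties are present the number of extrema can genuinely jump up: deleting the repeated entry from $(1,2,2,1)$ produces $(1,2,1)$, turning a sequence with a plateau (hence \emph{no} strict interior extremum) into one with a strict maximum. Thus the hypothesis $s\in\Seq(X)$ is essential and must be used ``all at once'' rather than propagated through intermediate subsequences, which need not lie in $\Seq(X)$.

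To exploit it, I would first record a bookkeeping identity. Call an index $i$ an \emph{interior turn} of a finite sequence $t=(t_0,\dots,t_\ell)$ if $0<i<\ell$ and $i\in\ext{t}$. Separating the two endpoints from the interior gives $|\ext{t}|=(\text{number of interior turns of }t)+c$, where $c\in\{0,1,2\}$ records which endpoints lie in $\ext{t}$. Since $s\in\Seq(X)$ has distinct neighbours, both of its endpoints are extrema by Example~\ref{First:example}(a), so $|\ext{s}|=(\text{interior turns of }s)+2$, whereas for the subsequence $s'$ one only gets $|\ext{s'}|\le(\text{interior turns of }s')+2$. Hence the lemma reduces to the single inequality that $s'$ has no more interior turns than $s$.

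Second, I would produce an explicit injection from the interior turns of $s'$ into the interior strict extrema of $s$ (of which there are exactly $|\ext{s}|-2$). Write $s=(x_0,\dots,x_k)$ and index $s'$ by $i_0<\dots<i_{k'}$. If $b$ is an interior turn of $s'$ that is a local maximum, then $x_{i_{b-1}}<x_{i_b}>x_{i_{b+1}}$; let $m_b$ be the least index in the block $[i_{b-1},i_{b+1}]$ at which $x$ attains its maximum over that block. Because both ends $x_{i_{b-1}},x_{i_{b+1}}$ are strictly below $x_{i_b}\le x_{m_b}$, the index $m_b$ lies strictly inside the block, so $1\le m_b\le k-1$; and since consecutive entries of $s$ are distinct, minimality of $m_b$ together with maximality of its value forces $x_{m_b-1}<x_{m_b}>x_{m_b+1}$, i.e.\ $m_b\in\lma{s}$. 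Symmetrically, a local-minimum turn is sent to the leftmost minimiser $m_b\in\lmi{s}$ of $x$ over $[i_{b-1},i_{b+1}]$.

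Finally, I would verify injectivity. Consecutive interior turns of $s'$ must alternate between maxima and minima, because the sign of the common middle step is shared and forces opposite types; hence any two maximum-turns occur at $s'$-indices differing by at least $2$, so their blocks $(i_{b-1},i_{b+1})$ are disjoint and their images in $\lma{s}$ distinct, and likewise for minimum-turns in $\lmi{s}$. Images of a maximum-turn and a minimum-turn are distinct since $\lma{s}\cap\lmi{s}=\emptyset$. This gives the desired injection, so (interior turns of $s'$)$\,\le |\ext{s}|-2$, and combining with the displayed bound yields $|\ext{s'}|\le|\ext{s}|$; the degenerate cases $|s'|\le 1$ are immediate. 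I expect the only delicate points to be checking that $m_b$ is a genuine \emph{strict} local extremum of $s$ — exactly where distinctness of neighbours in $s$ is used — and the disjointness argument underpinning injectivity.
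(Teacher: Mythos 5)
Your proof is correct, and its engine is the same as the paper's: each interior local maximum (minimum) of $s'$ is sent to an index realizing the maximum (minimum) of $s$ over the block $[i_{b-1},i_{b+1}]$ --- exactly the paper's $j_{\max}/j_{\min}$ --- and distinctness of neighbours in $s$ upgrades the weak inequalities at that index to strict ones, so the image lies in $\ext{s}$. The genuine difference is in the bookkeeping around this core. The paper extends the map to all of $\ext{s'}$ by sending the endpoints $0,k'$ to $0,k$ and proves injectivity of the resulting single map $\iota$; the delicate case there is two \emph{adjacent} interior extrema $n,n+1$ of $s'$, whose blocks overlap, and the paper kills it with a contradiction argument (equality of the images would force a block minimum to equal a block maximum, hence $x_{i_n}=x_{i_{n+1}}$). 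You instead split off the endpoints by counting ($|\ext{s'}|\le \hbox{interior turns}+2$ while $|\ext{s}|=\hbox{interior turns of }s+2$, the equality using $s\in\Seq(X)$ via Example \ref{First:example}(a)), and then get injectivity almost for free from type-preservation: maximum-turns land in $\lma{s}$, minimum-turns in $\lmi{s}$, these are disjoint on interior indices, and same-type turns are at least two apart in $s'$, so their open blocks $(i_{b-1},i_{b+1})$ are disjoint. Your alternation observation thus replaces the paper's adjacent-turn case analysis; the two arguments use the same structure, but yours localizes the use of the $\Seq(X)$ hypothesis to two clean places (strictness at the chosen index, and the endpoint count for $s$) and avoids any overlap analysis, which is arguably tidier. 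One cosmetic remark: your insistence on the \emph{leftmost} maximizer is not needed for strictness on the left --- distinctness of neighbours already gives it, which is why the paper can take an arbitrary maximizer --- though minimality does no harm and gives you strictness there without invoking $\Seq(X)$ a second time.
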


\begin{proof}
Let $s=(x_0, x_1, ..., x_k)$ for some $k\in\N$, and let $s'$ be a subsequence of $s$. Then there exist $k'\in\N$ and natural numbers $i_0, i_1, \ldots, i_{k'}$ such that
$0\leq i_0< i_1< \ldots  < i_{k'}\leq  k$ and $s'=(x_{i_0}, x_{i_1}, \ldots , x_{i_{k'}})$. For each $p,q\in \N$ with $0\le p<q\le k$ choose integers $j_{\max}(p,q)$
and $j_{\min}(p,q)$ such that
\begin{equation}
\label{inequality:j:max:min}
p\le j_{\max}(p,q)\le q,
\ \
p\le j_{\min}(p,q)\le q,
\end{equation}
\begin{equation}
\label{def:j:max:min}
x_{j_{\max}(p,q)}=\max\{x_{p},x_{p+1},\dots,x_{q-1},x_{q}\}
\ \
\mbox{and}
\ \
x_{j_{\min}(p,q)}=\min\{x_{p},x_{p+1},\dots,x_{q-1},x_{q}\}.
\end{equation}
Define a function $\iota:\ext{s'}\to\N$ by
\begin{equation}
\label{def:alpha}
\iota(n)=
\left\{\begin{array}{ll}
0 & \mbox{if $n=0$}\\
k & \mbox{if $n=k'$}\\
j_{\max}(i_{n-1},i_{n+1})
& \mbox{if $n\in\lma{s'}\setminus\{0,k'\}$}\\
j_{\min}(i_{n-1},i_{n+1})
& \mbox{if $n\in\lmi{s'}\setminus\{0,k'\}$}
\end{array}
\right.
\hbox{\hskip30pt}
\mbox{ for each }
n\in\ext{s'}.
\end{equation}

First of all, let us note that
\begin{equation}
\label{alpha:l:inequality}
i_{n-1}<\iota(n)<i_{n+1}
\mbox{ whenever }
n\in \ext{s'}\setminus\{0,k'\}.
\end{equation}
Indeed, if $n\in \lma{s'}\setminus\{0,k'\}$, then $\iota(n)=j_{\max}(i_{n-1},i_{n+1})$ by \eqref{def:alpha}, so
$i_{n-1}\le \iota(n)\le i_{n+1}$ by
\eqref{inequality:j:max:min}
and
$x_{\iota(n)}=\max\{x_{i_{n-1}},\dots, x_{i_n},\dots,x_{i_{n+1}}\}\ge x_{i_n}$ by \eqref{def:j:max:min}.
Since $x_{i_n}>x_{i_{n-1}}$ and $x_{i_n}>x_{i_{n+1}}$,
we conclude that $\iota(n)\not=i_{n-1}$ and $\iota(n)\not=i_{n+1}$. This proves \eqref{alpha:l:inequality}
when $n\in \lma{s'}\setminus\{0,k'\}$. The proof in the case $n\in \lmi{s'}\setminus\{0,k'\}$ is similar.

Next, let us show that $\iota(\ext{s'})\subseteq \ext{s}$.
Let $n\in \ext{s'}$. If $n=0$ or $n=k'$, then $\iota(n)\in\{0,k\}\subseteq \ext{s}$ by \eqref{def:alpha} and Example
\ref{First:example}(a), as $s\in\Seq(X)$. Assume now that $n\in \ext{s}\setminus\{0,k'\}$.
If $n\in \lma{s'}\setminus\{0,k'\}$, then from
\eqref{def:j:max:min}, \eqref{def:alpha}
and \eqref{alpha:l:inequality} we conclude
that
$x_{\iota(n)-1}\le x_{\iota(n)}$
and
$x_{\iota(n)+1}\le x_{\iota(n)}$.
Since $s\in\Seq(X)$, both inequalities must be strict, and so
$\iota(n)\in \lma{s}\subseteq \ext{s}$. The proof in the case $n\in \lmi{s'}\setminus\{0,k'\}$ is similar.

Finally, to finish the proof of our lemma, it remains only to check that $\iota:\ext{s'}\to\ext{s}$
is an injection. Suppose that $n,m$ are integers such that $0\le n<m\le k'$. We need to show that $\iota(n)\not=\iota(m)$.
If $n=0$, then $\iota(n)=\iota(0)=0\le i_{m-1}<\iota(m)$ by
\eqref{def:alpha} and
\eqref{alpha:l:inequality}.
Similarly, if $m=k'$, then $\iota(n)<i_{n+1}\le k=\iota(k')=\iota(m)$. Suppose now that $0< n<m< k'$.
If $m-n\ge 2$, then $\iota(n)<i_{n+1}\le i_{m-1}<\iota(m)$ by \eqref{alpha:l:inequality}, so we are left only with the case
$m=n+1$. From $n,n+1\in \ext(s')$ we get $x_{i_n}\not=x_{i_{n+1}}$.
Without loss of generality, we may assume that $x_{i_n}<x_{i_{n+1}}$,
so that $n\in\lmi{s'}$ and $n+1\in\lma{s'}$. Suppose that $\iota(n)=\iota(m)$.
Since $\iota(n)=j_{\min}(i_{n-1},i_{n+1})$
and $\iota(m)=\iota(n+1)=j_{\max}(i_{n},i_{n+2})$
by \eqref{def:alpha},
from \eqref{def:j:max:min} we get
$\min\{x_{i_{n-1}},\dots, x_{i_n}, \dots, x_{i_{n+1}}\}
=
\max\{x_{i_{n}},\dots, x_{i_{n+1}}, \dots, x_{i_{n+2}}\}$.
This gives
$x_{i_n}=x_{i_{n+1}}$, in contradiction  with $x_{i_n}<x_{i_{n+1}}$.
Therefore, $\iota(n)\not=\iota(m)$.
\end{proof}

\begin{definition}
\label{eta:definition}
For a linearly ordered set $(X,<)$, we define
the function $\eta:F(X)\to\N\subseteq\R^+$ by
\begin{equation}
\label{def:theta}
\eta(w)=|\ext{\support{w}}|=|\ext{(\letter{w}{1},\letter{w}{2},\dots,\letter{w}{l_w})}|
\ \
 \mbox{ for every } w\in F(X).
\end{equation}
\end{definition}
\begin{lemma}\label{a(g):equation:2}
If $(X,<)$ is a linearly ordered set, then
$\eta$
is a seminorm on $F(X)$.
\end{lemma}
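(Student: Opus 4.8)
The plan is to verify the two seminorm axioms (S1) and (S2) of Definition \ref{definition:of:a:seminorm} for the function $\eta$ of Definition \ref{eta:definition}, following the general scheme of Lemma \ref{a(g):equation:1} but replacing the analysis of powers by an analysis of the letter-sequence $\support{w}$. Axiom (S2) is the easy one. If $w=\letter{w}{1}^{\power{w}{1}}\cdots \letter{w}{l_w}^{\power{w}{l_w}}$ is the canonical representation of $w$, then $w^{-1}=\letter{w}{l_w}^{-\power{w}{l_w}}\cdots \letter{w}{1}^{-\power{w}{1}}$ is the canonical representation of $w^{-1}$, so $\support{w^{-1}}=(\letter{w}{l_w},\dots,\letter{w}{1})$ is the reverse of $\support{w}$. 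Since the defining conditions for $\lma{}$ and $\lmi{}$ are symmetric under reversal (reversing a sequence carries local maxima to local maxima and local minima to local minima bijectively, as the position $i$ and its mirror position have their two neighbors interchanged), one gets $|\ext{\support{w^{-1}}}|=|\ext{\support{w}}|$, i.e.\ $\eta(w^{-1})=\eta(w)$.

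For (S1) I would first isolate the purely combinatorial core as an auxiliary fact about concatenation: \emph{for any two finite sequences $s_1,s_2$ of elements of $X$, the concatenation $s=s_1\frown s_2$ satisfies $|\ext{s}|\le|\ext{s_1}|+|\ext{s_2}|$.} This is proved by a direct count. Every position strictly interior to $s_1$ has the same two neighbors in $s$ as in $s_1$, hence is a local extremum of $s$ exactly when it is one of $s_1$; likewise for positions strictly interior to $s_2$. The two global endpoints of $s$ are extrema of $s$, and are the corresponding endpoints of $s_1$ and $s_2$. Thus the only positions counted by $|\ext{s_1}|+|\ext{s_2}|$ but possibly not by $|\ext{s}|$ are the right endpoint of $s_1$ and the left endpoint of $s_2$: these contribute exactly $2$ to the right-hand side (as forced endpoint-extrema) but at most $2$ to $|\ext{s}|$, which yields the inequality. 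The degenerate cases in which $s_1$ or $s_2$ is empty or a singleton are immediate.

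Then I would feed into this the cancellation analysis already recorded in \eqref{eq:v:w} and \eqref{eq:vw:2}. Fix $a,b\in F(X)$ and write, with $u=\cancel{a}{b}$ as in Definition \ref{def:of:c}, $a=a'\cdot u^{-1}$ and $b=u\cdot b'$, so that $ab=a'\cdot b'$. Here $\support{a'}$ is an initial segment of $\support{a}$ and $\support{b'}$ is a final segment of $\support{b}$, so both are subsequences and Lemma \ref{subsequence:has:less:extrems} gives $|\ext{\support{a'}}|\le\eta(a)$ and $|\ext{\support{b'}}|\le\eta(b)$. If the product $a'\cdot b'$ is cancellation-free, then $\support{ab}=\support{a'}\frown\support{b'}$. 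If instead the final monom of $a'$ and the initial monom of $b'$ cancel partially, then by \eqref{eq:vw:2} they merge into the single letter $x_{a,b}=\letter{a'}{l_{a'}}=\letter{b'}{1}$, so that $\support{ab}=\support{a'}\frown t$, where $t=(\letter{b'}{2},\dots,\letter{b'}{l_{b'}})$ is obtained from $\support{b'}$ by deleting its first entry. In either case $\support{ab}=\support{a'}\frown t$ with $t$ a subsequence of $\support{b}$, so combining the concatenation inequality with Lemma \ref{subsequence:has:less:extrems} gives
\[
\eta(ab)=|\ext{\support{ab}}|\le|\ext{\support{a'}}|+|\ext{t}|\le \eta(a)+\eta(b),
\]
which is (S1). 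I expect the main obstacle to be the bookkeeping in the partial-cancellation case: one must confirm that after the merge the letters of $a'$ (all of $\support{a'}$, its last letter now carrying the combined exponent) and the surviving letters of $b'$ (all but the first) assemble, in this order and with no further interaction, into the genuine canonical support $\support{ab}$. This is precisely what \eqref{eq:vw:2} guarantees, so the difficulty lies entirely in marshalling the already-established cancellation formalism rather than in any new estimate; the concatenation inequality then does the rest.
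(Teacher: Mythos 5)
Your proof is correct and takes essentially the same route as the paper's: (S2) by reversing the support, and (S1) by decomposing $ab$ after cancellation into pieces whose supports are subsequences of $\support{a}$ and $\support{b}$, bounding the extrema of a concatenation by the sum of the extrema of the pieces, and finishing with Lemma \ref{subsequence:has:less:extrems}. The only real difference is that you treat the partial-cancellation case explicitly via \eqref{eq:vw:2}, which is in fact slightly more careful than the paper's own wording (the paper asserts a cancellation-free factorization $ab=a'b'$ into genuine subwords of $a$ and $b$, which can fail, e.g.\ for $a=x^{2}$, $b=x^{-1}$; since only the letter sequences matter for $\eta$, the argument is unaffected).
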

\begin{proof}
We should check that $\eta$ satisfies conditions (S1) and (S2) of Definition \ref{definition:of:a:seminorm}.

(S1)
Fix $a,b\in F(X)$. Let $c=ab$.
Then there exist an initial subword $a'$ of $a$ and a final subword $b'$ of $b$ such that
$c=ab=a'b'$ and the last product is cancellation-free.
In particular,
\begin{equation}
\label{equation:sigma:c}
\support{c}=
(\letter{c}{1},\letter{c}{2},\dots,\letter{c}{l_c})=
(\letter{a'}{1},\letter{a'}{2},\dots,\letter{a'}{l_{a'}},
\letter{b'}{1},\letter{b'}{2},\dots,\letter{b'}{l_{b'}}).
\end{equation}
Since $\support{a'}=(\letter{a'}{1},\letter{a'}{2},\dots,\letter{a'}{l_{a'}})\in\Seq(X)$,
both $\letter{a'}{1}$ and $\letter{a'}{l_{a'}}$ are local extrema of
the sequence $\support{a'}$ by Example \ref{First:example}(a). Similarly, since
$\support{b'}=(\letter{b'}{1},\letter{b'}{2},\dots,\letter{b'}{l_{b'}})\in\Seq(X)$,
both $\letter{b'}{1}$ and $\letter{b'}{l_{b'}}$ are local extrema of
the sequence $\support{b'}$. From this and \eqref{equation:sigma:c} we obtain
\begin{equation}
\label{eq:two:cutted:seq:have:more:ext}
\eta(c)=
|\ext{\support{c}}|\leq|\ext{\support{a'}}|+|\ext{\support{b'}}|.
\end{equation}
Since $\support{a'}$ is a subsequence of $\support{a}$ and
$\support{b'}$ is a subsequence of $\support{b}$, Lemma
\ref{subsequence:has:less:extrems}
implies
$|\ext{\support{a'}}|\le|\ext{\support{a}}|=\eta(a)$
 and
$|\ext{\support{b'}}|\le|\ext{\support{b}}|=\eta(b)$.
Combining this with \eqref{eq:two:cutted:seq:have:more:ext}, we obtain
$\eta(c)\le \eta(a)+\eta(b)$.

(S2) For $a\in F(X)$, the sequence $\support{a^{-1}}=(\letter{a}{l_w},\dots,\letter{a}{2},\letter{a}{1})$
coincides with the sequence $\support{a}=(\letter{a}{1}, \letter{a}{2},\dots,\letter{a}{l_w})$ written in the reverse order,
so $\eta(a^{-1})=|\ext{\support{a^{-1}}}|=|\ext{\support{a}}|=\eta(a)$.
\end{proof}

\begin{definition}
For every $w\in F(X)$ let
\begin{equation}
\label{eq:c(g)}
c(w)=(\cancel{w}{w})^{-1}\cdot w\cdot (\cancel{w}{w})
\end{equation}
denote the conjugate of $w$ by $\cancel{w}{w}$; see Definition \ref{def:of:c} for the notation $\cancel{w}{w}$.
\end{definition}

Lemma \ref{a(g):equation:2} gives an estimate $\eta(w^n)\leq n \eta(w)$ from above for every $w\in F(X)$ and each $n\in\N\setminus\{0\}$.
Our next lemma establishes an estimate from below.

\begin{lemma}\label{cyclic:reduction}
Let $(X,<)$ be a linearly ordered set. If $w\in F(X)$ and $l_{c(w)}>1$, then $\eta(w^n)\ge2n$ for every $n\in\N\setminus\{0\}$.
\end{lemma}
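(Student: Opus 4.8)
The plan is to first reduce to the cyclically reduced case and then count local extrema one letter at a time. Write $u=\cancel{w}{w}$ and $m=c(w)$, so that $w=u\cdot m\cdot u^{-1}$ and $m=u^{-1}wu$. The maximality of $u$ in Definition~\ref{def:of:c} guarantees that $m$ is cyclically reduced, i.e.\ $\cancel{m}{m}=e$: otherwise, writing $m=v\cdot m'\cdot v^{-1}$ with $v=\cancel{m}{m}\ne e$ would make $uv$ a longer admissible initial subword of $w$, a contradiction. Consequently $w^n=u\,m^n\,u^{-1}$, and since $m$ is cyclically reduced no cancellation occurs between consecutive copies of $m$, nor across the junctions $u\mid m^n$ and $m^n\mid u^{-1}$; thus this product is cancellation-free. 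Hence $\support{m^n}$ is a (contiguous) subsequence of $\support{w^n}$, and Lemma~\ref{subsequence:has:less:extrems} together with Definition~\ref{eta:definition} gives $\eta(w^n)\ge\eta(m^n)$. Since $l_m=l_{c(w)}>1$, it therefore suffices to prove $\eta(m^n)\ge 2n$ for a cyclically reduced word $m$ with $l_m\ge 2$.

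For the counting, note that $m$ is reduced with $l_m\ge 2$, so $\support{m}$ contains at least two distinct letters; let $p$ and $q$ be the largest and the smallest letter occurring in $\support{m}$, so that $p\ne q$. Every occurrence of $p$ in $\support{m^n}$ is a local extremum: an interior occurrence has both neighbours different from it (as $\support{m^n}\in\Seq(X)$) and hence strictly below $p$, while the two endpoints are extrema by Example~\ref{First:example}(a). The same reasoning applies to every occurrence of $q$. Since $p\ne q$, the sets of $p$-occurrences and of $q$-occurrences are disjoint, whence
\[
\eta(m^n)=|\ext{\support{m^n}}|\ \ge\ \#\{p\text{-occurrences}\}+\#\{q\text{-occurrences}\}.
\]

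It remains to check that each letter occurring in $m$ appears at least $n$ times in $\support{m^n}$. Forming $m^n$ from $n$ copies of $m$, the only merging happens at the $n-1$ junctions, and only in the case $\letter{m}{1}=\letter{m}{l_m}$, where the two monoms in this common letter combine into a single monom (which never vanishes, since $\cancel{m}{m}=e$). Thus a letter $y$ occurring $k\ge 1$ times in $\support{m}$ occurs exactly $nk$ times in $\support{m^n}$ when $y\ne\letter{m}{1}$, and at least $nk-(n-1)$ times when $y=\letter{m}{1}=\letter{m}{l_m}$; but in the latter case $y$ occupies both the first and the last position of $m$, so $k\ge 2$ and $nk-(n-1)\ge n+1$. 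In either case $y$ appears at least $n$ times. Applying this to $y=p$ and to $y=q$ yields $\eta(m^n)\ge 2n$, and therefore $\eta(w^n)\ge\eta(m^n)\ge 2n$, as required.

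I expect the main obstacle to lie in the junction bookkeeping of the last paragraph: one must verify that passing from $m$ to the reduced form of $m^n$ does not destroy too many occurrences of the extreme letters $p$ and $q$, the only delicate point being the single merged monom at each junction when $\letter{m}{1}=\letter{m}{l_m}$. A secondary point requiring care is the clean justification that $c(w)$ is genuinely the cyclically reduced conjugate $u^{-1}wu$ with $\cancel{m}{m}=e$, since this is what makes $w^n=u\,m^n\,u^{-1}$ cancellation-free and licenses the reduction to the cyclically reduced case.
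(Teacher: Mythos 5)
Your proposal opens with the same move as the paper---conjugate by $u=\cancel{w}{w}$ to pass to $m=c(w)$ and then bound $\eta(w^n)$ below by $\eta(m^n)$ via Lemma~\ref{subsequence:has:less:extrems}---but your counting step is genuinely different. The paper extracts from $\support{c(w)^n}$ an alternating subsequence $(x,y,x,y,\dots,x,y)$ of length $2n$ formed by two adjacent letters of $c(w)$ and invokes Example~\ref{First:example}(c); you instead count all occurrences of the largest letter $p$ and the smallest letter $q$ of $\support{m}$, observe that every such occurrence is a local extremum, and check that each of $p$ and $q$ occurs at least $n$ times in $\support{m^n}$. Both counts are valid, and yours has one real advantage: it forces you to analyse what happens at the junctions between consecutive copies of $m$, which is exactly the point the paper elides (its assertion that ``the power $c(w)^n$ is cancellation-free'' is not true in general).

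There is, however, an internal inconsistency in your write-up, and it involves a false statement. In your first paragraph you claim that $\cancel{m}{m}=e$ implies that no cancellation occurs between consecutive copies of $m$, so that $u\,m\,m\cdots m\,u^{-1}$ is cancellation-free. This is false: for distinct $x,y\in X$ take $w=m=x^{2}yx^{-1}$; then $\cancel{m}{m}=e$ (no nonempty initial subword has its inverse as a final subword), yet the final monom $x^{-1}$ of one copy and the initial monom $x^{2}$ of the next are dependent, so the product of two copies cancels partially: $m^{2}=x^{2}yxyx^{-1}$. What $\cancel{m}{m}=e$ actually gives---and what you correctly prove and use in your last paragraph---is that any junction cancellation is only partial, i.e.\ the merged monom never vanishes. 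So your first paragraph is contradicted by your third, and the third is the correct one. The damage is local and repairable with material you already have: in the merging case $l_m\ge 3$, the merging is interior, so the reduced form of $m^{n}$ still begins with the monom $m[1]$ and ends with $m[l_m]$; since $w=u\cdot m\cdot u^{-1}$ is cancellation-free, the product $u\cdot m^{n}\cdot u^{-1}$ (with $m^{n}$ taken in reduced form) is then cancellation-free at the two outer junctions, so $\support{m^n}$ is still a contiguous subsequence of $\support{w^n}$ and $\eta(w^n)\ge\eta(m^n)$ stands. With the first paragraph rephrased along these lines your proof is complete; in particular the bookkeeping $nk-(n-1)\ge n+1$ in the merging case is right, and in this respect your argument is more careful than the paper's own proof.
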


\begin{proof}
It follows from \eqref{eq:c(g)} and Definition \ref{def:of:c} that $w=(\cancel{w}{w})\cdot c(w)\cdot (\cancel{w}{w})^{-1}$, this product is cancellation-free
and the power $ c(w)^n$ is cancellation-free as well. Clearly, $w^n=(\cancel{w}{w})\cdot c(w)^n\cdot(\cancel{w}{w})^{-1}$ and the product on the right
is cancellation-free. Since $l_{c(w)}>1$ and the power $ c(w)^n$ is
cancellation-free, $\support{c(w)^n}$ (and consequently, also $\support{w^n}$) contains a subsequence $s= (x,y,x,y, \ldots, x, y )$
having $2n$ alternating terms $x$ and $y$ for some $x,y\in X$ with $x\not=y$. Since $\support{w^n}\in\Seq(X)$, from \eqref{def:theta},
Lemma \ref{subsequence:has:less:extrems} and Example \ref{First:example}(c), we obtain
$\eta(w^n)=|\ext{\support{w^n}}|\ge |\ext{s}|=2n$.
\end{proof}

Our next example shows that the set $\{\eta(w^n):n\in\N\}$ is unbounded precisely when $l_{c(w)}>1$.

\begin{example}\label{SecondZigZag:example}  Let $(X,<)$ be a linearly ordered set. If $w\in F(X)$ and $l_{c(w)}\leq 1$, then $\eta(w^n)=\eta(w)$ for every
$n\in\N\setminus\{0\}$. To prove this, it suffices to show that
$\support{w}=\support{w^n}$ for every $n\in\N\setminus\{0\}$.
Fix $n\in\N\setminus\{0\}$. If $l_{c(w)}=0$, then $w=e$ and
$\support{w}=\support{w^n}=\support{e}=\emptyset$.
Assume now that $l_{c(w)}=1$. Clearly, $\support{c(w)}=\support{c(w^n)}$.
 From the proof of Lemma \ref{cyclic:reduction} we know that $w=(\cancel{w}{w})\cdot c(w)\cdot(\cancel{w}{w})^{-1}$
and
$w^n=(\cancel{w}{w})\cdot c(w)^n\cdot(\cancel{w}{w})^{-1}$, where
both products on the right side are cancellation-free. Since $\support{c(w)}=\support{c(w^n)}$, this shows that $\support{w}=\support{w^n}$.
\end{example}

\section{A separable metric group answering a question of Dominguez and Tarieladze}
\label{Sec:f-prod:seq:not:f-prod:set}

Let $(\nat{\N},<)$ be a copy of the ordered set $(\N,<)$ of all natural numbers. That is, the map $n\mapsto \nat{n}$ that sends each $n\in\N$ to $\nat{n}\in\nat{\N}$ is a bijection between $\N$ and $\nat{\N}$, and the order relation $<$ on $\nat{\N}$ is defined by declaring $\nat{m}<\nat{n}$ if and only if $m<n$ holds ($n,m\in\N$).
We introduced $\nat{\N}$ in order to make a clear distinction between the usual product $mn$ of natural  numbers $m,n\in\N$ and the product $\nat{m}\cdot\nat{n}$ of their copies taken in the free group $F(\nat{\N})$ with the alphabet $\nat{\N}$.

Let $G=F(\nat{\N})^{\N}$. For $z\in\Z^\N$ define $g_z\in G$ by
\begin{equation}
\label{definition:of:g_z}
g_z(i)=\nat{0}^{z(0)}\nat{1}^{z(1)}\dots\nat{i}^{z(i)}
\ \ \mbox{ for }\ \
i\in\N.
\end{equation}
Let $H$ be the subgroup of $G$ generated by the set $\{g_z:z\in\Z^\N\}$. That is,
\begin{equation}
\label{definition:of:H}
H=\{g_{z_0}^{\varepsilon_0}g_{z_1}^{\varepsilon_1}\dots g_{z_s}^{\varepsilon_s}: s\in\N,
z_0,z_1,\dots,z_s\in\Z^\N,
\varepsilon_0,\varepsilon_1,\dots,\varepsilon_s\in\{-1,1\}\}.
\end{equation}

We consider the discrete topology on
$F(\nat{\N})$,  the Tychonoff product topology on
 $G=F(\nat{\N})^{\N}$ and the subspace topology on $H$ inherited from $G$.

\begin{theorem}
\label{TAP:group:with:ap:sequence}
The topological group $H$ satisfies the following conditions:
\begin{itemize}
\item[(i)] $H$ is a separable metric group;
\item[(ii)] $H$ has a linear topology;
\item[(iii)] $H$ contains an $f_\omega$-productive sequence $B=\{b_n:n\in\N\}$;
\item[(iv)] there exists a bijection $\varphi:\N\to\N$ such that the sequence $\left\{b_{\varphi(n)}:n\in\N\right\}$ is not productive in $H$;
\item[(v)] $H$ does not contain any
unconditionally
$f_\omega$-productive
sequence;
that is, $H$ is TAP.
\end{itemize}
\end{theorem}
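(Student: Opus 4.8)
The plan is to dispatch the items in increasing order of difficulty, with item (v) being by far the hardest. For (i) and (ii), I would observe that $F(\nat\N)$ is a countable discrete group, so $G=F(\nat\N)^{\N}$ is a countable product of second countable discrete groups, hence second countable and metrizable; thus $H\subseteq G$ is separable and metrizable. Moreover the finite-support sets $\{g\in G: g(i)=e \text{ for } i\in S\}$ ($S\subseteq\N$ finite) form a base at $e$ consisting of open subgroups, so $G$ — and therefore $H$ — is linear. For (iii) I would take the explicit candidate $b_n=g_{\delta^{(n)}}$, where $\delta^{(n)}\in\Z^\N$ is the indicator of $\{n\}$. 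By \eqref{definition:of:g_z} one has $b_n(i)=\nat n$ for $i\ge n$ and $b_n(i)=e$ for $i<n$, so $\{b_n:n\in\N\}$ is faithfully indexed and converges to $e$. For any $z:\N\to\Z$, the factor $b_n^{z(n)}$ is trivial in coordinate $i$ whenever $n>i$, so $\prod_{n=0}^m b_n^{z(n)}(i)=\nat 0^{z(0)}\nat 1^{z(1)}\cdots\nat i^{z(i)}=g_z(i)$ as soon as $m\ge i$. Hence the partial products stabilize coordinatewise to $g_z\in H$, proving that $B=\{b_n:n\in\N\}$ is $f_\omega$-productive.

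The structural fact underlying both (iv) and (v) is that every $h\in H$ satisfies $\sup_i\eta(h(i))<\infty$: writing $h=g_{z_0}^{\varepsilon_0}\cdots g_{z_s}^{\varepsilon_s}$, each $g_{z_k}(i)^{\pm1}$ has strictly monotone support, so $\eta(g_{z_k}(i)^{\pm1})\le 2$ by Example \ref{First:example}(b), and subadditivity of the seminorm $\eta$ (Lemma \ref{a(g):equation:2}) gives $\eta(h(i))\le 2(s+1)$. For (iv) I would then take $\varphi:\N\to\N$ to be the involution interchanging $2k$ and $2k+1$. Since $b_{\varphi(n)}(i)=\nat{\varphi(n)}$ exactly when $\varphi(n)\le i$, the partial products stabilize coordinatewise to the $Q\in G$ whose coordinate $Q(i)$ lists $\{\nat j:j\le i\}$ in order of increasing $\varphi(j)$, i.e. in the zigzag order $\nat 1,\nat 0,\nat 3,\nat 2,\dots$ of Lemma \ref{zigzag:sequence}. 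All exponents are $1$ and the letters are distinct, so consecutive terms of $\support{Q(i)}$ alternate under $<$, every letter is a local extremum, and $\eta(Q(i))=|\ext{\support{Q(i)}}|=i+1\to\infty$. By the boundedness just proved, $Q\notin H$; as $G$ is Hausdorff the partial products cannot converge in $H$, so $\{b_{\varphi(n)}\}$ is not productive.

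For (v), let $A=\{h_n:n\in\N\}$ be an arbitrary faithfully indexed sequence in $H$; by Lemma \ref{AP:set:has:AP:subsets} it suffices to produce an injection $\varphi$ for which $\{h_{\varphi(n)}\}$ is not $f_\omega$-productive. If $A$ does not converge to $e$, it is not $f_\omega$-productive by Lemma \ref{AP:is:null:sequence} (take $\varphi=\mathrm{id}$), so I may assume $h_n\to e$, whence each coordinate has only finitely many $n$ with $h_n(i)\ne e$ and every rearranged power-product converges coordinatewise in $G$. I would then split into two cases. If infinitely many $n$ have some coordinate $i$ with $l_{c(h_n(i))}>1$, then $\eta(h_n(i)^z)\ge 2|z|$ by Lemma \ref{cyclic:reduction}, so the set $N_r$ of \eqref{eq:N_r} is infinite for every $r$; combined with the boundedness fact, Lemma \ref{impossible:case:for:unbounded:seminorm} (with seminorm $\eta$, $D=F(\nat\N)$, $I=\N$, $f=f_\omega$) shows $A$ itself is not $f_\omega$-productive. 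In the complementary case, all but finitely many $h_n(i)$ are conjugate to a single monom, so their powers have bounded $\eta$ (Example \ref{SecondZigZag:example}) and the cyclic-reduction shortcut is unavailable; here I would imitate (iv), building an injection $\varphi$ and exponents $z$ by induction so that the $G$-limit $L$ of $\prod_{n=0}^m h_{\varphi(n)}^{z(n)}$ satisfies $\sup_i\eta(L(i))=\infty$, forcing $L\notin H$ and hence non-productivity. At each stage I would pick a fresh index and an exponent $z$ large enough to turn a chosen ``core'' monom into a dominant mountain, arranging the cores so that, after the prescribed cancellations, the surviving letters occur in the support of $L$ in an alternating zigzag whose length grows with the stage.

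The hard part is entirely this second case of (v): one must guarantee that the selected dominant monoms neither cancel completely nor get reordered in the reduced products, and that they genuinely assemble into longer and longer zigzags across coordinates. This is exactly what the mountain lemma (Lemma \ref{real:final:step}) is designed to secure, so the bulk of the work will be the inductive verification that its domination hypothesis \eqref{mountain:eq} can be maintained — choosing exponents large enough and indices whose core letters are controlled — while keeping $\varphi$ injective and driving $\sup_i\eta(L(i))$ to infinity.
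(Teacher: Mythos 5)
Your treatment of (i)--(iv) is correct and coincides with the paper's own proof: (iii) is the paper's Lemma \ref{H:is:metric:linear:with:ap:seq}, the boundedness fact $\sup_i\eta(h(i))<\infty$ is Lemma \ref{z(h):is:bounded}(ii), and your zigzag involution for (iv) is exactly Lemma \ref{f:omega:productive:seq:but:not:f1:prod:set}; likewise your Case A of (v) is the paper's Case 2 of Lemma \ref{H:has:no:AP:sets}. The genuine gap is in your Case B (all $l_{c(h_n(i))}\le 1$), which you propose to settle entirely by the mountain lemma. The domination hypothesis \eqref{mountain:eq} must be maintained against factors selected \emph{after} stage $j$: at any coordinate where many mountains are supposed to coexist, the product also contains later factors $h_{\varphi(k)}^{z_k}(i)$, and since powers of thin elements do not increase $\mu_{\nat{j}}$ at non-core letters (Lemma \ref{lc(b(i))<=1}(ii)), keeping $\mu_{x_j}(w_j)>2\mu_{x_j}(w_k)$ for all future $k$ forces an a priori uniform bound $\sup_{n,i}\mu_{\nat{p_j}}(h_n(i))<\infty$ for each core letter $\nat{p_j}$ you use --- this is exactly the function $\psi$ of \eqref{eq:uniform:bound} in the paper. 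Such a bound need not exist. Concretely, take $h_n=g_{a_n}g_{b_n}^{-1}$ where $a_n(j)=n$ for $j<n$, $a_n(n)=1$, $a_n(j)=0$ for $j>n$, and $b_n(j)=n$ for $j<n$, $b_n(j)=0$ otherwise; then $h_n(i)=e$ for $i<n$ and $h_n(i)=\nat{0}^{n}\nat{1}^{n}\cdots\nat{n-1}^{n}\,\nat{n}\,\nat{n-1}^{-n}\cdots\nat{0}^{-n}$ for $i\ge n$. This sequence is faithfully indexed, converges to $e$, and is thin ($c(h_n(i))=\nat{n}$), so it lands squarely in your Case B; yet \emph{every} letter satisfies $\sup_{n,i}\mu_{\nat{j}}(h_n(i))=\infty$, and for any selected core letters $\nat{n_j}$ one has $\mu_{\nat{n_j}}(h_{n_k}^{z_k}(i))=n_k\to\infty$ for later factors with $n_k>n_j$. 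Hence no choice of ``indices whose core letters are controlled'' exists, \eqref{mountain:eq} fails against all sufficiently late factors no matter how large the exponents $z_j$ were chosen, and your induction never closes.

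The paper anticipates precisely this and inserts a third prong that your two-case split lacks: inside the thin case it first asks whether $\{\mu_{\nat{j}}(h_n(i)):i,n\in\N\}$ is unbounded for some $j$ (Case 1 of Lemma \ref{thin:sets}). If so, Lemma \ref{z(h):is:bounded}(i) guarantees that the set $N_r$ of \eqref{eq:N_r} is infinite for every $r$, and Lemma \ref{impossible:case:for:unbounded:seminorm} applied with the seminorm $\mu_{\nat{j}}$ (rather than $\eta$) already shows the sequence is not $f_\omega$-productive --- this disposes of the example above in one stroke. Only in the complementary sub-case does $\psi$ exist, and there the paper's construction (zigzag enumeration of the values $t_{h_n}$ via Lemma \ref{zigzag:sequence}, exponents $z_m=2\psi(p_m)+1$, test coordinates $i_m=\max\{p_0,\dots,p_m\}$, and Lemma \ref{technical:lemma} combined with the mountain lemma \ref{real:final:step}) is essentially the induction you outline; note also that the test coordinates are chosen so that later factors vanish there, which is what lets one identify the limit's coordinate with a finite partial product. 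One smaller point in your favor: the situation where infinitely many $h_n$ share the same first nontrivial coordinate (the paper's Subcase 2a) is indeed absorbed by your standing assumption $h_n\to e$, so that reduction is fine. But as written, your proof of (v) is incomplete: without the $\mu_{\nat{j}}$-seminorm case it fails on sequences like the one exhibited.
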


Items (i) and (ii) are clear from the definition of $H$. Item (iii) will be proved in
Lemma
\ref{H:is:metric:linear:with:ap:seq}, item (iv) will be proved in
Lemma
\ref{f:omega:productive:seq:but:not:f1:prod:set}, and item (v) will be proved in Lemma \ref{H:has:no:AP:sets}.

For $n\in\N$ define $y_n\in\Z^\N$ by
\begin{equation}
\label{definition:of:y_n}
y_n(m)=
\left\{\begin{array}{ll}
1 & \mbox{if $m=n$}\\
0 & \mbox{if $m\not=n$}
\end{array}
\right.
\hskip30pt
\mbox{ for each }
m\in\N.
\end{equation}

\begin{lemma}
\label{H:is:metric:linear:with:ap:seq}
The sequence $\{g_{y_n}:n\in\N\}$ is an $f_\omega$-productive sequence in $H$.
\end{lemma}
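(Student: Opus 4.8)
The plan is to argue directly from the coordinatewise description of the generators, exploiting that each coordinate group $F(\nat{\N})$ carries the discrete topology. First I would record the explicit form of the relevant generators: from \eqref{definition:of:g_z} and \eqref{definition:of:y_n} one reads off that $g_{y_n}(i)=e$ whenever $i<n$ and $g_{y_n}(i)=\nat{n}$ whenever $i\ge n$, since $y_n$ is the indicator of $\{n\}$ and all other exponents in the defining word vanish. In particular the $g_{y_n}$ are pairwise distinct (compare them at the coordinate $i=\max\{n,n'\}$), so $B=\{g_{y_n}:n\in\N\}$ is faithfully indexed, as required by Definition \ref{def:ap-sequence}.

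Next, since $f_\omega$ takes the value $\omega$ everywhere, the condition $|z|\le f_\omega$ holds for \emph{every} function $z:\N\to\Z$; thus I must show that for an arbitrary such $z$ the sequence of partial products $P_m=\prod_{n=0}^{m}g_{y_n}^{z(n)}$ converges in $H$. The key computation is to evaluate $P_m$ at a fixed coordinate $i\in\N$. Because $g_{y_n}^{z(n)}(i)=\nat{n}^{z(n)}$ for $n\le i$ and equals $e$ for $n>i$, the trivial factors drop out and the surviving factors occur in increasing order of $n$, so
\begin{equation*}
P_m(i)=\nat{0}^{z(0)}\nat{1}^{z(1)}\cdots\nat{\min\{m,i\}}^{z(\min\{m,i\})}=g_z(\min\{m,i\}).
\end{equation*}
Consequently $P_m(i)=g_z(i)$ as soon as $m\ge i$.

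From here the conclusion is immediate. Since $G=F(\nat{\N})^{\N}$ carries the product topology and each factor is discrete, coordinatewise stabilization of $\{P_m(i)\}$ means exactly that $P_m\to g_z$ in $G$. The limit $g_z$ lies in $H$ by \eqref{definition:of:H}, and every $P_m$ lies in $H$ as a finite product of generators, so the convergence already takes place in the subspace $H$. Thus $\{g_{y_n}^{z(n)}:n\in\N\}$ is productive for each $z:\N\to\Z$, which is precisely the assertion that $B$ is $f_\omega$-productive.

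I expect no serious obstacle in this lemma: the only point demanding care is the non-commutativity of $F(\nat{\N})$, which forces me to keep track of the left-to-right order of the surviving factors. This causes no trouble here because, at each coordinate $i$, the nontrivial factors $\nat{n}^{z(n)}$ appear in the same increasing order of $n$ that defines $g_z$. The genuine difficulty --- that a reordering of $B$ can destroy convergence --- belongs to the companion Lemma \ref{f:omega:productive:seq:but:not:f1:prod:set} and not to this statement.
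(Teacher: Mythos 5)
Your proof is correct and takes essentially the same route as the paper's: both first compute $g_{y_n}(i)=e$ for $i<n$ and $g_{y_n}(i)=\nat{n}$ for $i\ge n$, note faithful indexing, and then observe that for any $z:\N\to\Z$ the partial products stabilize coordinatewise to $g_z\in H$, which is convergence in the product topology with discrete factors. The paper merely states this coordinatewise stabilization in one compressed sentence, whereas you spell out the computation $P_m(i)=g_z(\min\{m,i\})$ explicitly; the content is identical.
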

\begin{proof}
 From \eqref{definition:of:g_z} and \eqref{definition:of:y_n} we get
\begin{equation}
\label{definition:of:g_{y_m}}
g_{y_n}(i)=\left\{\begin{array}{ll}
                                       e & \mbox{if $i<n$}\\
                                       \nat{n} & \mbox{if $i\geq n$}
                                    \end{array}
                           \right.
\hskip30pt
\mbox{whenever }
i,n\in\N.
\end{equation}
In particular, the sequence $\{g_{y_n}:n\in\N\}$ is faithfully indexed.
Combining \eqref{definition:of:g_{y_m}} with \eqref{definition:of:g_z}, we conclude that
the sequence $\displaystyle\left\{\prod_{n=0}^k g_{y_n}^{z(n)}:k\in\N\right\}$ converges to $g_z\in H$ for every function $z:\N\to\Z$.
\end{proof}

\begin{remark}
 From the proof of Lemma \ref{H:is:metric:linear:with:ap:seq} and \eqref{definition:of:H}, one can easily deduce that $H$ is the {\em smallest\/}
subgroup of $G$ in which the sequence $\{g_{y_n}:n\in\N\}$ is $f_\omega$-productive.
\end{remark}

By Lemma \ref{a(g):equation:1},
$\mu_{\nat{m}}:F(\nat{\N})\to\N$ is a seminorm on  $F(\nat{\N})$  for every $m\in\N$. Similarly, since $(\nat{\N},<)$ is a linearly ordered set,
Lemma \ref{a(g):equation:2} implies that $\eta:F(\nat{\N})\to\N$ is a seminorm on  $F(\nat{\N})$. Our next lemma shows that these seminorms satisfy the key assumption in Lemmas \ref{technical:lemma} and \ref{impossible:case:for:unbounded:seminorm}, thereby clearing the way for applications of these lemmas later in this section.

\begin{lemma}\label{z(h):is:bounded}
Let $h\in H$. Then:
\begin{itemize}
\item[(i)] the set $\{\mu_{\nat{m}}(h(i)):i\in\N\}$ is bounded for every $m\in\N$;
\item[(ii)] the set $\{\eta(h(i)):i\in\N\}$ is bounded.
\end{itemize}
\end{lemma}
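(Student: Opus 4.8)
The plan is to exploit the subadditivity of the two relevant seminorms, reducing everything to a uniform bound on each generator. The starting point is that, since $H\subseteq G=F(\nat{\N})^{\N}$ carries the coordinatewise group structure, evaluation at a coordinate $i\in\N$ is a group homomorphism. Thus, fixing any representation $h=g_{z_0}^{\varepsilon_0}g_{z_1}^{\varepsilon_1}\dots g_{z_s}^{\varepsilon_s}$ of $h$ as in \eqref{definition:of:H}, I obtain $h(i)=g_{z_0}(i)^{\varepsilon_0}g_{z_1}(i)^{\varepsilon_1}\dots g_{z_s}(i)^{\varepsilon_s}$, where the product on the right is taken in $F(\nat{\N})$. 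The crucial feature is that the number $s+1$ of factors is fixed by $h$ and does not depend on $i$; so if each factor contributes a bounded amount to the seminorm, repeated application of subadditivity finishes the job.

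For (i) I would work with $\mu_{\nat{m}}$, which is a seminorm by Lemma \ref{a(g):equation:1}. By \eqref{definition:of:g_z} the word $g_{z_j}(i)=\nat{0}^{z_j(0)}\dots\nat{i}^{z_j(i)}$ contains the letter $\nat{m}$ in at most one monom (namely $\nat{m}^{z_j(m)}$, and only when $m\le i$), so $\mu_{\nat{m}}(g_{z_j}(i))\le|z_j(m)|$; by (S2) the same bound holds for $g_{z_j}(i)^{\varepsilon_j}$. Applying (S1) repeatedly then yields $\mu_{\nat{m}}(h(i))\le\sum_{j=0}^s|z_j(m)|$, a bound independent of $i$.

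For (ii) I would use the order seminorm $\eta$, which is a seminorm by Lemma \ref{a(g):equation:2}. Here the key structural observation is that, by \eqref{definition:of:g_z}, the support sequence $\support{g_{z_j}(i)}$ is strictly increasing in the order of $\nat{\N}$ (the surviving letters are exactly the $\nat{k}$ with $z_j(k)\ne0$, listed in increasing order of $k$); by Example \ref{First:example}(b) such a word has at most two local extrema, so $\eta(g_{z_j}(i))\le 2$, and by (S2) the same holds for $g_{z_j}(i)^{\varepsilon_j}$ (whose support is strictly decreasing). Subadditivity (S1) then gives $\eta(h(i))\le 2(s+1)$, again independent of $i$.

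The computation is mechanical once the right seminorms are in hand; the conceptual heart of the lemma -- and the only point that needs care -- is recognizing that, within a single coordinate, each generator $g_{z_j}$ reduces to a strictly monotone word in which every letter occurs at most once. Consequently both seminorms are bounded on the generators uniformly in $i$: all the growth in the length of $h(i)$ as $i\to\infty$ is ``monotone'' growth that introduces no new local extremum and no repetition of a fixed letter, and is therefore invisible to $\eta$ and to each $\mu_{\nat{m}}$.
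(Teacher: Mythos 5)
Your proof is correct and follows essentially the same route as the paper: both decompose $h$ via \eqref{definition:of:H}, use the seminorm axioms (S1) and (S2) to reduce to a uniform bound on the generators $g_{z_j}(i)$, and then bound $\mu_{\nat{m}}(g_{z_j}(i))$ by $|z_j(m)|$ and $\eta(g_{z_j}(i))$ by $2$ via the monotonicity of $\support{g_{z_j}(i)}$ and Example \ref{First:example}(b). The only cosmetic difference is that the paper treats both seminorms simultaneously with a single symbol $\nu$ before splitting into cases (i) and (ii).
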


\begin{proof}
Let $\nu$ denote either $\eta$ or $\mu_{\nat{m}}$ for some $m\in\N$. By \eqref{definition:of:H}, there exist $s\in\N$, $z_0,z_1,\dots,z_s\in \Z^\N$ and
$\varepsilon_0,\varepsilon_1,\dots,\varepsilon_s\in\{-1,1\}$ such that $h=g_{z_0}^{\varepsilon_0}g_{z_1}^{\varepsilon_1}\dots g_{z_s}^{\varepsilon_s}$.
Since $\nu$ is a seminorm on $F(\nat{\N})$, for every $i\in \N$ we have
$$
\nu(h(i))=
\nu\left(\prod_{k=0}^s g_{z_k}^{\varepsilon_k}(i)\right)
\le
\sum_{k=0}^s \nu(g_{z_k}^{\varepsilon_k}(i))
=
\sum_{k=0}^s \nu(g_{z_k}(i)^{\varepsilon_k})
=
\sum_{k=0}^s \nu(g_{z_k}(i)).
$$
Therefore, in order to prove that the set $\{\nu(h(i)):i\in\N\}$ is bounded,
it remains only to check that the set $\{\nu(g_z(i)):i\in\N\}$ is bounded for every $z\in \Z^\N$. Fix $z\in \Z^\N$.

(i)
Let $m\in\N$. From \eqref{definition:of:g_z} we conclude that $|\mu_{\nat{m}}(g_z(i))|\le |z(m)|$ for every $i\in\N$.

(ii) Let $i\in\N$. By \eqref{definition:of:g_z}, the sequence $\support{g_z(i)}$ is monotonically increasing,
so $\eta(g_z(i))\le 2$ by Example \ref{First:example}(b) and \eqref{def:theta}.
\end{proof}

In the next section we shall prove at the end of the section that $H$ contains no unconditionally $f_\omega$-productive sequences.
At the present stage we can (easily) see that the $f_\omega$-productive sequence $\{g_{y_{\varphi(n)}}:n\in\N\}$
defined in \eqref{definition:of:g_{y_m}} is not even unconditionally $f_1$-productive in $H$.

\begin{lemma}
\label{f:omega:productive:seq:but:not:f1:prod:set}
There exists a bijection $\varphi:\N\to\N$ such that the sequence $\{g_{y_{\varphi(n)}}:n\in\N\}$ is not productive in $H$.
\end{lemma}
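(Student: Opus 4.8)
The plan is to exhibit an explicit bijection $\varphi$ for which the partial products converge in the ambient product group $G$ to a point $g^\ast$ that fails to lie in $H$, the obstruction being detected by the seminorm $\eta$ from Definition \ref{eta:definition}.

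Concretely, I would take $\varphi:\N\to\N$ to be the involution swapping consecutive pairs, $\varphi(2j)=2j+1$ and $\varphi(2j+1)=2j$ for every $j\in\N$; this is precisely the zigzag reindexing of Lemma \ref{zigzag:sequence} applied to the increasingly enumerated alphabet $\nat{\N}$, so that $(\nat{\varphi(0)},\nat{\varphi(1)},\dots)=(\nat 1,\nat 0,\nat 3,\nat 2,\dots)$ is the alternating sequence $(x_0,x_1,\dots)$ produced there. Using \eqref{definition:of:g_{y_m}}, at each coordinate $i\in\N$ the factor $g_{y_{\varphi(n)}}(i)$ equals $\nat{\varphi(n)}$ when $\varphi(n)\le i$ and equals $e$ otherwise; since $\varphi$ is a bijection, only finitely many $n$ satisfy $\varphi(n)\le i$, so the $i$th coordinate of the partial products $\prod_{n=0}^k g_{y_{\varphi(n)}}$ stabilises as $k\to\infty$. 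As $F(\nat{\N})$ is discrete and $G$ carries the product topology, the partial products therefore converge in $G$ to the point $g^\ast$ with $g^\ast(i)=\prod_{\{n:\varphi(n)\le i\}}\nat{\varphi(n)}$, the product taken in order of increasing $n$.

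Because the letters $\nat{\varphi(n)}$ are pairwise distinct, this product is already reduced, so $\support{g^\ast(i)}$ is exactly the finite subsequence of $(\nat{\varphi(0)},\nat{\varphi(1)},\dots)$ consisting of the letters $\le \nat i$. I would then read off the odd coordinates: for $i=2s+1$ the set $\{0,1,\dots,2s+1\}$ is invariant under $\varphi$, so $\{n:\varphi(n)\le 2s+1\}=\{0,1,\dots,2s+1\}$ and hence $\support{g^\ast(2s+1)}=(x_0,x_1,\dots,x_{2s+1})$ is the full initial zigzag. By Lemma \ref{zigzag:sequence}(i) this gives $\eta(g^\ast(2s+1))=|\ext{(x_0,\dots,x_{2s+1})}|=2s+2$, so the set $\{\eta(g^\ast(i)):i\in\N\}$ is unbounded.

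Since $\{\eta(h(i)):i\in\N\}$ is bounded for every $h\in H$ by Lemma \ref{z(h):is:bounded}(ii), the unboundedness just established shows $g^\ast\notin H$. The partial products all lie in $H$ and converge in $G$ to $g^\ast$; as $G$ is Hausdorff and $H$ carries the subspace topology, they cannot converge to any element of $H$. Therefore $\{g_{y_{\varphi(n)}}:n\in\N\}$ is not productive in $H$, as required. The only delicate point is the identification of $\support{g^\ast(2s+1)}$ with the full alternating initial segment $(x_0,\dots,x_{2s+1})$ — everything else is bookkeeping — and this is exactly what forces $\eta$ to blow up along the odd coordinates; here the seminorm $\eta$ (rather than the letter-height seminorms $\mu_{\nat m}$) is essential, precisely because reordering creates arbitrarily long zigzags without increasing any exponent.
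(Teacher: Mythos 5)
Your proposal is correct and follows essentially the same route as the paper: the same pair-swapping bijection $\varphi$, the same limit element in $G$ whose odd coordinates are the alternating words $\nat{1}\cdot\nat{0}\cdot\nat{3}\cdot\nat{2}\cdots$, and the same obstruction via Lemma \ref{zigzag:sequence}(i) and the boundedness of $\eta$ on $H$ (Lemma \ref{z(h):is:bounded}(ii)). Your write-up merely makes explicit some steps the paper leaves implicit (coordinatewise stabilisation of the partial products and the Hausdorff argument ruling out convergence in $H$).
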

\begin{proof}
Let
$\varphi:\N\to\N$ be a bijection defined by $\varphi(i)=i+1$ for an even $i\in\N$ and $\varphi(i)=i-1$ for an odd $i\in\N$.
Observe that the sequence $\left\{\prod_{i=0}^k g_{y_{\varphi(i)}}:k\in\N\right\}$ converges to the element $a\in G$ such that
$$
a(2l+1)=\nat{1}\cdot\nat{0}\cdot\nat{3}\cdot\nat{2}\cdot\ldots\cdot\nat{2l+1}\cdot\nat{2l}
$$
for every $l\in\N$. Combining this with \eqref{def:theta} and Lemma \ref{zigzag:sequence}(i), we conclude that
$$
\eta(a(2l+1))=|\ext{\support{a(2l+1)}}|=
\ext{(\nat{1},\nat{0},\nat{3},\nat{2},\dots, \nat{2l+1},\nat{2l})}
=
\ext{(1,0,3,2,\dots,2l+1, 2l)}
=
2l+2
$$
for every $l\in\N$. That is, the set $\{\eta(a(i)):i\in\N\}$ is unbounded, and so $a\not\in H$ by Lemma \ref{z(h):is:bounded}(ii).
\end{proof}

\begin{remark}
Dominguez and Tarieladze asked the following question in \cite[Remark 2.5]{DT-private}. Assume that $A=\{a_n:n\in\N\}$ is an 
$f_1^\star$-productive 
sequence in a topological group $H$. Must then the sequence $\{\prod_{i=0}^n a_{\varphi(i)}:n\in\N\}$
converge for every bijection $\varphi:\N\to\N$? As Lemmas \ref{H:is:metric:linear:with:ap:seq}
and \ref{f:omega:productive:seq:but:not:f1:prod:set} show, if one takes $a_n=g_{y_n}$ for every $n\in\N$,
then the resulting sequence provides a 
strong
counter-example to
this question even in a separable metric group $H$ (having a linear topology as well).
\end{remark}

\section{Proof of item (v) of Theorem \ref{TAP:group:with:ap:sequence}}
\label{proof:of:item:v}

For every $j\in\N$, let $\Delta_j:F(\nat{\N})\to\Z$ denote the unique homomorphism extending the function $\delta_j:\nat{\N}\to\Z$ defined by
\begin{equation}
\label{delta:function}
\delta_j(\nat{m})=
\left\{\begin{array}{ll}
1 & \mbox{if $m=j$}\\
0 & \mbox{if $m\not=j$}
\end{array}
\right.
\hskip30pt
\mbox{ for each }
m\in\N.
\end{equation}

\begin{lemma}\label{varphi_n:the:same:from:n}
If $j\in\N$ and $h\in H$, then
\begin{equation}\label{eq:varphi:in:H}
\Delta_j(h(i))=\left\{\begin{array}{ll}
                                       0 & \mbox{if $i<j$}\\
                                       \Delta_j(h(j)) & \mbox{if $i\geq j$}
                                    \end{array}
                           \right.
\hskip30pt
\mbox{ for each }
i\in\N.
                           \end{equation}
\end{lemma}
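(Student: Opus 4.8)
The plan is to first verify the claim on the generators $g_z$ ($z\in\Z^\N$) of $H$ by a direct computation, and then bootstrap to an arbitrary $h\in H$ using only the homomorphism property of $\Delta_j$ together with the fact that the group operations in $G=F(\nat{\N})^\N$ are performed coordinatewise. The key conceptual point is that $\Delta_j$ records the signed exponent sum of the single letter $\nat{j}$, so on the generator $g_z(i)$ it simply reads off the coefficient $z(j)$, and this coefficient is present as soon as $i\ge j$ and absent when $i<j$.

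For the generator step I fix $z\in\Z^\N$ and $i\in\N$. By \eqref{definition:of:g_z} we have $g_z(i)=\nat{0}^{z(0)}\nat{1}^{z(1)}\dots\nat{i}^{z(i)}$, so applying the homomorphism $\Delta_j$ and invoking \eqref{delta:function} gives
\begin{equation*}
\Delta_j(g_z(i))=\sum_{m=0}^{i} z(m)\,\delta_j(\nat{m})
=
\left\{
\begin{array}{ll}
0 & \mbox{if } i<j,\\
z(j) & \mbox{if } i\ge j.
\end{array}
\right.
\end{equation*}
In particular, for $i\ge j$ one has $\Delta_j(g_z(i))=z(j)=\Delta_j(g_z(j))$, so \eqref{eq:varphi:in:H} holds for every generator $g_z$.

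For the general step I take $h\in H$ and use \eqref{definition:of:H} to write $h=g_{z_0}^{\varepsilon_0}g_{z_1}^{\varepsilon_1}\dots g_{z_s}^{\varepsilon_s}$ for suitable $s\in\N$, $z_0,\dots,z_s\in\Z^\N$ and $\varepsilon_0,\dots,\varepsilon_s\in\{-1,1\}$. Since the operations in $G$ are coordinatewise, $h(i)=\prod_{k=0}^{s} g_{z_k}(i)^{\varepsilon_k}$, and applying the homomorphism $\Delta_j$ yields $\Delta_j(h(i))=\sum_{k=0}^{s}\varepsilon_k\,\Delta_j(g_{z_k}(i))$. Feeding in the generator computation finishes the argument: when $i<j$ every summand vanishes, so $\Delta_j(h(i))=0$; when $i\ge j$ every summand equals $\varepsilon_k\,\Delta_j(g_{z_k}(j))$, whence $\Delta_j(h(i))=\sum_{k=0}^{s}\varepsilon_k\,\Delta_j(g_{z_k}(j))=\Delta_j(h(j))$.

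I do not expect any real obstacle here, as this is a routine verification; the lemma is essentially just the statement that the ``coefficient of $\nat{j}$'' stabilizes once the index $i$ reaches $j$. The only point worth noting is that the representation of $h$ in \eqref{definition:of:H} is not unique, but this causes no difficulty: the quantity $\Delta_j(h(i))$ is defined directly in terms of $h(i)$, so the identity we derive holds regardless of which representation is chosen.
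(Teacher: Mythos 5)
Your proof is correct and follows essentially the same route as the paper: compute $\Delta_j(g_z(i))$ on the generators via \eqref{definition:of:g_z} and \eqref{delta:function}, then write $h$ as in \eqref{definition:of:H} and use the homomorphism property of $\Delta_j$ coordinatewise to reduce to the generator case. The only additions (the explicit sum over $\delta_j(\nat{m})$ and the remark on non-uniqueness of the representation) are harmless elaborations of what the paper does implicitly.
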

\begin{proof}
Since $\Delta_j$ is a homomorphism, for every $z\in \Z^\N$, from \eqref{definition:of:g_z} and \eqref{delta:function} we obtain
\begin{equation}
\label{eq:36}
\Delta_j(g_z(i))=\left\{\begin{array}{ll}
                                       0 & \mbox{if $i<j$}\\
                                       z(j) & \mbox{if $i\geq j$}
                                    \end{array}
                           \right.
\hskip30pt
\mbox{ for each }
i\in\N.
\end{equation}
By \eqref{definition:of:H}, there exist $s\in\N$,
$z_0,z_1,\dots,z_s\in \Z^\N$ and $\varepsilon_0,\varepsilon_1,\dots,\varepsilon_s\in\{-1,1\}$ such that
$h=g_{z_0}^{\varepsilon_0}g_{z_1}^{\varepsilon_1}\dots g_{z_s}^{\varepsilon_s}$.
Since $\Delta_j$ is a homomorphism,
from \eqref{eq:36}
we get
$$
\Delta_j(h(i))=
\Delta_j\left(\prod_{n=0}^s g_{z_n}^{\varepsilon_n}(i)\right)
=
\sum_{n=0}^s \varepsilon_n\Delta_j\left(g_{z_n}(i)\right)
=
\left\{\begin{array}{ll}
                                       0 & \mbox{if $i<j$}\\
                                       \sum_{n=0}^s \varepsilon_n z_n(j) & \mbox{if $i\geq j$.}
                                    \end{array}
                           \right.
$$
This yields \eqref{eq:varphi:in:H}.
\end{proof}

Define
\begin{equation}
\label{definition:of:t_a}
t_a=\min\{i\in\N: a(i)\not=e\}
\ \ \mbox{ for each }\ \
a\in H\setminus\{e\}.
\end{equation}
\begin{lemma}\label{lc(b(i))<=1}
Suppose that $z\in\N\setminus\{0\}$, $a\in H\setminus\{e\}$,  $i\in\N$, $i\ge t_a$ and $l_{c(a(k))}\le 1$ for every
$k\in\N$. Then:
\begin{itemize}
\item[(i)] $\mu_{\nat{t_a}}(a^z(i))\ge z$;
\item[(ii)] $\mu_{\nat{j}}(a^z(i))\le \mu_{\nat{j}}(a(i))$ whenever $j\in \N$ and $j\not=t_a$.
\end{itemize}
\end{lemma}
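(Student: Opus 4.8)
The plan is to exploit the hypothesis $l_{c(a(k))}\le 1$ to put each coordinate $a(i)$ into an explicit conjugacy normal form, and then to read both estimates directly off the reduced word of $a^z(i)=a(i)^z$. First I would record that, since $l_{c(a(i))}\le 1$, the cyclically reduced word $c(a(i))$ is either $e$ or a single monom. Writing $u_i=\cancel{a(i)}{a(i)}$ and combining \eqref{eq:c(g)} with the cancellation-free decomposition established in the proof of Lemma \ref{cyclic:reduction}, we have $a(i)=u_i\cdot c(a(i))\cdot u_i^{-1}$ with this product cancellation-free. Hence, whenever $a(i)\ne e$, there are $m_i\in\N$ and $p_i\in\Z\setminus\{0\}$ with $c(a(i))=\nat{m_i}^{p_i}$ and with $a(i)=u_i\,\nat{m_i}^{p_i}\,u_i^{-1}$ cancellation-free (so reduced).

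The crux is a rigidity statement: for every $i\ge t_a$ one has $a(i)\ne e$, $m_i=t_a$, and $p_i=p$, where $p:=\Delta_{t_a}(a(t_a))$. To prove it I would use that each $\Delta_j$, being a homomorphism into the abelian group $\Z$, is conjugation invariant, so that $\Delta_j(a(i))=p_i\,\delta_j(\nat{m_i})$ vanishes for $j\ne m_i$ and equals $p_i$ for $j=m_i$. Now Lemma \ref{varphi_n:the:same:from:n} pins down $m_{t_a}$: for $j>t_a$ it gives $\Delta_j(a(t_a))=0$ (since $t_a<j$), and for $j<t_a$ it gives $\Delta_j(a(t_a))=\Delta_j(a(j))=\Delta_j(e)=0$ (since $a(j)=e$ by minimality of $t_a$). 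As $a(t_a)\ne e$ forces some $\Delta_j(a(t_a))\ne 0$, the only surviving index is $j=t_a$; thus $m_{t_a}=t_a$ and $p=p_{t_a}=\Delta_{t_a}(a(t_a))\ne 0$. For arbitrary $i\ge t_a$, Lemma \ref{varphi_n:the:same:from:n} then yields $\Delta_{t_a}(a(i))=\Delta_{t_a}(a(t_a))=p\ne 0$, so $a(i)\ne e$ and, by the same invariance, $m_i=t_a$ and $p_i=p$.

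To conclude, I would observe that from $a(i)=u_i\,\nat{t_a}^{p}\,u_i^{-1}$ cancellation-free, raising to the $z$-th power merely replaces the central monom: $a^z(i)=u_i\,\nat{t_a}^{pz}\,u_i^{-1}$. This product is again cancellation-free, because the independence of the neighbouring monoms of $u_i$ and $u_i^{-1}$ from $\nat{t_a}$ is unaffected by changing the central exponent from $p$ to $pz\ne 0$; hence it is reduced and $\nat{t_a}^{pz}$ survives as a genuine monom. Therefore $\mu_{\nat{t_a}}(a^z(i))\ge |pz|=|p|\,z\ge z$, which is (i). For (ii), fix $j\ne t_a$: the $\nat{j}$-monoms of $a^z(i)$ are exactly those occurring in $u_i$ and in $u_i^{-1}$ (the central monom is a power of $\nat{t_a}\ne\nat{j}$), and the same holds for $a(i)=u_i\,\nat{t_a}^{p}\,u_i^{-1}$; using $\mu_{\nat{j}}(u_i^{-1})=\mu_{\nat{j}}(u_i)$ (seminorm property (S2)) we get $\mu_{\nat{j}}(a^z(i))=\mu_{\nat{j}}(u_i)=\mu_{\nat{j}}(a(i))$, which gives the desired inequality.

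The only genuinely non-routine step is the rigidity of the second paragraph: showing that the distinguished generator is $\nat{t_a}$ and that the central exponent $p$ is the \emph{same} in every coordinate $i\ge t_a$. Everything else is bookkeeping with reduced words. This is exactly the place where the particular ``staircase'' structure of the generators $g_z$ enters, through Lemma \ref{varphi_n:the:same:from:n}, in combination with the conjugation invariance of the exponent-sum homomorphisms $\Delta_j$; without the hypothesis $l_{c(a(k))}\le 1$ the coordinates $a(i)$ need not be conjugate to single monoms, and both the normal form and the rigidity argument would fail.
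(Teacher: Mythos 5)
Your proof is correct and follows essentially the same route as the paper's: the conjugacy normal form $a(i)=d_i\, c(a(i))\, d_i^{-1}$ with $c(a(i))$ a monom (forced by $l_{c(a(i))}\le 1$), conjugation-invariance of the exponent-sum homomorphisms $\Delta_j$ combined with Lemma \ref{varphi_n:the:same:from:n} to identify $c(a(i))=\nat{t_a}^{p}$ with the \emph{same} $p=\Delta_{t_a}(a(t_a))\neq 0$ for every $i\ge t_a$, and the cancellation-free computation $a^z(i)=d_i\,\nat{t_a}^{pz}\,d_i^{-1}$ from which both estimates are read off. The only cosmetic difference is in (ii), where you obtain the equality $\mu_{\nat{j}}(a^z(i))=\mu_{\nat{j}}(a(i))$ using symmetry of the seminorm, whereas the paper derives the stated inequality from the initial-subword bound $\mu_{\nat{j}}(d_i)\le\mu_{\nat{j}}(a(i))$.
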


\begin{proof}
Note that
\begin{equation}
\label{eq:38}
a(s)=d_s c(a(s)) d_s^{-1}
\ \ \mbox{ for every } s\in\N,
 \mbox{ where } d_s=\cancel{a(s)}{a(s)}.
\end{equation}
Since each $\Delta_{n}:F(\nat{\N})\to\Z$ is a homomorphism and $\Z$ is abelian, from \eqref{eq:38} we obtain
\begin{equation}\label{chi(a(i))=chi(c(a(i)))}
\Delta_{n}(a(s))=
\Delta_{n}(c(a(s)))
\ \ \mbox{ whenever }\ \
s,n\in\N.
\end{equation}

 From \eqref{definition:of:t_a} and \eqref{eq:38}, it follows that $c(a(t_a))\not=e$. Since $l_{c(a(t_a))}\le 1$,
we conclude that $c(a(t_a))$ is a monom; that is,
$c(a(t_a))=\nat{n}^{p}$ for some $n\in\N$ and $p\in\Z\setminus\{0\}$. Now
\eqref{chi(a(i))=chi(c(a(i)))} yields $\Delta_n(a(t_a))=\Delta_{n}(c(a(t_a)))=\Delta_{n}(\nat{n}^{p})=p\neq
0$. From this and Lemma \ref{varphi_n:the:same:from:n}, we conclude that $n\le t_a$.
If $n<t_a$, then $a(n)=e$ by \eqref{definition:of:t_a}, and applying Lemma \ref{varphi_n:the:same:from:n} once again, we get
$\Delta_{n}(a(t_a))=\Delta_{n}(a(n))=\Delta_{n}(e)=0$, a contradiction. This shows that $n=t_a$. In particular,
$\Delta_{t_a}(a(t_a))=p$. From $i\ge t_a$,
\eqref{chi(a(i))=chi(c(a(i)))} and Lemma \ref{varphi_n:the:same:from:n},
we get $\Delta_{t_a}(c(a(i)))=\Delta_{t_a}(a(i))=\Delta_{t_a}(a(t_a))=p$.
Since $l_{c(a(i))}\le 1$, this yields $c(a(i))=\nat{t_a}^{p}$, and so
\begin{equation}
\label{eq:a(i)}
a(i)=d_i c(a(i)) d_i^{-1}=d_i\nat{t_a}^{p} d_i^{-1}
\end{equation}
by \eqref{eq:38}. Since $d_i=\cancel{a(i)}{a(i)}$ is an initial
subword of $a(i)$, this implies that the last product in
\eqref{eq:a(i)} is cancellation-free. Furthermore,
$$
a^z(i)=a(i)^z=(d_i \nat{t_a}^{p} d_i^{-1})^z=d_i \nat{t_a}^{pz} d_i^{-1},
$$
and the product on the right is cancellation-free as well. In
particular,
\begin{equation}
\label{mu:eq:power:q}
\mu_{\nat{j}}(a^z(i))=\max\left\{\mu_{\nat{j}}\left(d_i\right),
\mu_{\nat{j}}\left(\nat{t_a}^{pz}\right),\mu_{\nat{j}}\left(d_i^{-1}\right)\right\}
=
\max\left\{    \mu_{\nat{j}}  \left(d_i \right), \mu_{\nat{j}} \left( \nat{t_a}^{pz} \right) \right   \}
\mbox{ for every }
j\in\N.
\end{equation}
Since $|p|\ge 1$, this implies
$\mu_{\nat{t_a}}(a^z(i))\ge \mu_{\nat{t_a}}\left(\nat{t_a}^{pz}\right)=|pz|\ge |z|=z$. This finishes the proof of (i).

Assume now that $j\in\N$ and $j\not=t_a$. Since $d_i$ is an initial subword of $a(i)$, we have
$\mu_{\nat{j}}(d_i)\le \mu_{\nat{j}}(a(i))$. Since $\mu_{\nat{j}}\left(\nat{t_a}^{pz}\right)=0$, from
\eqref{mu:eq:power:q}
we obtain $\mu_{\nat{j}}(a^z(i))= \mu_{\nat{j}}(d_i)\le \mu_{\nat{j}}(a(i))$. This proves (ii).
\end{proof}

\begin{lemma}
\label{thin:sets}
Let $\{h_n:n\in\N\}$ be a faithfully indexed sequence of elements of $H\setminus\{e\}$ such that $l_{c(h_n(i))}\le 1$ whenever
$i,n\in\N$. Then $\{h_n:n\in\N\}$ is not
unconditionally $f_\omega$-productive
in $H$.
\end{lemma}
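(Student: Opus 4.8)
The plan is to show that $\{h_n:n\in\N\}$ fails to be unconditionally $f_\omega$-productive by producing an injection $\varphi:\N\to\N$ for which $\{h_{\varphi(n)}:n\in\N\}$ is not $f_\omega$-productive; since $f_\omega\circ\varphi=f_\omega$, Lemma \ref{AP:set:has:AP:subsets} guarantees this is enough. Write $t_n$ for the first coordinate at which $h_n$ is nontrivial, and for $\ell,r\in\N$ consider $N_r^{(\ell)}=\{n\in\N:\exists\,i\in\N\ \exists\,z\in\Z\ \mu_{\nat{\ell}}(h_n(i)^z)\ge r\}$. Using the hypothesis $l_{c(h_n(i))}\le 1$ (the computation underlying Lemma \ref{lc(b(i))<=1}), one sees that $\mu_{\nat{\ell}}(h_n(i)^z)$ is unbounded in $z$ precisely when $\ell=t_n$, and otherwise equals $\mu_{\nat{\ell}}(h_n(i))$. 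I split the argument according to whether some letter $\nat{\ell}$ carries unbounded weight along the sequence.

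\emph{Case A: there is $\ell\in\N$ with $N_r^{(\ell)}$ infinite for every $r$.} Here I apply Lemma \ref{impossible:case:for:unbounded:seminorm} directly to $\{h_n:n\in\N\}$ with $D=F(\nat{\N})$, $I=\N$, $\nu=\mu_{\nat{\ell}}$ and $f=f_\omega$. Hypothesis (i) holds by Lemma \ref{z(h):is:bounded}(i); hypothesis (ii) is exactly the assumption that $N_r^{(\ell)}$ — which coincides with the set $N_r$ of that lemma, since $f_\omega$ places no restriction on $z$ — is infinite for all $r$. Hence $\{h_n:n\in\N\}$ is not even $f_\omega$-productive.

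\emph{Case B: for every $\ell$ some $N_r^{(\ell)}$ is finite.} Then $\{n:t_n=\ell\}$ is finite for each $\ell$, so after passing to a subsequence and relabelling I may assume $t_0<t_1<t_2<\dots$; moreover the finiteness of $N_r^{(\ell)}$ together with Lemma \ref{z(h):is:bounded}(i) yields a finite bound $B_\ell=\sup_{n,i}\mu_{\nat{\ell}}(h_n(i))<\infty$ for every $\ell$. Now reorder by the zig-zag permutation $\psi$ of Lemma \ref{zigzag:sequence} (so $\psi(2l)=2l+1$, $\psi(2l+1)=2l$), set $a_m=h_{\psi(m)}$, $z_m=2B_{t_{\psi(m)}}+1$, $i_m=t_m$ and $M=\{2s+1:s\in\N\}$. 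By Lemma \ref{zigzag:sequence}(ii) the first $2s+2$ reordered terms are exactly those with essential letter $\le t_{2s+1}$, so every later $a_m$ vanishes at $i_{2s+1}=t_{2s+1}$, giving condition (ii$_m$) of Lemma \ref{technical:lemma}. For (iii$_m$) with $m=2s+1$, evaluate $\prod_{j=0}^{2s+1}a_j^{z_j}$ at $t_{2s+1}$ to obtain a product $w_0\cdots w_{2s+1}$ with $w_j=h_{\psi(j)}^{z_j}(t_{2s+1})$, and apply the mountain lemma (Lemma \ref{real:final:step}) with $x_j=\nat{t_{\psi(j)}}$: Lemma \ref{lc(b(i))<=1}(i) gives $\mu_{x_j}(w_j)\ge z_j$, while for $k\ne j$ Lemma \ref{lc(b(i))<=1}(ii) gives $\mu_{x_j}(w_k)\le\mu_{\nat{t_{\psi(j)}}}(h_{\psi(k)}(t_{2s+1}))\le B_{t_{\psi(j)}}$, so $z_j=2B_{t_{\psi(j)}}+1$ secures $\mu_{x_j}(w_j)>2\mu_{x_j}(w_k)$. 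Thus $(x_0,\dots,x_{2s+1})$ is a subsequence of $\support{(w_0\cdots w_{2s+1})}$, and by Lemma \ref{subsequence:has:less:extrems} with Lemma \ref{zigzag:sequence}(i) and Definition \ref{eta:definition} we get $\eta\!\left(\prod_{j=0}^{2s+1}a_j^{z_j}(t_{2s+1})\right)\ge 2s+2\ge m$. Taking $\nu=\eta$ (a seminorm bounded on every coordinate-set by Lemma \ref{z(h):is:bounded}(ii)), Lemma \ref{technical:lemma} shows that $\{a_m:m\in\N\}=\{h_{\varphi(m)}:m\in\N\}$, for the injection $\varphi$ obtained by composing the passage to the subsequence with $\psi$, is not $f_\omega$-productive.

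The step I expect to be the real obstacle is the \emph{uniform} control of the cross terms $\mu_{x_j}(w_k)$: the mountain lemma requires, for each fixed mountain index $j$, a single bound valid against all competitors $k$ and all levels $s$ at once, whereas Lemma \ref{lc(b(i))<=1}(ii) and Lemma \ref{z(h):is:bounded}(i) only bound each individual cross term $\mu_{\nat{t_{\psi(j)}}}(h_{\psi(k)}(i))$. The entire purpose of the Case~A/Case~B dichotomy is to quarantine the letters of unbounded weight — Case~A disposes of them through Lemma \ref{impossible:case:for:unbounded:seminorm}, and in Case~B the finiteness of the $B_\ell$ is exactly what lets the exponents $z_j$ be fixed once and for all.
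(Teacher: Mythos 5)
Your proof is correct and is essentially the paper's own argument: the hard case is handled exactly as in the paper, by re-indexing along an injection so that the essential coordinates follow the zig-zag enumeration (Lemma \ref{zigzag:sequence}), taking exponents just over twice the uniform bound on the letter-weights, and feeding the mountain lemma (Lemma \ref{real:final:step}) into Lemma \ref{technical:lemma} with the seminorm $\nu=\eta$, while the unbounded case goes through Lemma \ref{impossible:case:for:unbounded:seminorm} with $\nu=\mu_{\nat{\ell}}$. The only structural difference is that your Case A, phrased with arbitrary powers $h_n(i)^z$, absorbs the paper's separate subcase in which infinitely many $h_n$ share the same essential coordinate $t_{h_n}$: the paper dispatches that subcase more simply, by noting via Lemma \ref{AP:is:null:sequence} that such a sequence cannot converge to $e$, whereas you route it through Lemma \ref{impossible:case:for:unbounded:seminorm} using Lemma \ref{lc(b(i))<=1}(i), and both are valid.
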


\begin{proof}
We consider two cases, with the easiest one coming first.

\medskip
{\em Case 1\/}. {\sl The set $\{\mu_{\nat{j}}(h_n(i)):i,n\in \N\}$ is unbounded for some $j\in\N$.\/}
It follows from Lemma \ref{z(h):is:bounded}(i) that  the set $\{n\in\N: \exists\ i\in\N\ \mu_{\nat{j}}(h_n(i))\ge r\}$ must be infinite for each $r\in \R$.
This means that the condition (ii) of Lemma \ref{impossible:case:for:unbounded:seminorm} holds, if one takes
$I=\N$, $f=f_\omega$, $D=F(\nat{\N})$ and $\nu=\mu_{\nat{j}}$. Lemma \ref{z(h):is:bounded}(i) implies that the condition (i) of Lemma
\ref{impossible:case:for:unbounded:seminorm} holds as well. Finally, $\mu_{\nat{j}}$ is a seminorm on $F(\nat{\N})$ by Lemma \ref{a(g):equation:1}.
Applying Lemma \ref{impossible:case:for:unbounded:seminorm}, we conclude that
$\{h_n:n\in\N\}$ is not an $f_\omega$-productive sequence in $H$.

\medskip
{\em Case 2\/}. {\sl The set $\{\mu_{\nat{j}}(h_n(i)):i,n\in \N\}$ is  bounded
for each $j\in\N$.\/} This allows us to fix a function $\psi:\N\to\N$ such that
\begin{equation}
\label{eq:uniform:bound}
\mu_{\nat{j}}(h_n(i))\le \psi(j)
\mbox{ whenever }
i,j,n\in \N.
\end{equation}

\medskip
{\em Subcase 2a\/}. {\sl  The set $E_j=\{n\in\N: t_{h_n}=j\}$ is infinite for some
$j\in\N$\/}.
From
Lemma \ref{lc(b(i))<=1}(i) it follows that $\mu_j(h_n(j))\ge 1$ for each $n\in E_j$. In particular, $h_n(j)\not=e$ for $n\in E_j$.
Since $F(\nat{\N})$ has the discrete topology and $E_j$ is infinite, we conclude that the sequence $\{h_n(j):n\in\N\}$ does
not converge to $e$ in $F(\nat{\N})$. It follows that the sequence $\{h_n:n\in\N\}$ does not converge to $e$ in $H$.
Therefore, this sequence is not $f_\omega$-productive in $H$ by
Lemma \ref{AP:is:null:sequence}.

\medskip
{\em Subcase 2b\/}. {\sl The set $\{n\in\N: t_{h_n}=j\}$ is finite for every $j\in\N$\/}. Then the set $T=\{t_{h_n}:n\in\N\}$ must be infinite, so
we can use Lemma \ref{zigzag:sequence} to fix a faithful enumeration $T=\{p_m:m\in\N\}$ of $T$ such that
\begin{equation}
\label{eq:number:of:extremums}
|\ext{(p_0,p_1,\ldots,p_m)}|=m+1
\ \ \mbox{ for every }\ \
m\in\N,
\end{equation}
and
\begin{equation}
\label{eq:max:p_s}
\max\{p_0,p_1,\dots,p_{2s+1}\}<p_m
\mbox{ whenever }
m,s\in\N
\mbox{ and }
2s+1<m.
\end{equation}
Let $\varphi:\N\to\N$ be the (unique) injection such that
\begin{equation}
\label{def:p_m}
p_m=t_{h_{\varphi(m)}}
\ \ \mbox{ for every }\ \
m\in\N.
\end{equation}
According to Lemma \ref{AP:set:has:AP:subsets}, in order to show that $\{h_n:n\in\N\}$ is not unconditionally $f_\omega$-productive
in $H$, it suffices to prove that the sequence $\{h_{\varphi(m)}:m\in\N\}$ is not $f_\omega$-productive in $H$. To achieve this,
we are going to make use of Lemma
\ref{technical:lemma}. For every $m\in\N$, let
\begin{equation}
\label{eq:def:a_m}
a_m=h_{\varphi(m)},
\end{equation}
\begin{equation}
\label{eq:def:z_m}
z_m=2\psi(p_m)+1
\end{equation}
and
\begin{equation}
\label{eq:i_m}
i_m=\max\{p_0,p_1,\ldots,p_m\}.
\end{equation}
 From \eqref{def:p_m}
and
\eqref{eq:def:a_m} it follows that
\begin{equation}
\label{eq:p_m:and:t_a_m}
p_m=t_{a_m}
\ \ \mbox{ for every }\ \
m\in\N.
\end{equation}
\begin{claim}
\label{we:can:apply:technical:lemma}
$I=\N$, $g=f_\omega$, $D=F(\nat{\N})$, $\nu=\eta$,
$A=\{a_m:m\in\N\}$, $M=\{2s+1:s\in\N\}$,
$\{z_m:m\in\N\}\subseteq\Z$
and $\{i_m:m\in\N\}\subseteq \N$ satisfy the assumptions of Lemma \ref{technical:lemma}.
\end{claim}

\begin{proof}
Note that $\nu=\eta$ is a seminorm on $D$ by Lemma
\ref{a(g):equation:2}. Lemma \ref{z(h):is:bounded}(ii) guarantees the validity of another assumption of Lemma \ref{technical:lemma}.
It remains only to check that conditions (i$_m$), (ii$_m$) and (iii$_m$)
of Lemma \ref{technical:lemma} hold for every $m\in\N$.

(i$_m$) $|z_m|< \omega=f_\omega(m)=g(m)$.

(ii$_m$) Assume that $k\in M$ and $0\le k<m$. Thus $k=2s+1$ for some $s\in \N$. From
\eqref{eq:max:p_s}, \eqref{eq:i_m} and \eqref{eq:p_m:and:t_a_m}, we get $i_k=i_{2s+1}<p_m=t_{a_m}$.
Thus, $a_m(i_k)=e$ by \eqref{definition:of:t_a}.

(iii$_m$)
 Assume that $m\in M$; that is, $m=2s+1$ for some $s\in\N$. In particular, $m\ge 1$.
For every $j\in\N$ with $j\le m$ define $w_j=a_j^{z_j}(i_m)\in F(\nat{\N})$
and $x_j=\nat{p_j}\in\nat{\N}$. We must prove that $\eta\left(\prod_{j=0}^m w_j\right)\ge m$.

First, let us show that the set $X=\nat{\N}$, the integer $m$, elements $w_0,w_1,\dots,w_m\in F(\nat{\N})$
and letters $x_0,x_1,\dots,x_m\in\nat{\N}$ satisfy the assumption of
Lemma
\ref{real:final:step}.
Indeed, let $j,k\in\N$, $j\le m$, $k\le m$ and $j\not = k$. From $j\le m$,
\eqref{eq:p_m:and:t_a_m} and \eqref{eq:i_m}, we get $t_{a_j}=p_j\le i_m$.
Therefore,
\begin{equation}
\label{eq:40}
\mu_{x_j}(w_j)=\mu_{\nat{p_j}}(w_j)=
\mu_{\nat{p_j}}\left(a_j^{z_j}(i_m)\right)
\ge z_j
=
 2\psi(p_j)+1> 2\psi(p_j)
\end{equation}
by
Lemma \ref{lc(b(i))<=1}(i) and \eqref{eq:def:z_m}. Note that $j\not=k$ and \eqref{eq:p_m:and:t_a_m} imply
$p_j\not=p_k=t_{a_k}$, and so
\begin{equation}
\label{eq:41}
\mu_{x_j}(w_k)=\mu_{\nat{p_j}}(w_k)=
\mu_{\nat{p_j}}(a_k^{z_k}(i_m))\le
\mu_{\nat{p_j}}(a_k(i_m))
\end{equation}
by Lemma \ref{lc(b(i))<=1}(ii). Since $a_k=h_{\varphi(k)}$ by \eqref{eq:def:a_m}, from \eqref{eq:uniform:bound} we obtain
$$
\mu_{\nat{p_j}}(a_k(i_m))=\mu_{\nat{p_j}}(h_{\varphi(k)}(i_m))\le \psi(p_j).
$$
Combining this with \eqref{eq:40} and \eqref{eq:41}, we conclude that $\mu_{x_j}(w_j)>2\psi(p_j)\ge 2 \mu_{x_j}(w_k)$.

Second, from Lemma \ref{real:final:step} it follows that the sequence $(x_0,x_1,\dots,x_m)$ is a subsequence
of the sequence $\support{v}$, where $v=\prod_{j=0}^m w_j$.
Since $\support{v}\in\Seq(\nat{\N})$,
Lemma \ref{subsequence:has:less:extrems} and \eqref{def:theta} yield
$$
\eta(v)=|\ext{\support{v}}| \ge |\ext{(x_0,x_1,\dots,x_m)}|.
$$
Since
$$
|\ext{(x_0,x_1,\dots,x_m)}|
=
|\ext{(\nat{p_0},\nat{p_1},\dots,\nat{p_m})}|
=
|\ext{(p_0,p_1,\dots,p_m)}|
=m+1
$$
by
\eqref{eq:number:of:extremums}, this shows that $\eta\left(\prod_{j=0}^m w_j\right)=\eta(v)\ge m$.
\end{proof}

 From Claim \ref{we:can:apply:technical:lemma} and Lemma \ref{technical:lemma}, it now follows that $A$ is not an $f_\omega$-productive sequence in $H$.
\end{proof}

\begin{lemma}
\label{H:has:no:AP:sets} $H$ does not contain an
unconditionally
$f_\omega$-productive
sequence.
\end{lemma}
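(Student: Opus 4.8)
The plan is to reduce Lemma~\ref{H:has:no:AP:sets} to the two principal tools already established, namely Lemma~\ref{thin:sets} (which disposes of the ``thin'' case) and Lemma~\ref{impossible:case:for:unbounded:seminorm} together with Lemma~\ref{cyclic:reduction} (which disposes of the ``thick'' case), by means of a suitable dichotomy. Assume for contradiction that $A=\{a_n:n\in\N\}$ is an unconditionally $f_\omega$-productive sequence in $H$. Since $A$ is faithfully indexed, at most one of its terms equals $e$; discarding it I may assume $a_n\in H\setminus\{e\}$ for all $n\in\N$. Here I rely on the bookkeeping that unconditional $f_\omega$-productivity passes to subsequences: by Lemma~\ref{AP:set:has:AP:subsets}, $A$ is unconditionally $f_\omega$-productive exactly when $\{a_{\varphi(n)}:n\in\N\}$ is $f_\omega$-productive for every injection $\varphi$ (using $f_\omega\circ\varphi=f_\omega$), and composing injections shows that any subsequence of $A$ again satisfies this condition.

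The dichotomy is governed by the set $S=\{n\in\N:\ l_{c(a_n(i))}>1\ \text{for some}\ i\in\N\}$. First I would treat the case that $S$ is infinite. The idea is that a coordinate whose cyclic reduction has length $>1$ forces the seminorm $\eta$ to grow linearly in the exponent. Concretely, for each $n\in S$ fix $i_n\in\N$ with $l_{c(a_n(i_n))}>1$; then Lemma~\ref{cyclic:reduction}, together with $\eta(w^z)=\eta(w^{|z|})$ (valid since $w^z=(w^{|z|})^{-1}$ for $z<0$ and $\eta$ is a seminorm), gives $\eta(a_n(i_n)^z)\ge 2|z|$ for every $z\in\Z\setminus\{0\}$. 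Consequently, for every $r\in\R$, choosing $z$ with $2|z|\ge r$ (permitted since $f_\omega(n)=\omega$) shows $S\subseteq N_r$, so each set $N_r$ of Lemma~\ref{impossible:case:for:unbounded:seminorm} is infinite. Assumption~(i) of that lemma holds by Lemma~\ref{z(h):is:bounded}(ii), and $\eta$ is a seminorm by Lemma~\ref{a(g):equation:2}. Thus Lemma~\ref{impossible:case:for:unbounded:seminorm}, applied with $I=\N$, $D=F(\nat{\N})$, $f=f_\omega$ and $\nu=\eta$, yields that $A$ is not $f_\omega$-productive, contradicting unconditional $f_\omega$-productivity (which implies $f_\omega$-productivity via the identity bijection).

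In the remaining case $S$ is finite, so there is $N_0\in\N$ with $l_{c(a_n(i))}\le 1$ for all $i\in\N$ and all $n\ge N_0$. The tail $\{a_{N_0+m}:m\in\N\}$ is then a faithfully indexed sequence in $H\setminus\{e\}$ satisfying exactly the hypothesis of Lemma~\ref{thin:sets}, and it inherits unconditional $f_\omega$-productivity from $A$ by the bookkeeping above. Lemma~\ref{thin:sets} declares this tail not unconditionally $f_\omega$-productive---a contradiction. Since both cases are impossible, $H$ contains no unconditionally $f_\omega$-productive sequence.

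I expect the only genuinely delicate point at this level to be the verification that unconditional $f_\omega$-productivity descends to subsequences, and the observation that mere $f_\omega$-productivity already suffices to trigger Lemma~\ref{impossible:case:for:unbounded:seminorm} in the infinite case. All the real combinatorial difficulty has already been absorbed into the previously proved Lemmas~\ref{cyclic:reduction} and especially~\ref{thin:sets}, whose Subcase~2b (the zig-zag construction feeding Lemma~\ref{real:final:step} and Lemma~\ref{technical:lemma}) is the true heart of the whole argument; at the present stage the proof is essentially just the organizing dichotomy.
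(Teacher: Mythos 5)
Your proposal is correct and follows essentially the same route as the paper's own proof: the identical dichotomy on the set of indices $n$ for which some coordinate $a_n(i)$ has cyclic reduction of length $>1$, with Lemma~\ref{thin:sets} handling the case where this set is finite and Lemmas~\ref{cyclic:reduction}, \ref{z(h):is:bounded}(ii), \ref{a(g):equation:2} and \ref{impossible:case:for:unbounded:seminorm} (applied with $I=\N$, $D=F(\nat{\N})$, $\nu=\eta$, $f=f_\omega$) handling the infinite case. The only differences are cosmetic: you argue by contradiction rather than directly, and you spell out two details the paper leaves implicit, namely the passage of unconditional $f_\omega$-productivity to subsequences via Lemma~\ref{AP:set:has:AP:subsets} and the (in fact unnecessary, since positive exponents suffice for membership in $N_r$) extension of Lemma~\ref{cyclic:reduction} to negative exponents.
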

\begin{proof}
Let $\{h_n:n\in\N\}$ be an arbitrary faithfully indexed sequence of elements if $H$.
We need to prove that $\{h_n:n\in\N\}$ is not
unconditionally
$f_\omega$-productive
in $H$. Without
loss of generality, we may assume that $h_n\neq e$ for every $n\in\N$.

\medskip
{\em Case 1\/}. {\sl The set $K=\{n\in \N: \exists\ i\in\N\ l_{c(h_n(i))}\ge 2\}$ is finite\/}. Let $k=\max K$. By Lemma \ref{thin:sets},
the sequence
$\{h_n:n\in\N, n>k\}$ is not
unconditionally
$f_\omega$-productive
in $H$.
Clearly, $\{h_n:n\in\N\}$ cannot be
unconditionally
$f_\omega$-productive
in $H$
either.

\medskip
{\em Case 2\/}. {\sl The set $K$ is infinite\/}.
It suffices to check that $I=\N$, $f=f_\omega$, $D=F(\nat{\N})$, $\nu=\eta$ and $\{h_n:n\in\N\}$ satisfy the assumptions of Lemma
\ref{impossible:case:for:unbounded:seminorm}. Recall that  $\eta$ is a seminorm on $D$ by Lemma
\ref{a(g):equation:2}. Condition (i) of Lemma \ref{impossible:case:for:unbounded:seminorm} is
satisfied by Lemma \ref{z(h):is:bounded}(ii). Lemma \ref{cyclic:reduction} implies that $K\subseteq N_r$ for every $r\in\R$.
Therefore, condition (ii) of Lemma \ref{impossible:case:for:unbounded:seminorm} is satisfied as well.
\end{proof}

\bigskip

We finish this paper with two open questions which are motivated by our Theorem \ref{TAP:group:with:ap:sequence}.

\begin{question}
Does there exist a (separable metric) group which contains an $f_\omega$-productive sequence
but does not contain any unconditionally $f_\omega$-Cauchy productive sequence?
\end{question}
\begin{question}
Does there exist a (separable metric) group $G$ such that:
\begin{itemize}
\item[(i)] $G$ contains an $f_\omega$-productive sequence;
\item[(ii)] for every faithfully indexed sequence $\{b_n:n\in\N\}$ of elements  of $G$ there
exists a bijection $\varphi:\N\to\N$ such that the sequence $\left\{b_{\varphi(n)}:n\in\N\right\}$ is not
(Cauchy)
productive?
\end{itemize}
\end{question}

\bigskip
\noindent
{\bf Historical comment:}
Sections 1--10 are the result of re-arrangement and substantial
improvement of (some of) the
material from an earlier preprint
\cite{DSS}. Sections 11--15 are completely new.

\bigskip
\noindent
{\bf Acknowledgement:} We would like to thank cordially Vaja Tarieladze for fruitful exchange of ideas that inspired
us to prove Theorem \ref{NSS:is:NACP} and Corollary \ref{weil:NSS:iff:TAP}, as well as for providing us with the reference \cite{Drewnowski}.

\end{document}